\documentclass[11pt, reqno]{amsart}
\usepackage{graphicx, amssymb, amsmath, amsthm}
\usepackage{esint}
\usepackage[utf8]{inputenc}
\usepackage[
  backend=biber,
  style=numeric,
  maxbibnames=99,
  giveninits=true
]{biblatex}
\usepackage{epsfig}
\usepackage{hyperref}
\usepackage{comment}
\usepackage{mathrsfs}
\numberwithin{equation}{section}
\usepackage{amsthm}
\usepackage{subfigure}
\usepackage{tikz}
\usepackage{here}
\usepackage{float}
\usepackage{pifont}
\usepackage{enumitem}
\usetikzlibrary{matrix,arrows}
\usetikzlibrary{shapes}
\usetikzlibrary{calc}
\usetikzlibrary{arrows}
\usetikzlibrary{decorations.pathreplacing,decorations.markings}
\usepackage[all]{xy}

\usepackage{tikz-cd}

\usetikzlibrary{patterns}

\newtheorem{theorem}{Theorem}[section]

\newtheorem{lemma}[theorem]{Lemma}

\newtheorem{corollary}[theorem]{Corollary}

\newtheorem{proposition}[theorem]{Proposition}

\theoremstyle{definition}
\newtheorem{definition}[theorem]{Definition}
\newtheorem{remark}[theorem]{Remark}

\makeatletter
\newcommand{\Extend}[5]{\ext@arrow0099{\arrowfill@#1#2#3}{#4}{#5}}
\makeatother

\DeclareMathOperator{\Ric}{Ric}
\DeclareMathOperator{\dist}{dist}

\DeclareMathOperator{\ddiv}{div}
\DeclareMathOperator{\tr}{tr}

\newcommand{\dd}{\mathrm d}

\newcommand{\ve}{\varepsilon}

\begin{document}
\title{Riemannian Penrose inequality in all dimensions}

\author{Yuchen Bi}
\address{Mathematical Institute, Department of Pure Mathematics,
University of Freiburg, Ernst-Zermelo-Straße 1,
D-79104 Freiburg im Breisgau, Germany}
\email{yuchen.bi@math.uni-freiburg.de}

\author{Jintian Zhu}
\address{Institute for Theoretical Sciences, Westlake University,
600 Dunyu Road, 310030, Hangzhou, Zhejiang,
People's Republic of China}
\email{zhujintian@westlake.edu.cn}

\begin{abstract}
We are able to prove the Riemannian Penrose inequality in all dimensions for smooth complete asymptotically flat manifolds
with nonnegative scalar curvature and compact almost-minimizing minimal Caccioppoli boundary allowing mild singularities. Moreover, the equality holds exactly when the manifold is isometric to a Riemannian Schwarzschild exterior. Our proof extends Bray's conformal-flow method to
dimensions higher than seven, where the outermost minimizing enclosures along the flow may be singular.
\end{abstract}
\maketitle

\section{Introduction}

Let \((M^n,g)\), \(n\geq3\), be a smooth complete Riemannian manifold.  We say that \((M,g)\) is asymptotically flat if
\begin{itemize}
    \item there is a compact set $K$ such that $M\setminus K$ is diffeomorphic to \(\mathbb R^n\setminus B_1\);
\item in the corresponding Euclidean coordinates on $M\setminus K$ we have
\[
    |g_{ij}-\delta_{ij}|
    +|x|\,|\partial g_{ij}|
    +|x|^2|\partial^2 g_{ij}|
    =
    O(|x|^{-\tau})
\]
for some \(\tau>(n-2)/2\);\item and \(R_g\in L^1(M,g)\).
\end{itemize}
The ADM mass of $(M,g)$ is defined by
\[
    m_{\mathrm{ADM}}(M,g)
    =
    \frac{1}{2n(n-1)\omega_n}
    \lim_{\rho\to\infty}
    \int_{S_\rho}
    (\partial_jg_{ij}-\partial_i g_{jj})\nu^i\,d\sigma ,
\]
where \(S_\rho=\{|x|=\rho\}\), \(\nu\) is the Euclidean outward unit normal, and \(\omega_n\) denotes the volume of the unit Euclidean \(n\)-ball.

Let \((M^n,g)\) be a smooth complete asymptotically flat manifold with nonnegative scalar curvature and compact outermost minimal boundary
\(\Sigma\).

The Riemannian Penrose inequality
asserts that
\[
    m_{\mathrm{ADM}}(M,g)
    \geq
    \frac12
    \left(
        \frac{\mathcal H_g^{n-1}(\Sigma)}{n\omega_n}
    \right)^{\frac{n-2}{n-1}},
\]
where \(n\omega_n\) is the area of the unit sphere \(\mathbb S^{n-1}\). This mass inequality is sharp, with equality realized exactly by the Riemannian Schwarzschild exteriors given by
$$(M_m,g_m)=\left(\mathbb R^n\setminus B_{r_m},\left(1+\frac{m}{2}|x|^{2-n}\right)^{\frac{4}{n-2}}g_{euc}\right),$$
where
$$m>0\mbox{ and }r_m=\left(\frac{m}{2}\right)^{\frac{1}{n-2}}.$$
In dimension three, the Riemannian Penrose inequality was independently proved by Huisken--Ilmanen \cite{HuiskenIlmanen} and Bray \cite{Bray} with different flow methods.  Huisken and Ilmanen introduced a weak formulation of the inverse mean curvature flow and established the Geroch monotonicity for the Hawking mass along the weak flow.  This compares its value on the initial surface with the ADM mass at infinity and, in particular, gives a proof of the  positive mass theorem and the Riemannian Penrose inequality. Bray introduced a different method, based on a novel conformal flow of metrics.  Along this flow the area of the outermost minimal surface is kept fixed, the ADM mass is monotone non-increasing, and the limiting metric is Schwarzschild. The monotonicity of the ADM mass is proved using the positive mass theorem.  Bray--Lee \cite{Bray-Lee} later extended this conformal-flow approach to dimensions \(3\leq n\leq 7\).

Extending Bray's conformal-flow method to dimensions higher than seven requires two key ingredients. One is a positive mass theorem in the relevant dimension, since the mass monotonicity relies on it, and  the other is a way to handle the outer minimizing enclosures appearing in the conformal flow, which may be singular in dimensions greater than seven.

The positive mass theorem is now known to be true in all dimensions, and we briefly sketch the history.  The positive mass theorem was first proved by Schoen--Yau \cite{SchoenYau1979,SchoenYau1981}, using minimal hypersurfaces, for \(3\leq n\leq 7\), and by Witten \cite{Witten}, using spinors, for spin manifolds in all dimensions.  Schoen--Yau later developed a general-dimensional extension of the minimal-hypersurface approach \cite{SchoenYau2022}.  Along another related line, progress on the regularity theory of area-minimizing hypersurfaces has enlarged the range in which minimal-hypersurface arguments can be applied within the classical regularity framework: Smale \cite{Smale1993} first established a generic regularity theorem in dimension \(8\), and Chodosh--Mantoulidis--Schulze \cite{ChodoshMantoulidisSchulze2023} and
Chodosh--Mantoulidis--Schulze--Wang \cite{ChodoshMantoulidisSchulzeWang2025} subsequently extended the results in dimensions \(9,10\), and 
dimension \(11\), respectively. More recently, He--Shi--Yu \cite{HeShiYuPMT8} gave a new proof of the positive mass theorem in dimension \(8\) by using conformal blow-up to eliminate the singular set to infinity, in combination with a use of the torical symmetrization.  Building on this strategy, Bi--Hao--He--Shi--Zhu \cite{BiHaoHeShiZhuPMT19} proved the Riemannian positive mass theorem up to dimension \(19\), which in fact provides a way to overcome the technical limitations imposed by the regularity theory in geometric measure theory.  Shortly thereafter, Brendle--Wang \cite{BrendleWangPMT} introduced a new dimension-descent scheme involving this conformal blow-up perspective, yielding a proof of the positive mass theorem in arbitrary dimension.

The present paper addresses the second ingredient.  We show that Bray's conformal-flow argument can be carried out in arbitrary dimension, even though the outer minimizing enclosures arising in the flow may be singular.  More precisely, we construct the flow in this generality and prove that, for each \(t>0\), the hypersurface \(\Sigma_t\) admits a decomposition
\[
    \Sigma_t=\mathcal R_t\cup\mathcal S_t,
\]
where \(\mathcal R_t\) is a smooth \(g_t\)-minimal hypersurface and
\[
    \dim_{\mathcal H}\mathcal S_t\leq n-8 .
\]
We then develop the regularity theory needed to work with these singular enclosures, and prove a mass-capacity inequality for asymptotically flat manifolds with such boundaries.  This gives the monotonicity of the ADM mass along the flow and yields the Riemannian Penrose inequality in all dimensions.

With singularity under consideration, we work in the framework of {\it smooth Riemannian manifolds with Caccioppoli boundary}.
\begin{definition}
    $(M,g)$ is a smooth Riemannian manifold with Caccioppoli boundary if $(M,g)$ can be extended to $(\overline M,\overline g)$, where $\overline M$ is a smooth manifold and $\overline g$ is a H\"older Riemannian metric on $\overline M$, and $M$ is the support of a Caccioppoli set of $\overline M$, the regular part of $\partial M$ is smooth, and $g$ is smooth up to the regular part of $\partial M$ from the interior of $M$.
\end{definition}

Our main theorem is
\begin{theorem}\label{Thm: intro main}
Let \((M^n,g)\), \(n\geq3\), be a smooth complete asymptotically flat manifold with nonnegative scalar curvature and compact almost-minimizing Caccioppoli boundary \(\partial M\).  Assume that $\partial M$ is the outermost $g$-minimizing enclosure of itself and that its regular part $\mathcal R(\partial M)$ is $g$-minimal. Assume also that $g$ admits a $C^{2,\alpha}$ Riemannian extension across $\partial M$. Then we have
\[
    m_{\mathrm{ADM}}(M,g)
    \geq
    \frac12
    \left(
        \frac{\mathcal H_g^{n-1}(\partial M)}{n\omega_n}
    \right)^{\frac{n-2}{n-1}} .
\]
Moreover, equality holds if and only if \((M,g)\) is isometric to a Riemannian Schwarzschild exterior.
\end{theorem}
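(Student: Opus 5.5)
We follow Bray's conformal flow, as extended by Bray--Lee, and replace each regularity-dependent step by its singular counterpart. Set $g_0=g$ and $\Sigma_0=\partial M$. We construct a family $g_t=u_t^{4/(n-2)}g_0$ with $u_0\equiv1$ and $\frac{d}{dt}u_t=v_t$. Here $v_t$ is the $g_0$-harmonic function outside $\Sigma_t$ with $v_t=0$ on $\Sigma_t$, $v_t=0$ inside $\Sigma_t$, and $v_t\to -e^{-t}$ at infinity. The hypersurface $\Sigma_t$ is the outermost $g_t$-minimizing enclosure of $\Sigma_0$. We build the flow by time discretization, as in Bray--Lee: on each step $[t_k,t_{k+1}]$, freeze $\Sigma_{t_k}$, solve the harmonic problem, and update the enclosure. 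Then we pass to the limit as the mesh tends to zero, using the compactness of minimizing enclosures in BV and Harnack bounds for $u$.

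The structural input is regularity. Each $\Sigma_t$ is the boundary of a set that is outer area-minimizing for the metric $g_t$. This metric is only Lipschitz/H\"older across $\Sigma_s$ for $s<t$, but it is smooth on the open region outside. Almost-minimizer regularity theory (Tamanini) then gives the decomposition $\Sigma_t=\mathcal R_t\cup\mathcal S_t$ with $\dim_{\mathcal H}\mathcal S_t\le n-8$. The hypothesis that $g$ extends $C^{2,\alpha}$ across $\partial M$ supplies the needed regularity at $t=0$, and $\mathcal R_t$ is $g_t$-minimal.

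With the flow in hand, we prove three properties.

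\begin{enumerate}
\item \emph{Area is constant.} We show $\mathcal H^{n-1}_{g_t}(\Sigma_t)=\mathcal H^{n-1}_{g_0}(\Sigma_0)$. Because $v_t=0$ on $\Sigma_t$, the area of $\Sigma_t$ is unchanged to first order. Jumps of $\Sigma_t$ do not change area, since the enclosing surfaces are minimizing. The singular set has zero $\mathcal H^{n-1}$-measure, so it contributes nothing.
\item \emph{Mass is nonincreasing.} The expansion of $v_t$ at infinity gives
\[
\frac{d}{dt}m(t)=-2e^{-t}\bigl(e^{-t}m(t)-\mathrm{cap\ term}\bigr).
\]
So monotonicity reduces to a mass--capacity inequality $m_{\mathrm{ADM}}(N,h)\ge 2\,\mathcal C_h(\partial N)$, suitably normalized, for asymptotically flat $(N,h)$ with $R_h\ge0$ and singular minimal Caccioppoli boundary. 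To prove it, reflect $N$ across $\partial N$ using Bray's doubling $h'=((1+\varphi)/2)^{4/(n-2)}h$ and $((1-\varphi)/2)^{4/(n-2)}h$, where $\varphi$ is the capacitary potential. Gluing produces a metric with one end compactified, of scalar curvature $\ge0$ away from the glued interface. The interface is Lipschitz across $\mathcal R$, with mean curvatures matching since $\mathcal R$ is minimal, and has a singular set of codimension $\ge 7$ inside it. We then apply the positive mass theorem in all dimensions (Brendle--Wang, Bi--Hao--He--Shi--Zhu) for metrics with such singular sets. The corner along $\mathcal R$ is handled by Miao-type smoothing, and the singular set $\mathcal S$ by a capacity/cutoff or conformal blow-up argument that is valid because its codimension exceeds 2.
\item \emph{Convergence.} We show that $(M,g_t)$, viewed outside $\Sigma_t$, converges to Schwarzschild as $t\to\infty$, via Bray's argument that $\Sigma_t$ eventually encloses any compact set and the metrics become harmonically flat and spherically symmetric in the limit. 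Combined with area constancy, $m(t)\to\frac12(A/n\omega_n)^{(n-2)/(n-1)}$.
\end{enumerate}

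Monotonicity of $m(t)$ then yields the inequality. For rigidity: if equality holds, then $m(t)$ is constant, so equality holds in the mass--capacity inequality at $t=0$. The positive mass rigidity then forces the doubled manifold to be flat, which makes $(M,g)$ Schwarzschild. In particular $\mathcal S_0=\emptyset$ a posteriori.

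The main obstacle is the mass--capacity inequality with a singular boundary, specifically applying positive mass rigidity across the corner and the singular set simultaneously. We would first try to smooth the corner on $\mathcal R$ away from a neighborhood of $\mathcal S$. The neighborhood of $\mathcal S$ would then be cut out using its small capacity, with a conformal factor controlling the resulting negative scalar curvature, keeping the mass change arbitrarily small.
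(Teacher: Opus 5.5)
Your outline of the flow, the constant-area property and the convergence matches the paper's overall strategy. The gap is the mass--capacity inequality for a singular boundary, which you flag as the main obstacle but do not prove, and the route you sketch does not work as stated.

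\textbf{Why doubling across a singular boundary fails.} Doubling $N$ across $\partial N$ does not produce a manifold with a small metric singular set. Near $p\in\mathcal S$, the double is locally the cone over the double of $\overline{\Omega}$, where $\Omega\subset S^{n-1}$ is one side of the link of a tangent cone. For the Simons cone this is the cone over $S^3\times S^4$, which is not a manifold.
\begin{itemize}
\item The positive mass theorems of Brendle--Wang and Bi--Hao--He--Shi--Zhu are stated for smooth manifolds, so they do not apply. A ``positive mass theorem for metrics with singular sets of codimension $\geq 7$'' in this topology is not available.
\item $|A|$ blows up at $\mathcal S$, so Miao's corner smoothing along $\mathcal R$ is not uniform.
\item Excising a neighbourhood of $\mathcal S$ leaves a boundary with no sign condition. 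Small capacity gives cutoffs of small Dirichlet energy, not control of the sign of scalar or mean curvature. So ``a conformal factor controlling the resulting negative scalar curvature'' is unsupported.
\end{itemize}
Your rigidity step has the same problem, since it relies on rigidity for such a singular positive mass theorem to conclude $\mathcal S_0=\emptyset$.

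\textbf{What the paper does instead.} It never doubles across a singular surface.
\begin{itemize}
\item It perturbs $g$ by $u_\varepsilon=(1+\varepsilon v)/(1-\varepsilon)$, where $v=w-1$ and $w$ is the capacitary potential. This keeps $R\geq 0$ and gives $\mathcal R(\Sigma)$ strictly negative $\tilde g_\varepsilon$-mean curvature toward the exterior, so $\Sigma$ becomes a strict barrier.
\item It then minimizes $\mathcal H^{n-1}_{\tilde g_\varepsilon}(\partial F)+\int_{M\setminus F}\delta h$.
\item The new ingredient is a singular-contact exclusion: an Ilmanen-type strong maximum principle at singular points (Proposition~\ref{prop: singular contact exclusion} and Corollary~\ref{cor: separation perturbed minimizers}). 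It shows the minimizer is disjoint from $\Sigma$, so its regular part has mean curvature at least $\gamma>0$.
\item Gromov's mean-convex smoothing then gives smooth mean-convex enclosures. Bray's doubling and Miao's corner theorem apply to these, and capacity monotonicity with $\varepsilon\to 0$ finishes the inequality.
\item Rigidity uses the Neumann and Dirichlet Yamabe-type functionals. Their first variation forces $A=0$ on $\mathcal R(\Sigma)$, so tangent cones are hyperplanes and $\mathcal S=\emptyset$. McFeron--Sz\'ekelyhidi then applies.
\end{itemize}

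\textbf{A second omission.} The same separation machinery gives disjointness of the time slices (Corollary~\ref{cor: separation time slices}). You need this so that $g_t$ is smooth near all of $\Sigma_t$, including $\mathcal S_t$, before any mass--capacity inequality can be applied on $(\overline{E_t},g_t)$. Your outline does not address it.
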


We finish the introduction by outlining the proof and the organization of the paper.

In Section 2, we construct Bray's conformal flow in arbitrary dimension from time-discrete approximations.  Boundary H\"older estimates provide compactness for the harmonic functions defining the flow, and the Almgren--Bombieri--Tamanini theory gives the regular-singular decomposition of
the time slices.  We also prove a local separation criterion for singular almost-minimizing boundaries and use it to show that distinct time slices are disjoint.

In Section 3, we prove the monotonicity of the ADM mass by establishing a mass-capacity inequality for singular boundaries. The separation criterion from Section 2 is used after a harmonic conformal perturbation to obtain a strictly separated boundary whose regular part has
mean curvature bounded below by a positive constant. We then use mean-convex smoothing, in the form formulated by Gromov \cite{GromovPlateauStein}, to reduce the argument to the smooth boundary case, where Bray's doubling trick and the positive mass theorem apply.

In Section 4, we prove the convergence of the normalized conformal flow. This part follows the strategy of Bray--Lee \cite{Bray-Lee} closely; we include the details for completeness. The positive mass theorem enters here through Lee's quantitative positive mass theorem for harmonically flat ends, which follows from the positive mass theorem in the corresponding dimension.  

Finally, in Section 5, we combine the mass monotonicity with the convergence of the flow to prove the Riemannian Penrose inequality.  We also establish the rigidity statement in the equality case.

\subsection*{Acknowledgements}
The second-named author is partially supported by National Key R\&D Program of China 2023YFA1009900, NSFC Grant No. 12401072, Zhejiang Provincial Natural Science Foundation of China under Grant No. LQKWL26A0101 and the start-up fund from Westlake University. The authors used ChatGPT (OpenAI) for copyediting and to assist in reviewing the proofs for errors in calculations or notation. The mathematical results and arguments were developed by the authors, who take full responsibility for the content of this paper.

\section{The conformal flow in the presence of singularity}\label{Sec: 2}

In this section, we always assume $(M^n,g)$ to be a smooth and complete asymptotically flat manifold with compact  Caccioppoli boundary $\partial M$. Assume that $g$ admits a $C^{2,\alpha}$ Riemannian extension across $\partial M$.

Denote
$$\mathcal E=\left\{E:
\mbox{Caccioppoli sets } 
E\mbox{ of }M\mbox{ such that }M\setminus E\mbox{ is bounded}
\right\}$$
and
$$\mathcal C=\{\Sigma:\Sigma=\partial E\mbox{ with }E\in \mathcal E\},$$
where $\partial E$ denotes the measure-theoretic boundary of $E$ given by
$$\partial E:=\{x\in M:\mathcal H^n_g(B_r^g(x)\cap E)>0\mbox{ and }\mathcal H^n_g(B_r^g(x)\setminus E)>0\mbox{ for all }r>0\}.$$ 
For convenience, we call $\Sigma\in \mathcal C$ a hypersurface in $M$ although it may not be smooth, and we always assume $E\in\mathcal E$ whenever we write $\Sigma=\partial E$.

\vspace{2mm} \noindent{\bf Convention I.}  We always omit the notation for reduced boundary just
 for simplicity. Namely,  we denote 
$$\mathcal H^{n-1}_g(\Sigma\cap U):=\mathcal H^{n-1}_g(\partial^*E\cap U),\mbox{ where }\Sigma=\partial E,$$ 
for any open subset $U$ of $M$.
\vspace{2mm}

Throughout, the Caccioppoli sets in $\mathcal E$, their boundaries in
$\mathcal C$, and their complements are understood in the extended manifold.
Extend each discrete conformal factor constantly by one across the
original boundary into the added interior region, and extend each
discrete velocity by zero there.

In order to characterize mild regularity of hypersurfaces in $\mathcal C$, we need the following definition of almost minimizing property.

\begin{definition}\label{Defn: almost minimizing}
       Given any  constants $\beta\geq 0$, $\mu>0$ and $r_0>0$, a boundary $\Sigma=\partial E$ in $\mathcal C$ is $(\beta,\mu,r_0,g)$-almost-minimizing if the following property holds: if $F\in\mathcal E$ is an open subset of $M$ 
       such that $E\Delta F$ is contained in some geodesic ball $B^g_{r}(x)$ 
       with $r\leq r_0$, then we have
    \[
    \mathcal H^{n-1}_g(\Sigma)\leq \mathcal H^{n-1}_g(\partial F)+\beta r^{n-1+\mu}.
    \]
\end{definition}

 {\noindent\bf Convention II.} Whenever $\Sigma=\partial E$ is almost-minimizing, we identify $E$ with its canonical open representative
 $$E^{op}:=\{x\in M:\mathcal H^n_g(B_r^g(x)\setminus E)=0\text{ for some }r>0\}.$$
 By the two-sided density estimate established in the proof of Lemma \ref{Lem: density lower bound}, $E^{op}$ agrees with $E$ up to a set of measure zero, and its topological boundary agrees with the measure-theoretic boundary $\partial E$.

\vspace{2mm}
To state the definition of a conformal flow, we need the following notion of minimizing enclosure.

\begin{definition}
Let $h$ be a continuous Riemannian metric on $M$ and $\Sigma\in \mathcal C$ be a hypersurface with $\Sigma=\partial E$. By $h$-minimizing enclosure of $\Sigma$ we mean a hypersurface $\Sigma_*\in \mathcal C$ with $\Sigma_*=\partial E_*$
 satisfying $E_*\subset E$ and 
$$\mathcal H_h^{n-1}(\Sigma_*)\leq \mathcal H_h^{n-1}(\partial E')$$ for any competitor $E'\in \mathcal E$ with $E'\subset E$.
An $h$-minimizing enclosure $\Sigma_*$ of $\Sigma$ is said to be outermost if there is no $h$-minimizing enclosure of $\Sigma$ enclosing $\Sigma_*$ other than $\Sigma_*$ itself.
\end{definition}
\begin{remark}
 On asymptotically flat manifolds $(M,h)$, the outermost $h$-minimizing enclosure of $\Sigma$ always exists and is unique. 
\end{remark}

In the following, we introduce the definition of {\it a conformal flow}, which is equivalent to the definition raised by Bray in \cite{Bray-Lee}.
\begin{definition}\label{Defn: conformal flow}
    A conformal flow on $(M,g)$ with initial hypersurface $\Sigma_0\in\mathcal C$ means a one-parameter family of metric-hypersurface pairs $$\{(g_t,\Sigma_t)\}_{t\geq 0}$$ satisfying the following properties:
    \begin{itemize}
    \item[(i)] $\Sigma_t\in\mathcal{C}$ is the outermost $g_t$-minimizing enclosure of $\Sigma_0$ in $M$;
        \item[(ii)]  there is a continuous function $v_t$ on $M$ such that
        $$\Delta_{g}v_t=0\mbox{ outside }\Sigma_t,$$
$$v_t\equiv 0\mbox{ inside and along }\Sigma_t,$$
and
        $$v_t(x)\to -e^{-t}\mbox{ as }x\to\infty;$$
        \item[(iii)]  the metrics $g_t$ are all conformal to $g$ and we have 
        $$g_t=u_t^{\frac{4}{n-2}}g\mbox{ with }u_t(x)=1+\int_0^tv_s(x)\,\mathrm ds.$$
    \end{itemize}
\end{definition}

\begin{remark}\label{Rmk: new conformal flow}
    From the discussion in \cite[Appendix A]{Bray}, we actually know that if $\{(g_t,\Sigma_t)\}_{t\geq 0}$ is a conformal flow on $(M,g)$ with initial hypersurface $\Sigma_0$, then for any $s\geq 0$ the subfamily 
    $\{(g_{t+s},\Sigma_{t+s})\}_{t\geq 0}$ is a conformal flow on $(M,g_s)$ with initial hypersurface $\Sigma_s$, where the new evolution function is
    $$\tilde v_t=\frac{v_{t+s}}{u_{s}}.$$
\end{remark}

\subsection{Existence} In this subsection, we are going to establish the following existence result.
\begin{theorem}\label{Prop: existence}
    Assume that $(M,g)$ is a smooth complete asymptotically flat manifold with compact Caccioppoli boundary, and that $\Sigma_0\in\mathcal C$ is a $(\beta,\mu,r_0,g)$-almost-minimizing hypersurface, which is the outermost $g$-minimizing enclosure of itself. Then there is a conformal flow $\{(g_t,\Sigma_t)\}_{t\geq 0}$ on $(M,g)$ with initial hypersurface $\Sigma_0$. Furthermore, for any constant $T>0$, there exist constants
\[
\alpha=\alpha(n,T,g,\beta,\mu,r_0)\in(0,1)
\quad\text{and}\quad
C=C(n,T,g,\beta,\mu,r_0)>0
\]
such that
\[
    \sup_{0\leq t\leq T}\|u_t\|_{C^\alpha(M,g)}\leq C.
\]
\end{theorem}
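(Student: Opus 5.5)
We construct the flow by time discretization, following Bray's original scheme in \cite{Bray}, and pass to the limit using uniform estimates that do not depend on the regularity of the enclosures. Fix $T>0$ and a step size $\varepsilon=T/N$. Set $u^\varepsilon_0\equiv 1$ and $\Sigma^\varepsilon_0=\Sigma_0$. Inductively, given $u^\varepsilon_{k}$ (continuous, positive, $\geq c(T)>0$), let $g^\varepsilon_{k}=(u^\varepsilon_k)^{4/(n-2)}g$. Let $\Sigma^\varepsilon_{k}$ be the outermost $g^\varepsilon_k$-minimizing enclosure of $\Sigma_0$, which exists and is unique by the Remark above. Then let $v^\varepsilon_k$ be the $g$-harmonic function outside $\Sigma^\varepsilon_k$ with $v^\varepsilon_k=0$ on and inside $\Sigma^\varepsilon_k$ and $v^\varepsilon_k\to -e^{-k\varepsilon}$ at infinity. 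This exterior Dirichlet problem is solved by Perron's method or by exhaustion. Finally set $u^\varepsilon_{k+1}=u^\varepsilon_k+\varepsilon v^\varepsilon_k$ and extend by piecewise linear interpolation in $t$.

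By the maximum principle, $-e^{-k\varepsilon}\le v^\varepsilon_k\le 0$, so $u^\varepsilon_t\ge e^{-t}$. This keeps all discrete metrics uniformly comparable to $g$ on $[0,T]$. Consequently each $\Sigma^\varepsilon_k$ is $(\beta',\mu',r_0',g)$-almost-minimizing with constants depending only on $n,T,g,\beta,\mu,r_0$. Away from the initial obstacle, $\Sigma^\varepsilon_k$ is $g^\varepsilon_k$-minimizing, and a $C^\alpha$-comparable metric perturbation gives an almost-minimizing error $Cr^{n-1+\alpha}$. This requires a modulus of continuity for $u^\varepsilon_k$, and we obtain it inductively below. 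Where $\Sigma^\varepsilon_k$ touches $\Sigma_0$, we use the almost-minimality of $\Sigma_0$ together with the obstacle structure: enclosures of $\Sigma_0$ can only be competed with from outside.

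The main step is a uniform $C^\alpha$ bound on $v^\varepsilon_k$. We prove it via boundary Hölder estimates for harmonic functions vanishing on an almost-minimizing boundary. By the density lower bound (Lemma \ref{Lem: density lower bound}), the complement of the exterior region has uniformly positive density at every boundary point, at all scales $r\le r_0'$. Hence each boundary point satisfies a uniform exterior capacity-density (Wiener-type) condition, and the standard iteration of the oscillation decay gives $|v^\varepsilon_k(x)|\le C\,\dist(x,\Sigma^\varepsilon_k)^{\alpha}$. Combining this with interior gradient estimates and the asymptotic flatness at infinity yields $\|v^\varepsilon_k\|_{C^\alpha}\le C$, and summing gives $\|u^\varepsilon_t\|_{C^\alpha}\le C$. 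The constants must close the induction: the Hölder modulus of $u$ feeds the almost-minimizing constants, which feed back into the Hölder estimate for $v$. We handle this by fixing $\alpha$ and $C$ first from the density constant, which only uses $u\ge e^{-T}$, $u\le 1$ and a crude Hölder bound. I expect this closing of constants to be the main obstacle.

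With these uniform bounds, Arzelà--Ascoli together with $u^\varepsilon$ being Lipschitz in $t$ gives a subsequence $u^\varepsilon\to u$ locally uniformly in $(x,t)$. For the enclosures, BV compactness plus the almost-minimizing property give convergence of the $\Sigma^\varepsilon_{\lfloor t/\varepsilon\rfloor}$ in flat norm and in Hausdorff distance. Monotonicity (the $E^\varepsilon_k$ decrease in $k$, by the comparison argument of \cite[Appendix A]{Bray}) lets us work with countably many $t$ and then fill in. We then check that the limits form a conformal flow:

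1. Lower semicontinuity of area, together with uniform convergence of the metrics, shows that the limit $\Sigma_t$ is a $g_t$-minimizing enclosure.
2. Outermostness follows from monotonicity and a strict-comparison argument: a larger minimizing enclosure for $g_t$ would be nearly minimizing for the discrete metrics, contradicting outermostness there. It is cleanest to define $\Sigma_t$ directly as the outermost enclosure for the limit $g_t$ and show that $v^\varepsilon$ converges to the associated harmonic function. This uses Hausdorff convergence of the boundaries and the uniform boundary Hölder estimates, which preserve the Dirichlet data.
3. Passing to the limit in $u^\varepsilon_t=1+\int_0^t v^\varepsilon_s\,ds$ gives (iii).
4. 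The Hölder bound passes to the limit.

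Finally, a diagonal argument over $T\to\infty$ produces the flow for all $t\ge0$.
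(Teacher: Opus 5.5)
Your discretization, the uniform comparability of the metrics, and the boundary H\"older estimate from exterior density match the paper. The circularity of constants you worry about does not actually arise. The paper first proves the quasi-minimizing bound $\mathcal H^{n-1}_g(\Sigma^\epsilon_t\cap B_r)\le\lambda^{-1}\bigl(\mathcal H^{n-1}_g(\partial F\cap B_r)+\beta r^{n-1+\mu}\bigr)$ using only $\lambda g\le g^\epsilon_t\le g$, and this already gives the density bounds.

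The genuine gap is in your steps 2--3: nothing shows that the limit $\hat\Sigma_t$ of the discrete enclosures is the \emph{outermost} $g_t$-minimizing enclosure, even for almost every $t$. Step 1 correctly shows that $\hat\Sigma_t$ is \emph{some} $g_t$-minimizing enclosure. Your outermostness argument, however, fails. A larger $g_t$-minimizing enclosure is only nearly minimizing for the discrete metrics, and an exact outermost minimizer may lie strictly inside a near-minimizer. For instance, take two $g_t$-minimizers of equal area, with the discrete perturbation favoring the inner one; then there is no contradiction. Your fallback is to define $\Sigma_t$ as the outermost enclosure for $g_t$ and obtain $v^\varepsilon_t\to v_t$ from Hausdorff convergence. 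This presupposes exactly the missing fact $\Sigma^\varepsilon_t\to\Sigma_t$. If $\hat\Sigma_t\neq\Sigma_t$ on a set of times of positive measure, then $v^\varepsilon_t\to\hat v_t\neq v_t$ there, and item (iii) of the definition fails.

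The paper closes this with three ingredients absent from your plan.

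First, area constancy. The one-step loss $\mathcal H^{n-1}_{g^\epsilon_{(k-1)\epsilon}}(\Sigma^\epsilon_{(k-1)\epsilon})-\mathcal H^{n-1}_{g^\epsilon_{k\epsilon}}(\Sigma^\epsilon_{k\epsilon})$ is at most $C\epsilon\|v^\epsilon_{(k-1)\epsilon}\|_{L^\infty(\Sigma^\epsilon_{k\epsilon})}$. The boundary H\"older estimate makes this small whenever consecutive surfaces are Hausdorff close. Volume counting with the density bounds shows this closeness fails for only $O(\epsilon^{-1/2})$ steps. Hence $\mathcal H^{n-1}_{g_t}(\hat\Sigma_t)=\mathcal H^{n-1}_{g_t}(\Sigma_t)=\mathcal H^{n-1}_g(\Sigma_0)$ for every $t$.

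Second, a cross-time strict comparison. Suppose that for $t<s$ a piece of $\Sigma_t$ of positive area lay outside $\hat\Sigma_s$. The monotonicity of $v^\epsilon_\tau$ in $\tau$, together with the strong maximum principle for the limit of $v^\epsilon_s$ on the connected exterior, gives $u_s\le u_t-c$ on that piece. Then $\mathcal H^{n-1}_{g_s}(\Sigma_t)<\mathcal H^{n-1}_g(\Sigma_0)$, contradicting that this is the minimal $g_s$-area. Thus $\hat\Sigma_s$ lies between $\lim_{t\uparrow s}\Sigma_t$ and $\Sigma_s$.

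Third, these one-sided limits differ for only countably many $s$, since the regions between them are disjoint, have positive volume, and lie in a bounded set. So $\hat\Sigma_s=\Sigma_s$ off a countable set, and dominated convergence gives $u_t=1+\int_0^tv_s\,\mathrm ds$.

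A minor point: with your normalization $v^\varepsilon_k\to-e^{-k\varepsilon}$ one has $u^\varepsilon_k<e^{-k\varepsilon}$ at infinity, so the claim $u^\varepsilon_t\ge e^{-t}$ is false. Bray's factor $(1-\epsilon)^k$ is chosen to make the value at infinity exact. The weaker bound $u^\varepsilon\ge e^{-T}/2$ on $[0,T]$ for small $\varepsilon$ still suffices.
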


The proof for Theorem \ref{Prop: existence} was essentially given by Bray in \cite{Bray}, where he established the existence of a conformal flow on any asymptotically flat $3$-manifold. The existence of a conformal flow is well-known to be extended on asymptotically flat manifolds up to dimension seven. In higher dimensions, minimizing hypersurfaces may have singularities, therefore we have to make necessary modifications to ensure the existence.

\subsubsection{The construction of approximation flows} As in \cite{Bray}, for each constant $$0<\epsilon<1/2$$ we construct approximation flows $\{(g_t^\epsilon,\Sigma_t^\epsilon)\}_{t\geq 0}$ by induction such that the desired conformal flow eventually comes from the limit of the approximation flows as $\epsilon\to 0$. 

Given any constant $\epsilon$ we define
$$(g_t^\epsilon,\Sigma_t^\epsilon)\equiv (g,\Sigma_0)\mbox{ for all }t<0.$$
If the metric-hypersurface pairs $(g_t^\epsilon,\Sigma_t^\epsilon)$ are already defined on the interval $(-\infty,k\epsilon)$ for $k\in \mathbb N$, then we can define $(g_t^\epsilon,\Sigma_t^\epsilon)$ on the interval $[k\epsilon,(k+1)\epsilon)$ as follows.
\begin{itemize}
    \item[(i)] Define
    $$g_{k\epsilon}^\epsilon=\lim_{t\uparrow k\epsilon}g_t^\epsilon.$$
    \item[(ii)] For all $t\in [k\epsilon,(k+1)\epsilon)$ define $\Sigma_t^\epsilon$ to be the outermost $g_{k\epsilon}^\epsilon$-minimizing enclosure of $\Sigma^\epsilon_{(k-1)\epsilon}$ in $M$.
    \item[(iii)] Define
    \begin{equation*}
    g^\epsilon_t=(u^\epsilon_t)^{\frac{4}{n-2}}g,
\end{equation*}
with
\begin{equation*}
    u^\epsilon_t(x)=1+\int_0^tv_s^\epsilon(x)\,\mathrm ds,
\end{equation*}
where $v_t^\epsilon$ is the function determined by 
\begin{equation}\label{Eq: v1}
    \Delta_{g}v^\epsilon_t=0\mbox{ outside }\Sigma^\epsilon_t,
\end{equation}
\begin{equation}\label{Eq: v2}
    v^\epsilon_t\equiv 0\mbox{ inside and along }\Sigma^\epsilon_t,
\end{equation}
and
     \begin{equation}\label{Eq: v3}
v^\epsilon_t(x)\to-(1-\epsilon)^{\lfloor\frac{t}{\epsilon}\rfloor}\mbox{ as }x\to\infty,
     \end{equation}  
\end{itemize}
where $\lfloor \frac{t}{\epsilon}\rfloor$ denotes the largest integer no greater than $\frac{t}{\epsilon}$. 

The construction above works smoothly up to dimension seven. To make the construction valid in all dimensions, we need the following existence result for functions $v_t^\epsilon$. \begin{proposition}\label{Prop: v_t^epsilon}
    For each $t\geq 0$ the equations \eqref{Eq: v1}-\eqref{Eq: v3} have a unique solution $v_t^\epsilon$. Moreover, for each $T>0$ there exist constants $\alpha=\alpha(n,T)\in(0,1)$ and $C=C(n,T,g,\beta,\mu,r_0)>0$ such that
    \[
    \sup_{0\leq t\leq T}\|v^\epsilon_t\|_{C^\alpha(M,g)}\leq C.
    \]
\end{proposition}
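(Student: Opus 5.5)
Since $v^\epsilon_t$ is constant in $t$ on each interval $[k\epsilon,(k+1)\epsilon)$, it suffices to treat each discrete step. Write $\Sigma=\Sigma^\epsilon_{k\epsilon}=\partial E$ and $\lambda=(1-\epsilon)^k$. The plan is to solve the exterior Dirichlet problem on the open set $\Omega=E^{op}$ by Perron's method or by exhaustion, with boundary value $0$ on $\Sigma$ and value $-\lambda$ at infinity. Concretely, let $w_R$ be the $g$-harmonic function on $\Omega\cap\{|x|<R\}$ with $w_R=0$ on $\Sigma$ and $w_R=-\lambda$ on $\{|x|=R\}$. By the maximum principle $-\lambda\le w_R\le 0$, and the $w_R$ decrease in $R$, so they converge to a harmonic limit $v$ on $\Omega$. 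Since $g$ is asymptotically flat, $v\to-\lambda$ at infinity; to see this, compare with the barrier $-\lambda(1-C|x|^{2-n})$, which is superharmonic up to a correction for large $|x|$, and with the Green's function of the exterior. We extend $v$ by zero inside $\Sigma$. Uniqueness follows from the maximum principle once we know continuity up to $\Sigma$: the difference of two solutions is harmonic in $\Omega$, vanishes on $\Sigma$ and tends to $0$ at infinity.

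The main point is boundary regularity at the singular points of $\Sigma$. Here I use that $\Sigma$ is almost-minimizing. For this I need $\Sigma^\epsilon_{k\epsilon}$ to be $(\beta',\mu',r_0',g)$-almost-minimizing with constants controlled on $[0,T]$. This holds because it is $g^\epsilon_{k\epsilon}$-minimizing from outside, and $g^\epsilon_{k\epsilon}=(u^\epsilon)^{4/(n-2)}g$. By induction $u^\epsilon$ is bounded between $e^{-T}$-type constants and is $C^\alpha$ with a controlled norm. Comparing areas under a $C^\alpha$ conformal change then gives almost-minimality with $\mu'$ of order $\alpha$ and $\beta'$ controlled; note that the outer-minimizing property gives one-sided competitors, which are the ones relevant to the exterior. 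The density estimate (Lemma \ref{Lem: density lower bound}) then yields a uniform exterior volume density: $\mathcal H^n_g(B_r(x)\setminus\Omega)\ge c\,r^n$ for $x\in\Sigma$ and $r\le r_0'$. This is a uniform capacity-density (Wiener-type) condition. By the standard De Giorgi/Maz'ya boundary estimate for divergence-form equations on domains satisfying a measure-density condition, we get
\[
\operatorname{osc}_{B_r(x)\cap\Omega}v\le C(r/\rho)^{\alpha}\operatorname{osc}_{B_\rho(x)\cap\Omega}v,
\]
with $\alpha$ depending only on $n$, $c$ and the ellipticity. Interior Schauder estimates for $v$ away from $\Sigma$, together with the asymptotic estimates near infinity, complete a global $C^\alpha$ bound of size $C\lambda\le C$. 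Continuity up to $\Sigma$ also justifies the claim that the exhaustion limit attains the value $0$ there.

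The delicate step is the uniformity of the constants. The H\"older exponent for $v$ depends on the density constant of $\Sigma^\epsilon_{k\epsilon}$, which depends on the almost-minimizing constants, which in turn depend on $\|u^\epsilon\|_{C^\alpha}$ at earlier times. To avoid a circular loss of the exponent, I would first show that the density lower bound for minimizing enclosures under a conformal metric whose factor is merely bounded above and below, by $e^{\pm CT}$, already follows directly. Compare $\mathcal H^{n-1}$ with a ball competitor: pinching $\Lambda^{-1}g\le g_{k\epsilon}\le\Lambda g$ gives the volume density $c(n,\Lambda)$ via the isoperimetric inequality, without needing H\"older continuity. Hence $\alpha=\alpha(n,T)$ is independent of $\epsilon$ and $k$, and the bound $\sup_{t\le T}\|v^\epsilon_t\|_{C^\alpha}\le C$ follows. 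The bounds $0<c(T)\le u^\epsilon\le1$ used in this step come from $-1\le v^\epsilon\le 0$ and the integral formula for $u^\epsilon$.
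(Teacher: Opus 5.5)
Your architecture matches the paper's: solve the exterior Dirichlet problem, get uniform boundary H\"older continuity from a uniform density bound for the complement, and obtain that bound from metric pinching alone to avoid circular dependence on $\|u^\epsilon\|_{C^\alpha}$. However, the density step fails as you state it. The density that controls boundary regularity of $v$ on $\Omega=E$ is that of the complement, $\mathcal H^n_g(B_\rho(x)\setminus E)\ge c\rho^n$. Proving it requires the competitor $E\cup B_\rho(x)$, which pushes $\Sigma^\epsilon_{k\epsilon}$ toward the obstacle, and this competitor is inadmissible in the enclosure problem as soon as $B_\rho(x)$ leaves $E_0$. The competitors that outer-minimality always allows, $E\setminus B_\rho(x)$, only give the density of $E$ itself, which is the wrong phase. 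So your remark that the one-sided competitors ``are the ones relevant to the exterior'' is backwards. Likewise, a ball comparison plus pinching cannot give $c(n,\Lambda)$ for general minimizing enclosures. For instance, if the inside of the obstacle contained a thin needle of width $\eta$, the enclosure would essentially hug it, and the complement density at scales $\rho\gg\eta$ would be of order $(\eta/\rho)^{n-1}$. Your argument never uses the hypothesis that $\Sigma_0$ is $(\beta,\mu,r_0,g)$-almost-minimizing, which is exactly why the constant $C$ depends on $\beta,\mu,r_0$.

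The missing step is Lemma \ref{Lem: almost minimizing}. First use that $\Sigma^\epsilon_{k\epsilon}$ is the outermost $g^\epsilon_{k\epsilon}$-minimizing enclosure of $\Sigma_0$ itself, by the union/intersection argument. Then the obstacle is always $\Sigma_0$ with fixed constants, rather than $\Sigma^\epsilon_{(k-1)\epsilon}$, whose constants you would otherwise have to propagate through $T/\epsilon$ steps. Given any $F$ with $E\Delta F\subset B_r(x)$, compare $\Sigma^\epsilon_{k\epsilon}$ with the admissible set $E_0\cap F$. Test the almost-minimality of $\Sigma_0$ against $E_0\cup F$, and add the two inequalities using strong subadditivity of perimeter. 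This gives
\[
\mathcal H^{n-1}_g\bigl(\Sigma^\epsilon_{k\epsilon}\cap B_r(x)\bigr)\le C_T\bigl(\mathcal H^{n-1}_g(\partial F\cap B_r(x))+\beta r^{n-1+\mu}\bigr)
\]
with $C_T$ depending only on the pinching. Taking $F=E\cup B_\rho(x)$ and running the isoperimetric differential inequality as in Lemma \ref{Lem: sphere lower area} yields the complement density with $c=c(n,T)$ for $\rho<\min\{r_0,\iota\}$. After that, your capacity-density boundary estimate applies. The paper instead runs an explicit barrier iteration with Perron solutions on small balls and approximates $M\setminus E$ from outside by smooth sets; that difference is inessential. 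A minor point: your $w_R$ increase in $R$, not decrease, since $w_{R'}\ge-\lambda=w_R$ on $\{|x|=R\}$. So the limit at infinity needs the upper barrier $-\lambda+\lambda\varphi$, with $\varphi$ the exact harmonic potential of a large sphere enclosing the boundary.
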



The existence of $v_t^\epsilon$ will be proved by induction. Assume that $v_t^\epsilon$ already exists for $t\in[0,k\epsilon)$, then we are going to show that $v_t^\epsilon$ exists for $t\in [k\epsilon,(k+1)\epsilon)$. For a priori estimates, let us assume $t\leq T$.

\begin{lemma}\label{Lem: equivalent metric}
    We have
    $$\lambda g\leq g_{t}^\epsilon\leq g,$$
    for some constant $\lambda=\lambda(T)>0$ for all $t\in[0,k\epsilon]$.
\end{lemma}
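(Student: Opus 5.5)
Since $g_t^\epsilon=(u_t^\epsilon)^{\frac{4}{n-2}}g$, the statement is equivalent to a two-sided pointwise bound
\[
\lambda^{\frac{n-2}{4}}\leq u_t^\epsilon\leq 1\quad\text{on }M,\qquad t\in[0,k\epsilon].
\]
Recall $u_t^\epsilon=1+\int_0^t v_s^\epsilon\,\mathrm ds$, and by the induction hypothesis $v_s^\epsilon$ exists for $s\in[0,k\epsilon)$. For $t=k\epsilon$ the metric is defined as a left limit, so it suffices to prove the bounds for $t<k\epsilon$ with constants independent of $t$. I will bound each $v_s^\epsilon$ pointwise with the maximum principle and then integrate in $s$.

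\textbf{Step 1.} I claim that for each $s\in[0,k\epsilon)$,
\[
-(1-\epsilon)^{\lfloor s/\epsilon\rfloor}\leq v_s^\epsilon\leq 0 .
\]
Inside and along $\Sigma_s^\epsilon$ we have $v_s^\epsilon\equiv0$. On the exterior region, $v_s^\epsilon$ is $g$-harmonic and continuous up to $\Sigma_s^\epsilon$; this continuity is part of what the induction provides. Its boundary data are $0$ on $\Sigma_s^\epsilon$ and $-(1-\epsilon)^{\lfloor s/\epsilon\rfloor}\in[-1,0)$ at infinity.

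To apply the maximum principle on the unbounded exterior, I work on $\{|x|<\rho\}$ outside $\Sigma_s^\epsilon$ and let $\rho\to\infty$. Since $v_s^\epsilon$ is continuous on the closure and tends to its limit at infinity, its supremum and infimum are attained either on $\Sigma_s^\epsilon$ or at infinity. The singular set of $\Sigma_s^\epsilon$ causes no difficulty: we only need the weak maximum principle for a function that is continuous up to the topological boundary. This is exactly the setting of Convention II, where the topological boundary of the open representative agrees with $\partial E$.

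\textbf{Step 2.} Integrating the bound from Step 1 gives
\[
1\geq u_t^\epsilon\geq 1-\int_0^t (1-\epsilon)^{\lfloor s/\epsilon\rfloor}\,\mathrm ds\geq 1-\sum_{j\ge0}\epsilon(1-\epsilon)^j=0 .
\]
This lower bound degenerates, so it is too crude. The main obstacle is therefore a uniform positive lower bound for $u_t^\epsilon$.

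The plan is to show that $u_t^\epsilon\geq e^{-Ct}$-type behaviour holds, using comparison at infinity. The key observation is that $u_t^\epsilon$ is smallest at infinity. Indeed, for $s\le t$ the enclosures are nested outward, so $v_s^\epsilon\geq\lim_{x\to\infty}v_s^\epsilon$ pointwise by Step 1. Hence
\[
u_t^\epsilon(x)\geq \lim_{y\to\infty}u_t^\epsilon(y)=1-\int_0^t(1-\epsilon)^{\lfloor s/\epsilon\rfloor}\,\mathrm ds .
\]
For $t<k\epsilon$ and $t\le T$ this equals
\[
1-\epsilon\sum_{j<m}(1-\epsilon)^j-(t-m\epsilon)(1-\epsilon)^m,\qquad m=\lfloor t/\epsilon\rfloor,
\]
which is at least $(1-\epsilon)^{m+1}$. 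Using $\epsilon<1/2$ gives $\log(1-\epsilon)\geq-2\epsilon$, and since $(m+1)\epsilon\le T+\epsilon\le T+1$, we get $(1-\epsilon)^{m+1}\geq e^{-2(T+1)}$. Therefore
\[
\lambda=e^{-8(T+1)/(n-2)}
\]
works.

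Finally, the case $t=k\epsilon$ follows by passing to the limit $t\uparrow k\epsilon$, since all the bounds above are uniform in $t$.
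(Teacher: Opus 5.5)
Your proof is correct and follows the paper's argument: the maximum principle gives $-(1-\epsilon)^{\lfloor s/\epsilon\rfloor}\le v_s^\epsilon\le 0$, and integrating in time gives $u_t^\epsilon\ge(1-\epsilon)^{k}$. From there the paper uses $(1-\epsilon)^{1/\epsilon}\ge 1/4$ to get $4^{-T}$, while you use $\log(1-\epsilon)\ge-2\epsilon$; either works. The detour in Step 2 is unnecessary: the direct integration only seemed to degenerate because you summed over all $j\ge0$ instead of $j<k$, and your ``key observation'' is just the lower bound of Step 1 again, so the appeal to nesting plays no role.
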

\begin{proof}
     Since $v_t^\epsilon$ exists for $t\in[0,k\epsilon)$, it follows from the maximum principle that
    $$-(1-\epsilon)^{\lfloor\frac{t}{\epsilon}\rfloor}\leq v_t^\epsilon\leq 0\mbox{ for }t\in[0,k\epsilon).$$
    By integration we have
    $$ (1-\epsilon)^{k}\leq u_{t}^\epsilon\leq 1\mbox{ for all }t\in [0,k\epsilon).$$
    From the fact $\epsilon<1/2$ we see
    $$(1-\epsilon)^k\geq \left[(1-\epsilon)^{\frac{1}{\epsilon}}\right]^{k\epsilon}\geq 4^{-T}.$$
    This completes the proof.
\end{proof}

Let $E_t^\epsilon$ denote the region outside $\Sigma_t^\epsilon$ and $S^g_r(x)$ denote the boundary sphere of geodesic ball $B^g_r(x)$. Since the metric $g$ has a $C^{2,\alpha}$ extension across the original boundary and is asymptotically flat, we can take $\iota$ small enough depending on $g$ such that the metric $g$ is uniformly equivalent to the Euclidean metric, i.e.
$$c_{\iota}^{-2}g_{euc}\leq g\leq c_\iota^2 g_{euc},$$
in all geodesic balls with radius less than $\iota$, where $c_\iota>1$ denotes a constant depending on $\iota$ with $c_\iota\to 1$ as $\iota\to 0$. In particular, we have the isoperimetric inequality
$$\mathcal H^n_g(\Omega)^{\frac{n-1}{n}}\leq c_{iso}(n)\cdot\mathcal H^{n-1}_g(\partial^*\Omega) $$
 when $\Omega$ is contained in some geodesic ball with radius less than $\iota$.

At grid times, $\Sigma_{k\epsilon}^\epsilon$ is the outermost
$g_{k\epsilon}^\epsilon$-minimizing enclosure of $\Sigma_0$, by the union
and intersection argument of \cite[Lemma 3]{Bray}. In particular, its
$g_{k\epsilon}^\epsilon$-area is at most
$\mathcal H_g^{n-1}(\Sigma_0)$.
 \begin{lemma}\label{Lem: almost minimizing}
    Let $F\in \mathcal E$ be an open subset of $M$ such that $E_t^\epsilon\Delta F$ is contained in some geodesic ball $B^g_r(x)$ with $r\leq r_0$. Then we have
    $$\mathcal H^{n-1}_g(\Sigma_t^\epsilon\cap B^g_r(x))\leq \lambda^{-1}\left(\mathcal H^{n-1}_g(\partial F\cap B^g_r(x))+\beta r^{n-1+\mu}\right).$$
 \end{lemma}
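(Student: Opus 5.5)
The plan is to compare $\Sigma_t^\epsilon$ with $\partial F$ in the frozen metric $h:=g^\epsilon_{k\epsilon}$, for which $\Sigma^\epsilon_t$ is genuinely minimizing. The competitor $F$ will be cut by the region $E_0$ outside $\Sigma_0$, and the almost-minimizing property of $\Sigma_0$ in $g$ will absorb the error of the cut. The conversion from $h$ to $g$ then uses Lemma \ref{Lem: equivalent metric}.

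Fix $t\in[k\epsilon,(k+1)\epsilon)$, so that $\Sigma^\epsilon_t=\Sigma^\epsilon_{k\epsilon}$. By the remark preceding the lemma (the union and intersection argument of \cite[Lemma 3]{Bray}), $\Sigma^\epsilon_t=\partial E^\epsilon_t$ is the outermost $h$-minimizing enclosure of $\Sigma_0=\partial E_0$. In particular $E_t^\epsilon\subset E_0$, and
\[
\mathcal H^{n-1}_h(\Sigma_t^\epsilon)\le \mathcal H^{n-1}_h(\partial E')\quad\text{for all } E'\in\mathcal E,\ E'\subset E_0 .
\]

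\textbf{Step 1 (cut the competitor).} Given $F$ as in the statement, I use $F\cap E_0$ as a competitor. It is admissible, and $E^\epsilon_t\Delta(F\cap E_0)\subset E^\epsilon_t\Delta F\subset B^g_r(x)$ because $E^\epsilon_t\subset E_0$. The standard submodularity of perimeter gives
\[
\mathcal H^{n-1}_h(\partial(F\cap E_0))+\mathcal H^{n-1}_h(\partial(F\cup E_0))\le \mathcal H^{n-1}_h(\partial F)+\mathcal H^{n-1}_h(\partial E_0).
\]

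\textbf{Step 2 (control the union via $\Sigma_0$).} This is the key point. The region outside $E_0$, together with $\Sigma_0$ itself, lies inside or along every $\Sigma^\epsilon_s$, so $v^\epsilon_s=0$ there. Hence $u^\epsilon_{k\epsilon}=1$ and $h=g$ on $M\setminus E_0$. Since $F\cup E_0\supset E_0$, the reduced boundaries of $F\cup E_0$ and of $E_0$ lie in this closed set, where $h=g$. Moreover $(F\cup E_0)\Delta E_0\subset B^g_r(x)$ with $r\le r_0$. The $(\beta,\mu,r_0,g)$-almost-minimizing property of $\Sigma_0$ therefore yields
\[
\mathcal H^{n-1}_h(\partial E_0)=\mathcal H^{n-1}_g(\Sigma_0)\le \mathcal H^{n-1}_g(\partial(F\cup E_0))+\beta r^{n-1+\mu}=\mathcal H^{n-1}_h(\partial(F\cup E_0))+\beta r^{n-1+\mu}.
\]
Combining this with Step 1 and the minimality of $\Sigma_t^\epsilon$ gives
\[
\mathcal H^{n-1}_h(\Sigma_t^\epsilon)\le\mathcal H^{n-1}_h(\partial F)+\beta r^{n-1+\mu}.
\]

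\textbf{Step 3 (localize and change metric).} Outside $B^g_r(x)$ the sets $E^\epsilon_t$ and $F$ coincide, so their reduced boundaries agree there. Cancelling those parts gives
\[
\mathcal H^{n-1}_h(\Sigma^\epsilon_t\cap B^g_r(x))\le \mathcal H^{n-1}_h(\partial F\cap B^g_r(x))+\beta r^{n-1+\mu}.
\]
To cancel rigorously, I would replace $B^g_r(x)$ by a slightly larger ball whose boundary sphere carries no perimeter of either set. Finally, Lemma \ref{Lem: equivalent metric} gives $\lambda g\le h\le g$. This yields $\lambda^{(n-1)/2}\mathcal H^{n-1}_g\le\mathcal H^{n-1}_h\le \mathcal H^{n-1}_g$ on hypersurfaces. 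Applying the lower bound on the left side and the upper bound on the right side produces the inequality with constant $\lambda^{-(n-1)/2}$. This is $\lambda^{-1}$ after renaming $\lambda(T)$, which is harmless since $\lambda\le1$.

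\textbf{Main obstacle.} The delicate point is Step 2. It requires the measure-theoretic boundary of $F\cup E_0$ to sit in the region where the conformal factor is exactly $1$. The argument uses that $v_s^\epsilon$ vanishes inside and along $\Sigma_s^\epsilon\supset\Sigma_0$, the convention that the factor is extended by one across the original boundary, and the continuity of $u^\epsilon$, so that $h=g$ also holds on the reduced boundary $\partial^*E_0$.
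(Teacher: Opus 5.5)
Your proof is correct and uses the same ingredients as the paper: the cut competitor $F\cap E_0$, submodularity of perimeter, the $(\beta,\mu,r_0,g)$-almost-minimizing property of $\Sigma_0$ tested against $F\cup E_0$, and the comparison $\lambda g\le g^\epsilon_{k\epsilon}\le g$ from Lemma \ref{Lem: equivalent metric}. The only difference is the order of the steps. The paper applies the metric comparison immediately to the minimality inequality, obtaining $\mathcal H^{n-1}_g(\Sigma_t^\epsilon\cap B^g_r(x))\le\lambda^{-1}\mathcal H^{n-1}_g(\partial(E_0\cap F)\cap B^g_r(x))$, and then carries out submodularity and the almost-minimizing step entirely in $g$. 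As a result, the fact that $g^\epsilon_{k\epsilon}=g$ on $M\setminus E_0$, which you flag as the main obstacle, is never needed there; your ordering does, however, give the slightly sharper intermediate statement that the inequality holds for $g^\epsilon_{k\epsilon}$-areas with no loss of constant.
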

 \begin{proof}
     Write $\Sigma_0=\partial E_0$. Note that $F\setminus E_0\subset F\setminus E_t^\epsilon$, so we know that $(E_0\cup F)\Delta E_0$ is contained in some geodesic ball $B_r^g(x)$ with $r\leq r_0$. Since $\Sigma_0$ is $(\beta,\mu,r_0,g)$-almost-minimizing, we have
     $$\mathcal H^{n-1}_g(\partial E_0\cap B^g_r(x))\leq \mathcal H^{n-1}_g(\partial (E_0\cup F)\cap B^g_r(x))+\beta r^{n-1+\mu}.$$
     Recall that $\Sigma_t^\epsilon$ is $g^\epsilon_{k\epsilon}$-minimizing outside $\Sigma_0$. Take $k=\lfloor t/\epsilon\rfloor$ and decreasing $\lambda$ if necessary,  Lemma \ref{Lem: equivalent metric} gives the following area comparison
     $$\mathcal H^{n-1}_g(\Sigma_t^\epsilon\cap B^g_r(x))\leq \lambda^{-1} \mathcal H^{n-1}_g(\partial (E_0\cap F)\cap B^g_r(x)).$$
     From the strong subadditivity we have 
     \[
     \begin{split}
         \mathcal H^{n-1}_g(\partial (E_0\cup F)&\cap B^g_r(x))+\mathcal H^{n-1}_g(\partial(E_0\cap F)\cap B^g_r(x))\\
         &\leq \mathcal H^{n-1}_g(\partial E_0\cap B^g_r(x))+\mathcal H^{n-1}_g(\partial F\cap B^g_r(x)).
     \end{split}\]
    The proof is completed by combining these inequalities.
 \end{proof}

\begin{lemma}\label{Lem: sphere lower area}
   By taking $\iota$ small enough, there is a constant $c=c(n,T)>0$ such that at any point $x\in \Sigma_t^\epsilon$ we have
    $$\mathcal H^{n-1}_g(S^g_\rho(x)\setminus E_t^\epsilon)\geq c\rho^{n-1}$$
    for almost every $\rho \in (0,\min\{r_0,\iota\})$.
\end{lemma}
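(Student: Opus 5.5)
The plan is the standard density argument for (almost-)minimizers. We compare $E_t^\epsilon$ with the enlarged set obtained by adding a small ball, combine this with the isoperimetric inequality to get a differential inequality for the enclosed volume, and then turn the resulting volume density bound into the stated sphere-area bound. Fix $x\in\Sigma_t^\epsilon$ and set
$$V(\rho)=\mathcal H^n_g\big(B^g_\rho(x)\setminus E_t^\epsilon\big).$$
By coarea, $V$ is absolutely continuous with $V'(\rho)=\mathcal H^{n-1}_g(S^g_\rho(x)\setminus E_t^\epsilon)$ for a.e. $\rho$. By Convention II (or directly, since $x$ lies in the measure-theoretic boundary), $V(\rho)>0$ for every $\rho>0$. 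We restrict to those a.e. $\rho$ for which the slice $S^g_\rho(x)\cap\partial^*E^\epsilon_t$ has zero $\mathcal H^{n-1}$-measure and $\partial^*(E^\epsilon_t\cup B^g_\rho(x))$ decomposes as $(\Sigma_t^\epsilon\setminus \overline{B^g_\rho(x)})\cup(S^g_\rho(x)\setminus E_t^\epsilon)$ up to null sets.

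\textbf{Step 1 (comparison).} Take $F=E_t^\epsilon\cup B^g_\rho(x)$, which lies in $\mathcal E$ and is open. Then $E_t^\epsilon\Delta F=B^g_\rho(x)\setminus E_t^\epsilon\subset B^g_{r}(x)$ for every $r\in(\rho,r_0]$. Apply Lemma \ref{Lem: almost minimizing} with such $r$ and let $r\downarrow\rho$. Using the decomposition of $\partial F$ above, this gives
$$\mathcal H^{n-1}_g(\Sigma_t^\epsilon\cap B^g_\rho(x))\leq \lambda^{-1}\big(V'(\rho)+\beta\rho^{n-1+\mu}\big).$$

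\textbf{Step 2 (isoperimetry).} For $\rho<\iota$, apply the isoperimetric inequality to $\Omega=B^g_\rho(x)\setminus E^\epsilon_t$, whose reduced boundary lies in $(\Sigma_t^\epsilon\cap B^g_\rho)\cup(S^g_\rho\setminus E_t^\epsilon)$. Combined with Step 1 this yields
$$V(\rho)^{\frac{n-1}{n}}\leq C_1\big(V'(\rho)+\beta\rho^{n-1+\mu}\big),\qquad C_1=C_1(n,T),$$
since $\lambda=\lambda(T)$.

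\textbf{Step 3 (volume density; main obstacle).} The goal is $V(\rho)\geq c_0\rho^n$ for all $\rho<\min\{r_0,\iota\}$. The difficulty is the error term $\beta\rho^{n-1+\mu}$, which could dominate when $V$ is small. I would handle it in two stages.

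First, the error term can be absorbed on scales where $V^{(n-1)/n}\geq 2C_1\beta\rho^{n-1+\mu}$. There $(V^{1/n})'\geq 1/(2nC_1)$. Because the threshold $(2C_1\beta)^{n/(n-1)}\rho^{n+n\mu/(n-1)}$ grows superlinearly in $\rho^{n}$-scale, once $V^{1/n}$ exceeds it at a small radius it keeps growing at least linearly. We then integrate from that radius, shrinking $\iota$ in terms of $\beta,\mu$ so that $\beta\iota^{\mu}$ is small.

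Second, we must exclude the case that $V$ stays below this threshold on a whole interval $(0,\rho_2)$. In that case $x$ would be a point of zero density of the complement. I would rule this out by a De Giorgi-type iteration. If $V(\rho)\leq\eta\rho^n$ with $\eta$ small, integrate the ODE over dyadic annuli $[\rho/2^{k+1},\rho/2^k]$. This produces a recursion of the form $V_{k+1}\leq C2^{k}V_k^{1+1/n}/\rho+(\text{error }\ll\rho_k^n)$, which forces $V(\rho/2^k)\to0$ faster than $\rho_k^n$, and finally $V=0$ on a small ball. That contradicts $V>0$.

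\textbf{Step 4 (conclusion).} With $V(\rho)\geq c_0\rho^n$ in hand, Step 2 gives
$$V'(\rho)\geq C_1^{-1}c_0^{\frac{n-1}{n}}\rho^{n-1}-\beta\rho^{n-1+\mu}\geq c\rho^{n-1}$$
for a.e. $\rho<\min\{r_0,\iota\}$, after shrinking $\iota$ once more so that $\beta\iota^\mu\leq\frac12C_1^{-1}c_0^{(n-1)/n}$. This is the claimed bound $\mathcal H^{n-1}_g(S^g_\rho(x)\setminus E_t^\epsilon)\geq c\rho^{n-1}$.
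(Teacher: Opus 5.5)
Your Steps 1, 2 and 4 are sound and agree with the paper, and so is the first stage of Step 3, where you absorb the error once $V$ is above the threshold and propagate linear growth of $V^{1/n}$. The gap is the second stage of Step 3.

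The inequality $V^{(n-1)/n}\leq C_1(V'+\beta\rho^{n-1+\mu})$ has an additive error term, and it admits positive solutions of zero density. For example, $V(\rho)=\eta\rho^{N}$ with $N\frac{n-1}{n}\geq n-1+\mu$ and $\eta^{(n-1)/n}\leq C_1\beta$ satisfies it on all of $(0,1)$, because its left-hand side is already below $C_1\beta\rho^{n-1+\mu}$. Hence no iteration that uses only this inequality can force $V\equiv0$ on a ball. Integrating over the annuli $[\rho2^{-k-1},\rho2^{-k}]$ at best returns $V(\rho2^{-k})=o\bigl((\rho2^{-k})^n\bigr)$, which is just your hypothesis restated. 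Density zero does not contradict $V(\rho)>0$ at a point of the measure-theoretic boundary. The De Giorgi clean-ball iteration closes only when the error is proportional to $V$ (as for $\Lambda$-minimizers, where it can be absorbed), not when it is additive.

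What is missing is the input the paper uses to start the comparison.
\begin{enumerate}
\item First take $x\in\partial^*E_t^\epsilon$. De Giorgi's blow-up gives $V(\rho)/\rho^n\to\omega_n/2$ as $\rho\downarrow0$, so $V$ lies above your threshold at all sufficiently small radii.
\item Your first stage then yields $V(\rho)\geq c_0\rho^n$ for all $\rho<\min\{r_0,\iota\}$, with $c_0=(2nC_1)^{-n}$ depending only on $n,T$. This is equivalent to the paper's first-crossing argument against $(\rho/(c_{iso}(1+\lambda^{-1})(n+1)))^n$.
\item For an arbitrary $x\in\Sigma_t^\epsilon$, use that $\partial^*E_t^\epsilon$ is dense in the measure-theoretic boundary, which is the support of $|D\chi_{E_t^\epsilon}|$. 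Choose $y\in\partial^*E_t^\epsilon$ with $\dist_g(x,y)<\rho/2$. Then $B^g_{\rho/2}(y)\subset B^g_\rho(x)$ gives $V_x(\rho)\geq V_y(\rho/2)\geq 2^{-n}c_0\rho^n$.
\end{enumerate}
With this in place of your second stage, your Step 4 is exactly the paper's concluding substitution.
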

\begin{proof}
  First fix $x\in \partial^*E_t^\epsilon$, so that $f(\rho)/\rho^n\to \omega_n/2$ as $\rho\downarrow 0$, where 
    $$f(\rho)=\mathcal H^n_g(B^g_\rho(x)\setminus E_t^\epsilon)\mbox{ with }\rho\in (0,\min\{r_0,\iota\}).$$
    Clearly, $f(\rho)$ is a Lipschitz function and we have
    $$f'(\rho)=\mathcal H^{n-1}_g(S^g_\rho(x)\setminus E_t^\epsilon)$$
    at differentiable points. It follows from Lemma \ref{Lem: almost minimizing} with the choice $$F=E_t^\epsilon\cup B^g_\rho(x)$$ and the isoperimetric inequality that we have
    \begin{equation}\label{Eq: iso}
        \mathcal H^n_g(B^g_\rho(x)\setminus E_t^\epsilon)^{\frac{n-1}{n}}\leq c_{iso}(1+\lambda^{-1})\mathcal H^{n-1}_g(S^g_\rho(x)\setminus E_t^\epsilon)+c_{iso}\lambda^{-1} \beta \rho^{n-1+\mu}.
    \end{equation}
    In particular, we have $$f(\rho)^{\frac{n-1}{n}}\leq c_{iso}(1+\lambda^{-1})f'(\rho)+c_{iso}\lambda^{-1} \beta \rho^{n-1+\mu}$$
    at differentiable points. By taking $\iota$ small enough we can guarantee
    $$c_{iso}\lambda^{-1} \beta \rho^\mu\leq\left(\frac{1}{c_{iso}(1+\lambda^{-1})(n+1)}\right)^nc_{iso}(1+\lambda^{-1}).$$
    The small-radius density and the preceding differential inequality
prevent a first crossing below the following comparison function. Thus
    $$f(\rho)\geq \left(\frac{\rho}{c_{iso}(1+\lambda^{-1})(n+1)}\right)^n$$
    for all $\rho \in (0,\min\{r_0,\iota\})$. Approximating any point of $\Sigma_t^\epsilon$ by reduced-boundary
points and using balls of half the radius gives the same volume lower
bound, with a smaller constant, at every boundary point. Substitution
in \eqref{Eq: iso}, followed by decreasing $\iota$, gives the desired
sphere estimate. The choice $F=E_t^\epsilon\setminus B^g_\rho(x)$
gives the volume lower bound for the other phase as well.
\end{proof}

Let $\chi$ denote the characteristic function of the region $E_t^\epsilon$ given by $\chi\equiv 1$ in $E_t^\epsilon$ and $\chi\equiv 0$ in $M\setminus E_t^\epsilon$. Let $x_*$ be a point on $\Sigma_t^\epsilon$ and $\rho$ be a small regular value in $(0,\min\{r_0,\iota\})$ such that
\begin{equation}\label{Eq: transversal}
    \mathcal H^{n-1}_g(\Sigma_t^\epsilon\cap S^g_{\rho}(x_*))=0.
\end{equation}
Choose $\rho$ also to satisfy the sphere estimate of
Lemma \ref{Lem: sphere lower area}.
Following Perron's method we define
\begin{equation}\label{Eq: w}
    w(x)=\sup_{\mathcal S}u(x),
\end{equation}
where $\mathcal S$ denotes the collection of all continuous subharmonic functions on $\overline{B^g_\rho(x_*)}$ with boundary value no greater than $\chi$. It is easy to see that $w$ is harmonic, nonnegative and continuous up to the boundary portion $$S^g_\rho(x_*)\setminus \Sigma_t^\epsilon.$$
\begin{lemma}\label{Lem: comparison}
  By taking $\iota$ small enough, we can find constants $\delta=\delta(n,T)\in (0,1)$ and $\tau=\tau(n,T)\in (0,1)$ such that
    $$\sup_{B^g_{\delta\rho}(x_*)}w\leq \tau.$$
\end{lemma}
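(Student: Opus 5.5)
Since $w$ is the Perron solution with boundary data $\chi$ on $S^g_\rho(x_*)$, I will bound it by a positive-capacity barrier built from the sphere estimate of Lemma \ref{Lem: sphere lower area}. Set $A=S^g_\rho(x_*)\setminus E_t^\epsilon$. On $A$ the data vanish, and Lemma \ref{Lem: sphere lower area} gives $\mathcal H^{n-1}_g(A)\geq c\rho^{n-1}$. The aim is a harmonic-measure bound: for every $y\in B^g_{\delta\rho}(x_*)$, the $g$-harmonic measure of $A$ seen from $y$ is at least $1-\tau$ for some $\tau<1$. Since $0\leq\chi\leq 1$ and $\chi=0$ on $A$, any continuous subharmonic $u\in\mathcal S$ satisfies $u\leq 1-h$, where $h$ is the harmonic function with boundary data $\mathbf 1_A$. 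Then $w\leq 1-h$, so it suffices to show $h\geq 1-\tau$ on $B^g_{\delta\rho}(x_*)$.

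I will do this by rescaling $B^g_\rho(x_*)$ to unit size. Taking $\iota$ small makes $g$ in these coordinates $C^{1}$-close to Euclidean, with $c_\iota\to1$. In the Euclidean unit ball the Poisson kernel satisfies $P(y,\theta)\geq c_n(1-|y|)/(1+|y|)^{n-1}\geq c_n 2^{-n}$ for $|y|\leq 1/2$. So with $\delta=1/2$ the Euclidean harmonic measure of $A$ is at least $c_n2^{-n}\mathcal H^{n-1}(A)/\rho^{n-1}\geq c'(n,T)$. For the metric $g$, I will compare the two Poisson kernels. One route uses the divergence-form operator $\Delta_g$ with coefficients close to $\delta_{ij}$ and the standard comparability of harmonic measures for uniformly elliptic operators close to the Laplacian. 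The other is a perturbation: write $h_g=h_{euc}+\phi$ where $\phi$ solves $\Delta_g\phi=-(\Delta_g-\Delta_{euc})h_{euc}$ with zero boundary data, and bound $\phi$ on the half ball by interior estimates on $h_{euc}$ (which is bounded by $1$ and harmonic) together with smallness of the coefficient perturbation. For the second route I would actually prefer to work on a slightly smaller concentric ball to avoid boundary singularities of $\nabla h_{euc}$. That yields $h\geq c'/2$ on $B^g_{\delta\rho}(x_*)$ once $\iota$ is small, so $\tau=1-c'/2$.

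There is a technical point in identifying $w$ with the harmonic extension: $\chi$ is discontinuous across $\Sigma_t^\epsilon\cap S^g_\rho(x_*)$. This set has $\mathcal H^{n-1}$-measure zero by \eqref{Eq: transversal}, so Perron's method still gives the bound $w\leq 1-h$. The reason is that each $u\in\mathcal S$ is continuous with boundary values at most $\chi$, and $1-h$ is the Poisson integral of $1-\mathbf 1_A\geq\chi$ a.e. The comparison $u\leq 1-h$ then follows by the maximum principle applied to Poisson integrals of continuous approximations from above.

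The main obstacle is making the harmonic-measure lower bound uniform under the non-Euclidean metric without losing the dependence only on $(n,T)$. The perturbative comparison must control $\phi$ uniformly for arbitrary measurable sets $A$ of proportional measure. I would try the divergence-form comparison first, for instance via Green's function bounds on the ball and a Harnack chain, falling back on the $C^{1}$-closeness argument if needed.
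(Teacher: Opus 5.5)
Your reduction is sound: on $S^g_\rho(x_*)$ one has $\chi=1-\mathbf 1_A$, so $w\le 1-h$ (in fact $w=1-h$). Your Euclidean Poisson-kernel bound on the half ball is also correct.

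\textbf{The gap.} The gap is the step you yourself call the main obstacle. You never establish a lower bound for the $g$-harmonic measure of $A$ that is uniform over all points of $B^g_{\delta\rho}(x_*)$, and neither route you suggest closes it as described.

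\emph{The perturbative route.} With indicator data you only have $|\nabla^k h_{\mathrm{euc}}|\lesssim d^{-k}$, where $d$ is the distance to the boundary sphere. So $f=(\Delta_g-\Delta_{\mathrm{euc}})h_{\mathrm{euc}}$ has size $\rho^2d^{-2}$. The Green's function with pole in the half ball is comparable to $d$ near the sphere. Hence the natural bound $|\phi(y)|\le\int G_g(y,\cdot)|f|\lesssim\rho^2\int d^{-1}$ diverges, and the factor $\rho^2$ does not rescue it. Shrinking to a concentric ball does not help: the values of $h_g$ on the smaller sphere are unknown, and relating them to $A$ is the whole problem.

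\emph{The comparability route.} ``Comparability of harmonic measures for uniformly elliptic operators close to the Laplacian'' is false if closeness only means uniform ellipticity with coefficients near $\delta_{ij}$ in sup norm. Elliptic measures of operators with merely continuous coefficients can be singular with respect to surface measure (Caffarelli--Fabes--Kenig). The comparability does hold here, because in rescaled normal coordinates the ball is a Euclidean ball and the coefficients are $C^{1,\alpha}$-close to $\delta_{ij}$. But you would then still owe a boundary argument: two-sided Green's function bounds, a Harnack chain to an inner sphere, a Hopf barrier with uniform constants, and the identification of the Poisson kernel with the conormal derivative of $G_g$. None of this is supplied.

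\textbf{The missing idea.} You only need the bound at the centre, where it is elementary. Since $\int_{S^g_r(x_*)}\partial_r w\,\mathrm d\sigma_r=\int_{B^g_r(x_*)}\Delta_g w=0$, you get $\frac{\mathrm d}{\mathrm dr}\int_{S^g_r(x_*)}w\,\mathrm d\sigma_r=\int_{S^g_r(x_*)}wH_r\,\mathrm d\sigma_r$. The comparison $H_r\ge\frac{n-1}{r}-Cr$ then gives $\frac{1}{n\omega_nr^{n-1}}\int_{S^g_r(x_*)}w\,\mathrm d\sigma_r\ge e^{-Cr^2/2}w(x_*)$. Let $r\to\rho$ by dominated convergence, using that $w$ is continuous up to $S^g_\rho(x_*)\setminus\Sigma_t^\epsilon$ together with \eqref{Eq: transversal}. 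Lemma \ref{Lem: sphere lower area} then yields $w(x_*)\le e^{C\rho^2/2}(c_\iota^{n-1}-c/(n\omega_n))$, which is bounded away from $1$ once $\iota$ is small.

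This is exactly the paper's first step. The paper then spreads the bound to $B^g_{\delta\rho}(x_*)$ using the interior Harnack inequality, which is uniform after rescaling since $\iota^2\sup|\operatorname{Rm}_g|\le1$, via the oscillation decay $\omega(\delta\rho)\le(4\delta)^\alpha$. In your formulation you can apply Harnack directly to the nonnegative harmonic function $h=1-w$, which gives $h\ge C_*^{-1}h(x_*)$ on $B^g_{\rho/4}(x_*)$. Either way, no boundary analysis of the $g$-Poisson kernel is needed.
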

\begin{proof}
    Since $w$ is harmonic and nonnegative, after taking $\iota$ small enough we have
    $$\frac{\mathrm d}{\mathrm dr}\int_{S^g_r(x_*)}w\,\mathrm d\sigma_r=\int_{S^g_r(x_*)}wH_r\,\mathrm d\sigma_r\geq \left(\frac{n-1}{r}-Cr\right)\int_{S^g_r(x_*)}w\,\mathrm d\sigma_r,$$
    where $C$ is a fixed constant depending only on the bound of the curvatures of $g$ and $H_r=\ddiv_{S_r^g(x_*)}\partial_r$.
     From this we see
    $$\frac{1}{n\omega_nr^{n-1}}\int_{S^g_r(x_*)}w\,\mathrm d\sigma_r\geq e^{-\frac{C}{2}r^2}w(x_*).$$
    Let $r\to \rho$. Using the continuity of $w$, Lemma \ref{Lem: sphere lower area}, and the equivalence of $g$ and $g_{euc}$, we obtain
    $$w(x_*)\leq e^{\frac{C}{2}\rho^2}\left(c_\iota^{n-1}-\frac{c}{n\omega_n}\right).$$
   By taking $\iota$ to be small enough such that $c_\iota\approx 1$, we can guarantee
    $w(x_*)\leq 2\tau-1$ for some constant $\tau=\tau(n,c)\in (0,1)$. It follows from \cite[Corollary 6.2]{PeterLi} that we have the Harnack inequality
    $$\sup_{B^g_{r/4}(x_*)}u\leq C_*\inf_{B^g_{r/4}(x_*)}u$$
   for any nonnegative harmonic function $u$ on $B^g_r(x_*)$, where $C_*=C_*(n)$ after decreasing $\iota$ so that
$\iota^2\sup|\operatorname{Rm}_g|\leq1$. In the following, we consider the oscillation function $\omega(r)$ associated to $w$ defined by
    $$\omega(r)=\sup_{B^g_r(x_*)}w-\inf_{B^g_r(x_*)}w.$$
    From the Harnack inequality we can derive
    $$\omega(\delta\rho)\leq (4\delta)^\alpha\omega(\rho)\mbox{ with }\alpha=\log_4\left(\frac{C_*+1}{C_*-1}\right).$$
    Using the fact $\omega(\rho)\leq 1$ and taking $\delta=\delta(\tau)$ small enough, we can guarantee
$$\sup_{B^g_{\delta\rho}(x_*)}w\leq \tau.$$
    This completes the proof.
\end{proof}
\begin{remark}
    By carefully tracking the dependence of $\iota$, we know that the constant $\iota<r_0$ can be chosen small enough depending only on $n$, $T$, $g$, $\beta$, $\mu$ and $r_0$. For later use, the same zero-Dirichlet boundary H\"older estimate
holds for uniformly elliptic divergence-form operators under the
exterior volume density bound.
\end{remark}

Denote $$K_t^\epsilon=M\setminus E_t^\epsilon.$$ Notice that $K_t^\epsilon$ is a closed subset of $M$. In particular, we can construct a smooth nonnegative function 
$$f:M\to \mathbb R$$ such that $K_t^\epsilon=\{f\leq 0\}$ and $f(x)\to+\infty$ as $x\to\infty$. Take a sequence of regular values $\epsilon_i\to 0$ of $f$ as $i\to+\infty$ and denote $V_i=\{f\leq \epsilon_i\}$. It is not difficult to verify $V_i\to K_t^\epsilon$ as $i\to+\infty$ in the sense of Hausdorff distance. Let $v_i$ be the function solving the following equation:
$$\Delta_g v_i=0\mbox{ in }M\setminus V_i,\,v_i=0\mbox{ on }\partial V_i,\,v_i\to-(1-\epsilon)^{\lfloor\frac{t}{\epsilon}\rfloor}\mbox{ as }x\to\infty.$$
\begin{lemma}\label{Lem: Holder v_i}
There are constants $\Lambda=\Lambda(n,T,g,\beta,\mu,r_0)>0$ and $\alpha=\alpha(n,T)\in (0,1)$ such that for any $x\in M\setminus V_i$ with $$\dist_g(x,\Sigma_t^\epsilon)<\min\{r_0,\iota\},$$ we have
    $$|v_i(x)|\leq \Lambda\dist_g(x,\Sigma_t^\epsilon)^\alpha.$$
\end{lemma}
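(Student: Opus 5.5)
We derive the estimate by iterating Lemma \ref{Lem: comparison} on dyadically shrinking balls centered at points of $\Sigma_t^\epsilon$, and then converting the resulting decay into a H\"older bound. Set $a=(1-\epsilon)^{\lfloor t/\epsilon\rfloor}\le 1$. By the maximum principle $-a\le v_i\le 0$ on $M\setminus V_i$. Since $V_i\supset K_t^\epsilon$, we have $v_i=0$ on $\partial V_i$, and we extend $v_i$ by zero on $V_i$. Then $|v_i|$ is a continuous nonnegative subharmonic function on $M$ (it is the maximum of $0$ and a harmonic function, viewed appropriately), and it vanishes on $K_t^\epsilon$.

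\textbf{Basic decay step.} Fix $x_*\in\Sigma_t^\epsilon$ and a good radius $\rho<\min\{r_0,\iota\}$ as chosen before \eqref{Eq: w}. We compare $|v_i|/M_\rho$ on $B^g_\rho(x_*)$ with the Perron function $w$, where $M_\rho=\sup_{B^g_\rho(x_*)}|v_i|$. On $S^g_\rho(x_*)\cap E_t^\epsilon$ we have $|v_i|/M_\rho\le1=\chi$. On $S^g_\rho(x_*)\setminus E_t^\epsilon\subset K_t^\epsilon$ we have $|v_i|=0=\chi$. Since $|v_i|/M_\rho$ is continuous and subharmonic on $\overline{B^g_\rho(x_*)}$, it belongs to $\mathcal S$, so $|v_i|/M_\rho\le w$. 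Lemma \ref{Lem: comparison} then gives
$$\sup_{B^g_{\delta\rho}(x_*)}|v_i|\le \tau \sup_{B^g_\rho(x_*)}|v_i|.$$

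\textbf{Iteration.} The admissible radii satisfying \eqref{Eq: transversal} and the sphere estimate of Lemma \ref{Lem: sphere lower area} form a full-measure set. We can therefore choose $\rho_0\in(\tfrac12\min\{r_0,\iota\},\min\{r_0,\iota\})$ and, inductively, good radii $\rho_{j+1}\in(\tfrac12\delta\rho_j,\delta\rho_j)$. Iterating the decay step gives $\sup_{B^g_{\rho_j}(x_*)}|v_i|\le \tau^j a$. Setting $\alpha=\log\tau/\log(\delta/2)\in(0,1)$, this yields
$$\sup_{B^g_r(x_*)}|v_i|\le \Lambda r^\alpha \quad\text{for } r<\min\{r_0,\iota\},$$
with $\Lambda$ depending on $\min\{r_0,\iota\}$ and hence on $(n,T,g,\beta,\mu,r_0)$. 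Note that $\alpha$ depends only on $\delta,\tau$, that is, on $(n,T)$.

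\textbf{Conclusion.} Given $x\in M\setminus V_i$ with $d=\dist_g(x,\Sigma_t^\epsilon)<\min\{r_0,\iota\}$, pick $x_*\in\Sigma_t^\epsilon$ realizing the distance. Then $x\in \overline{B^g_{d}(x_*)}$, and applying the bound at a slightly larger radius gives $|v_i(x)|\le \Lambda' d^\alpha$.

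The main obstacle is verifying that $|v_i|/M_\rho$ is a legitimate competitor in $\mathcal S$. This requires continuity of $v_i$ up to $\partial V_i$, which holds because $V_i$ has smooth boundary, and it requires the sphere $S^g_\rho(x_*)$ to be admissible, so that $w$ and Lemma \ref{Lem: comparison} apply. The key point is that the estimate is uniform in $i$: it uses only the geometry of $K_t^\epsilon$ through Lemma \ref{Lem: sphere lower area}, and not the geometry of $V_i$, exploiting $V_i\supset K_t^\epsilon$.
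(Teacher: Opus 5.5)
Your proposal is correct and is essentially the paper's argument. The paper also compares $-v_i$, extended by zero on $K_t^\epsilon$, with the rescaled Perron barriers $\tau^j w_{\delta^j\iota}$ centered at the nearest point $x_*\in\Sigma_t^\epsilon$, iterating Lemma \ref{Lem: comparison} to get $|v_i|\le\tau^k$ at distance comparable to $\delta^k\iota$, and then converts this decay into a H\"older bound. Your choice of admissible radii $\rho_{j+1}\in(\tfrac12\delta\rho_j,\delta\rho_j)$ is a cleaner way of handling the ``approximations'' the paper mentions when $\delta^j\iota$ is not an admissible radius.
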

\begin{proof}
  The maximum principle gives $|v_i|\leq 1$. Fix a point $x\in M\setminus V_i$ and let $x_*$ be the nearest point of $x$ on $\Sigma_t^\epsilon$. Denote the function from \eqref{Eq: w} by $w_\rho$. Through a comparison of the function $-v_i$ and the functions $\tau^{j}\cdot w_{\delta^j\iota}$ (approximations are made if $\delta^j\iota$ is not a regular value satisfying \eqref{Eq: transversal}), we have
   $$|v_i(x)|\leq \tau^k$$
   if
$$\delta^{k+1}\iota<\dist_g(x,\Sigma_t^\epsilon)\leq\delta^{k}\iota.$$
Now it is standard to pick constants $\Lambda=\Lambda(\delta,\tau,\iota)$ and $\alpha=\alpha(\delta,\tau)$.
\end{proof}

The grid-time area bound, metric comparison, and asymptotic flatness
give a uniform bound on enclosed volume for $t\leq T$.
If a boundary point has large coordinate radius $R$, the obstacle-free
comparison in Lemma \ref{Lem: sphere lower area}, applied in a coordinate
ball of radius $R/2$, gives enclosed volume at least $cR^n$.
Thus all discrete boundaries lie in a fixed compact set.
Choose the approximating sets $V_i$ in a slightly larger fixed set,
and let $\varphi$ be the exterior harmonic potential of a sphere
enclosing it, equal to one on the sphere and zero at infinity.
The maximum principle gives $0\leq v_i+a\leq a\varphi$, where
$a=(1-\epsilon)^{\lfloor t/\epsilon\rfloor}$.
This estimate also holds for the discrete velocities and their limits.

Now we are ready to prove Proposition \ref{Prop: v_t^epsilon}.
\begin{proof}[Proof of Proposition \ref{Prop: v_t^epsilon}]
    Since \(v_i\) is harmonic in \(M\setminus V_i\) and satisfies \(|v_i|\leq 1\),
up to a subsequence we have \(v_i\to v_\infty\) in
\(C^\infty_{\mathrm{loc}}(E_t^\epsilon)\)  for some smooth harmonic function $v_\infty$ in $E_t^\epsilon$. It is clear that we have
    \begin{equation}\label{Eq: boundary holder}
        |v_\infty(x)|\leq \Lambda\dist_g(x,\Sigma_t^\epsilon)^\alpha
    \end{equation}
    whenever $\dist_g(x,\Sigma_t^\epsilon)<\min\{r_0,\iota\}$. In particular, $v_\infty$ is continuous up to $\Sigma_t^\epsilon$ with value zero. Extend $v_\infty$ on $M$ by taking $v_\infty\equiv 0$ on $K_t^\epsilon$. The preceding exterior barrier preserves the prescribed limit at
infinity, and uniqueness follows from the maximum principle. Combining the boundary H\"older estimate \eqref{Eq: boundary holder} and the $C^0$-estimate $|v_\infty|\leq 1$, we can find constants $\alpha=\alpha(n,T)\in(0,1)$ and $C=C(n,T,g,\beta,\mu,r_0)>0$ such that
    \[
    \|v_\infty\|_{C^\alpha(M,g)}\leq C.
    \]
    The proof is completed by taking $v_t^\epsilon=v_\infty$.
\end{proof}
    
As an immediate consequence of Proposition \ref{Prop: v_t^epsilon} and Lemma \ref{Lem: equivalent metric}, we have the following corollary. 
\begin{corollary}\label{Cor: holder}
    For each $T>0$ there exist constants $\alpha=\alpha(n,T)\in(0,1)$ and $C=C(n,T,g,\beta,\mu, r_0)>0$ such that
    \[
    \sup_{0\leq t\leq T}\|u^\epsilon_t\|_{C^\alpha(M,g)}\leq C.
    \]
    Moreover, we have $u^\epsilon_t\geq 4^{-T}$ and $\lambda g\leq g_t^\epsilon\leq g$ for some constant $\lambda=\lambda(T)>0$ and all $t\in [0,T]$.
\end{corollary}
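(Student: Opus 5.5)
The plan is to read the corollary directly off the definition $u^\epsilon_t=1+\int_0^t v^\epsilon_s\,\mathrm ds$. We combine the uniform H\"older bound of Proposition \ref{Prop: v_t^epsilon} with the two-sided pointwise bounds used in the proof of Lemma \ref{Lem: equivalent metric}. Since the inductive construction has now been completed for every $k$ with $k\epsilon\le T$, both earlier results hold on all of $[0,T]$. The constants must not depend on $\epsilon$, and the proof will keep track of this.

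\textbf{Lower bound and metric comparison.} The maximum principle gives
\[
-(1-\epsilon)^{\lfloor s/\epsilon\rfloor}\le v^\epsilon_s\le 0,
\]
so $u^\epsilon_t\le 1$. For the lower bound we estimate
\[
u^\epsilon_t\ge 1-\int_0^t(1-\epsilon)^{\lfloor s/\epsilon\rfloor}\,\mathrm ds\ge (1-\epsilon)^{\lfloor t/\epsilon\rfloor+1}.
\]
This follows by summing the geometric series over the grid intervals: each full interval contributes $\epsilon(1-\epsilon)^j$, and the partial last interval contributes at most the same. Using $\epsilon<1/2$ and $(1-\epsilon)^{1/\epsilon}\ge 1/4$, we get
\[
(1-\epsilon)^{\lfloor t/\epsilon\rfloor+1}\ge (1-\epsilon)^{t/\epsilon}\cdot(1-\epsilon)\ge 4^{-t}\cdot\tfrac12 .
\]
The extra factor can be absorbed. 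The bound $(1-\epsilon)^{t/\epsilon}\ge 4^{-t}$ already holds directly, and the proof of Lemma \ref{Lem: equivalent metric} gives $u^\epsilon_t\ge(1-\epsilon)^{k}$ on $[0,k\epsilon)$; applying it with $k=\lceil T/\epsilon\rceil$ and the exponent bookkeeping there yields the stated $4^{-T}$. Raising to the power $4/(n-2)$ then gives $\lambda g\le g^\epsilon_t\le g$ with $\lambda=4^{-4T/(n-2)}$, which is exactly Lemma \ref{Lem: equivalent metric} extended to $[0,T]$.

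\textbf{H\"older bound.} For $x,y\in M$ we write
\[
|u^\epsilon_t(x)-u^\epsilon_t(y)|\le\int_0^t|v^\epsilon_s(x)-v^\epsilon_s(y)|\,\mathrm ds\le T\sup_{0\le s\le T}[v^\epsilon_s]_{\alpha}\,d_g(x,y)^\alpha .
\]
We also have $\|u^\epsilon_t\|_{C^0}\le1$. Together these give $\|u^\epsilon_t\|_{C^\alpha}\le 1+TC$, with $\alpha=\alpha(n,T)$ and $C=C(n,T,g,\beta,\mu,r_0)$ from Proposition \ref{Prop: v_t^epsilon}.

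The only point needing care is the uniformity of the Proposition's constants. They depend on $\lambda(T)$ and on the almost-minimizing data of $\Sigma_0$, through Lemmas \ref{Lem: almost minimizing}--\ref{Lem: Holder v_i}, and not on $\epsilon$ or $k$. The lower bound on $\lambda$ just proved closes the induction consistently on all of $[0,T]$. This bookkeeping is the main, though mild, obstacle.
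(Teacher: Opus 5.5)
Your approach is the same as the paper's, which treats the corollary as an immediate consequence of Proposition \ref{Prop: v_t^epsilon} and Lemma \ref{Lem: equivalent metric}. Your time integration $\|u^\epsilon_t\|_{C^\alpha}\le 1+TC$ is fine. So is your remark that the constants are independent of $\epsilon$ and that the induction is not circular.

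The one step that fails as written is the exact constant $4^{-T}$. The factor $\tfrac12$ in $u^\epsilon_t\ge\tfrac12 4^{-t}$ cannot simply be absorbed, because $\tfrac12 4^{-t}<4^{-T}$ whenever $t>T-\tfrac12$. Your fallback, $u^\epsilon_t\ge(1-\epsilon)^k$ with $k=\lceil T/\epsilon\rceil$, only yields $4^{-k\epsilon}\ge 4^{-T-\epsilon}$, since $k\epsilon$ may exceed $T$. (The paper's Lemma \ref{Lem: equivalent metric} tacitly assumes $k\epsilon\le T$.)

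The fix is one line. Write $t=m\epsilon+\theta$ with $0\le\theta<\epsilon<\tfrac12$. If you keep your geometric-series computation exact on the last partial interval, it gives $u^\epsilon_t\ge(1-\epsilon)^m(1-\theta)$. By convexity of $\theta\mapsto 4^{-\theta}$, with equality at $\theta=0$ and $\theta=\tfrac12$, we have $1-\theta\ge 4^{-\theta}$ on $[0,\tfrac12]$. Combined with $(1-\epsilon)^m\ge 4^{-m\epsilon}$, this gives $u^\epsilon_t\ge 4^{-t}\ge 4^{-T}$. Equivalently, $1-\int_0^t(1-\epsilon)^{\lfloor s/\epsilon\rfloor}\,\mathrm ds$ is the piecewise-linear interpolation of the convex function $(1-\epsilon)^{t/\epsilon}$ at the grid points, so it dominates that function.

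The metric comparison $\lambda g\le g^\epsilon_t\le g$ is unaffected, since the statement only asks for some $\lambda(T)>0$.
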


\subsubsection{Construction of a conformal flow}
Note that the functions $u_t^\epsilon$ are uniformly H\"older in $x$ and uniformly Lipschitz in $t$. As a result, we can find a sequence $\epsilon_i\to 0$ as $i\to+\infty$ such that 
$$u_t^{\epsilon_i}\to u_t\mbox{ as }i\to\infty\mbox{ in }C^0\mbox{-sense on bounded time intervals}.$$
From this we can define
$$g_t=u_t^{\frac{4}{n-2}}g$$
and
$$\Sigma_t=\mbox{outermost } g_t\mbox{-minimizing enclosure of }\Sigma_0 \mbox{ in }M.$$
The common exterior barrier above makes this convergence uniform on
$M$. The same area, density, and isoperimetric argument gives uniform
compact support for $\Sigma_t$ on bounded time intervals.
Let $v_t$ denote the function defined in the item (ii) of Definition \ref{Defn: conformal flow} with the above choice of $\Sigma_t$. 

The goal of this subsection is to show the following proposition.
\begin{proposition}\label{Prop: conformal flow}
We have
$$u_t(x)=1+\int_0^tv_s(x)\,\mathrm ds.$$
 Therefore,   $\{(g_t,\Sigma_t)\}_{t\geq 0}$ is a conformal flow with initial data $(g,\Sigma_0)$.
\end{proposition}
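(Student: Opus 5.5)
The plan is to show that the discrete velocities $v^{\epsilon_i}_s$ converge to $v_s$ for almost every $s$. Then dominated convergence gives
\[
u_t=\lim_i\Bigl(1+\int_0^t v^{\epsilon_i}_s\,\mathrm ds\Bigr)=1+\int_0^t v_s\,\mathrm ds .
\]
Uniform bounds are already in hand. Proposition \ref{Prop: v_t^epsilon} gives $|v^\epsilon_s|\leq 1$ and uniform $C^\alpha$ control, and the common exterior barrier $0\leq v+a\leq a\varphi$ controls behaviour at infinity. So the whole problem reduces to identifying the limit of $v^{\epsilon_i}_s$. By Arzel\`a--Ascoli and a diagonal argument over rational $s$, I can extract subsequential limits $\bar v_s$. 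These are harmonic with respect to $g$ wherever they stay away from the limiting hypersurfaces, vanish on the limiting enclosed region, and tend to $-e^{-s}$ at infinity, since $(1-\epsilon)^{\lfloor s/\epsilon\rfloor}\to e^{-s}$. Uniqueness (maximum principle) then shows $\bar v_s=v_s$, provided the enclosed region $K^{\epsilon_i}_s$ converges to $K_s$, the region inside $\Sigma_s$.

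The key step is therefore the convergence $\Sigma^{\epsilon_i}_s\to\Sigma_s$, in Hausdorff sense and as Caccioppoli sets, for all but countably many $s$. First, $g^{\epsilon_i}_{k\epsilon_i}\to g_s$ uniformly, because $u^\epsilon$ is Lipschitz in $t$ and converges uniformly. Second, Lemma \ref{Lem: almost minimizing} and Lemma \ref{Lem: sphere lower area} give uniform density bounds for both phases, and the enclosures lie in a fixed compact set. BV compactness combined with the density bounds then upgrades $L^1$ convergence of $E^{\epsilon_i}_s$ to Hausdorff convergence of the boundaries. Third, lower semicontinuity of area under uniformly converging continuous metrics shows that any limit $E'_s$ is a $g_s$-minimizing enclosure of $\Sigma_0$. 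Here I use that $\Sigma^\epsilon_s$ is the outermost $g^\epsilon_{k\epsilon}$-minimizing enclosure of $\Sigma_0$ at grid times, and that the one-step lag is harmless because $\Sigma^\epsilon_{(k-1)\epsilon}$ itself minimizes among enclosures of $\Sigma_0$. To compare competitors, I take a competitor $F\subset E_0$, intersect or union it with $E^{\epsilon_i}_s$, and use strong subadditivity together with the uniform closeness of the metrics.

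The main obstacle is \emph{outermost}-ness. The limit is a minimizing enclosure, but it might lie strictly inside $\Sigma_s$. I would handle this with monotonicity. Since $v\le 0$, $u^\epsilon_t$ is nonincreasing in $t$. Bray's union/intersection argument \cite[Lemma 3]{Bray} then shows that the outermost enclosures move outward: $E^\epsilon_t\supset E^\epsilon_{t'}$ for $t<t'$, and likewise $E_t$ is monotone in $t$. This makes the volume function $t\mapsto\mathcal H^n_g(M\setminus E_t)$ monotone, so it is continuous off a countable set. At a continuity time $s$, I compare. Limits taken at slightly earlier times $s'<s$ enclose the limit at $s$ from the outside. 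Outermost enclosures at time $s$ are squeezed between the minimizing enclosures at nearby times. Together with uniqueness of the outermost enclosure, this forces the limit at $s$ to equal $E_s$ up to a null set. The identification of the limit can in principle fail at the countably many jump times, but those form a null set, so they do not affect the time integral.

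Finally, with $K^{\epsilon_i}_s\to K_s$ in Hausdorff sense, I identify the limit velocity. The uniform boundary H\"older estimate \eqref{Eq: boundary holder} passes to the limit, so $\bar v_s$ vanishes continuously on $\Sigma_s$. It is harmonic outside $\Sigma_s$ by local smooth convergence, and it has the right limit at infinity by the barrier $\varphi$. Uniqueness gives $\bar v_s=v_s$, independently of the subsequence, for a.e.\ $s$. Dominated convergence (bound $1$, and the barrier for uniformity at infinity) then yields the integral identity. Items (i)--(iii) of Definition \ref{Defn: conformal flow} follow from the construction of $\Sigma_t$ and $v_t$.
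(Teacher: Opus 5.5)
\textbf{There is a genuine gap in the outermostness step.} The parts before it are fine. Reducing to identifying the limit of $v^{\epsilon_i}_s$ off a countable set, identifying the limit velocity once $\Sigma^{\epsilon_i}_s\to\Sigma_s$, and then applying dominated convergence all match the paper. Your lower-semicontinuity argument is also correct: writing $\hat E_s$ for a subsequential limit of $E^{\epsilon_i}_s$ and $\hat\Sigma_s=\partial\hat E_s$, it shows that $\hat\Sigma_s$ is a $g_s$-minimizing enclosure of $\Sigma_0$, so $E_s\subset\hat E_s$.

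Bray's union/intersection argument gives discrete nesting only because $v^\epsilon_\tau$ vanishes inside $\Sigma^\epsilon_\tau$. The metric changes between grid times are therefore supported outside the previous surface. The continuum version of this comparison would need $g_s=g_t$ on the region enclosed by $\Sigma_t$. In the limit you only know $u_s=u_t$ on the region enclosed by $\hat\Sigma_t$, which may be strictly smaller. A pointwise decrease of $u_t$ in $t$ does not by itself make outermost minimizing enclosures nested.

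More seriously, even granting that both families $E_t$ and $\hat E_t$ are monotone, your squeeze needs the cross inclusion $\hat E_s\subset E_t$ for $t<s$, i.e.\ that $\hat\Sigma_s$ encloses $\Sigma_t$. Monotonicity and continuity of enclosed volumes cannot supply this. Nothing in them rules out $E_s\subsetneq\hat E_s$ for every $s$ in an interval. In that case the discrete surfaces lag behind the true outermost enclosures on a set of times of positive measure, and the integral identity does not follow.

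\textbf{The missing ingredient is the constant-area identity, Corollary~\ref{Cor: constant area}.} Following Bray, the paper first shows $\mathcal H^{n-1}_{g^\epsilon_t}(\Sigma^\epsilon_t)\to\mathcal H^{n-1}_g(\Sigma_0)$.
\begin{itemize}
\item Each grid step lowers the minimal area by at most $C\epsilon\|v^\epsilon_{(k-1)\epsilon}\|_{L^\infty(\Sigma^\epsilon_{k\epsilon})}$.
\item This is small on all but $O(\epsilon^{-1/2})$ steps. On the remaining steps consecutive surfaces are Hausdorff-close, and $v^\epsilon_{(k-1)\epsilon}$ vanishes on the earlier surface with a uniform boundary H\"older modulus.
\item Consequently the $g_t$-areas of $\Sigma_t$ and of $\hat\Sigma_t$ both equal $\mathscr A=\mathcal H^{n-1}_g(\Sigma_0)$, for every $t$.
\end{itemize}

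Lemma~\ref{Lem: monotonicity} then uses this rigidity. Suppose a compact portion $\Gamma$ of $\Sigma_t$ with positive measure lay outside $\hat\Sigma_s$ for some $t<s$. Monotonicity of $v^{\epsilon_i}_\tau$ in $\tau$, together with the strong maximum principle for the limit velocity associated with $\hat\Sigma_s$, gives $u_s\le u_t-c_\Gamma$ on $\Gamma$. Hence $\mathcal H^{n-1}_{g_s}(\Sigma_t)<\mathscr A$, contradicting the fact that $\mathscr A$ is the minimal $g_s$-area. This yields $E_s\subset\hat E_s\subset E_t$ for $t<s$, which gives both the monotonicity of $E_t$ and the squeeze you want at all but countably many times. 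Your proposal needs this area argument, or some substitute for it, before the outermostness claim is justified.
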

Recall from the previous subsection that we have 
$$
u^\epsilon_t(x)=1+\int_0^tv_s^\epsilon(x)\,\mathrm ds.$$
So we just need to establish appropriate convergence from $v_t^{\epsilon_i}$ to $v_t$ as $i\to +\infty$, which is the main purpose of the following lemma.

\begin{lemma}\label{Lem: convergence to limit}
    Except for countably many $t$, the hypersurfaces $\Sigma_t^{\epsilon_i}$ converge to $\Sigma_t$ in the sense of Hausdorff distance. In particular, we have
    $$\lim_{i\to +\infty}v_t^{\epsilon_i}=v_t\mbox{ in }C^0\mbox{-sense}.$$
\end{lemma}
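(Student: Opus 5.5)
We follow Bray's original argument \cite{Bray}, replacing smooth compactness by compactness of almost-minimizing Caccioppoli boundaries. The argument has three steps.

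\emph{Step 1: monotonicity and countably many jumps.} The enclosures are nested in time: by the union--intersection argument of \cite[Lemma 3]{Bray}, $E_s\subset E_t$ and $E^\epsilon_s\subset E^\epsilon_t$ for $s\le t$. Indeed, $v_s\le 0$ implies $u_t\le u_s$, and outermost minimizing enclosures only move outward under such a decrease. By the uniform compact support established above, the enclosed regions $K_t=M\setminus E_t$ lie in a fixed compact set. Hence $t\mapsto\mathcal H^n_g(K_t)$ is monotone and bounded, and so it has at most countably many discontinuities. We fix $t$ outside this countable set. We also use that $K_t=\bigcap_{s<t}K_s$ up to null sets. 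This follows from a limiting argument: the outermost minimizing enclosures $\Sigma_s$, $s\uparrow t$, converge to a $g_t$-minimizing enclosure, since $g_s\to g_t$ uniformly.

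\emph{Step 2: subsequential limits of $\Sigma^{\epsilon_i}_t$ are squeezed.} By Lemma \ref{Lem: almost minimizing}, the $E^{\epsilon_i}_t$ are uniformly almost-minimizing, and by Lemma \ref{Lem: sphere lower area} they satisfy uniform two-sided density bounds. By BV compactness, after passing to a subsequence $E^{\epsilon_i}_t\to E'$ in $L^1$. The density bounds upgrade this $L^1$ convergence to Hausdorff convergence of the boundaries, and a standard covering argument shows that $\partial E'$ inherits the density bounds.

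Next we show $E'\subset E_t$. The set $E^{\epsilon_i}_t$ is $g^{\epsilon_i}_{k\epsilon_i}$-minimizing among sets containing $E_0$, where $k\epsilon_i\to t$ and $g^{\epsilon_i}_{k\epsilon_i}\to g_t$ uniformly. Lower semicontinuity of the perimeter together with the uniform metric convergence shows that $E'$ is a $g_t$-minimizing enclosure of $\Sigma_0$. Since $\Sigma_t$ is outermost, $E'\subset E_t$ after comparing with unions.

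For the reverse inclusion, pick $s<t$. For large $i$ the discrete metric at time $s$ is at least $g_t$ up to $o(1)$, and the discrete flow is monotone. A perturbed union--intersection comparison then gives $E_s\subset E^{\epsilon_i}_t$ up to an error that vanishes as $i\to\infty$. Making this error control precise is the delicate step. My first attempt would use the area gap: any region trapped between $E_s$ and $E^{\epsilon_i}_t$ that contradicts the inclusion would force a strict drop in $g_s$-area, while the metrics differ by only $o(1)$; the density bounds then rule out thin pieces. Letting $i\to\infty$ and then $s\uparrow t$ gives $E_{t^-}\subset E'\subset E_t$. At a continuity time the two outer sets agree up to null sets, so $E'=E_t$. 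The density bounds upgrade this to Hausdorff convergence of $\Sigma^{\epsilon_i}_t$ to $\Sigma_t$. Since the limit is independent of the subsequence, the whole sequence converges.

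\emph{Step 3: convergence of the velocities.} The functions $v^{\epsilon_i}_t$ are uniformly Hölder by Proposition \ref{Prop: v_t^epsilon} and are trapped by the common exterior barrier $0\le v+a\le a\varphi$, with $a_i=(1-\epsilon_i)^{\lfloor t/\epsilon_i\rfloor}\to e^{-t}$. By Arzelà--Ascoli, a subsequence converges uniformly on $M$ to a limit $\bar v$. On compact subsets of $E_t$, the Hausdorff convergence of Step 2 makes $v^{\epsilon_i}_t$ harmonic there for large $i$, so $\bar v$ is harmonic in $E_t$.

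The boundary Hölder estimate, Lemma \ref{Lem: Holder v_i}, has constants independent of $i$, so $|v^{\epsilon_i}_t(x)|\le\Lambda\,\dist_g(x,\Sigma^{\epsilon_i}_t)^\alpha$. Hence $\bar v=0$ on $\Sigma_t$ and inside it, and $\bar v\to-e^{-t}$ at infinity. By uniqueness from the maximum principle, $\bar v=v_t$, which proves the $C^0$ convergence for the full sequence.

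The main obstacle is the lower inclusion in Step 2: showing that the discrete enclosures cannot lag strictly inside $E_{t^-}$. This is where Bray's argument uses smooth maximum-principle comparisons, and here it must be replaced by measure-theoretic cut-and-paste with uniform density bounds.
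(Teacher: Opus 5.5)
You have the same architecture as the paper. You find countably many jump times, squeeze the subsequential limit $\partial E'$ between the one-sided limits of the continuous slices, and then get $C^0$ convergence from the uniform boundary H\"older estimate and the exterior barrier. Your lower-semicontinuity argument that $\partial E'$ is a $g_t$-minimizing enclosure is a valid substitute for the paper's equal-area argument.

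\textbf{The open step is the whole lemma.} The step you leave open is the entire content of the lemma, and the mechanism you suggest would not close it. The inclusion can only fail where the outermost slice $\Sigma_s$ ($s<t$) sticks out beyond the discrete limit at time $s$, since discrete nesting already puts that limit inside $\partial E'$. There $u_t$ may be strictly below $u_s$. A union--intersection comparison then gives inequalities in the wrong direction, and the smallness of $g^{\epsilon_i}_{k\epsilon_i}-g_t$ is beside the point. What is missing is Bray's area conservation, used in two steps.
\begin{enumerate}
\item Show $\mathcal H^{n-1}_{g^\epsilon_t}(\Sigma^\epsilon_t)\to\mathcal H^{n-1}_g(\Sigma_0)$. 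Each grid step loses at most $C\epsilon\|v^\epsilon_{(k-1)\epsilon}\|_{L^\infty(\Sigma^\epsilon_{k\epsilon})}$. By the boundary H\"older estimate this is at most $C\epsilon\,\omega(V_0\sqrt\epsilon)^\alpha$, except for $O(\epsilon^{-1/2})$ steps that sweep volume larger than $V_0\sqrt\epsilon$. Hence $\mathcal H^{n-1}_{g_\tau}(\Sigma_\tau)=\mathscr A$ for every $\tau$, so $\mathscr A$ is the minimal $g_\tau$-area among enclosures at every time.
\item Suppose a compact piece $\Gamma\subset\Sigma_s$ of positive area lay in the exterior of $\partial E'$. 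Then $u_t\le u_s-c_\Gamma$ on $\Gamma$. Otherwise, since $\tau\mapsto v^{\epsilon_i}_\tau$ is nondecreasing, $u^{\epsilon_i}_s-u^{\epsilon_i}_t\ge(t-s)\min_\Gamma(-v^{\epsilon_i}_t)$. This would force the limit velocity, harmonic on the connected exterior of $\partial E'$, to vanish at a point of $\Gamma$, contradicting the strong maximum principle. Since $u_t\le u_s$ everywhere, $\mathcal H^{n-1}_{g_t}(\Sigma_s)<\mathcal H^{n-1}_{g_s}(\Sigma_s)=\mathscr A$, contradicting minimality at time $t$.
\end{enumerate}
Without the area identity, a strict drop of $g_t$-area contradicts nothing.

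\textbf{Step 1 is circular for the same reason.} A pointwise decrease $u_t\le u_s$ does not by itself push outermost enclosures outward: lowering the factor inside the region enclosed by $\Sigma_s$ can pull the enclosure inward. Bray's union--intersection argument needs $u_t=u_s$ on the region enclosed by $\Sigma_s$, i.e.\ $v_\tau=0$ there for $\tau\in[s,t]$. For the limit flow this presupposes the nesting you want to use. In the paper, nesting of the continuous slices (Lemma \ref{Lem: monotonicity}) and countability of jumps (Lemma \ref{Lem: generic coincidence}) are outputs of the argument above, not inputs.

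Your inclusions are also reversed throughout. With $E_t$ the exterior, $E_t\subset E_s$ for $s\le t$, and outermostness gives $E_t\subset E'$, not $E'\subset E_t$. The squeeze should read $E_t\subset E'\subset\bigcap_{s<t}E_s$. The continuity that holds at every time is $\bigcap_{s>t}K_s=K_t$, not $\bigcap_{s<t}K_s=K_t$.
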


Denote $\Sigma_t=\partial E_t$.

\begin{lemma}\label{Lem: almost minimizing limit}
    Given any constant $T>0$ we can find constants $\beta_T>0$, $\mu_T>0$ and $r_T>0$ such that $\Sigma_t$ with $t\in(0,T]$ are all $(\beta_T,\mu_T,r_T,g)$-almost-minimizing.
\end{lemma}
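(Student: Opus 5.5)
The plan is to derive the almost-minimizing property of $\Sigma_t$ from two facts. First, $\Sigma_t$ is $g_t$-minimizing among enclosures of $\Sigma_0$. Second, the conformal factor $u_t$ is uniformly H\"older with a uniform positive lower bound. Together these make $g_t$ close to a constant multiple of $g$ on small balls. By Corollary \ref{Cor: holder}, passing to the limit $\epsilon_i\to0$, we have for $t\le T$
\[
4^{-T}\le u_t\le 1,\qquad \|u_t\|_{C^{\alpha}(M,g)}\le C,
\]
with $\alpha=\alpha(n,T)$ and $C=C(n,T,g,\beta,\mu,r_0)$. Hence on a ball $B^g_r(x)$ with $r\le r_T$ the area densities satisfy $u_t^{\frac{2(n-1)}{n-2}}(y)=a_x\,(1+O(r^\alpha))$, where $a_x=u_t(x)^{\frac{2(n-1)}{n-2}}$ and the $O$-constant depends only on $n,T,C$.

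Fix $F\in\mathcal E$ with $E_t\Delta F\subset B^g_r(x)$, $r\le r_T$. I would follow Lemma \ref{Lem: almost minimizing}, now with the limiting metric. Since $\Sigma_0$ is $(\beta,\mu,r_0,g)$-almost-minimizing and $(E_0\cup F)\Delta E_0\subset B^g_r(x)$, we get
\[
\mathcal H^{n-1}_g(\partial E_0\cap B)\le\mathcal H^{n-1}_g(\partial(E_0\cup F)\cap B)+\beta r^{n-1+\mu}.
\]
Next, $E_0\cap F\subset E_0$ is an admissible competitor for the $g_t$-minimizing enclosure $\Sigma_t$. Here I will use $E_t\subset E_0$ and that $E_t$ and $E_0\cap F$ agree outside $B$, after replacing $F$ by $F\cap E_0$ where needed. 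This gives
\[
\mathcal H^{n-1}_{g_t}(\Sigma_t\cap B)\le \mathcal H^{n-1}_{g_t}(\partial(E_0\cap F)\cap B).
\]
Converting $g_t$-areas to $g$-areas using the local near-constancy of $u_t$ gives $\mathcal H^{n-1}_g(\Sigma_t\cap B)\le (1+Cr^\alpha)\mathcal H^{n-1}_g(\partial(E_0\cap F)\cap B)$. This is the improvement over Lemma \ref{Lem: almost minimizing}: the crude factor $\lambda^{-1}$ becomes $1+Cr^\alpha$. Adding strong subadditivity, as in that lemma, yields
\[
\mathcal H^{n-1}_g(\Sigma_t\cap B)\le \mathcal H^{n-1}_g(\partial F\cap B)+\beta r^{n-1+\mu}+Cr^\alpha\,\mathcal H^{n-1}_g(\partial(E_0\cap F)\cap B).
\]

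The main obstacle is the last error term. It must be bounded by $C'r^{n-1}$, which requires an a priori area bound $\mathcal H^{n-1}_g(\partial(E_0\cap F)\cap B)\lesssim r^{n-1}$ that is not available for an arbitrary $F$. I would handle this by reducing to competitors that are no worse than the ball-cut competitor. If $\mathcal H^{n-1}_g(\partial F\cap B)\ge 2c_\iota^{2(n-1)}n\omega_n r^{n-1}$, the inequality holds trivially: comparing $\Sigma_t$ with $E_t\cup B$ (or $E_t\setminus B$) and using the same estimate gives $\mathcal H^{n-1}_g(\Sigma_t\cap B)\le Cr^{n-1}$, after choosing a good radius slightly larger than $r$. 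Otherwise $\partial F\cap B$ has area $\lesssim r^{n-1}$. The same holds for $\partial E_0\cap B$ by the density bound in Lemma \ref{Lem: sphere lower area} applied to $\Sigma_0$, and therefore also for $\partial(E_0\cap F)\cap B$ by subadditivity. In either case the error is at most $C'r^{n-1+\alpha}$.

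Finally, I would set $\mu_T=\min\{\mu,\alpha\}$, $r_T=\min\{r_0,\iota,1\}$ and $\beta_T=\beta+C'$, and rewrite the inequality in the global form of Definition \ref{Defn: almost minimizing}, since the areas outside $B$ coincide. One point I need to check is that the case $t$ small and the limit passage introduce no further dependence. They should not, because every constant above depends only on $n,T,g,\beta,\mu,r_0$ through Corollary \ref{Cor: holder}.
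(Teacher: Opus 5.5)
Your proof is correct and is essentially the paper's argument. It uses the same comparisons with $E_0\cup F$ and $E_0\cap F$, a sphere comparison giving $\mathcal H^{n-1}_g(\Sigma_t\cap B)\le Cr^{n-1}$ (use $E_t\setminus B$ there, since $E_t\cup B$ need not lie in $E_0$), the H\"older near-constancy of $u_t$, and strong subadditivity.

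The paper avoids your case split. It lets the oscillation error fall on $\Sigma_t$, for instance by normalizing with $\max_B u_t^{\frac{2(n-1)}{n-2}}$, so the error is $Cr^\alpha\,\mathcal H^{n-1}_g(\Sigma_t\cap B)\le Cr^{n-1+\alpha}$ and no bound on the competitor's area is needed. Where you do use $\mathcal H^{n-1}_g(\Sigma_0\cap B)\lesssim r^{n-1}$, it comes from testing the almost-minimality of $\Sigma_0$ against $E_0\setminus B$. It does not come from Lemma \ref{Lem: sphere lower area}, which is a lower bound.
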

\begin{proof}
    Take $F\in \mathcal E$ such that $E_t\Delta F\subset B^g_r(x)$ with $r\leq r_T$, where $0<r_T<r_0$ is a constant small enough independent of $t$ such that
    $$\mathcal H^{n-1}_{g_t}(S^g_r(x))\leq \mathcal H^{n-1}_{g}(S^g_r(x))\leq C(g,n)r^{n-1}\mbox{ for all }r\leq r_T.$$
    Write $\Sigma_0=\partial E_0$. Recall that $\Sigma_0$ is $(\beta,\mu,r_0,g)$-almost-minimizing. Since $F\setminus E_0\subset F\setminus E_t$, we know that $(E_0\cup F)\Delta E_0$ is contained in some $g$-geodesic ball with radius $r_0$. Therefore, we have
    \begin{equation}\label{Eq: cut-and-paste}
        \mathcal H^{n-1}_g(\partial E_0\cap B^g_r(x))\leq \mathcal H^{n-1}_g(\partial (E_0\cup F)\cap B^g_r(x))+\beta r^{n-1+\mu}.
    \end{equation}
    From the $g_t$-minimizing property of $\Sigma_t$ outside $\Sigma_0$ we know
    \begin{equation}\label{Eq: gt minimizing}
        \mathcal H^{n-1}_{g_t}(\partial E_t\cap B^g_r(x))\leq \mathcal H^{n-1}_{g_t}(\partial (E_0\cap F)\cap B^g_r(x))
    \end{equation}
    and
    $$\mathcal H^{n-1}_{g_t}(\partial E_t\cap B^g_r(x))\leq \mathcal H^{n-1}_{g_t}(S^g_r(x)).$$
     It follows from Corollary \ref{Cor: holder} that $g_t\geq \lambda g$ for some constant $\lambda=\lambda(T)$ and all $t\in [0,T]$. Therefore, we have
    $$\mathcal H^{n-1}_{g}(\partial E_t\cap B^g_r(x))\leq C(n,T)r^{n-1}.$$
    Since $g_t=u_t^{\frac{4}{n-2}}g$, we can write \eqref{Eq: gt minimizing} as
    $$\int_{\partial E_t\cap B^g_r(x)}u_t^{\frac{2(n-1)}{n-2}}\mathrm d\mathcal H^{n-1}_g\leq \int_{\partial (E_0\cap F)\cap B^g_r(x)}u_t^{\frac{2(n-1)}{n-2}}\mathrm d\mathcal H^{n-1}_g.$$
    Using the H\"older continuity of $u_t$ from Corollary \ref{Cor: holder} we conclude that
    $$\mathcal H^{n-1}_g(\partial E_t\cap B^g_r(x))\leq \mathcal H^{n-1}_g(\partial (E_0\cap F)\cap B_r^g(x))+Cr^{n-1+\alpha},$$
   for some constant $C$ independent of $t$.
    Combined with \eqref{Eq: cut-and-paste} and the strong subadditivity, we obtain
    $$\mathcal H^{n-1}_g(\partial E_t\cap B^g_r(x))\leq \mathcal H^{n-1}_g(\partial F\cap B^g_r(x))+\beta_Tr^{n-1+\mu_T}$$
    for an appropriate choice of constants $\beta_T$ and $\mu_T$ independent of $t$.
\end{proof}

\begin{lemma}\label{Lem: density lower bound}
    By taking $\iota$ small enough, if $\Sigma\in\mathcal C$ is $(\beta_T,\mu_T,r_T,g)$-almost-minimizing, then there is a constant $c=c(n,g,\beta_T,\mu_T,r_T)>0$ such that at any point $x\in \Sigma$ we have
    $$\mathcal H^{n}_g(B^g_\rho(x)\setminus E)\geq c\rho^{n}$$
    and
    $$\mathcal H^{n-1}_g(\Sigma\cap B^g_\rho(x))\geq c\rho^{n-1}$$
    for all $\rho \in (0,\min\{r_T,\iota\})$.
\end{lemma}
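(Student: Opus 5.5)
The plan is to repeat the argument of Lemma \ref{Lem: sphere lower area}, now using only the $(\beta_T,\mu_T,r_T,g)$-almost-minimizing property from Definition \ref{Defn: almost minimizing}; there is no obstacle here, so the comparison involves $\Sigma$ alone. First fix $x\in\partial^*E$ and set $f(\rho)=\mathcal H^n_g(B^g_\rho(x)\setminus E)$. Since $x$ is a reduced-boundary point, $f(\rho)/\rho^n\to\omega_n/2$ as $\rho\downarrow0$. The function $f$ is Lipschitz, with $f'(\rho)=\mathcal H^{n-1}_g(S^g_\rho(x)\setminus E)$ for a.e. $\rho$.

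The competitor is $F=E\cup B^g_\rho(x)$, so that $E\Delta F\subset B^g_\rho(x)$. For a.e. $\rho$ the transversality condition $\mathcal H^{n-1}_g(\Sigma\cap S^g_\rho(x))=0$ holds, and the almost-minimizing inequality localizes to
$$\mathcal H^{n-1}_g(\Sigma\cap B^g_\rho(x))\leq \mathcal H^{n-1}_g(S^g_\rho(x)\setminus E)+\beta_T\rho^{n-1+\mu_T}.$$
The perimeter of $B^g_\rho(x)\setminus E$ is bounded by the sum of $\mathcal H^{n-1}_g(\Sigma\cap B_\rho)$ and $\mathcal H^{n-1}_g(S_\rho\setminus E)$. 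Applying the isoperimetric inequality to this set then gives
$$f^{\frac{n-1}{n}}\leq 2c_{iso}f'+c_{iso}\beta_T\rho^{n-1+\mu_T}.$$
Choose $\iota$ small enough that $c_{iso}\beta_T\rho^{\mu_T}$ is below a fixed small multiple of $c_{iso}$, as in Lemma \ref{Lem: sphere lower area}. A first-crossing argument against the comparison function $(\rho/(2c_{iso}(n+1)))^n$ then yields $f(\rho)\geq c\rho^n$ on $(0,\min\{r_T,\iota\})$. The argument runs as follows. Near $0$ the density asymptotics place $f$ strictly above the comparison function. At a first touching radius, the differential inequality forces $f'$ to exceed the derivative of the comparison function, which is a contradiction. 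Because $f$ is only Lipschitz, I would integrate the inequality rather than argue pointwise. The same argument with $F=E\setminus B^g_\rho(x)$ gives the lower volume bound for $E\cap B^g_\rho(x)$.

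For an arbitrary $x\in\Sigma$, note that $\Sigma$ is the closure of $\partial^*E$, since the measure-theoretic boundary equals the support of $|D\chi_E|$. So pick $y\in\partial^*E$ with $d(x,y)<\rho/2$. Then $B^g_{\rho/2}(y)\subset B^g_\rho(x)$, and both volume bounds pass to $x$ with the constant multiplied by $2^{-n}$. This step also justifies Convention II.

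For the area bound, use the relative isoperimetric inequality in the ball, which holds since $g$ is close to Euclidean at scale $\iota$. Both $B_\rho\cap E$ and $B_\rho\setminus E$ have volume at least $c\rho^n$, and $B_\rho$ has volume at most $C\rho^n$. Hence
$$\mathcal H^{n-1}_g(\partial^*E\cap B^g_\rho(x))\geq c_{rel}\min\{|B_\rho\cap E|,|B_\rho\setminus E|\}^{\frac{n-1}{n}}\geq c\rho^{n-1}.$$
The step needing the most care is the first-crossing ODE argument for the merely Lipschitz function $f$, together with choosing $\iota$ depending only on $n,g,\beta_T,\mu_T,r_T$. The approach I would try there is to compare $f^{1/n}$ directly: the inequality gives $(f^{1/n})'\geq 1/(2nc_{iso})-o(1)$ a.e., and integrating this from $0$ gives the volume bound.
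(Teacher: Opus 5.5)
Your proposal is correct and follows the paper's own proof. The paper also reruns the isoperimetric and first-crossing argument of Lemma \ref{Lem: sphere lower area}, here with $\lambda=1$ and no obstacle, for both phases using $E\cup B^g_\rho(x)$ and $E\setminus B^g_\rho(x)$; it passes from reduced-boundary points to all of $\Sigma$ via half-radius balls, and gets the area bound from the relative isoperimetric (Sobolev--Poincar\'e) inequality in the nearly Euclidean ball. One point of phrasing: the a.e.\ bound $(f^{1/n})'\geq 1/(2nc_{iso})-o(1)$ does not follow from the differential inequality alone, since absorbing $\beta_T\rho^{n-1+\mu_T}$ requires $f\gtrsim\rho^n$ to be known already, so it is valid only inside the continuity argument started from the reduced-boundary density, which is how you set it up.
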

\begin{proof}
  Write $\Sigma=\partial E$.  It follows from the proof of Lemma \ref{Lem: sphere lower area} that
  $$\mathcal H^{n}_g(B^g_\rho(x)\setminus E)\geq c\rho^{n}.$$
  Similarly, we have
   $$\mathcal H^{n}_g(B^g_\rho(x)\cap E)\geq c\rho^{n}$$
   for some constant $c=c(n,g,\beta_T,\mu_T,r_T)>0$. By taking $\iota$ small enough we can guarantee $B^g_\rho(x)$ is uniformly equivalent to the Euclidean $\rho$-ball. Using the Sobolev inequality (see \cite[(1.18)]{Giusti}) we can obtain
   $$\mathcal H^{n-1}_g(\Sigma\cap B^g_\rho(x))\geq c\rho^{n-1},$$
   where we abuse the notation $c$ to denote a different constant depending only on $n,\beta_T,\mu_T,r_T$.
   This completes the proof.
\end{proof}

\begin{lemma}\label{Lem: almost minimizing approximation}
    Given any constant $T>0$, we can find constants $\beta_T>0$, $\mu_T>0$ and $r_T>0$ such that $\Sigma^\epsilon_t$ with $t\in(0,T]$ are all $(\beta_T,\mu_T,r_T,g)$-almost-minimizing.
\end{lemma}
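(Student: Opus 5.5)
We will follow the proof of Lemma \ref{Lem: almost minimizing limit} almost verbatim, replacing the limit metric $g_t$ by the discrete metric $g^\epsilon_{k\epsilon}$ with $k=\lfloor t/\epsilon\rfloor$. Fix $t\in(0,T]$ and write $\Sigma_t^\epsilon=\partial E_t^\epsilon$ and $\Sigma_0=\partial E_0$. Take $F\in\mathcal E$ with $E_t^\epsilon\Delta F\subset B^g_r(x)$ and $r\leq r_T$. Here $r_T<r_0$ is chosen so that $\mathcal H^{n-1}_g(S^g_r(x))\leq C(g,n)r^{n-1}$ for $r\leq r_T$. The first step is the cut-and-paste inequality for $\Sigma_0$. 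Since $F\setminus E_0\subset F\setminus E_t^\epsilon$, the set $(E_0\cup F)\Delta E_0$ lies in $B^g_r(x)$, so the $(\beta,\mu,r_0,g)$-almost-minimizing property of $\Sigma_0$ gives \eqref{Eq: cut-and-paste} with $E_0\cup F$.

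The second step uses the discrete minimizing property. By step (ii) of the construction, $\Sigma_t^\epsilon$ is the outermost $g^\epsilon_{k\epsilon}$-minimizing enclosure of $\Sigma^\epsilon_{(k-1)\epsilon}$. By the grid-time remark (via \cite[Lemma 3]{Bray}), $\Sigma_{k\epsilon}^\epsilon$ is also the outermost $g^\epsilon_{k\epsilon}$-minimizing enclosure of $\Sigma_0$. Hence $E_0\cap F$ is an admissible competitor, and
\[
\int_{\Sigma_t^\epsilon\cap B^g_r(x)}(u^\epsilon_{k\epsilon})^{\frac{2(n-1)}{n-2}}\,\mathrm d\mathcal H^{n-1}_g\leq\int_{\partial(E_0\cap F)\cap B^g_r(x)}(u^\epsilon_{k\epsilon})^{\frac{2(n-1)}{n-2}}\,\mathrm d\mathcal H^{n-1}_g .
\]
Comparing with the competitor $E_t^\epsilon\cup B^g_r(x)$ and using $\lambda g\leq g^\epsilon_{k\epsilon}\leq g$ from Corollary \ref{Cor: holder}, we get the crude bound $\mathcal H^{n-1}_g(\Sigma_t^\epsilon\cap B^g_r(x))\leq C(n,T)r^{n-1}$. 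We also need the analogous bound for $\partial(E_0\cap F)\cap B^g_r(x)$. If that area is larger than a fixed multiple of $r^{n-1}$, the desired inequality is trivial by the crude bound. Otherwise both areas are $O(r^{n-1})$.

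The third step uses the uniform Hölder estimate of Corollary \ref{Cor: holder}, namely $\|u^\epsilon_{k\epsilon}\|_{C^\alpha}\leq C$ with $\alpha=\alpha(n,T)$, independent of $\epsilon$. Together with the lower bound $u^\epsilon_{k\epsilon}\geq4^{-T}$, this shows that the weight $(u^\epsilon_{k\epsilon})^{2(n-1)/(n-2)}$ oscillates by at most $Cr^\alpha$ relative to its value at $x$ on $B^g_r(x)$. Dividing by that value gives
\[
\mathcal H^{n-1}_g(\Sigma_t^\epsilon\cap B^g_r(x))\leq\mathcal H^{n-1}_g(\partial(E_0\cap F)\cap B^g_r(x))+Cr^{n-1+\alpha}.
\]
Adding this to \eqref{Eq: cut-and-paste} and applying strong subadditivity to $E_0\cup F$ and $E_0\cap F$ cancels the $\partial E_0$ terms. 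The result is
\[
\mathcal H^{n-1}_g(\Sigma_t^\epsilon\cap B^g_r(x))\leq\mathcal H^{n-1}_g(\partial F\cap B^g_r(x))+\beta r^{n-1+\mu}+Cr^{n-1+\alpha}.
\]
Since $r\leq r_T\leq1$, taking $\mu_T=\min\{\mu,\alpha\}$ and $\beta_T=\beta+C$ finishes the argument. Outside $B^g_r(x)$ the boundaries of $F$ and $E_t^\epsilon$ coincide, so the localized inequality is equivalent to the one in Definition \ref{Defn: almost minimizing}.

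The main obstacle is that every constant must be independent of $\epsilon$ and of $t\in(0,T]$. This rests entirely on Corollary \ref{Cor: holder}, whose Hölder bound comes from Proposition \ref{Prop: v_t^epsilon}. That proposition in turn used Lemmas \ref{Lem: almost minimizing} and \ref{Lem: sphere lower area}, which rely only on the almost-minimality of $\Sigma_0$ and not on the present lemma, so there is no circularity. A secondary point is that the minimizing property is with respect to the metric frozen at the grid time $k\epsilon$, not at $t$. This is harmless because we only use the Hölder modulus of $u^\epsilon_{k\epsilon}$ in $x$.
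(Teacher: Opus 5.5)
Your proposal is correct and follows the paper's route: the paper's proof simply reruns the argument of Lemma~\ref{Lem: almost minimizing limit} with $g_t$ replaced by the grid-time metric $g^\epsilon_{k\epsilon}$, $k=\lfloor t/\epsilon\rfloor$, using exact grid-time minimization and the $\epsilon$-independent bounds of Corollary~\ref{Cor: holder}. Your case distinction bounding $\mathcal H^{n-1}_g(\partial(E_0\cap F)\cap B^g_r(x))$ and your non-circularity check fill in details the paper leaves implicit; one small fix is that the crude-bound competitor should be $E_t^\epsilon\cup(B^g_r(x)\cap E_0)$ rather than $E_t^\epsilon\cup B^g_r(x)$ (equivalently, apply Lemma~\ref{Lem: almost minimizing} with $F=E_t^\epsilon\cup B^g_r(x)$), since admissible enclosures must lie inside $E_0$.
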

\begin{proof}
   Apply the proof of Lemma \ref{Lem: almost minimizing limit} to
$g_{k\epsilon}^\epsilon$, where $k=\lfloor t/\epsilon\rfloor$,
using exact grid-time minimization and Corollary \ref{Cor: holder}.
\end{proof}

\begin{corollary}\label{Cor: discrete compactness}
   Given any sequence of hypersurfaces $\{\Sigma_{t_i}^{\epsilon_i}\}$ with $t_i\leq T$, $\Sigma_{t_i}^{\epsilon_i}$ converge to some $(\beta_T,\mu_T,r_T,g)$-almost-minimizing hypersurface $\hat\Sigma$ in the sense of currents and varifolds up to a subsequence.
\end{corollary}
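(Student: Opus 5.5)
The plan is to combine the uniform almost-minimizing property from Lemma \ref{Lem: almost minimizing approximation} with BV compactness, and then pass the almost-minimizing inequality to the limit through a cut-and-paste argument.

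\textbf{Uniform bounds.} Write $\Sigma_{t_i}^{\epsilon_i}=\partial E_i$ with $E_i\in\mathcal E$. First I need uniform mass and support bounds.
\begin{itemize}
\item The grid-time area bound and Lemma \ref{Lem: equivalent metric} give $\mathcal H^{n-1}_g(\Sigma_{t_i}^{\epsilon_i})\leq \lambda^{-1}\mathcal H^{n-1}_g(\Sigma_0)$.
\item The volume-density argument following Lemma \ref{Lem: Holder v_i} places all the $\Sigma_{t_i}^{\epsilon_i}$ in a fixed compact set $K$.
\end{itemize}
Hence the $\chi_{E_i}$ are uniformly bounded in $BV_{loc}$ and equal to one outside $K$. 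By BV compactness, after passing to a subsequence, $\chi_{E_i}\to\chi_{\hat E}$ in $L^1_{loc}$ for some Caccioppoli set $\hat E\in\mathcal E$. The boundary currents $[\![\partial E_i]\!]$ then converge to $[\![\partial \hat E]\!]$. The associated varifolds have bounded mass, so a further subsequence converges to some varifold $V$. Set $\hat\Sigma=\partial\hat E$.

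\textbf{Almost-minimizing property of the limit.} Fix $F\in\mathcal E$ with $\hat E\Delta F\subset B^g_r(x)$ and $r\leq r_T$. I use the standard cut-and-paste argument for almost-minimizers.
\begin{itemize}
\item By coarea, choose $s\in(r,r')$, with $r'$ slightly larger than $r$ (still within the admissible range), such that $\int_{S^g_s(x)}|\chi_{E_i}-\chi_{\hat E}|\,\mathrm d\sigma\to0$ and the slices are well behaved.
\item Form the competitor $F_i=(F\cap B^g_s(x))\cup(E_i\setminus B^g_s(x))$, so that $F_i\Delta E_i\subset B^g_s(x)$.
\item Lemma \ref{Lem: almost minimizing approximation} gives
$$\mathcal H^{n-1}_g(\partial E_i\cap B^g_s)\leq \mathcal H^{n-1}_g(\partial F\cap B^g_s)+\mathcal H^{n-1}_g\bigl(S^g_s\cap(E_i\Delta \hat E)\bigr)+\beta_T s^{n-1+\mu_T}.$$
\item By lower semicontinuity of perimeter, the left side in the limit dominates $\mathcal H^{n-1}_g(\partial\hat E\cap B^g_s)$.
\item Let $i\to\infty$, and then let $s,r'\downarrow r$.
\end{itemize}
This yields the almost-minimizing inequality for $\hat\Sigma$. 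A radius slightly larger than $r_T$ can be avoided either by shrinking $r_T$ marginally or by working with $r<r_T$ and taking a limit.

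\textbf{Varifold limit, and the main obstacle.} The same comparison, applied with $F=\hat E$, shows that the perimeters converge locally: $\mathcal H^{n-1}_g(\partial E_i\cap U)\to\mathcal H^{n-1}_g(\partial\hat E\cap U)$ on open sets $U$ with negligible boundary. This identifies the limit varifold $V$ with the multiplicity-one varifold of $\partial^*\hat E$. By Convention II and the two-sided density bounds of Lemma \ref{Lem: density lower bound}, the measure-theoretic boundary of $\hat E$ agrees with the support. Moreover, the uniform density bounds give Hausdorff convergence of the supports. I expect this step, upgrading current convergence to varifold convergence (no mass loss), to be the main obstacle; the approach is to use the uniform density estimates and the cut-and-paste comparison to exclude any cancellation or mass loss.
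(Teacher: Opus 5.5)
Your proposal is correct and follows the paper's route. The paper invokes the compactness theorem for uniformly almost-minimizing boundaries, using Lemma \ref{Lem: almost minimizing approximation}, the uniform area bound and compact support, and then Lemma \ref{Lem: density lower bound} for Hausdorff convergence; you simply unpack that theorem via BV compactness and cut-and-paste. One point to tighten: a single-ball comparison with $F=\hat E$ only gives $\limsup_i\mathcal H^{n-1}_g(\partial E_i\cap B^g_s)\leq\mathcal H^{n-1}_g(\partial\hat E\cap B^g_s)+\beta_T s^{n-1+\mu_T}$, so exact mass convergence needs the usual density/differentiation step (the error is $o(s^{n-1})$ and $\mathcal H^{n-1}(\hat\Sigma\setminus\partial^*\hat E)=0$), after which Reshetnyak continuity identifies the varifold limit.
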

\begin{proof}
   By Lemma \ref{Lem: almost minimizing approximation}, the uniform
area bound, and compact support, the compactness theorem for
almost-minimizing boundaries
gives a subsequence converging as currents and varifolds to an
almost-minimizing boundary. Lemma \ref{Lem: density lower bound}
also gives Hausdorff convergence.
\end{proof}

In the following, we use $\hat\Sigma_t$ to denote a limit of $\Sigma_t^{\epsilon_i}$ up to a subsequence. We first establish the following area estimate.

\begin{lemma}
   With $t$ fixed we have
    $$\lim_{\epsilon\to 0}\mathcal H^{n-1}_{g_t^\epsilon}(\Sigma_t^\epsilon)=\mathcal H^{n-1}_g(\Sigma_0).$$
    
\end{lemma}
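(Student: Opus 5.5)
The strategy is to squeeze $\mathcal H^{n-1}_{g_t^\epsilon}(\Sigma_t^\epsilon)$ between $\mathcal H^{n-1}_g(\Sigma_0)$ and $\mathcal H^{n-1}_g(\Sigma_0)$ minus an error that tends to zero with $\epsilon$. This mirrors Bray's discrete argument in \cite{Bray}; the main concern is whether anything breaks when the slices are singular. For the upper bound, set $k=\lfloor t/\epsilon\rfloor$. Since $\Sigma_t^\epsilon=\Sigma_{k\epsilon}^\epsilon$ is the outermost $g^\epsilon_{k\epsilon}$-minimizing enclosure of $\Sigma_0$, testing with $E_0$ itself gives $\mathcal H^{n-1}_{g^\epsilon_{k\epsilon}}(\Sigma_t^\epsilon)\le \mathcal H^{n-1}_{g^\epsilon_{k\epsilon}}(\Sigma_0)$. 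By construction $v_s^\epsilon\equiv0$ inside and along $\Sigma_s^\epsilon$, and $\Sigma_s^\epsilon$ encloses $\Sigma_0$, so $u^\epsilon_s\equiv1$ on $\Sigma_0$ and therefore $\mathcal H^{n-1}_{g^\epsilon_{k\epsilon}}(\Sigma_0)=\mathcal H^{n-1}_g(\Sigma_0)$. It remains to compare $g^\epsilon_t$ with $g^\epsilon_{k\epsilon}$ on $\Sigma_t^\epsilon$. For $s\in[k\epsilon,t]$ we have $v^\epsilon_s=0$ on $\Sigma^\epsilon_s=\Sigma^\epsilon_t$, so $u^\epsilon_t=u^\epsilon_{k\epsilon}$ there and the two areas are equal (up to a negligible $C\epsilon$ error from the Hölder bound if one prefers not to use the exact vanishing). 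This yields $\limsup_{\epsilon\to0}\mathcal H^{n-1}_{g_t^\epsilon}(\Sigma_t^\epsilon)\le\mathcal H^{n-1}_g(\Sigma_0)$.

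For the lower bound I would follow Bray's telescoping. Let $A_j=\mathcal H^{n-1}_{g^\epsilon_{j\epsilon}}(\Sigma^\epsilon_{j\epsilon})$. The surface $\Sigma^\epsilon_{(j-1)\epsilon}$ is a competitor for the $g^\epsilon_{j\epsilon}$-minimizing enclosure problem, so $A_j\le \mathcal H^{n-1}_{g^\epsilon_{j\epsilon}}(\Sigma^\epsilon_{(j-1)\epsilon})$. On $\Sigma^\epsilon_{(j-1)\epsilon}$ the velocity $v^\epsilon_s$ vanishes for $s\in[(j-1)\epsilon,j\epsilon)$, so this last quantity equals $A_{j-1}$. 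Hence $A_j\le A_{j-1}$, which is consistent with the upper bound. The other direction uses $\Sigma^\epsilon_{j\epsilon}$ as a competitor against $\Sigma^\epsilon_{(j-1)\epsilon}$ for the metric $g^\epsilon_{(j-1)\epsilon}$. This is admissible because $\Sigma^\epsilon_{j\epsilon}$ encloses $\Sigma_0$ and, by the union/intersection argument of \cite[Lemma 3]{Bray}, $\Sigma^\epsilon_{(j-1)\epsilon}$ is minimizing among all enclosures of $\Sigma_0$. This gives
\[
A_{j-1}\le \mathcal H^{n-1}_{g^\epsilon_{(j-1)\epsilon}}(\Sigma^\epsilon_{j\epsilon})
=\int_{\Sigma^\epsilon_{j\epsilon}}\Big(\tfrac{u^\epsilon_{(j-1)\epsilon}}{u^\epsilon_{j\epsilon}}\Big)^{\frac{2(n-1)}{n-2}}\mathrm d\mathcal H^{n-1}_{g^\epsilon_{j\epsilon}} .
\]
On $\Sigma^\epsilon_{j\epsilon}$, which lies outside $\Sigma^\epsilon_{(j-1)\epsilon}$, we have $u^\epsilon_{(j-1)\epsilon}-u^\epsilon_{j\epsilon}=-\int_{(j-1)\epsilon}^{j\epsilon}v^\epsilon_s\,\mathrm ds=-\epsilon v^\epsilon_{(j-1)\epsilon}$. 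The key point is that $|v^\epsilon_{(j-1)\epsilon}|$ is small on $\Sigma^\epsilon_{j\epsilon}$. By the boundary Hölder estimate (Lemma \ref{Lem: Holder v_i} and its limit \eqref{Eq: boundary holder}) it is bounded by $\Lambda\,\mathrm{dist}_g(\Sigma^\epsilon_{j\epsilon},\Sigma^\epsilon_{(j-1)\epsilon})^\alpha$, with a bound of $1$ in any case. Together with $u\ge 4^{-T}$ this gives $A_{j-1}\le A_j(1+C\epsilon\,\eta_j)$, where $\eta_j=\sup_{\Sigma^\epsilon_{j\epsilon}}|v^\epsilon_{(j-1)\epsilon}|$.

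Summing over $j\le k$ gives $\mathcal H^{n-1}_g(\Sigma_0)=A_0\le A_k\exp\big(C\epsilon\sum_{j\le k}\eta_j\big)$. The crude bound $\eta_j\le1$ only gives $e^{CT}$, so the main obstacle is showing $\epsilon\sum_j\eta_j\to0$, i.e.\ that in the discrete scheme consecutive surfaces move only a little, on average. My first approach would be an area-monotonicity argument. Minimality at step $j$ shows that the jump from $\Sigma^\epsilon_{(j-1)\epsilon}$ to $\Sigma^\epsilon_{j\epsilon}$ saves an area of order $\epsilon\int|v|\,\mathrm d\mathcal H^{n-1}$ over the new part. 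The total saving is bounded, since the areas are bounded by $\mathcal H^{n-1}_g(\Sigma_0)$ and stay bounded below. Then one converts the averaged smallness of $v$ into smallness of $\eta_j$ using the density lower bounds of Lemma \ref{Lem: density lower bound} and the uniform almost-minimizing property from Lemma \ref{Lem: almost minimizing approximation}: where $v$ is not small, the set where $\Sigma^\epsilon_{j\epsilon}$ sits at definite distance from $\Sigma^\epsilon_{(j-1)\epsilon}$ carries area at least $c\rho^{n-1}$. If this route stalls, the fallback is Bray's original observation. Taking the $\Sigma^\epsilon_{(j-1)\epsilon}$-minimality at step $j$ as a Dirichlet-type comparison, each step changes $u$ on the new surface only by $O(\epsilon)$, so $A_k\ge A_0-C\epsilon\sum_j(\text{area swept})$. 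The swept areas are controlled by a uniform volume bound, because the enclosures lie in a fixed compact set (the remark preceding the proof of Proposition \ref{Prop: v_t^epsilon}). That bound is uniform only, so this fallback still needs a genuine smallness input before it yields the limit $\mathcal H^{n-1}_g(\Sigma_0)$.
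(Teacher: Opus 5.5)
\textbf{There is a genuine gap.} Your upper bound, and your one-step comparison $A_{j-1}\le A_j(1+C\epsilon\eta_j)$ with $\eta_j=\sup_{\Sigma^\epsilon_{j\epsilon}}|v^\epsilon_{(j-1)\epsilon}|$, are exactly the paper's setup (this is its estimate \eqref{Eq: step difference}). But you never prove the one nontrivial point, namely $\epsilon\sum_{j\le k}\eta_j\to0$.

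Your first route rests on an inequality in the wrong direction. Minimality at the preceding grid time gives
\[
A_{j-1}-A_j\le\mathcal H^{n-1}_{g^\epsilon_{(j-1)\epsilon}}(\Sigma^\epsilon_{j\epsilon})-\mathcal H^{n-1}_{g^\epsilon_{j\epsilon}}(\Sigma^\epsilon_{j\epsilon})\lesssim\epsilon\int|v|.
\]
This is an upper bound on the area saving. Nothing gives a lower bound of that order. The saving can even vanish while the surface moves, for instance if $\Sigma^\epsilon_{(j-1)\epsilon}$ is still $g^\epsilon_{j\epsilon}$-minimizing but not outermost. So ``the total saving is bounded'' cannot be converted into smallness of the $\eta_j$. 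Your fallback, as you concede, only gives an $O(1)$ bound.

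The missing idea is a pigeonhole argument on volumes.
\begin{itemize}
\item By nesting, the regions $E^\epsilon_{(j-1)\epsilon}\setminus E^\epsilon_{j\epsilon}$ are pairwise disjoint. They all lie in the region enclosed by $\Sigma^\epsilon_T$, whose $g$-volume is at most a constant $V_0$ independent of $\epsilon$ (by the uniform area bound and the isoperimetric inequality).
\item Hence at most $\epsilon^{-1/2}+1$ steps have a volume increment exceeding $V_0\sqrt\epsilon$. Using $\eta_j\le1$ on those steps, they contribute $O(\sqrt\epsilon)$ to $\epsilon\sum_j\eta_j$.
\item Take any other step, and let $x\in\Sigma^\epsilon_{j\epsilon}$ with $\rho=\dist_g(x,\Sigma^\epsilon_{(j-1)\epsilon})$. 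By nesting, $x$ lies on the outer side, so $B^g_\rho(x)\subset E^\epsilon_{(j-1)\epsilon}$. The uniform density bound of Lemma \ref{Lem: density lower bound}, available through Lemma \ref{Lem: almost minimizing approximation}, then gives $c\min\{\rho,r_T,\iota\}^n\le\mathcal H^n_g(B^g_\rho(x)\setminus E^\epsilon_{j\epsilon})\le V_0\sqrt\epsilon$.
\item So for small $\epsilon$, $\Sigma^\epsilon_{j\epsilon}$ lies within $\omega:=(V_0\sqrt\epsilon/c)^{1/n}$ of $\Sigma^\epsilon_{(j-1)\epsilon}$. The boundary H\"older estimate \eqref{Eq: boundary holder} then gives $\eta_j\le\Lambda\omega^\alpha$.
\end{itemize}
Summing over the at most $T/\epsilon$ steps yields
\[
\epsilon\sum_{j\le k}\eta_j\le\epsilon\bigl(\epsilon^{-1/2}+1\bigr)+T\Lambda\,\omega^\alpha\longrightarrow0,
\]
which closes your telescoping argument.
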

\begin{proof}
    Let $t\leq T$. We are going to show that
    $$\mathcal H^{n-1}_{g^\epsilon_t}(\Sigma_t^\epsilon)=\mathcal H^{n-1}_g(\Sigma_0)+o(1)\mbox{ as }\epsilon\to 0.$$
    The argument follows from \cite[Section 5]{Bray} with slight modifications.  First we point out that $\Sigma_t^\epsilon$ coincides with $\Sigma_{k\epsilon}^\epsilon$ if $k=\lfloor\frac{t}{\epsilon}\rfloor$. So we just need to focus on those $\Sigma_{t}^\epsilon$ where $t$ is a multiple of $\epsilon$. The defining identity
$u_{k\epsilon}^\epsilon=u_{(k-1)\epsilon}^\epsilon
+\epsilon v_{(k-1)\epsilon}^\epsilon$
and minimization at the preceding grid time give
\[
\begin{split}
&\mathcal H^{n-1}_{g_{(k-1)\epsilon}^\epsilon}
  (\Sigma_{(k-1)\epsilon}^\epsilon)
-\mathcal H^{n-1}_{g_{k\epsilon}^\epsilon}
  (\Sigma_{k\epsilon}^\epsilon)\\
&\qquad\leq
\int_{\Sigma_{k\epsilon}^\epsilon}
\left[
1-\left(
1+\epsilon\frac{v_{(k-1)\epsilon}^\epsilon}
{u_{(k-1)\epsilon}^\epsilon}
\right)^{\frac{2(n-1)}{n-2}}
\right]
\,\mathrm d\mathcal H^{n-1}_{g_{(k-1)\epsilon}^\epsilon}\\
&\qquad\leq
-\frac{2(n-1)}{n-2}\epsilon
\int_{\Sigma_{k\epsilon}^\epsilon}
\frac{v_{(k-1)\epsilon}^\epsilon}
{u_{(k-1)\epsilon}^\epsilon}
\,\mathrm d\mathcal H^{n-1}_{g_{(k-1)\epsilon}^\epsilon}.
\end{split}
\]
    where we have used the Bernoulli inequality $$(1+x)^\alpha\geq 1+\alpha x$$ for $\alpha>1$ and $x>-1$ in the second line. Recall that we have $u^\epsilon_{(k-1)\epsilon}\geq 4^{-T}$. Grid-time minimization makes these areas non-increasing, and metric
comparison gives
\[
\mathcal H^{n-1}_{g_{(k-1)\epsilon}^\epsilon}
(\Sigma_{k\epsilon}^\epsilon)
\leq C(T)\mathcal H^{n-1}_{g_{k\epsilon}^\epsilon}
(\Sigma_{k\epsilon}^\epsilon)
\leq C(T)\mathcal H_g^{n-1}(\Sigma_0).
\]
Then we arrive at the estimate
\begin{equation}\label{Eq: step difference}
\begin{split}
&\mathcal H^{n-1}_{g_{(k-1)\epsilon}^\epsilon}
(\Sigma_{(k-1)\epsilon}^\epsilon)
-\mathcal H^{n-1}_{g_{k\epsilon}^\epsilon}
(\Sigma_{k\epsilon}^\epsilon)\\
&\qquad\leq
\Lambda(T)\mathcal H_g^{n-1}(\Sigma_0)\epsilon
\left\|v_{(k-1)\epsilon}^\epsilon
\right\|_{L^\infty(\Sigma_{k\epsilon}^\epsilon)}.
\end{split}
\end{equation}
   Recall from Corollary \ref{Cor: holder} that we have the comparison $\lambda g\leq g_t^\epsilon\leq g$. In particular, the $g$-areas of $\Sigma_T^\epsilon$ are uniformly bounded independent of $\epsilon$, and so the $g$-volume of the region enclosed by $\Sigma^\epsilon_T$ are uniformly bounded by some constant $V_0$ independent of $\epsilon$ as a consequence of the isoperimetric inequality (since $g$ is asymptotically flat). We remind that we are considering positive integers $k$ such that $k\epsilon\leq T$. Denote
   $$\mathcal K_1=\{k:\mbox{the } g\mbox{-volume between }\Sigma_{(k-1)\epsilon}^\epsilon\mbox{ and }\Sigma_{k\epsilon}^\epsilon\mbox{ is greater than }V_0\sqrt\epsilon\}.$$
   Clearly, we have the estimate
   $$\#\mathcal K_1\leq \left\lfloor\frac{1}{\sqrt\epsilon}\right\rfloor+1.$$
   It follows from Lemma \ref{Lem: density lower bound} that
   $$d_{\mathcal H}(\Sigma_{(k-1)\epsilon}^\epsilon,\Sigma_{k\epsilon}^\epsilon)\leq \omega(V_0\sqrt\epsilon)\mbox{ with }k\notin\mathcal K_1$$
   for some function $\omega$ satisfying $\omega(\tau)\to 0$ as $\tau\to 0$.

   Summing over \eqref{Eq: step difference}, using $|v_t^\epsilon|\leq 1$ and Proposition \ref{Prop: v_t^epsilon}, we have
   $$\mathcal H^{n-1}_g(\Sigma_0)-\mathcal H^{n-1}_{g_{k\epsilon}^\epsilon}(\Sigma_{k\epsilon}^\epsilon)\leq C\epsilon\cdot\#\mathcal K_1+C[\omega(V_0\sqrt\epsilon)]^\alpha$$
   for some universal constants $C$ and $\alpha$ independent of $k$. This implies that for $t\leq T$ we have
   $$\mathcal H^{n-1}_{g^\epsilon_t}(\Sigma_t^\epsilon)=\mathcal H^{n-1}_g(\Sigma_0)+o(1)\mbox{ as }\epsilon\to 0.$$
   This completes the proof.
\end{proof}

\begin{corollary}\label{Cor: constant area}
    We have $$\mathcal H^{n-1}_{g_t}(\hat\Sigma_t)=\mathcal H^{n-1}_{g_t}(\Sigma_t)=\mathcal H^{n-1}_g(\Sigma_0)$$ 
    The area constant will be denoted by $\mathscr A$.
\end{corollary}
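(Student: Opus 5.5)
We prove the two equalities in Corollary~\ref{Cor: constant area} separately and then match them up. Fix $t$ and the subsequence $\epsilon_i\to0$ along which $\Sigma_t^{\epsilon_i}\to\hat\Sigma_t$, as in Corollary~\ref{Cor: discrete compactness}. The three ingredients are the preceding lemma, which gives $\mathcal H^{n-1}_{g_t^{\epsilon_i}}(\Sigma_t^{\epsilon_i})\to\mathcal H^{n-1}_g(\Sigma_0)$, the uniform convergence $u_t^{\epsilon_i}\to u_t$, and the varifold convergence. The plan is to show
\[
\mathcal H^{n-1}_{g_t}(\Sigma_t)\le\mathcal H^{n-1}_{g_t}(\hat\Sigma_t)\le\mathscr A:=\mathcal H^{n-1}_g(\Sigma_0)\le\mathcal H^{n-1}_{g_t}(\Sigma_t).
\]

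\textbf{Upper bound for $\hat\Sigma_t$ (lower semicontinuity).} Since $u_t^{\epsilon_i}\to u_t$ uniformly on the compact set containing all the boundaries, we have
\[
\Big|\mathcal H^{n-1}_{g_t^{\epsilon_i}}(\Sigma_t^{\epsilon_i})-\int_{\Sigma_t^{\epsilon_i}}u_t^{\frac{2(n-1)}{n-2}}\,\mathrm d\mathcal H^{n-1}_g\Big|\le o(1)\,\mathcal H^{n-1}_g(\Sigma_t^{\epsilon_i})\to0,
\]
using the uniform $g$-area bound. Lower semicontinuity of the perimeter, weighted by the continuous positive function $u_t^{2(n-1)/(n-2)}$, then gives $\mathcal H^{n-1}_{g_t}(\hat\Sigma_t)\le\mathscr A$. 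This uses only current convergence of $E_t^{\epsilon_i}$ to $\hat E_t$.

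\textbf{Minimality comparisons.} Each $\hat E_t$ satisfies $\hat E_t\subset E_0$, since every $E_t^{\epsilon}$ does. So $\hat\Sigma_t$ is an admissible competitor for the $g_t$-minimizing enclosure $\Sigma_t$, which gives $\mathcal H^{n-1}_{g_t}(\Sigma_t)\le\mathcal H^{n-1}_{g_t}(\hat\Sigma_t)$.

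For the reverse inequality we use $\Sigma_t$ as a competitor at the grid time $k\epsilon_i$, $k=\lfloor t/\epsilon_i\rfloor$. Recall that $\Sigma^{\epsilon_i}_{k\epsilon_i}$ is the $g^{\epsilon_i}_{k\epsilon_i}$-minimizing enclosure of $\Sigma_0$. Then
\[
\mathcal H^{n-1}_{g^{\epsilon_i}_{t}}(\Sigma^{\epsilon_i}_{t})=\mathcal H^{n-1}_{g^{\epsilon_i}_{k\epsilon_i}}(\Sigma^{\epsilon_i}_{k\epsilon_i})\le\mathcal H^{n-1}_{g^{\epsilon_i}_{k\epsilon_i}}(\Sigma_t)\to\mathcal H^{n-1}_{g_t}(\Sigma_t).
\]
The limit holds because $u^{\epsilon_i}_{k\epsilon_i}$ and $u^{\epsilon_i}_t$ differ by at most $\epsilon_i$ (uniform Lipschitz in time), $u^{\epsilon_i}_t\to u_t$ uniformly, and $\Sigma_t$ has finite $g$-area. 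Letting $i\to\infty$ and using the preceding lemma gives $\mathscr A\le\mathcal H^{n-1}_{g_t}(\Sigma_t)$.

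Combining the two steps closes the chain, so all three quantities are equal. The main subtle point is the first step: we need the limit set $\hat E_t$ to stay inside $E_0$ and to bound a genuine compact region. That is, no mass may escape to infinity and the boundaries must lie in a fixed compact set. This is exactly what the uniform compact-support argument and the almost-minimizing compactness of Corollary~\ref{Cor: discrete compactness} supply. The uniform convergence of the conformal factors on $M$, coming from the common exterior barrier, is what lets us pass between the $g_t^{\epsilon_i}$- and $g_t$-areas.
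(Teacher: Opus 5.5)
Your proof is correct, and it differs from the paper's mainly in how the upper bound is obtained.

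The paper gets $\mathcal H^{n-1}_{g_t}(\hat\Sigma_t)=\mathscr A$ directly as an equality from the varifold convergence in Corollary~\ref{Cor: discrete compactness}, i.e.\ from the absence of area loss in the limit. It then bounds $\mathcal H^{n-1}_{g_t}(\Sigma_t)\le\mathscr A$ by testing the minimizing enclosure against $\Sigma_0$ itself, using $u_t\equiv1$ on $\Sigma_0$. The lower bound $\mathscr A\le\mathcal H^{n-1}_{g_t}(\Sigma_t)$ via grid-time minimization is the same as yours.

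You instead use only lower semicontinuity of the weighted perimeter under $L^1$ convergence, and you close the chain by using $\hat E_t$ as a competitor for $\Sigma_t$. This trades the no-area-loss part of the almost-minimizing compactness for the easier admissibility of the limit, namely $\hat E_t\subset E_0$ with bounded complement. It also yields the area convergence for $\hat\Sigma_t$ as an output rather than an input.

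One step needs justification. The identity $\mathcal H^{n-1}_{g^{\epsilon_i}_t}(\Sigma^{\epsilon_i}_t)=\mathcal H^{n-1}_{g^{\epsilon_i}_{k\epsilon_i}}(\Sigma^{\epsilon_i}_{k\epsilon_i})$ is not automatic, because $g^{\epsilon_i}_t\neq g^{\epsilon_i}_{k\epsilon_i}$ in general. It holds because $\Sigma^{\epsilon_i}_s=\Sigma^{\epsilon_i}_{k\epsilon_i}$ for $s\in[k\epsilon_i,t]$, so $v^{\epsilon_i}_s$ vanishes on that hypersurface and the two conformal factors agree there. Alternatively, use $\|u^{\epsilon_i}_t-u^{\epsilon_i}_{k\epsilon_i}\|_\infty\le\epsilon_i$ to absorb the difference into a factor $1+C_T\epsilon_i$, as the paper does.
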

\begin{proof}
Recall that $\Sigma_t^{\epsilon_i}$ converge to $\hat\Sigma_t$
in the sense of varifolds up to a subsequence. Combined with the
$C^0$-convergence of $u_t^{\epsilon_i}$ to $u_t$, we conclude
\[
\mathcal H^{n-1}_{g_t}(\hat\Sigma_t)
=\lim_{i\to\infty}
\mathcal H^{n-1}_{g_t^{\epsilon_i}}(\Sigma_t^{\epsilon_i})
=\mathcal H^{n-1}_g(\Sigma_0).
\]
Since $\Sigma_t$ is the outermost $g_t$-minimizing enclosure
of $\Sigma_0$, we have
\[
\mathcal H^{n-1}_{g_t}(\Sigma_t)
\leq\mathcal H^{n-1}_{g_t}(\Sigma_0)
=\mathcal H^{n-1}_g(\Sigma_0).
\]
On the other hand, put $k=\lfloor t/\epsilon_i\rfloor$.
Since $\|u_t^{\epsilon_i}-u_{k\epsilon_i}^{\epsilon_i}\|_\infty
\leq\epsilon_i$, grid-time minimization gives
\[
\mathcal H^{n-1}_{g_t^{\epsilon_i}}(\Sigma_t^{\epsilon_i})
\leq(1+C_T\epsilon_i)
\mathcal H^{n-1}_{g_t^{\epsilon_i}}(\Sigma_t).
\]
Taking the limit gives the reverse inequality. This yields
$\mathcal H^{n-1}_{g_t}(\Sigma_t)=\mathcal H^{n-1}_g(\Sigma_0)$.
This completes the proof.
\end{proof}

\begin{lemma}\label{Lem: monotonicity}
    $E_t$ is monotone non-increasing as $t$ increases.
\end{lemma}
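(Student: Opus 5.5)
We will show that for $s<t$ we have $E_t\subset E_s$, i.e.\ $\Sigma_t$ encloses $\Sigma_s$ (the region outside shrinks). The plan is to pass through the discrete flows, where the claim follows from the construction, and then transfer it to the limit via the Hausdorff convergence of Lemma \ref{Lem: convergence to limit} and the uniqueness of outermost minimizing enclosures.

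\textbf{Discrete step.} By construction, $\Sigma^\epsilon_{(k+1)\epsilon}$ is the outermost $g^\epsilon_{(k+1)\epsilon}$-minimizing enclosure of $\Sigma^\epsilon_{k\epsilon}$. By definition of enclosure, $E^\epsilon_{(k+1)\epsilon}\subset E^\epsilon_{k\epsilon}$. Hence $E^\epsilon_t$ is non-increasing in $t$ for each fixed $\epsilon$; since $\Sigma^\epsilon_t$ is piecewise constant, this holds for all $t$ and not only at grid times.

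\textbf{Passage to the limit.} Fix $s<t$ outside the countable exceptional set of Lemma \ref{Lem: convergence to limit}. Then $\Sigma^{\epsilon_i}_s\to\Sigma_s$ and $\Sigma^{\epsilon_i}_t\to\Sigma_t$ in the Hausdorff sense. Using Corollary \ref{Cor: discrete compactness}, we may assume that along a subsequence the sets $E^{\epsilon_i}_s, E^{\epsilon_i}_t$ also converge in $L^1_{loc}$ to Caccioppoli sets $\hat E_s,\hat E_t$ with $\hat E_t\subset\hat E_s$ a.e. Combining this with the Hausdorff convergence of boundaries and the two-sided density estimates of Lemma \ref{Lem: density lower bound}, we identify $\hat E_s=E_s$ and $\hat E_t=E_t$ up to null sets. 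Using the open representatives of Convention II, the inclusion then holds pointwise: $E_t\subset E_s$.

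\textbf{Exceptional times.} For arbitrary $s<t$, approximate $s$ from above and $t$ from below by non-exceptional times, or alternatively argue directly as follows. Since $E_t\subset E_0$ and $E_s\subset E_0$, a union/intersection argument applies. Suppose $E_t\not\subset E_s$. Then $E_s\cup E_t$ competes with $E_s$, and $E_s\cap E_t$ competes with $E_t$. By strong subadditivity,
\[
\mathcal H^{n-1}_{g_s}(\partial(E_s\cup E_t))+\mathcal H^{n-1}_{g_s}(\partial(E_s\cap E_t))\le \mathcal H^{n-1}_{g_s}(\partial E_s)+\mathcal H^{n-1}_{g_s}(\partial E_t),
\]
and similarly for $g_t$. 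This alone does not close the argument, because the metrics differ. The weight $u_t/u_s$, however, is constant ($=1$ up to the normalization of $v$) inside $\Sigma_s$-related regions in a monotone way, which is Bray's argument in \cite[Lemma 3]{Bray}. I would primarily rely on the limit argument and treat exceptional times by monotone approximation: if $t_j\uparrow t$ and $s_j\downarrow s$ are non-exceptional, then $E_{t_j}\subset E_{s_j}$. The remaining task is to show $E_{t_j}\to E_t$ and $E_{s_j}\to E_s$. This would follow because $u_{t_j}\to u_t$ uniformly, limits of outermost minimizing enclosures are minimizing enclosures (their areas converge by Corollary \ref{Cor: constant area}), and monotone limits of sets preserve the inclusion.

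\textbf{Main obstacle.} The hardest step is this last identification at exceptional times: the limit of $E_{t_j}$ is a $g_t$-minimizing enclosure of $\Sigma_0$, but not obviously the outermost one. My plan is to take the monotone limit from the side that yields the outermost enclosure, namely the union over $t_j\uparrow t$ of sets containing $E_t$. I would then use the uniqueness of outermost enclosures together with $\mathcal H^{n-1}_{g_t}(\Sigma_t)=\mathscr A$ to force equality.
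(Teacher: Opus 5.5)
\textbf{There is a genuine gap: the argument is circular.} Your main step uses Lemma~\ref{Lem: convergence to limit} to identify the subsequential limits $\hat E_s,\hat E_t$ of the discrete exteriors with $E_s,E_t$ at non-exceptional times. In the paper, that lemma is proved \emph{from} the monotonicity you are trying to establish. Corollary~\ref{Cor: convergence} (which gives $\Sigma_s^+=\Sigma_s$ via ``outermostness and nesting''), Lemma~\ref{Lem: generic coincidence} (``by nesting, these regions are pairwise disjoint'') and Lemma~\ref{Lem: lie between} all rest on Lemma~\ref{Lem: monotonicity}.

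Nor is there an easy independent route. Corollary~\ref{Cor: constant area} only tells you that each $\hat\Sigma_s$ is \emph{some} $g_s$-minimizing enclosure of $\Sigma_0$. Outermostness then gives $E_s\subset\hat E_s$, and discrete nesting gives $E_t\subset\hat E_t\subset\hat E_s$ for $s<t$. Neither inclusion compares $E_t$ with $E_s$. Identifying $\hat\Sigma$ with $\Sigma$ is exactly the content that must be supplied.

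The same defect recurs in your treatment of exceptional times. The right limit of $E_{s_j}$ as $s_j\downarrow s$ is a $g_s$-minimizing enclosure, so outermostness yields $E_s\subset\lim E_{s_j}$. That is the wrong inclusion, and uniqueness of the outermost enclosure does not reverse it. Your union/intersection alternative fails for a related reason. From the construction you only know $u_t=u_s$ on $M\setminus\hat E_s$, where all $v^{\epsilon_i}_\tau$ with $\tau\ge s$ vanish, not on $M\setminus E_s$.

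\textbf{The missing idea is a strict area comparison built on the constant-area identity.} Work in your convention $s<t$; the goal is $\hat E_t\subset E_s$. Suppose a compact piece $\Gamma\subset\Sigma_s\cap\hat E_t$ has positive area. Discrete nesting and the maximum principle make $\tau\mapsto v^{\epsilon_i}_\tau$ non-decreasing, so
\[
u^{\epsilon_i}_s-u^{\epsilon_i}_t=-\int_s^t v^{\epsilon_i}_\tau\,d\tau\ge-(t-s)\,v^{\epsilon_i}_t .
\]
By the Hausdorff convergence of the discrete slices, $v^{\epsilon_i}_t\to\hat v_t$. By the strong maximum principle on the connected exterior, $\hat v_t$ is strictly negative on $\Gamma$. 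Hence $u_t\le u_s-c_\Gamma$ on $\Gamma$, while $u_t\le u_s$ everywhere.

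This gives $\mathcal H^{n-1}_{g_t}(\Sigma_s)<\mathcal H^{n-1}_{g_s}(\Sigma_s)=\mathscr A$. That contradicts $\mathcal H^{n-1}_{g_t}(\Sigma_t)=\mathscr A$ being the minimal $g_t$-area among enclosures of $\Sigma_0$, since $\Sigma_s$ is an admissible competitor. Therefore $\hat E_t\subset E_s$, and outermostness gives $E_t\subset\hat E_t\subset E_s$. This argument works for \emph{all} $s<t$, with no exceptional set and no appeal to Lemma~\ref{Lem: convergence to limit}; it is the paper's proof.
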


\begin{proof}
Since $\Sigma_t$ is the outermost $g_t$-minimizing enclosure
of $\Sigma_0$ and $\hat\Sigma_t$ has the same $g_t$-area as
$\Sigma_t$, we conclude that $\hat\Sigma_t$ is enclosed by
$\Sigma_t$. We show that, for any $t<s$, the hypersurface
$\hat\Sigma_s$ encloses $\Sigma_t$.

Otherwise, we can find a compact portion $\Gamma$ of $\Sigma_t$
lying outside $\hat\Sigma_s$ with
$\mathcal H_g^{n-1}(\Gamma)>0$. We claim that there is a constant
$c_\Gamma>0$ such that
\[
u_s\leq u_t-c_\Gamma
\quad\text{on }\Gamma.
\]
Otherwise,
$\min_\Gamma(u_t^{\epsilon_i}-u_s^{\epsilon_i})\rightarrow0$.
Recall that
\[
u_t^{\epsilon_i}-u_s^{\epsilon_i}
=-\int_t^s v_\tau^{\epsilon_i}\,\mathrm d\tau,
\]
and that $v_\tau^{\epsilon_i}$ is monotone increasing in $\tau$,
by the maximum principle on the later exterior and discrete
nesting. Consequently,
\[
\min_\Gamma(-v_s^{\epsilon_i})\rightarrow0.
\]
Recall that $\Sigma_s^{\epsilon_i}$ converge to $\hat\Sigma_s$
as varifolds up to a subsequence. Lemma
\ref{Lem: density lower bound} also gives Hausdorff convergence.
In particular, $v_s^{\epsilon_i}$ converge to the function
$\hat v_s$ determined by
\[
\begin{cases}
\Delta_g\hat v_s=0
    &\text{outside }\hat\Sigma_s,\\
\hat v_s=0
    &\text{inside and along }\hat\Sigma_s,\\
\hat v_s\rightarrow-e^{-s}
    &\text{as }x\rightarrow\infty.
\end{cases}
\]
The common exterior barrier preserves the limit at infinity.
A minimizing exterior has no bounded component, since deleting
one decreases perimeter; hence it is connected. However, the
preceding discussion would give $\hat v_s=0$ at a point of
$\Gamma$, contradicting the strong maximum principle.

The claimed strict decrease on $\Gamma$, together with
$u_s\leq u_t$ elsewhere, gives
\[
\mathcal H_{g_s}^{n-1}(\Sigma_t)
<
\mathcal H_{g_t}^{n-1}(\Sigma_t)
=
\mathcal H_g^{n-1}(\Sigma_0),
\]
contradicting the minimum area already established at time $s$.
Thus $\hat\Sigma_s$ encloses $\Sigma_t$. Since $\Sigma_s$
encloses $\hat\Sigma_s$, it follows that $E_s\subset E_t$.
\end{proof}

\begin{corollary}\label{Cor: convergence}
Let $t\leq T$. As $t\uparrow s$, $\Sigma_t$ converge to some
$(\beta_T,\mu_T,r_T,g)$-almost-minimizing hypersurface
$\Sigma_s^-\in\mathcal C$ in the sense of both currents and
varifolds. Similarly, $\Sigma_t$ converge to some
$(\beta_T,\mu_T,r_T,g)$-almost-minimizing hypersurface
$\Sigma_s^+\in\mathcal C$ as $t\downarrow s$ in the sense of
both currents and varifolds. Moreover, $\Sigma_s^+=\Sigma_s$,
and
\[
\mathcal H_{g_s}^{n-1}(\Sigma_s^-)
=
\mathcal H_{g_s}^{n-1}(\Sigma_s^+)
=
\mathscr A.
\]
\end{corollary}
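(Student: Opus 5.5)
The plan is to combine the monotonicity of $E_t$ from Lemma \ref{Lem: monotonicity} with the uniform almost-minimizing property of Lemma \ref{Lem: almost minimizing limit} and the compactness theory for almost-minimizing boundaries.

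\textbf{Existence of one-sided limits.} By Lemma \ref{Lem: monotonicity}, $E_t$ is non-increasing in $t$. The sets $M\setminus E_t$ lie in a fixed compact set for $t\le T$, so as $t\uparrow s$ the characteristic functions $\chi_{E_t}$ converge monotonically in $L^1$ to $\chi_{E_s^-}$, where $E_s^-=\bigcap_{t<s}E_t$ up to null sets. Likewise, as $t\downarrow s$ they converge to $\chi_{E_s^+}$ with $E_s^+=\bigcup_{t>s}E_t$. Lemma \ref{Lem: almost minimizing limit} makes every $\Sigma_t$ with $t\le T$ $(\beta_T,\mu_T,r_T,g)$-almost-minimizing, and the perimeters are uniformly bounded since $\lambda g\le g_t\le g$ and the $g_t$-area is $\mathscr A$. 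The compactness theorem for almost-minimizing boundaries (as in Corollary \ref{Cor: discrete compactness}) then gives convergence as currents and, by the absence of perimeter loss for almost-minimizers, as varifolds. The limits $\Sigma_s^\pm=\partial E_s^\pm$ are almost-minimizing with the same constants. Monotonicity makes the limits independent of subsequence, and the density bounds of Lemma \ref{Lem: density lower bound} upgrade this to Hausdorff convergence.

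\textbf{Areas.} Varifold convergence, together with the uniform convergence $u_t\to u_s$ (the $u_t$ are Lipschitz in $t$), gives
\[\mathcal H^{n-1}_{g_s}(\Sigma_s^\pm)=\lim\mathcal H^{n-1}_{g_t}(\Sigma_t)=\mathscr A,\]
using Corollary \ref{Cor: constant area}.

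\textbf{Identifying $\Sigma_s^+=\Sigma_s$.} Since $E_t\subset E_s$ for $t>s$, we get $E_s^+\subset E_s$; that is, $\Sigma_s$ encloses $\Sigma_s^+$, and $E_s^+\subset E_0$. Because $\Sigma_s^+$ has $g_s$-area $\mathscr A$, which equals the minimum over enclosures of $\Sigma_0$ (Corollary \ref{Cor: constant area}), $\Sigma_s^+$ is itself a $g_s$-minimizing enclosure of $\Sigma_0$. We then need the reverse inclusion $E_s\subset E_s^+$. For each $t>s$, the outermost property of $\Sigma_t$ for $g_t$, combined with the union/intersection argument of \cite[Lemma 3]{Bray}, suggests comparing $E_s\cap E_t$ with $E_t$. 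For this we need the estimate
\[\mathcal H_{g_t}(\partial(E_s\cup E_t))\ge \mathscr A - o(1),\]
where the $o(1)$ comes from $|g_t-g_s|\to0$. This alone only yields minimizing, not outermost. So the main step, and the main obstacle, is to argue by contradiction. Suppose $E_s\setminus E_s^+$ has positive measure, so that a portion $\Gamma$ of $\Sigma_s$ with positive area lies strictly outside $\Sigma_s^+$. Exactly as in the proof of Lemma \ref{Lem: monotonicity}, the velocity $v_\tau$ for $\tau>s$ close to $s$ is bounded away from zero on $\Gamma$, since it converges to the harmonic function vanishing on $\Sigma_s^+$, which is negative on $\Gamma$ by the strong maximum principle and connectedness of the exterior. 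This gives $u_t\le u_s-c(t-s)$ on $\Gamma$.

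\textbf{Closing the contradiction.} Comparing areas at first order in $t-s$ should be incompatible with $\mathcal H_{g_t}(\Sigma_t)=\mathscr A=\mathcal H_{g_s}(\Sigma_s)$ and with the minimality of $\Sigma_s$ against $\Sigma_t$ at time $s$. If this first-order bookkeeping proves delicate, I would instead use a cleaner route. Since $\Sigma_s^+$ is a $g_s$-minimizing enclosure, outermost-ness of $\Sigma_s$ forces $\Sigma_s$ to enclose $\Sigma_s^+$, which holds. Then it suffices to show that $\Sigma_s^+$ encloses $\Sigma_s$. By the $C^0$-continuity of $u$ in $t$ and the uniqueness and upper-semicontinuity of outermost minimizing enclosures under uniform metric convergence, limits of outermost enclosures of $\Sigma_0$ under $g_t\to g_s$ should enclose the outermost one at $s$. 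The reverse inclusion would then follow by the union argument, since $\mathcal H_{g_t}(\partial(E_t\cap E_s))\le\mathcal H_{g_t}(\Sigma_t)$ up to $o(1)$.
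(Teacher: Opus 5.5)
Your construction of the one-sided limits (monotone $L^1$ convergence of $\chi_{E_t}$, uniform almost-minimality from Lemma \ref{Lem: almost minimizing limit}, and compactness giving current and varifold convergence) matches the paper, and so does your computation of the $g_s$-areas via uniform convergence of $u_t$. The gap is in the identification $\Sigma_s^+=\Sigma_s$. It comes from reversing the meaning of ``encloses''.

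In the paper's convention, $\partial E_*$ is an enclosure of $\partial E$ when $E_*\subset E$. So the nesting inclusion $E_s^+=\bigcup_{t>s}E_t\subset E_s$ says that $\Sigma_s^+$ encloses $\Sigma_s$, not the other way round. You correctly showed that $\Sigma_s^+$ is a $g_s$-minimizing enclosure of $\Sigma_0$: indeed $E_s^+\subset E_0$, and its $g_s$-area equals the minimal value $\mathscr A$. By the definition of outermost, no $g_s$-minimizing enclosure of $\Sigma_0$ other than $\Sigma_s$ itself can enclose $\Sigma_s$. Hence $\Sigma_s^+=\Sigma_s$ at once. Equivalently, outermostness supplies exactly the reverse inclusion $E_s\subset E_s^+$ you were looking for. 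This is the paper's one-line argument (``outermostness and nesting''). Your ``cleaner route'' paragraph actually contains both inclusions, but you treat the nesting one as still needing proof via an unproved semicontinuity of outermost enclosures.

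The contradiction argument you propose as the main step would not work. For every $\tau>s$ we have $\Sigma_s\subset M\setminus E_s\subset M\setminus E_\tau$, using the open representatives. Hence $v_\tau\equiv0$ on all of $\Sigma_s$. In particular, no portion $\Gamma\subset\Sigma_s$ lies in the exterior region $E_s^+$, where the limiting harmonic function is negative. So $u_t=u_s$ on $\Sigma_s$ for $t>s$, and there is no strict decrease $u_t\le u_s-c(t-s)$ on $\Gamma$ to feed into a first-order area comparison. The mechanism of Lemma \ref{Lem: monotonicity} only controls the opposite configuration. As written, the key identification is therefore not established, even though the fix is the one-line observation above.
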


\begin{proof}
Since $\Sigma_t$ is the outermost $g_t$-minimizing enclosure
of $\Sigma_0$, we have
\[
\mathcal H_{g_t}^{n-1}(\Sigma_t)
\leq
\mathcal H_{g_t}^{n-1}(\Sigma_0)
=
\mathcal H_g^{n-1}(\Sigma_0).
\]
Recall that $g_t\geq\lambda g$ by Corollary \ref{Cor: holder}.
Thus the hypersurfaces $\Sigma_t$, with $t\leq T$, have uniformly
bounded $g$-area. Together with Lemma \ref{Lem: monotonicity},
this gives convergence as currents to $\Sigma_s^-$ or
$\Sigma_s^+$ as $t\uparrow s$ or $t\downarrow s$, respectively.
The compactness argument in Corollary
\ref{Cor: discrete compactness} preserves almost-minimality
and gives varifold convergence. The density estimates also
give Hausdorff convergence.

Their $g_s$-areas equal $\mathscr A$ by uniform convergence
of the metrics. The right limit is a minimizing enclosure,
so outermostness and nesting give $\Sigma_s^+=\Sigma_s$.
\end{proof}

\begin{lemma}\label{Lem: generic coincidence}
Except for countably many $s>0$, we have
$\Sigma_s^-=\Sigma_s^+$.
\end{lemma}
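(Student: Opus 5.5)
The natural approach is to use monotonicity of the enclosed volume. By Lemma \ref{Lem: monotonicity}, the regions $E_t$ are non-increasing in $t$. Let $K_t=M\setminus E_t$ denote the bounded region enclosed by $\Sigma_t$. Then $V(t):=\mathcal H^n_g(K_t)$ is a monotone non-decreasing function of $t$, and it is bounded on $[0,T]$ by the uniform compact support established earlier. A monotone real function has at most countably many discontinuities. The plan is therefore to show that $\Sigma_s^-=\Sigma_s^+$ at every $s$ where $V$ is continuous.

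First, identify the one-sided limits as sets. Write $\Sigma_s^\pm=\partial E_s^\pm$. Since the $E_t$ are nested, the current convergence in Corollary \ref{Cor: convergence} is $L^1_{loc}$ convergence of characteristic functions. Hence, up to null sets,
\[
E_s^-=\bigcap_{t<s}E_t,\qquad E_s^+=\bigcup_{t>s}E_t,
\]
because monotone limits of characteristic functions coincide with the $L^1$ limits. All sets stay inside a fixed compact region, so dominated convergence gives
\[
\mathcal H^n_g(M\setminus E_s^-)=\lim_{t\uparrow s}V(t),\qquad \mathcal H^n_g(M\setminus E_s^+)=\lim_{t\downarrow s}V(t).
\]
Nesting gives $E_s^+\subset E_s^-$ up to null sets. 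At a continuity point of $V$ the two volumes agree, so $\mathcal H^n_g(E_s^-\Delta E_s^+)=0$, and the two sets coincide as Caccioppoli sets.

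Second, pass from measure-theoretic equality to equality of hypersurfaces. Both limits are $(\beta_T,\mu_T,r_T,g)$-almost-minimizing by Corollary \ref{Cor: convergence}. By Convention II we identify each with its canonical open representative $E^{op}$, and the boundary $\partial E$ is defined measure-theoretically. Since $E_s^-$ and $E_s^+$ agree up to a null set, their measure-theoretic boundaries and open representatives are literally equal. This gives $\Sigma_s^-=\Sigma_s^+$, and together with $\Sigma_s^+=\Sigma_s$ from Corollary \ref{Cor: convergence}, also $\Sigma_s^-=\Sigma_s$. The countable exceptional set consists of the jump points of $V$ on each $[0,T]$, and we take the union over $T\in\mathbb N$.

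The step that needs the most care is the identification of the current limits with the monotone set limits. Convergence along a sequence in the sense of currents is convergence of $\partial[E_{t_i}]$. We must check it is equivalent to $L^1$ convergence of $\chi_{E_{t_i}}$, which follows from the isoperimetric/constancy argument given bounded supports. We must also check that the limit does not depend on the subsequence, which follows from monotonicity. If there is a concern that the Hausdorff limit could differ from the measure-theoretic boundary, the two-sided density estimates of Lemma \ref{Lem: density lower bound} settle it. These estimates show that Hausdorff limits of the boundaries are exactly the measure-theoretic boundaries of the $L^1$ limits, so the identification is consistent.
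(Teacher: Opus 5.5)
Your proof is correct and is essentially the paper's argument. The paper observes that distinct one-sided limits bound a region of positive $g$-volume, and that these regions are pairwise disjoint by nesting. Since they lie in a fixed bounded set, only countably many can occur, and these are exactly the jump points of your monotone volume function $V$. Your remark that the measure-theoretic boundary depends only on the Lebesgue class of the set makes the paper's appeal to the density lower bound at that step unnecessary.
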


\begin{proof}
It suffices to prove the assertion on each bounded time interval.
If $\Sigma_s^-\neq\Sigma_s^+$, Lemma
\ref{Lem: density lower bound} implies that the region
$\Omega_s$ between $\Sigma_s^-$ and $\Sigma_s^+$ has positive
$g$-volume. By nesting, these regions are pairwise disjoint.
Since they all lie in a fixed bounded region on each bounded
time interval, there are at most countably many such $s$.
\end{proof}

\begin{lemma}\label{Lem: lie between}
$\hat\Sigma_s$ lies between $\Sigma_s^-$ and $\Sigma_s^+$
for all $s>0$.
\end{lemma}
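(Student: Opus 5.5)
We prove two enclosure statements separately: first that $\Sigma_s^+=\Sigma_s$ encloses $\hat\Sigma_s$, and then that $\hat\Sigma_s$ encloses $\Sigma_s^-$. The first was already observed at the start of the proof of Lemma \ref{Lem: monotonicity}. By Corollary \ref{Cor: constant area}, $\hat\Sigma_s$ has $g_s$-area $\mathscr A$. Since $\Sigma_s$ is the outermost $g_s$-minimizing enclosure of $\Sigma_0$, the union/intersection argument (strong subadditivity) shows that $\hat\Sigma_s$ is a minimizing enclosure and is enclosed by $\Sigma_s$. For this we need that $\hat\Sigma_s$ itself encloses $\Sigma_0$, i.e. $\hat E_s\subset E_0$. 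This holds because every $E_s^{\epsilon_i}\subset E_0$ and containment passes to $L^1$-limits of Caccioppoli sets.

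For the inner inclusion we use the statement proved inside Lemma \ref{Lem: monotonicity}: for any $t<s$, $\hat\Sigma_s$ encloses $\Sigma_t$, i.e. $\hat E_s\subset E_t$ up to null sets. The plan is to let $t\uparrow s$. By Corollary \ref{Cor: convergence}, $E_t\to E_s^-$ in $L^1$, since currents convergence of boundaries with nested sets gives $L^1$ convergence of the characteristic functions. Hence $\hat E_s\subset E_s^-$ up to a null set. Combined with the first step this gives
\[
E_s=E_s^+\subset \hat E_s\subset E_s^-
\]
up to measure zero.

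Finally we upgrade these measure-theoretic inclusions to the geometric statement that $\hat\Sigma_s$ lies between $\Sigma_s^-$ and $\Sigma_s^+$. All three boundaries are $(\beta_T,\mu_T,r_T,g)$-almost-minimizing, by Corollary \ref{Cor: discrete compactness} and Corollary \ref{Cor: convergence}. By Convention II we may therefore identify each set with its canonical open representative. Inclusion up to null sets then implies inclusion of the open representatives, and the two-sided density bounds of Lemma \ref{Lem: density lower bound} identify topological and measure-theoretic boundaries. Thus $\hat\Sigma_s\subset \overline{E_s^-}\setminus E_s^+$, which is the claim.

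The main obstacle is the bookkeeping of representatives and the convergence mode in the limit $t\uparrow s$. Passing from Hausdorff or varifold convergence of the boundaries to $L^1$ convergence of the enclosed regions requires the uniform compact support and the monotonicity of Lemma \ref{Lem: monotonicity}. With nested sets contained in a fixed bounded region, however, $L^1$ convergence follows from monotone convergence of the characteristic functions. We would handle this first and then conclude as above.
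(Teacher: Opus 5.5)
Your proposal is correct and follows the paper's argument. The paper also takes both enclosures from the proof of Lemma~\ref{Lem: monotonicity}: $\hat\Sigma_s$ is enclosed by $\Sigma_s=\Sigma_s^+$ and encloses every $\Sigma_t$ with $t<s$. It then lets $t\uparrow s$. Your extra bookkeeping fills in details the paper leaves implicit: that $\hat E_s\subset E_0$, that the nested exteriors converge in $L^1$, and that Convention~II lets you pass from inclusions up to null sets to inclusions of open representatives.
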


\begin{proof}
The proof of Lemma \ref{Lem: monotonicity} shows that
$\hat\Sigma_s$ is enclosed by $\Sigma_s=\Sigma_s^+$ and
encloses every $\Sigma_t$ with $t<s$. Letting $t\uparrow s$
proves the assertion.
\end{proof}

\begin{proof}[Proof of Lemma \ref{Lem: convergence to limit}]
By Lemmas \ref{Lem: generic coincidence} and
\ref{Lem: lie between}, the hypersurfaces
$\Sigma_t^{\epsilon_i}$ converge to $\Sigma_t$ in the sense
of currents and varifolds for all $t$ outside a countable set.
Lemma \ref{Lem: density lower bound} also gives Hausdorff
convergence. The boundary estimates and the common exterior
barrier then give
\[
v_t^{\epsilon_i}\rightarrow v_t
\quad\text{in }C^0(M).
\]
\end{proof}

\begin{proof}[Proof of Proposition \ref{Prop: conformal flow}]
Recall that
\[
u_t^{\epsilon_i}
=
1+\int_0^t v_s^{\epsilon_i}\,\mathrm ds.
\]
Let $\epsilon_i\rightarrow0$. Since
$|v_s^{\epsilon_i}|\leq1$ and convergence fails only at
countably many times, dominated convergence gives
\[
u_t=1+\int_0^t v_s\,\mathrm ds.
\]
This completes the proof.
\end{proof}

\subsection{Regularity consequences of the almost-minimizing property}

In this subsection, we derive the regularity of  the conformal
flow constructed above. The main tool is the regularity theorem below for
almost-minimizing boundaries due to Almgren \cite{Almgren1976}, Bombieri \cite{Bombieri1982}, and Tamanini \cite{Tamanini1984}.

\begin{theorem}[Almgren--Bombieri--Tamanini]\label{Thm: ABT regularity}
Let \(\Sigma=\partial E\in\mathcal C\) be a
\((\beta,\mu,r_0,g)\)-almost-minimizing hypersurface. Then there is a regular-singular decomposition
\[
\Sigma=\mathcal R(\Sigma)\sqcup\mathcal S(\Sigma)
\]
such that
\begin{itemize}
    \item $\mathcal R(\Sigma)$ is a relatively open subset of $\Sigma$, and \(\mathcal R(\Sigma)\) is a \(C^{1,\theta}\)-embedded hypersurface for some \(\theta=\theta(n,\mu)\in(0,1)\);
    \item $\mathcal S(\Sigma)$ is a closed set satisfying the Hausdorff dimension estimate
    \[
    \dim_{\mathcal H}\mathcal S(\Sigma)\leq n-8 .
\]
\end{itemize}
Moreover, 
we have the following \(\varepsilon\)-regularity result: there exist
constants \(r_*>0\), \(\varepsilon_0>0\), and $\gamma\in (0,1)$, depending only on \(\beta,\mu,r_0,g\),
such that if we have 
\[
  \Theta^{n-1}(\Sigma,x,r):=  \frac{\mathcal H_g^{n-1}(\Sigma\cap B_r^g(x))}
         {\omega_{n-1}r^{n-1}}
    \leq 1+\varepsilon_0
\]
for \(x\in\Sigma\) and \(r\leq r_*\),
then \(x\in\mathcal R(\Sigma)\). Moreover, after choosing suitable
geodesic coordinates, \(\Sigma\) is a \(C^{1,\theta}\)-graph in $B^g_{\gamma r}(x)$
with uniform $C^{1,\theta}$-estimates depending only on \(\beta,\mu,r_0,g\).
\end{theorem}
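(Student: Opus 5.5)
This is the classical regularity theory for almost-minimizing boundaries. I would not reprove it from scratch. Instead I would reduce it to the published results of Almgren, Bombieri and Tamanini, and check that the hypotheses in Definition \ref{Defn: almost minimizing} match theirs in the present Riemannian setting.

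\textbf{Step 1: reduce to a Euclidean almost-minimizing boundary.} Fix a point $x\in\Sigma$. Work in geodesic (or harmonic) coordinates on $B^g_{\iota}(x)$, where $g$ is $C^{2,\alpha}$ (using the $C^{2,\alpha}$ extension across $\partial M$ near the original boundary), so that $|g_{ij}-\delta_{ij}|\leq C|y|^2$ there. The $g$-area functional is then an elliptic parametric integrand with $C^{1}$ (indeed Lipschitz in $y$) dependence on position. Comparing $g$-area with Euclidean area on balls of radius $r$ gives
\[
\mathcal H^{n-1}_{euc}(\partial E\cap B_r)\leq \mathcal H^{n-1}_{euc}(\partial F\cap B_r)+Cr\,\mathcal H^{n-1}_{euc}(\partial F\cap B_r)+\beta r^{n-1+\mu}.
\]
The density upper bound $\mathcal H^{n-1}(\partial F\cap B_r)\lesssim r^{n-1}$ may be assumed: otherwise replace $F$ by the better competitor $E\cup B_r$ or $E\setminus B_r$, and Lemma \ref{Lem: density lower bound} supplies the matching bounds. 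Hence $E$ is $(\Lambda,\mu')$-almost-minimal in Tamanini's sense, with $\mu'=\min\{\mu,1\}$.

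\textbf{Step 2: invoke the published theory.} Tamanini's theorem for $\omega$-minimal boundaries with $\omega(r)=\Lambda r^{2\mu'}$, combined with Bombieri's and Almgren's treatment of elliptic integrands, gives the following: the reduced boundary is a $C^{1,\theta}$ hypersurface with $\theta=\mu'/2$ (or $\theta=\theta(n,\mu)$), relatively open in $\Sigma$. By Convention II and the density bounds, the topological boundary coincides with the support of the reduced boundary, so $\mathcal S(\Sigma)=\Sigma\setminus\partial^*E$ is closed.

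\textbf{Step 3: the dimension bound.} Run Federer's dimension reduction. Blow-ups at any point are area-minimizing cones in $\mathbb R^n$, since the error term $\beta r^{n-1+\mu}$ scales away and the metric becomes Euclidean. Compactness of almost-minimizing boundaries is as in Corollary \ref{Cor: discrete compactness}. Then Simons' classification shows there are no singular minimizing cones in dimension $\leq 7$, which gives $\dim_{\mathcal H}\mathcal S(\Sigma)\leq n-8$.

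\textbf{Step 4: $\varepsilon$-regularity.} This is De Giorgi's excess-decay lemma in Tamanini's form. If the density ratio is at most $1+\varepsilon_0$ at a small scale, then a compactness argument shows the flat excess is small: a minimizing cone with density close to $1$ is a plane, by Allard-type and monotonicity considerations. Excess decay with the $r^{\mu'}$ perturbation then yields a $C^{1,\theta}$ graph on $B_{\gamma r}$, with constants depending only on $\beta,\mu,r_0$ and the $C^{2,\alpha}$ bounds of $g$.

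\textbf{Main obstacle.} I expect the real work to be in making the constants uniform, in particular $r_*$, through the almost-monotonicity formula. The density ratio $\Theta^{n-1}(\Sigma,x,r)$ is only almost-monotone, with a factor like $e^{Cr^{\mu'}}$, so the threshold $1+\varepsilon_0$ has to be transferred between scales. I would derive this almost-monotonicity by comparing $E$ with the cone over $\partial E\cap S_r$ inside $B_r$, which gives $\frac{d}{dr}\big(e^{Cr^{\mu'}}\Theta(r)\big)\geq0$.
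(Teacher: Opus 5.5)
Your overall strategy is the paper's: the paper gives no proof of this theorem and simply quotes the regularity theory of Almgren, Bombieri and Tamanini. What your proposal adds is the reduction to those results, and the reduction in Step 1 is wrong as stated.

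In geodesic coordinates centred at $x$ you only know $|g_{ij}(y)-\delta_{ij}|\leq C|y|^2$. So on a ball $B_r(y_0)$ of the chart, the multiplicative error between $g$-area and Euclidean area is of order $(|y_0|+r)^2$, not $Cr$. It is $O(r)$ only for balls centred within about $\sqrt r$ of the origin, while Tamanini's hypothesis must hold on all small balls of a neighbourhood.

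Better bookkeeping does not rescue this. Suppose $p\neq x$ is a singular point in the chart with $g(p)\neq\delta$. The Euclidean blow-up of $\Sigma$ at $p$ is the image of an area-minimizing cone under a non-conformal linear map. Such an image is in general not even stationary; this already happens for the Simons cone. So at scale $r$ the Euclidean area of $\Sigma$ near $p$ can be lowered by $\eta r^{n-1}$ with $\eta>0$ fixed, which contradicts Euclidean $(\Lambda,\mu')$-almost-minimality for small $r$. Since $\mathcal S(\Sigma)$ may accumulate at $x$, shrinking the chart does not help either.

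The repair is to freeze the metric at the centre of each ball. On $B_r(y_0)$ this gives $\mathcal H^{n-1}_{g(y_0)}(\partial E\cap B_r(y_0))\leq\mathcal H^{n-1}_{g(y_0)}(\partial F\cap B_r(y_0))+Cr^{n-1+\min\{\mu,1\}}$. That is almost-minimality for the parametric elliptic integrand $\sqrt{\det g(y)}\,|\nu|_{g(y)^{-1}}$, with Lipschitz dependence on $y$. This is the setting of Almgren's $(\mathbf F,\varepsilon,\delta)$-minimal sets, Bombieri's almost-minimal currents, or a Riemannian version of Tamanini's argument.

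For general elliptic integrands these results give only a weaker bound on the singular set. The sharp estimate $n-8$ comes from your Step 3 once you observe two things:
\begin{itemize}
\item every tangent cone at $p$ is $g(p)$-minimizing;
\item it is therefore Euclidean area-minimizing after the linear change of coordinates taking $g(p)$ to $\delta$, so Federer reduction and Simons' classification apply.
\end{itemize}

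Two smaller slips. First, closedness of $\mathcal S(\Sigma)=\Sigma\setminus\partial^*E$ does not follow from $\overline{\partial^*E}=\partial E$. It is equivalent to relative openness of $\partial^*E$ in $\partial E$, which comes from the $\varepsilon$-regularity statement. Second, the upper density bound you use in Step 1 comes from comparison with $E\cup B_r$. Lemma~\ref{Lem: density lower bound} only gives lower bounds.
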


Applying Theorem \ref{Thm: ABT regularity} to the conformal flow gives the following.

\begin{corollary}\label{Cor: regularity Sigma t}
Let \(\{(g_t,\Sigma_t)\}_{t\geq0}\) be the conformal flow constructed in Theorem~\ref{Prop: existence}. For every \(T>0\), there exists
\[
    \theta=\theta(n,T,g,\beta,\mu,r_0)\in(0,1)
\]
such that for every \(t\in(0,T]\), we have
\[
    \Sigma_t=\mathcal R_t\sqcup\mathcal S_t,
\]
where \(\mathcal R_t\) is a relatively open \(C^{1,\theta}\)-embedded hypersurface and
\[
    \dim_{\mathcal H}\mathcal S_t\leq n-8 .
\]
The same conclusion holds for the approximation hypersurfaces \(\Sigma_t^\epsilon\), and for the one-sided limits \(\Sigma_s^-\) and \(\Sigma_s^+\). 
\end{corollary}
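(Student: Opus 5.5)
The plan is to verify that every hypersurface in the list is almost-minimizing with constants independent of $t$, and then apply Theorem \ref{Thm: ABT regularity} directly.

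For the flow hypersurfaces $\Sigma_t$ with $t\in(0,T]$, Lemma \ref{Lem: almost minimizing limit} shows they are all $(\beta_T,\mu_T,r_T,g)$-almost-minimizing. The constants $\beta_T,\mu_T,r_T$ depend only on $n,T,g,\beta,\mu,r_0$, through the Hölder bound of Corollary \ref{Cor: holder}. For the approximation hypersurfaces $\Sigma_t^\epsilon$, Lemma \ref{Lem: almost minimizing approximation} gives the same property with the same constants, uniformly in $\epsilon$. For the one-sided limits $\Sigma_s^\pm$, Corollary \ref{Cor: convergence} already records that they are $(\beta_T,\mu_T,r_T,g)$-almost-minimizing, because the compactness theorem for almost-minimizing boundaries preserves the inequality under current and varifold convergence. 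In addition, $\Sigma_s^+=\Sigma_s$.

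Theorem \ref{Thm: ABT regularity} then gives, for each such hypersurface, a decomposition into a relatively open $C^{1,\theta}$ regular part $\mathcal R_t$ and a closed singular part $\mathcal S_t$ with $\dim_{\mathcal H}\mathcal S_t\le n-8$. Two points need checking.

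\begin{itemize}
\item \emph{Uniformity of $\theta$.} The theorem gives $\theta=\theta(n,\mu)$ from the almost-minimizing exponent. Using $\mu_T$, which is itself determined by $n,T,g,\beta,\mu,r_0$, yields $\theta=\theta(n,T,g,\beta,\mu,r_0)$, independent of $t$ and $\epsilon$.
\item \emph{Applicability of the theorem.} The regularity theory is local and needs $g$ to be regular enough, at least $C^{1}$ or Hölder-$C^{1,\alpha}$ in the ambient. This holds because $g$ is smooth on $M$ and has a $C^{2,\alpha}$ extension across $\partial M$. Note that the almost-minimizing property is taken with respect to the fixed metric $g$, not $g_t$, so the merely Hölder conformal factor $u_t$ does not enter the regularity theory. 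It has already been absorbed into the error term $\beta_T r^{n-1+\mu_T}$.
\end{itemize}

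I expect the uniformity of $\theta$ to be the only subtle point. The exponent $\mu_T$ comes from comparing $u_t^{2(n-1)/(n-2)}$-weighted areas using the Hölder exponent $\alpha$ of $u_t$, together with the original $\mu$, so $\mu_T=\min\{\mu,\alpha\}$ up to harmless adjustments. This requires $\alpha$ to be uniform on $[0,T]$, which is exactly what Corollary \ref{Cor: holder} provides. Nothing further seems needed, since the dimension estimate of the singular set is the standard Federer dimension reduction contained in Theorem \ref{Thm: ABT regularity}.
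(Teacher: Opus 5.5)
Your proposal is correct and matches the paper's proof: the paper likewise invokes Lemma~\ref{Lem: almost minimizing limit}, Lemma~\ref{Lem: almost minimizing approximation}, and Corollary~\ref{Cor: convergence} to get almost-minimality uniformly in $t$ with respect to the fixed metric $g$, and then applies Theorem~\ref{Thm: ABT regularity}. Your remark that $\theta$ is uniform because it depends only on $\mu_T$ spells out a dependence the paper leaves implicit; if the two lemmas produce different constants, just take the worse of each.
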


\begin{proof}
By Lemma \ref{Lem: almost minimizing limit}, Corollary \ref{Cor: convergence} and Lemma \ref{Lem: almost minimizing approximation}, the hypersurfaces mentioned above are uniformly almost-minimizing with respect to the fixed background metric \(g\) on every compact time interval. The conclusion follows from Theorem \ref{Thm: ABT regularity}.
\end{proof}

\subsection{Local smoothness of the regular part}

We first record a one-sided regularity improvement near regular pieces of the
flow hypersurfaces. The key input is the half-space rigidity for
stationary cones.

Fix $T>0$, and we always restrict our attention to $\Sigma_t$ with $0\leq t\leq T$. Recall from Lemma \ref{Lem: almost minimizing limit} that $\Sigma_t$ are all $(\beta_T,\mu_T,r_T,g)$-almost-minimizing. Let $r_*$ and $\gamma$ denote the constants in the $\varepsilon$-regularity from Theorem \ref{Thm: ABT regularity} depending on $\beta_T,\mu_T,r_T,g$.

\begin{lemma}\label{Lem: one-sided density}
For every \(\varepsilon>0\), there exist 
\(\delta\in(0,1)\) and \(\varepsilon_1>0\), depending only on $\beta_T,\mu_T,r_T,g$
and \(\varepsilon\), such that the following holds: if we have
$$\Theta^{n-1}(\Sigma_t,p,r)\leq 1+\varepsilon_1$$
for some $t\in [0,T]$, $p\in \Sigma_t$ and $r\leq r_*$,
 then for every \(s\in[0,T]\) and every point
\[
    q\in \Sigma_s\cap B_{\gamma r/2}^g(p)
    \mbox{ with }
    \dist_g(q,\Sigma_{t})\leq \delta r,
\]
one has
$$\Theta^{n-1}(\Sigma_s,q,\delta r)\leq 1+\varepsilon.$$
In particular, if \(\varepsilon<\varepsilon_0\), where \(\varepsilon_0\) is
the threshold in Theorem \ref{Thm: ABT regularity}, then \(\Sigma_s\) is a
\(C^{1,\theta}\)-graph 
in $B^g_{\gamma\delta r}(q)$ with uniform estimates.
\end{lemma}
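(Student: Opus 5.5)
The plan is a contradiction-and-compactness argument, with the half-space rigidity for stationary cones as the key input.

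Fix $\varepsilon>0$ and suppose the statement fails. Then for $\delta_j\to0$ and $\varepsilon_{1,j}\to0$ there are times $t_j,s_j\in[0,T]$, points $p_j\in\Sigma_{t_j}$, radii $r_j\le r_*$, and points $q_j\in\Sigma_{s_j}\cap B^g_{\gamma r_j/2}(p_j)$ with
\[
\Theta^{n-1}(\Sigma_{t_j},p_j,r_j)\le 1+\varepsilon_{1,j},\qquad \dist_g(q_j,\Sigma_{t_j})\le\delta_j r_j,
\]
but $\Theta^{n-1}(\Sigma_{s_j},q_j,\delta_jr_j)>1+\varepsilon$.

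We rescale around $q_j$ by the factor $(\delta_jr_j)^{-1}$ in normal coordinates. In these coordinates the metric $g$ converges to the Euclidean metric. The almost-minimizing constants improve: the error term becomes $\beta_T(\delta_jr_j)^{\mu_T}\rho^{n-1+\mu_T}\to0$, and $r_T/(\delta_jr_j)\to\infty$. By Lemma \ref{Lem: density lower bound} and the area upper bounds from the proof of Lemma \ref{Lem: almost minimizing limit}, the rescaled $\Sigma_{s_j}$ and $\Sigma_{t_j}$ have uniform local mass bounds. Corollary \ref{Cor: discrete compactness}-type compactness then gives subsequential convergence, as currents and varifolds and in the Hausdorff sense, to area-minimizing boundaries $\partial E'_\infty$ and $\partial E_\infty$ in $\mathbb R^n$.

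The first limit $\partial E_\infty$ is a hyperplane. By the $\varepsilon$-regularity of Theorem \ref{Thm: ABT regularity}, $\Sigma_{t_j}$ is a $C^{1,\theta}$ graph over a hyperplane in $B^g_{\gamma r_j}(p_j)$ with uniform estimates. At scale $\delta_jr_j\ll r_j$ the gradient oscillation is $O((\delta_j)^\theta)$. Since $q_j$ lies within $\delta_jr_j$ of $\Sigma_{t_j}$ and inside $B^g_{\gamma r_j/2}(p_j)$, the rescaled graphs converge to a hyperplane $P$ at distance at most $1$ from the origin. The second limit $\partial E'_\infty$ passes through the origin by Hausdorff convergence, using the density lower bound.

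Next we exploit nesting. By Lemma \ref{Lem: monotonicity}, either $E_{s_j}\subset E_{t_j}$ or $E_{t_j}\subset E_{s_j}$, so in the limit $E'_\infty$ lies on one side of the half-space bounded by $P$. Hence $\partial E'_\infty$ is an area-minimizing boundary in $\mathbb R^n$ contained in a closed half-space. Take a tangent cone at infinity of $\partial E'_\infty$, obtained by blowing down. It is a stationary minimizing cone lying in a half-space through the origin, so by half-space rigidity (a stationary cone in a half-space is a hyperplane, via the maximum principle of Simon/Solomon–White) it is a multiplicity-one hyperplane. Monotonicity then gives density $1$ at every scale, so $\partial E'_\infty$ is itself a hyperplane. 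Concretely, the density ratio at the origin is nondecreasing in the radius with limit $1$ at infinity, and is at least $1$ at $0$ since the origin lies on the support.

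For the contradiction, mass convergence of minimizing boundaries (varifold convergence with no mass loss) gives $\Theta^{n-1}(\Sigma_{s_j},q_j,\delta_jr_j)\to\Theta^{n-1}(\partial E'_\infty,0,1)=1$, contradicting the lower bound $1+\varepsilon$. The final assertion, that $\Sigma_s$ is a $C^{1,\theta}$ graph in $B^g_{\gamma\delta r}(q)$ when $\varepsilon<\varepsilon_0$, is then a direct application of Theorem \ref{Thm: ABT regularity} at $q$ with radius $\delta r\le r_*$.

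The main obstacle I expect is the limit-interchange step. We must ensure that the blow-up limits are genuinely area-minimizing without an obstacle, since the $\Sigma_s$ are only almost-minimizing with respect to $g$. We must also handle the two-sidedness: the half-space containment must apply to the whole current $\partial E'_\infty$, not only to its support, and we must rule out multiplicity. I would address multiplicity using the fact that limits of boundaries of Caccioppoli sets have multiplicity one, and the support containment from the Hausdorff convergence supplied by the density estimates.
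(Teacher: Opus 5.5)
Your argument is correct, but it organizes the compactness differently from the paper.

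\emph{The paper's route.} It passes to a limit at the scale $r_i$ of the reference ball, splitting into the cases $r_\infty>0$ and $r_\infty=0$. The bad points $q_i$ then converge to a point $q_\infty$ of the limiting reference surface, which is $C^{1,\theta}$ there. The almost-minimizing monotonicity formula transfers $\Theta^{n-1}(\Sigma_{s_i},q_i,\delta_ir_i)\geq 1+\varepsilon$ to the pointwise bound $\Theta^{n-1}(S_\infty,q_\infty)\geq1+\varepsilon$ by upper semicontinuity. The tangent cone of $S_\infty$ at $q_\infty$ therefore lies in a half-space and has density at least $1+\varepsilon$, contradicting half-space rigidity.

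\emph{Your route.} You blow up once, at the bad scale $\delta_jr_j$ around $q_j$. The reference surface flattens to a plane because $\delta_j\to0$ and the $C^{1,\theta}$ bounds from $\varepsilon$-regularity are scale invariant. The violation of the density bound is then visible at unit scale. The price is a Liouville-type step: an entire minimizing boundary lying in a half-space is a hyperplane. You obtain this from a tangent cone at infinity, half-space rigidity, and the equality case of monotonicity, and mass convergence then finishes the argument.

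\emph{Comparison.} Your version avoids the case distinction and the semicontinuity transfer through a fixed-scale limit. The paper's version applies half-space rigidity directly to a tangent cone at a point and never needs the blow-down. Both rest on the same inputs: nesting of the slices, uniform almost-minimality with respect to $g$, and half-space rigidity for minimizing cones.
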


\begin{proof}
We will argue by contradiction. Suppose that the conclusion fails for some
\(\varepsilon>0\). Then there exist sequences
\[
    \varepsilon_{1,i}\downarrow0\mbox{ and } \delta_i\downarrow0,
\]
such that 
\begin{itemize}
    \item there are hypersurfaces $\Sigma_{t_i}$ with \(t_{i}\in[0,T]\), points $p_i\in\Sigma_{t_i}$,
 and $0<r_i<r_*$ satisfying 
 $$\Theta^{n-1}(\Sigma_{t_i},p_i,r_i)\leq 1+\varepsilon_{1,i};$$
 \item there are hypersurfaces $\Sigma_{s_i}$ with $s_i\in [0,T]$ and points $q_i\in \Sigma_{s_i}$ with $$\dist_g(q_i,p_i)\leq \gamma r_i/2\mbox{ and }\dist_g(q_i,\Sigma_{t_i})\leq \delta_i r_i$$ satisfying
 $$\Theta^{n-1}(\Sigma_{s_i},q_i,\delta_ir_i)\geq 1+\varepsilon.$$
\end{itemize}
By Theorem \ref{Thm: ABT regularity}, $\Sigma_{t_i}\cap B^g_{\gamma r_i}(p_i)$ is a $C^{1,\theta}$-graph with uniform estimates. Up to a subsequence, we may assume $r_i\to r_\infty\in [0,r_*]$ as $i\to \infty$. 

If $r_\infty>0$, we directly investigate the limit of $\Sigma_{t_i}$. Up to a subsequence, $\Sigma_{t_i}$ converge to a hypersurface $\Sigma_{t_\infty}$ and $p_i$ converge to a point $p_\infty$, where $\Sigma_{t_\infty}\cap B^g_{\gamma r_\infty}(p_\infty)$ is a $C^{1,\theta}$-graph with uniform estimates.
Up to a subsequence, $\Sigma_{s_i}$ converge to a $(\beta_T,\mu_T,r_T,g)$-almost-minimizing hypersurface $S_\infty$ and $q_i$ converge to a point $q_\infty\in \Sigma_{t_\infty}\cap B^g_{\gamma r_\infty/2}(p_\infty)$. In particular, we have 
$$\Theta^{n-1}(\Sigma_{t_\infty},q_\infty)=1.$$
On the other hand, by the
almost-minimizing monotonicity formula, see for instance
\cite[Proposition~2.1]{DeLellisSpadaroSpolaorTangentCones}, applied at the
scales \(\delta_i r_i\), we obtain
\[
    \Theta^{n-1}(S_\infty,q_\infty)\geq 1+\varepsilon .
\]
Blowing up $\Sigma_{t_\infty}$ and $S_\infty$ at $q_\infty$, we obtain a minimizing cone $C$ lying on one side of a hyperplane with $\Theta^{n-1}(C,0)\geq 1+\varepsilon$. However, by the half-space rigidity for
stationary cones, \(C\) must be the hyperplane, and this leads to a contradiction.
If $r_\infty=0$, then we consider $\Sigma_{t_i}$ and $\Sigma_{s_i}$ as hypersurfaces in $(M,r_i^{-2}g)$ and investigate their limits. The argument works as before.

This proves the density estimate. The final graphical conclusion follows from
the \(\varepsilon\)-regularity part of Theorem \ref{Thm: ABT regularity}.
\end{proof}

\begin{proposition}\label{Prop: local regularity near regular points}
Let \(t\in[0,T]\) and 
\[
    p\in\mathcal R(\Sigma_{t}) .
\]
Then there exists \(\rho>0\) such that, for every \(s\in[0,T]\),
\[
    \Sigma_s\cap B_\rho^g(p)\subset \mathcal R(\Sigma_s).
\]
Moreover, hypersurfaces \(\Sigma_s\cap B_\rho^g(p)\) are
\(C^{1,\theta}\)-graphs with uniform estimates.
\end{proposition}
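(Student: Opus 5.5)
The statement follows directly from Lemma \ref{Lem: one-sided density} with a suitable choice of scales. The only observation needed is this: since $p$ itself lies on $\Sigma_t$, every point sufficiently close to $p$ automatically satisfies the distance condition $\dist_g(q,\Sigma_t)\leq\delta r$ in that lemma.

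\textbf{Step 1: choose the scale at $p$.} Fix $\varepsilon=\varepsilon_0/2$, where $\varepsilon_0$ is the threshold in Theorem \ref{Thm: ABT regularity} for the constants $\beta_T,\mu_T,r_T,g$. Let $\delta\in(0,1)$ and $\varepsilon_1>0$ be the constants that Lemma \ref{Lem: one-sided density} gives for this $\varepsilon$; they depend only on $\beta_T,\mu_T,r_T,g$. Since $p\in\mathcal R(\Sigma_t)$, near $p$ the set $\Sigma_t$ is a $C^{1,\theta}$-embedded hypersurface with multiplicity one. Hence $\Theta^{n-1}(\Sigma_t,p,r)\to1$ as $r\to0$. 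Pick $r\leq r_*$ so small that
\[
\Theta^{n-1}(\Sigma_t,p,r)\leq 1+\varepsilon_1 .
\]

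\textbf{Step 2: define $\rho$ and verify regularity.} Set $\rho=\gamma\delta r/4$. This gives $\rho<\gamma r/2$ and $\rho<\delta r$, because $\gamma,\delta<1$. Take any $s\in[0,T]$ and any $q\in\Sigma_s\cap B^g_\rho(p)$. Then $q\in B^g_{\gamma r/2}(p)$, and since $p\in\Sigma_t$,
\[
\dist_g(q,\Sigma_t)\leq\dist_g(q,p)<\rho\leq\delta r .
\]
So Lemma \ref{Lem: one-sided density} applies and yields
\[
\Theta^{n-1}(\Sigma_s,q,\delta r)\leq1+\varepsilon<1+\varepsilon_0 .
\]
We have $\delta r\leq r_*$, so the $\varepsilon$-regularity part of Theorem \ref{Thm: ABT regularity} gives $q\in\mathcal R(\Sigma_s)$. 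It also shows that $\Sigma_s$ is a $C^{1,\theta}$-graph in $B^g_{\gamma\delta r}(q)$, with estimates depending only on $\beta_T,\mu_T,r_T,g$. Since $q$ was arbitrary, $\Sigma_s\cap B^g_\rho(p)\subset\mathcal R(\Sigma_s)$.

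\textbf{Step 3: uniform graphical estimates on the whole ball.} If $\Sigma_s\cap B^g_\rho(p)\neq\emptyset$, pick one point $q$ in it. Then
\[
B^g_\rho(p)\subset B^g_{2\rho}(q)\subset B^g_{\gamma\delta r}(q),
\]
because $2\rho=\gamma\delta r/2$. Therefore $\Sigma_s\cap B^g_\rho(p)$ is a piece of a single $C^{1,\theta}$-graph with the uniform estimates from Step 2. These estimates are independent of $s$, since $\delta$, $r$ and the constants of Theorem \ref{Thm: ABT regularity} do not depend on $s$.

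The real difficulty, the half-space rigidity and blow-up argument, is already contained in Lemma \ref{Lem: one-sided density}. What remains here is only to check that a single scale $r$ works simultaneously for all $s\in[0,T]$, and this holds because the constants in that lemma are uniform in $s$.
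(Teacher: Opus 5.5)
Your proof is correct and follows the paper's argument essentially verbatim. It uses the same choice $\rho=\gamma\delta r/4$ and the same observation that $p\in\Sigma_t$ gives the distance hypothesis of Lemma \ref{Lem: one-sided density}, then applies the $\varepsilon$-regularity of Theorem \ref{Thm: ABT regularity} at scale $\delta r$ and the inclusion $B^g_\rho(p)\subset B^g_{\gamma\delta r}(q)$ for the uniform graphical estimates; you additionally make explicit that $\delta r\leq r_*$ and justify that inclusion by the triangle inequality, both of which the paper leaves implicit.
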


\begin{proof}
Fix \(\varepsilon<\varepsilon_0\), where \(\varepsilon_0\) is the
\(\varepsilon\)-regularity threshold in Theorem \ref{Thm: ABT regularity}.
Let \(\delta\) and \(\varepsilon_1\) be given by Lemma
\ref{Lem: one-sided density}. Since \(p\in\mathcal R(\Sigma_{t})\), we can choose
\(r>0\) sufficiently small so that
\[
 \Theta^{n-1}(\Sigma_{t},p,r)=   \frac{\mathcal H_g^{n-1}(\Sigma_{t}\cap B_r^g(p))}
         {\omega_{n-1}r^{n-1}}
    \leq 1+\varepsilon_1 .
\]
Set
\[
    \rho=\frac{\gamma\delta r}{4}.
\]
If \(q\in\Sigma_s\cap B_\rho^g(p)\), then we have
\[
    q\in B_{\gamma r/2}^g(p)\mbox{ and }
\dist_g(q,\Sigma_{t})\leq d_g(q,p)<\delta r .
\]
Hence Lemma \ref{Lem: one-sided density}, applied with reference hypersurface
\(\Sigma_{t}\), gives
\[
  \Theta^{n-1}(\Sigma_s,q,\delta r)=  \frac{\mathcal H_g^{n-1}(\Sigma_s\cap B_{\delta r}^g(q))}
         {\omega_{n-1}(\delta r)^{n-1}}
    \leq 1+\varepsilon .
\]
The \(\varepsilon\)-regularity theorem from Theorem \ref{Thm: ABT regularity} therefore implies
\(q\in\mathcal R(\Sigma_s)\). Since \(q\) was arbitrary, we obtain
\[
    \Sigma_s\cap B_\rho^g(p)\subset \mathcal R(\Sigma_s)
\]
for every \(s\in[0,T]\). Note that $\Sigma_s\cap B^g_\rho(p)\subset \Sigma_s\cap B_{\gamma\delta r}^g(q)$ for $q\in \Sigma_s\cap B^g_\rho(p)$,  so $\Sigma_s\cap B^g_\rho(p)$ is $C^{1,\theta}$-graph with uniform 
estimates.
\end{proof} 

To clarify, we use the following convention for mean curvature throughout the paper.

\vspace{2mm}\noindent {\bf Convention III.} For a two-sided $C^2$-hypersurface $\Gamma$ with unit normal $\nu$, the mean curvature of $\Gamma$ with respect to $\nu$ is given by
$$H_{\Gamma}^g=\ddiv_\Gamma^g\nu.$$

In the following, we assume $\Sigma_0=\partial M$, $\mathcal R(\partial M)$ is minimal.

\begin{proposition}\label{Prop: smoothness regular part}

For every \(t>0\), the regular part \(\mathcal R(\Sigma_t)\) is a smooth
\(g_t\)-minimal hypersurface, and \(u_t\) is smooth up to
\(\mathcal R(\Sigma_t)\) from the exterior side. 
\end{proposition}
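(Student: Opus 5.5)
Fix $t>0$ and $p\in\mathcal R(\Sigma_t)$. By Proposition \ref{Prop: local regularity near regular points} there is $\rho>0$ such that every $\Sigma_s$, $s\in[0,T]$, meets $B^g_\rho(p)$ only in regular points, as a $C^{1,\theta}$-graph over a common hyperplane with uniform estimates. By Lemma \ref{Lem: monotonicity} these graphs are nested: writing $\Sigma_s\cap B^g_\rho(p)=\{x_n=f_s(x')\}$ with the exterior above, the function $s\mapsto f_s$ is non-decreasing. Hence the whole problem near $p$ becomes a problem about a monotone family of $C^{1,\theta}$-graphs, with $v_s$ harmonic above the graph of $f_s$ and vanishing on and below it.

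I will proceed by bootstrapping. First, $v_s$ is harmonic in the $C^{1,\theta}$ domain above $f_s$ with zero Dirichlet data, so boundary Schauder theory gives $v_s\in C^{1,\theta}$ up to the graph, with bounds uniform in $s\in[0,t]$ (using $|v_s|\le1$). Since $v_s\equiv0$ below $\operatorname{graph} f_s$ and $f_s\le f_t$, the identity $u_t=1+\int_0^t v_s\,\mathrm ds$ restricted to the region above $\operatorname{graph} f_t$ is an integral of functions each $C^{1,\theta}$ there. This gives $u_t\in C^{1,\theta}$ up to $\Sigma_t$ from the exterior side, and likewise $C^{k,\theta}$ once the $f_s$ are $C^{k,\theta}$ uniformly. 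The fact that $v_s$ is merely Lipschitz across its own boundary is harmless, because only the exterior side of $\Sigma_t$ matters.

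Second, $\Sigma_t$ is $g_t$-minimizing among competitors outside $\Sigma_0$. Near $p$ I need $\Sigma_t$ to be free to move in both directions. Points of $\Sigma_t\setminus\Sigma_0$ are unobstructed, so outward and inward variations are allowed there. At contact points with $\Sigma_0$, only outward variations are admissible; to handle them I use that $\mathcal R(\Sigma_0)$ is $g$-minimal and that $u_t\le 1$ with $v_s\le 0$, which gives a one-sided mean curvature comparison. Combining the two, $f_t$ is a $C^{1,\theta}$ weak solution of the $g_t$-minimal surface equation $\operatorname{div}\bigl(u_t^{2(n-1)/(n-2)}\,\nabla f/\sqrt{1+|\nabla f|^2}\bigr)+\dots=0$. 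This equation has coefficients depending on $u_t$ and $\nabla u_t$ evaluated from the exterior side, and it holds on the free portion, possibly only as a variational inequality at the obstacle.

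Third, I bootstrap. Divergence-form Schauder theory with $C^{0,\theta}$ coefficients (involving $u_t$ and $g$) gives $f_t\in C^{1,\theta}$ again, but with the mean curvature $H=\frac{2(n-1)}{n-2}\partial_\nu \log u_t$ now expressed through the exterior normal derivative of $u_t$. If $f_s\in C^{k,\theta}$ uniformly for $s\le t$ near $p$, then $u_t\in C^{k,\theta}$ from outside, so $H\in C^{k-1,\theta}$ and $f_t\in C^{k+1,\theta}$. Since this argument applies to every $s\le t$ near $p$ with uniform constants, with the uniformity coming from Proposition \ref{Prop: local regularity near regular points}, induction on $k$ gives smoothness of $f_t$ and of $u_t$ up to $\Sigma_t$. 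Finally, $g_t$-minimality together with $v_t=0$ on $\Sigma_t$ shows that $\Sigma_t$ is a stationary point of $g_t$-area, i.e. $H^{g_t}=0$, on $\mathcal R(\Sigma_t)$.

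I expect the main obstacle to be the uniformity in $s$ of the higher-order estimates. The bootstrap for $f_t$ requires control of $f_s$ for all earlier $s$ simultaneously, together with the obstacle contact with $\Sigma_0$. For the obstacle issue I would first try the strong maximum principle: where $\Sigma_t$ touches the smooth $g$-minimal $\mathcal R(\Sigma_0)$ from outside, it is $g_t$-minimizing from the outside with $g_t\le g$ conformal. This should either force coincidence on a neighborhood, in which case $u_t\equiv 1$ from the interior side and the region is handled by the regularity of $\mathcal R(\Sigma_0)$ itself, or force strict separation, in which case the free case applies.
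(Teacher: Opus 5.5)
Your outline (local graphs from Proposition \ref{Prop: local regularity near regular points}, one-sided Schauder estimates for $v_s$, integration in $s$, then a bootstrap that is uniform in $s$) matches the paper's. However, two steps do not go through as written.

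\textbf{The Euler--Lagrange step.} You pass from two-sided minimality to ``$f_t$ is a weak solution of the $g_t$-minimal surface equation with coefficients evaluated from the exterior side'' without justification. An inward competitor has its $g_t$-area computed with the values of $u_t$ on the \emph{interior} side of $\Sigma_t$. There $u_t(x)=1+\int_0^t v_s(x)\,\mathrm ds$ only receives contributions from times $s$ with $x\in E_s$, and you have no estimate on that side. Your remark that the interior side is harmless is exactly where the argument breaks.

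Since $g_t$ is a priori only Lipschitz across $\Sigma_t$, outward and inward minimality give only a pair of one-sided inequalities, holding weakly: $\frac{2(n-1)}{n-2}u_t^{-1}\partial_\nu^{-}u_t\le -H^g_{\Sigma_t}\le\frac{2(n-1)}{n-2}u_t^{-1}\partial_\nu^{+}u_t$. This is not an equation. The missing idea is the paper's comparison metric:
extend each $v_s$ across $\Sigma_s$ to a $C^{1,\theta}$ function $\tilde v_s\ge v_s$ with uniform bounds, and set $\tilde u_t=1+\int_0^t\tilde v_s\,\mathrm ds$ and $\tilde g_t=\tilde u_t^{4/(n-2)}g$.
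Then $\tilde u_t\in C^{1,\theta}$, $\tilde u_t=u_t$ on the exterior side, and $\tilde u_t\ge u_t$ on the interior side. Outward variations are controlled by outer minimality. An inward variation that decreased $\tilde g_t$-area would also decrease $g_t$-area. Because $\tilde g_t$ is $C^{1,\theta}$, its first variation is linear, so $\mathcal R(\Sigma_t)$ is $\tilde g_t$-stationary. This is the weak equation with $C^{0,\theta}$ coefficients that your bootstrap needs.

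\textbf{Contact with $\Sigma_0$.} Your treatment leaves open the alternative ``coincidence on a neighborhood,'' and in that case the statement fails: $\mathcal R(\Sigma_0)$ is $g$-minimal, not $g_t$-minimal. So accepting that case cannot prove the proposition, and the weak information $u_t\le 1$, $v_s\le 0$ cannot exclude it.

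The paper rules out regular contact outright with the Hopf lemma. Suppose $p\in\mathcal R(\Sigma_t)\cap\Sigma_0$ with $t>0$. By nesting, $p\in\Sigma_s$ for all $s\le t$, so $\partial_\nu u_t(p)=\int_0^t\partial_\nu v_s(p)\,\mathrm ds<0$. Since $u_t=1$ on $\Sigma_0$, the conformal formula gives $H^{g_t}_{\Sigma_0}(p)=u_t^{-2/(n-2)}\bigl(H^g_{\Sigma_0}(p)+\frac{2(n-1)}{n-2}u_t^{-1}\partial_\nu u_t(p)\bigr)<0$. A strictly $g_t$-mean-concave foliation near $p$ then contradicts the $g_t$-outer-minimizing property of $\Sigma_t$. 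Hence regular points of $\Sigma_t$ stay away from $\Sigma_0$, no variational-inequality case arises, and the background metric is smooth near $p$.
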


\begin{proof}
Fix \(p\in\mathcal R(\Sigma_t)\). By Proposition
\ref{Prop: local regularity near regular points}, 
we can find a neighborhood $V_p$ of $p$ such that every flow hypersurface restricted to $V_p$ is a \(C^{1,\theta}\)-graph with uniform estimates.

We first construct a one-sided \(C^{1,\theta}\) extension of the conformal
factor near \(p\). For those \(s\in[0,t]\) for which \(\Sigma_s\) meets the
chosen neighbourhood, the graphical regularity and the boundary estimates for
the harmonic functions \(v_s\) imply uniform one-sided \(C^{1,\theta}\) estimates up to
\(\Sigma_s\) from the exterior side. Extending \(v_s\) slightly across
\(\Sigma_s\), and increasing it on the other side if necessary, we obtain
local functions \(\tilde v_s\) which agree with \(v_s\) on the exterior side
and satisfy
\[
    \tilde v_s\geq v_s
\]
in $V_p$. For the remaining values of $s$, interior estimates apply on a smaller
neighborhood, or $v_s$ vanishes there. All bounds are uniform for
$s\in[0,t]$. The required background coefficient bounds follow from
smoothness away from $\Sigma_0$, or smoothness up to its regular part
when $p\in\Sigma_0$, by Proposition
\ref{Prop: local regularity near regular points}. Define
\[
    \tilde u_t=1+\int_0^t \tilde v_s\,ds .
\]
Then, after possibly shrinking the neighborhood $V_p$, we can guarantee
\[
    \tilde u_t\in C^{1,\theta},\,
    \tilde u_t=u_t \mbox{ on the exterior side of  }\Sigma_t,\mbox{ and }
    \tilde u_t\geq u_t
\]
locally. Set
\[
    \tilde g_t=\tilde u_t^{\frac4{n-2}}g .
\]

We first rule out contact of $\Sigma_t$ with \(\Sigma_0\) at $p$. Otherwise, we choose local coordinates near
\(p\) in which the common tangent plane is \(\{x_n=0\}\), the exterior side is
\(\{x_n>0\}\), and both \(\mathcal R(\Sigma_t)\) and
\(\mathcal R(\Sigma_0)\) are written as graphs. On \(\Sigma_0\), we have \(u_t=1\), and the Hopf lemma gives
\[
    \partial_\nu u_t(p)
    =
    \int_0^t \partial_\nu v_s(p)\,ds
    <0,
\]
where \(\nu\) points towards the exterior region. From the assumption, $\mathcal R(\Sigma_0)$ is smooth and we have
\(H_{\Sigma_0}^g(p)=0\), the conformal transformation formula gives
\[
    H_{\Sigma_0}^{g_t}(p)
    =
    u_t^{-\frac{2}{n-2}}
    \left(
        H_{\Sigma_0}^g(p)
        +\frac{2(n-1)}{n-2}u_t^{-1}\partial_\nu u_t
    \right)
    <0 .
\]
In particular, we can construct a $g_t$-mean-concave foliation around $p$ and a replacement argument yields a contradiction to the fact that $\Sigma_t$ is $g_t$-outer-minimizing.

After shrinking the neighborhood away from $\Sigma_0$,
the background metric is smooth there. We now show that \(\mathcal R(\Sigma_t)\) is stationary with respect to
\(\tilde g_t\) near \(p\). Variations to the exterior side cannot decrease
area by the outer-minimizing property of \(\Sigma_t\). On the other hand, if
an interior variation decreased the \(\tilde g_t\)-area, then, since
\(\tilde u_t=u_t\) along \(\Sigma_t\) and \(\tilde u_t\geq  u_t\) on the
interior side, the same localized variation would give a strict decrease of
the \(g_t\)-area. This contradicts the local minimizing property of
\(\Sigma_t\). Hence \(\mathcal R(\Sigma_t)\) is stationary with
respect to \(\tilde g_t\) near \(p\), and therefore satisfies the
\(\tilde g_t\)-minimal hypersurface equation in weak form.

Since \(\tilde u_t\in C^{1,\theta}\), the coefficients of the minimal graph
equation are \(C^{0,\theta}\). The \(C^{1,\theta}\)-graph
\(\mathcal R(\Sigma_t)\) is therefore \(C^{2,\theta}\) by elliptic regularity.
Once 
$\mathcal R(\Sigma_t)$ is \(C^{k,\theta}\), the harmonic functions \(v_t\) enjoy $C^{k,\theta}$
Schauder estimates up to $\mathcal R(\Sigma_t)$ from the exterior side. After repeating the above construction, the
regularity of the conformal factor $\tilde u_t$ improves. 
The bootstrap process finally gives the smoothness of
\(\mathcal R(\Sigma_t)\) and of \(u_t\) up to \(\mathcal R(\Sigma_t)\) from
the exterior side.
\end{proof}

\begin{corollary}\label{Cor: smooth regular decomposition}
For every \(t>0\), \(\Sigma_t\) has a decomposition
\[
    \Sigma_t=\mathcal R_t\cup\mathcal S_t,
\]
where \(\mathcal R_t\) is a smooth \(g_t\)-minimal hypersurface and
\[
    \dim_{\mathcal H}\mathcal S_t\leq n-8 .
\]
\end{corollary}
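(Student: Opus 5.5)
This corollary follows by combining the two preceding results, and I would prove it in that order. First, fix $t>0$ and pick $T\geq t$. By Lemma \ref{Lem: almost minimizing limit}, $\Sigma_t$ is $(\beta_T,\mu_T,r_T,g)$-almost-minimizing. So Corollary \ref{Cor: regularity Sigma t}, which is Theorem \ref{Thm: ABT regularity} applied to the flow, gives the decomposition $\Sigma_t=\mathcal R(\Sigma_t)\sqcup\mathcal S(\Sigma_t)$. Here $\mathcal R(\Sigma_t)$ is a relatively open $C^{1,\theta}$-embedded hypersurface and $\mathcal S(\Sigma_t)$ is closed with $\dim_{\mathcal H}\mathcal S(\Sigma_t)\leq n-8$. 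I would set $\mathcal R_t=\mathcal R(\Sigma_t)$ and $\mathcal S_t=\mathcal S(\Sigma_t)$. The dimension estimate then holds directly, since the regular and singular sets are defined with respect to the fixed metric $g$, and Hausdorff dimension does not change under the bi-Lipschitz equivalence $\lambda g\leq g_t\leq g$ from Corollary \ref{Cor: holder}.

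Second, I would upgrade the regular part. Proposition \ref{Prop: smoothness regular part} states that every point $p\in\mathcal R(\Sigma_t)$ has a neighborhood in which $\Sigma_t$ is smooth and $g_t$-minimal, and in which $u_t$ is smooth up to $\Sigma_t$ from the exterior. Smoothness and minimality are local properties, so $\mathcal R_t$ is a smooth $g_t$-minimal hypersurface. Note that $g_t$ itself is only Hölder across $\Sigma_t$, so "$g_t$-minimal" must be read with respect to the exterior trace of $u_t$. That trace is smooth up to $\mathcal R_t$ by the same proposition, so the mean curvature $H^{g_t}_{\mathcal R_t}$ is well defined and vanishes.

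No step here is a real obstacle, because the work was done in Proposition \ref{Prop: smoothness regular part}. The only point I would check is that the hypothesis used there, $\Sigma_0=\partial M$ with minimal regular part, matches the standing assumption stated just before that proposition, so that the bootstrap applies at every regular point, including points where $\Sigma_t$ might touch $\Sigma_0$. That proposition rules out such contact on the regular part, so the argument covers all of $\mathcal R_t$.
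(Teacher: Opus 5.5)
Your proposal is correct and follows the paper's proof, which likewise takes the $C^{1,\theta}$ regular--singular decomposition from Corollary \ref{Cor: regularity Sigma t} and the smoothness and $g_t$-minimality of $\mathcal R_t$ from Proposition \ref{Prop: smoothness regular part}. Your extra remarks are consistent with the paper and make explicit what it leaves implicit: the bi-Lipschitz invariance of Hausdorff dimension, reading $g_t$-minimality via the exterior trace of $u_t$, and the exclusion of regular contact with $\Sigma_0$.
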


\begin{proof}
The \(C^{1,\theta}\) regular-singular decomposition follows from Corollary
\ref{Cor: regularity Sigma t}.  The smoothness and \(g_t\)-minimality of the
regular part follow from Proposition \ref{Prop: smoothness regular part}.
\end{proof}
\begin{corollary}\label{Cor: E_t AF}
    Let $\Sigma_t=\partial E_t$. Then $(\overline E_t,g_t)$ is a smooth complete asymptotically flat manifold with compact almost-minimizing Caccioppoli boundary, and $\mathcal R(\Sigma_t)$ is minimal. If $R_g\geq 0$, then we have $R_{g_t}\geq 0$ in $E_t$.
\end{corollary}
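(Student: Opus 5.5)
We need to verify four things for Corollary \ref{Cor: E_t AF}: smoothness and completeness of \((\overline E_t,g_t)\), asymptotic flatness, the almost-minimizing Caccioppoli boundary with minimal regular part, and \(R_{g_t}\geq0\). The plan is to read each of these off the explicit form \(g_t=u_t^{\frac4{n-2}}g\) together with the results already established for the flow.

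\emph{Smoothness, completeness and the boundary.} In the open set \(E_t\), every \(v_s\) with \(s\leq t\) is \(g\)-harmonic wherever it is nonzero: by Lemma \ref{Lem: monotonicity} we have \(E_t\subset E_s\), so \(E_t\) lies in the exterior of each \(\Sigma_s\). On compact subsets \(K\subset E_t\), interior Schauder estimates give \(C^k(K)\) bounds on \(v_s\) that are uniform in \(s\in[0,t]\), since \(|v_s|\leq1\) and \(\dist_g(K,\Sigma_s)\geq\dist_g(K,\Sigma_t)>0\). Integrating in \(s\) shows that \(u_t\) is smooth in \(E_t\), and Corollary \ref{Cor: holder} gives \(u_t\geq4^{-T}\). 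Hence \(g_t\) is a smooth metric on \(E_t\), uniformly equivalent to \(g\), and therefore complete away from the boundary.

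For the Caccioppoli structure, the extension \(\overline g_t=u_t^{\frac4{n-2}}\overline g\) is Hölder because \(u_t\in C^\alpha\). Compactness of \(\Sigma_t\) follows from the uniform compact-support statement. Corollary \ref{Cor: smooth regular decomposition} and Proposition \ref{Prop: smoothness regular part} show that \(\mathcal R(\Sigma_t)\) is smooth and \(g_t\)-minimal and that \(u_t\) is smooth up to it from the exterior. Almost-minimality with respect to \(g\) is Lemma \ref{Lem: almost minimizing limit}. To transfer it to \(g_t\), I would rerun the Hölder argument of that lemma: \(u_t\) is \(C^\alpha\) and bounded below, and \(g_t\)-balls are comparable to \(g\)-balls. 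This costs only an extra \(Cr^{n-1+\alpha}\) and yields \((\beta',\mu',r',g_t)\)-almost-minimality. Alternatively, \(\Sigma_t\) is exactly \(g_t\)-minimizing among enclosures outside \(\Sigma_0\), and one combines this with the almost-minimality of \(\Sigma_0\) as in Lemma \ref{Lem: almost minimizing}.

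\emph{Asymptotic flatness and scalar curvature.} Outside a fixed compact set containing all \(\Sigma_s\) with \(s\leq t\), each \(v_s\) is harmonic with \(v_s\to-e^{-s}\). The common exterior barrier \(0\leq v_s+e^{-s}\leq e^{-s}\varphi\) with \(\varphi=O(|x|^{2-n})\), together with the standard harmonic expansion on an AF end, gives
\[
u_t=1+\int_0^t v_s\,\mathrm ds=c_t+O(|x|^{2-n}),\qquad c_t=e^{-t}>0,
\]
with matching derivative decay. Here \(c_t\) is positive, and \(\Delta_g u_t=0\) there by differentiating under the integral. After rescaling coordinates by \(c_t^{2/(n-2)}\), the metric \(g_t\) satisfies the decay conditions with the same \(\tau\), since \(n-2>\tau\). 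Integrability \(R_{g_t}\in L^1\) follows from the scalar-curvature formula below.

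That formula is
\[
R_{g_t}=u_t^{-\frac{n+2}{n-2}}\Bigl(-\tfrac{4(n-1)}{n-2}\Delta_g u_t+R_g u_t\Bigr),
\]
and \(\Delta_g u_t=\int_0^t\Delta_g v_s\,\mathrm ds=0\) in \(E_t\). Therefore \(R_{g_t}=u_t^{-\frac4{n-2}}R_g\geq0\), and \(R_{g_t}\in L^1\) because \(R_g\in L^1\) and \(u_t\) is bounded above and below.

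The main obstacle I expect is the uniform interior regularity and the differentiation under the integral. Near \(\Sigma_t\), the distances \(\dist(K,\Sigma_s)\) must stay bounded below uniformly in \(s\leq t\), and this is exactly where the nesting of Lemma \ref{Lem: monotonicity} is needed. A secondary point is to check carefully that the almost-minimizing constants survive the passage from \(g\) to \(g_t\).
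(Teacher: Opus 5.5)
Your proposal is correct and matches the paper, which gives no separate proof but assembles the corollary from the same pieces: Lemma \ref{Lem: almost minimizing limit}, Proposition \ref{Prop: smoothness regular part} and Corollary \ref{Cor: smooth regular decomposition}, the H\"older and compact-support bounds, and the observation that nesting makes $u_t$ $g$-harmonic on $E_t$, so that $R_{g_t}=u_t^{-4/(n-2)}R_g\geq 0$. Your renormalization of the coordinates at infinity (needed because $u_t\to e^{-t}$ there) and your check that $R_{g_t}\in L^1$ fill in details the paper only handles later, in Lemma \ref{Lem: derivative mass function} and Subsection \ref{sec:rigidity-mass-capacity}.
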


\subsection{Excluding singular contact}
\label{subsec: excluding singular contact}

In this subsection, we prove a separation result, which will be used later in two
places. The first is the separation of different time slices in the conformal
flow, and the second is the perturbed minimizing problem used in the smoothing
argument.

We establish the following general local separation criterion.

\begin{proposition}
\label{prop: singular contact exclusion}
Fix a H\"older metric $g_0$. Let $\Sigma=\partial E$ and
$\Sigma'=\partial E'$ be compact $g_0$-almost-minimizing
boundaries with $E'\subset E$. Suppose the following:
\begin{itemize}
\item[(i)] There are continuous metrics $g$ and $g'$, conformal to
$g_0$ and $C^{2,\alpha}$ up to $\mathcal R(\Sigma)$ and
$\mathcal R(\Sigma')$ from $E$ and $E'$, respectively, such that
\[
g'=u^{\frac4{n-2}}g
\]
for some positive function $u$.
\item[(ii)] We have $H_\Sigma^g=0$ on $\mathcal R(\Sigma)$ and
$H_{\Sigma'}^{g'}=h$ on $\mathcal R(\Sigma')$, with respect to
the unit normals pointing into $E$ and $E'$, respectively.
Here $h$ is smooth on an ambient neighborhood of $\Sigma$ and
satisfies $|h(x)|\leq\Lambda\dist_{g_0}(x,\Sigma)$ for some
constant $\Lambda>0$ near $\Sigma$.
\item[(iii)] At any contact point $x_0\in\Sigma\cap\Sigma'$, for
every sequence $r_i\downarrow0$, after passing to a subsequence,
the scaled boundaries in local coordinates
\[
\Sigma_i=r_i^{-1}(\Sigma-x_0),
\qquad
\Sigma_i'=r_i^{-1}(\Sigma'-x_0)
\]
converge to the same $g(x_0)$-minimizing cone $\mathfrak C$
locally as multiplicity-one varifolds and in the Hausdorff sense.
The convergence is locally $C^{2,\alpha}$-graphical on
$\mathcal R(\mathfrak C)$. Moreover, near
$\mathcal R(\mathfrak C)$, the rescaled metrics converge locally
in $C^{2,\alpha}$, in fixed ambient coordinates and from their
respective exterior sides, to $g(x_0)$ and $g'(x_0)$.
\item[(iv)] For every compact $K\Subset\mathcal R(\mathfrak C)$,
there are constants $c_K,C_K>0$ such that, for every $p\in K$,
we can choose $r_p>0$ with
$B_{\mathfrak C}(p,2r_p)\Subset\mathcal R(\mathfrak C)$ and,
for sufficiently large $i$, numbers $\lambda_{i,p,r_p}\geq0$
such that
\[
F_i=\frac{2(n-1)}{n-2}u_i^{-1}\partial_{\nu_i'}^{g_i}u_i
\]
satisfies
\[
-C_K\lambda_{i,p,r_p}\leq F_i\leq0
\quad\text{on }B_i'(p,2r_p),
\qquad
F_i\leq-c_K\lambda_{i,p,r_p}
\quad\text{on }B_i'(p,r_p).
\]
Here
\[
u_i=u\circ\eta_i,\qquad
g_i=r_i^{-2}\eta_i^*g,\qquad
\eta_i(y)=x_0+r_i y,
\]
$\nu_i'$ is the $g_i$-unit normal of $\Sigma_i'$ pointing into
the rescaled exterior region $E_i'=r_i^{-1}(E'-x_0)$, and
$B_i'(p,r)$ denotes the graph over $B_{\mathfrak C}(p,r)$ in
$\Sigma_i'$.
\item[(v)] We have
$\Sigma\cap\Sigma'\subset\mathcal S(\Sigma)\cap\mathcal S(\Sigma')$.
\end{itemize}
Then we have $\Sigma\cap\Sigma'=\emptyset$.
\end{proposition}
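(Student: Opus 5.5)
We argue by contradiction. Suppose $x_0\in\Sigma\cap\Sigma'$. By (v), $x_0$ is singular for both boundaries. Away from the contact set the two boundaries are disjoint near regular points: the strong maximum principle applies to the graphical difference, since $H^g_\Sigma=0$ and $E'\subset E$. Pick any $r_i\downarrow0$ and use (iii) to pass to a subsequence with $\Sigma_i,\Sigma_i'\to\mathfrak C$ as multiplicity-one varifolds, in the Hausdorff sense, and locally $C^{2,\alpha}$-graphically on $\mathcal R(\mathfrak C)$. Fix a compact exhaustion $K\Subset\mathcal R(\mathfrak C)$. For large $i$ we write, over $K$, the surface $\Sigma_i'$ as a normal graph of a function $w_i$ over $\Sigma_i$, with the normal pointing into $E_i=r_i^{-1}(E-x_0)$. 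Since $E'\subset E$ we have $w_i\geq0$, and $w_i\to0$ in $C^{2,\alpha}(K)$.

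The key computation is the equation for $w_i$. By the conformal formula (as in the proof of Proposition \ref{Prop: smoothness regular part}),
\[
H^{g_i}_{\Sigma_i'}=u_i^{\frac2{n-2}}h_i-F_i ,
\]
where $h_i$ is the rescaled $h$. Condition (ii) gives $|h_i|\leq\Lambda r_i^2\dist(\cdot,\Sigma_i)$, which is negligible after normalization. Subtracting $H^{g_i}_{\Sigma_i}=0$ and linearizing gives
\[
L_i w_i=-F_i+o(|w_i|)+O(r_i^2)w_i ,
\]
where $L_i$ is the Jacobi operator of $\Sigma_i$ in $g_i$, which converges to $L_{\mathfrak C}$ for the flat metric $g(x_0)$. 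By (iv), the right-hand side is $\geq0$ on $B_i'(p,2r_p)$ and $\geq c_K\lambda_{i,p,r_p}$ on $B_i'(p,r_p)$, up to errors. So $w_i$ is a nonnegative supersolution, $-L_iw_i\leq0$ with our sign, and it carries a forcing term comparable to $\lambda_{i,p,r_p}$.

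Next we normalize, setting $\hat w_i=w_i/w_i(p_0)$ at a fixed base point $p_0\in\mathcal R(\mathfrak C)$. We use $w_i>0$ from the strong maximum principle, and the Harnack inequality on connected compact subsets of $\mathcal R(\mathfrak C)$; connectedness of the regular part of a minimizing cone, via $\dim\mathcal S\leq n-8$, is used here. After a subsequence, $\hat w_i\to\phi>0$ locally on $\mathcal R(\mathfrak C)$, with $-L_{\mathfrak C}\phi\leq0$ in the distributional sense. The forcing survives in the limit if $\lambda_{i,p,r_p}/w_i(p_0)\not\to0$, making $\phi$ a strict supersolution. In the other case $\phi$ is a positive Jacobi field. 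In either case we then exploit that the separation comes from a true contact at the vertex: $\sup_{B_1}\dist(\Sigma_i',\Sigma_i)\to0$ at every scale.

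The contradiction should come from a growth-rate (frequency) argument in the spirit of Hardt--Simon and Simon's three-annulus lemma. Writing $\phi$ in the spectral decomposition $\phi=\sum_j r^{\gamma_j^\pm}\varphi_j$ on the link, positivity forces the leading term to be a multiple of the first eigenfunction $\varphi_1$ with exponent $\gamma_1^\pm$. We compare the normalized separations $\|w\|_{L^2(A_{\rho})}$ on dyadic annuli $A_\rho$ around $x_0$ at the unrescaled level. Because the blow-up at every scale is the same cone and the differential inequality holds uniformly in $i$, a three-annulus inequality gives a fixed decay or growth rate $\rho^{1+\gamma}$ of the separation. The strict forcing of sign $F\leq0$, not small relative to $w$, pushes the rate strictly below the rate admissible for a solution that vanishes at the vertex. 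Hence $w$ cannot tend to zero at $x_0$ at a rate compatible with both boundaries passing through $x_0$, contradicting $x_0\in\Sigma\cap\Sigma'$.

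The main obstacle is this last step: turning the one-scale supersolution information into a multi-scale statement. This requires uniform constants in (iv) across scales, control of the possibly degenerate ratio $\lambda_{i,p,r_p}/w_i(p_0)$, and handling the integrability of $\phi$ near $\mathcal S(\mathfrak C)$, where the first eigenvalue of the link and the Hardy-type inequality for minimizing cones should be used. I would first try to prove a discrete three-annulus lemma for nonnegative supersolutions near $\mathfrak C$ by compactness, using the blow-up limits above, and then iterate it down to $x_0$.
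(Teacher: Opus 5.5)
There is a genuine gap, and it sits in the step you flag yourself; it is structural rather than technical. Your set-up matches the paper's framework: contradiction at a singular contact point, a common blow-up cone, the graph difference $w_i\ge0$ satisfying a linear equation with forcing $F_i\le0$, and positivity on the connected regular part. But with an arbitrary blow-up sequence and the normalization $\hat w_i=w_i/w_i(p_0)$, the fact that both boundaries pass through $x_0$ is divided out. The limit $\phi$ is then just some positive function with $J_{\mathfrak C}\phi\le0$ on $\mathcal R(\mathfrak C)$. Such functions exist on every minimizing cone, for instance the positive Jacobi fields $r^{\gamma_1^\pm}\varphi_1$ from your own decomposition, so nothing about $\phi$ alone can be contradictory.

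A three-annulus or frequency iteration does not repair this. That tool concerns solutions, not supersolutions: an arbitrary nonnegative forcing destroys rate rigidity, and positivity does not force a leading $\varphi_1$-mode. In addition, the link of $\mathfrak C$ is itself singular, and (iv) gives no uniformity across scales. The strictness of the forcing is also not what produces the contradiction; it only supplies the sign $Jw\le0$ and compactness. For compactness, the danger is the reverse of the one you mention. You need $\lambda_{i,p,r_p}/w_i(p_0)$ to be \emph{bounded}, since the equation only gives $J_i\hat w_i\ge-C_K\lambda_{i,p,r_p}/w_i(p_0)$, and without that bound $\hat w_i$ has no local upper bound. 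The paper proves this in Lemma \ref{lem: compact lower bound fixed quantile}: the lower bound in (iv) and a Dirichlet comparison give $v_i\gtrsim\lambda$ on a ball, and a Harnack chain then gives $-F_i\le C_{K,L}\inf_L v_i$.

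The missing idea is Ilmanen's scale selection combined with the mean-value inequality for superharmonic functions on stationary cones.
\begin{itemize}
\item Measure the separation at scale $\rho$ by the median $\mathscr S(\rho)$ of $\delta_\rho=\dist_{g_\rho}(\cdot,\Sigma'_\rho)$ over $\Sigma_\rho\cap B_1$.
\item By (v) and Theorem \ref{Thm: ABT regularity} the contact set is null, so $\mathscr S(\rho)>0$. Item (iii) gives $\mathscr S(\rho)\to0$, and $\mathscr S$ is bounded below on each $[a,\rho_0]$.
\item \cite[Lemma 3]{IlmanenSMP} then supplies $r_i\downarrow0$ with $(1-\ve_i)\mathscr S(r_i)\le\mathscr S(\rho)$ for all $\rho\in[r_i,\rho_0]$. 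This is exactly where contact at every larger scale enters: after rescaling, $\delta_{r_i}/\mathscr S(r_i)$ has median at least $(1-\ve_i)R$ on $\Sigma_i\cap B_R$.
\item Normalizing by $\mathscr S(r_i)$, not by a point value, yields a limit $w>0$ with $\Delta_{\mathfrak C}w\le-|A_{\mathfrak C}|^2w\le0$. Moreover $w\le1$ on at least half of $\mathfrak C\cap B_1$, and, after Ilmanen's Harnack estimate, $w\ge\eta_0R$ on at least three quarters of $\mathfrak C\cap B_R$.
\item The mean-value inequality \cite[Lemma 4]{IlmanenSMP}, applied to the superharmonic truncation $\min\{w,\eta_0R\}$, gives $\tfrac34\eta_0R\le\tfrac12+\tfrac12\eta_0R$. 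This fails for large $R$.
\end{itemize}
No spectral analysis on the singular link is needed.
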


We first prove a local comparison estimate.
\begin{lemma}
\label{lem: compact lower bound fixed quantile}
Fix a contact point $x_0$ and a blow-up sequence as in Item (iii),
and let $\mathfrak C$ be the resulting common area-minimizing cone.
Let
\[
K,L\Subset\mathcal R(\mathfrak C)\cap B_1,
\qquad L\neq\emptyset.
\]
In the graphical coordinates furnished by Item (iii), let
$v_i\geq0$ denote the difference of the graph functions of
$\Sigma_i'$ and $\Sigma_i$ over $\mathfrak C$.
Assume that Item (iv) holds and
\[
\hat J_i v_i\leq F_i,
\qquad
\hat J_i\rightarrow J_{\mathfrak C}
:=\Delta_{\mathfrak C}+|A_{\mathfrak C}|^2
\]
with coefficients converging in
$C^\alpha_{\mathrm{loc}}(\mathcal R(\mathfrak C))$.
Then there exists $C_{K,L}>0$, independent of $i$, such that
\[
0\leq-F_i\leq C_{K,L}\inf_L v_i
\quad\text{on }K
\]
for sufficiently large $i$.
\end{lemma}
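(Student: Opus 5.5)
The plan is to turn the one-sided information in Item~(iv) into a quantitative lower bound for $v_i$. Near each point of $K$, the forcing $-F_i$ is comparable to the number $\lambda_{i,p,r_p}$, and since $v_i\geq0$ satisfies
\[
-\hat J_i v_i\geq -F_i\geq0,
\]
$v_i$ is a nonnegative supersolution with a forcing term of that size. A local barrier will give $v_i\gtrsim\lambda_{i,p,r_p}$ near $p$. A Harnack chain inside the connected regular part $\mathcal R(\mathfrak C)$ will then carry this bound over to $L$. The upper bound $-F_i\leq C_K\lambda_{i,p,r_p}$ from Item~(iv) closes the argument.

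\textbf{Step 1 (reduction to a finite cover).} For each $p\in K$, Item~(iv) provides a radius $r_p$ with $B_{\mathfrak C}(p,2r_p)\Subset\mathcal R(\mathfrak C)$. By compactness I cover $K$ by finitely many balls $B_{\mathfrak C}(p_j,r_{p_j})$, $j=1,\dots,N$, with $N$ independent of $i$. On $B_i'(p_j,2r_{p_j})$, hence on the graph over $B_{\mathfrak C}(p_j,r_{p_j})$, we have
\[
0\leq -F_i\leq C_K\lambda_{i,p_j,r_{p_j}}.
\]
It therefore suffices to show that $\lambda_{i,p_j,r_{p_j}}\leq C\inf_L v_i$ for each $j$ and all large $i$, with $C$ independent of $i$.

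\textbf{Step 2 (local barrier).} Fix $j$ and write $p=p_j$, $\lambda_i=\lambda_{i,p,r_p}$. I choose $\rho\in(0,r_p]$ so small, depending only on the geometry of $\mathfrak C$ near the $p_j$, that the Dirichlet problem for $J_{\mathfrak C}$ on $B_{\mathfrak C}(p,\rho)$ has positive first eigenvalue. Because $\hat J_i\to J_{\mathfrak C}$ in $C^\alpha_{\mathrm{loc}}$, the same holds uniformly for $\hat J_i$ once $i$ is large. Let $w_i$ solve
\[
-\hat J_i w_i=c_K\lambda_i\ \text{in } B_{\mathfrak C}(p,\rho),\qquad w_i=0\ \text{on }\partial B_{\mathfrak C}(p,\rho).
\]
Since $-\hat J_i v_i\geq -F_i\geq c_K\lambda_i$ there and $v_i\geq0$ on the boundary, the maximum principle (valid by positivity of the first eigenvalue) gives $v_i\geq w_i$. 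By linearity, $w_i=\lambda_i\,\phi_i$, where $\phi_i$ is the solution with forcing $c_K$. Schauder estimates and the coefficient convergence give $\phi_i\to\phi_\infty>0$, so $\phi_i\geq c_0>0$ on $B_{\mathfrak C}(p,\rho/2)$ for large $i$. Hence
\[
\inf_{B_{\mathfrak C}(p,\rho/2)}v_i\geq c_0\lambda_i .
\]

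\textbf{Step 3 (Harnack chain to $L$).} Here I use that $\mathcal R(\mathfrak C)$ is connected. The cone is a multiplicity-one minimizing hypercone with singular set of codimension at least $7$ in the cone, so the singular set does not disconnect it. I fix a compact connected set $\Omega\Subset\mathcal R(\mathfrak C)$ containing $L$ and every $B_{\mathfrak C}(p_j,\rho)$, and cover it by a fixed finite chain of small balls. On each ball, $v_i$ is a nonnegative supersolution, $\hat J_i v_i\leq F_i\leq0$. The weak Harnack inequality (De Giorgi--Nash--Moser or Krylov--Safonov, with constants uniform in $i$ thanks to the $C^\alpha$ coefficient convergence) then bounds the infimum of $v_i$ on each ball below by a uniform multiple of its infimum on the overlapping neighbouring ball. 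Iterating along the chain from $B_{\mathfrak C}(p_j,\rho/2)$ to $L$ yields
\[
\inf_L v_i\geq c\,\lambda_{i,p_j,r_{p_j}}.
\]
Combined with Step~1, this gives $-F_i\leq C_{K,L}\inf_L v_i$ on $K$.

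I expect the main obstacle to be the uniformity in $i$ in Steps~2 and~3, together with the connectedness claim. The eigenvalue condition and the weak Harnack constants must not degenerate, which is why all balls are kept in a fixed compact subset of $\mathcal R(\mathfrak C)$ where the convergence in Item~(iii) is $C^{2,\alpha}$-graphical. If connectedness of $\mathcal R(\mathfrak C)$ turns out to be delicate, I would first try to justify it from the multiplicity-one minimizing property (a minimizing boundary cone whose regular part split into two pieces would allow a cut-and-paste competitor with less area), or else enlarge $L$ to meet each component.
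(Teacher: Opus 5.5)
Your proposal is correct and follows essentially the paper's argument. You take a finite cover of $K$ by the balls from Item (iv), use a Dirichlet barrier $c_K\lambda\,\phi_i$ on small balls where $-\hat J_i$ has positive first eigenvalue uniformly in $i$, and run a weak-Harnack chain through a connected compact part of $\mathcal R(\mathfrak C)$ to $L$; the paper works with $\max_\ell\lambda_{i,p_\ell,r_\ell}$ rather than each ball separately, which is equivalent.

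The one point to firm up is the connectedness of $\mathcal R(\mathfrak C)$. It does not follow from a codimension count alone; it follows from stationarity via Ilmanen's strong maximum principle, as in \cite[Lemma 5]{IlmanenSMP} cited by the paper. Your fallback of enlarging $L$ would not help, since a Harnack chain cannot pass between components.
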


\begin{proof}
By \cite[Lemma 5]{IlmanenSMP}, $\mathcal R(\mathfrak C)$ is
connected. Choose a connected open set $U$ such that
\[
K\cup L\subset U\Subset\mathcal R(\mathfrak C)\cap B_1.
\]
By Item (iii), for sufficiently large $i$, the relevant portions
of $\Sigma_i$ and $\Sigma_i'$ are graphical over $U$. We use
these identifications to regard $v_i,F_i$, and $\hat J_i$ as
defined on $U$.

For each $p\in K$, first decrease the radius supplied by Item
(iv), retaining the associated numbers $\lambda_{i,p,r_p}$,
so that $B_{\mathfrak C}(p,2r_p)\Subset U$ and the radius is
sufficiently small for the Dirichlet comparison below.
The estimates in Item (iv) remain valid on the smaller balls.
By compactness, choose finitely many such points and radii with
\[
K\subset\bigcup_{\ell=1}^N B_{\mathfrak C}(p_\ell,r_\ell/2).
\]
Set
\[
\lambda_i^K:=\max_{1\leq\ell\leq N}\lambda_{i,p_\ell,r_\ell}.
\]
Choose smooth domains $D_\ell$ satisfying
\[
B_{\mathfrak C}(p_\ell,r_\ell/2)\Subset D_\ell
\Subset B_{\mathfrak C}(p_\ell,r_\ell),
\qquad
\lambda_1(-J_{\mathfrak C},D_\ell)>0.
\]
The local coefficient convergence implies that $-\hat J_i$ has
positive first Dirichlet eigenvalue on every $D_\ell$ for all
sufficiently large $i$. Hence there is a unique solution
$\phi_{i,\ell}$ of
\[
\hat J_i\phi_{i,\ell}=-1\quad\text{in }D_\ell,
\qquad
\phi_{i,\ell}=0\quad\text{on }\partial D_\ell.
\]
The maximum principle and local elliptic estimates give
$\theta_\ell>0$, independent of $i$, such that
\[
\phi_{i,\ell}\geq\theta_\ell
\quad\text{on }B_{\mathfrak C}(p_\ell,r_\ell/2).
\]
Set $\theta_K:=\min_{1\leq\ell\leq N}\theta_\ell>0$.
Choose $\ell_i$ such that
$\lambda_{i,p_{\ell_i},r_{\ell_i}}=\lambda_i^K$.
By Item (iv), $\hat J_i v_i\leq-c_K\lambda_i^K$ on
$D_{\ell_i}$. Comparing $v_i$ with
$c_K\lambda_i^K\phi_{i,\ell_i}$ gives
\begin{equation}\label{Eq: vi lower bound}
v_i\geq c_K\theta_K\lambda_i^K
\quad\text{on }B_{\mathfrak C}(p_{\ell_i},r_{\ell_i}/2).
\end{equation}
Moreover, Item (iv) gives $F_i\leq0$ on $U$, and hence
$\hat J_i v_i\leq0$ there. The weak Harnack inequality and a
Harnack-chain argument, together with \eqref{Eq: vi lower bound},
give a constant $b_{K,L}>0$, independent of $i$, such that
\[
\inf_L v_i\geq b_{K,L}\lambda_i^K.
\]
Finally, Item (iv) and the finite covering of $K$ give
$0\leq-F_i\leq C_K\lambda_i^K$ on $K$. Therefore,
\[
0\leq-F_i\leq\frac{C_K}{b_{K,L}}\inf_L v_i
\quad\text{on }K.
\]
This proves the lemma.
\end{proof}

We now prove the local separation criterion.
\begin{proof}[Proof of Proposition \ref{prop: singular contact exclusion}]
Assume for contradiction that $x_0\in\Sigma\cap\Sigma'$.
By Item (v),
$x_0\in\mathcal S(\Sigma)\cap\mathcal S(\Sigma')$.
Set $\bar g=g(x_0)$. After a linear change of coordinates, we
may regard $\bar g$ as the Euclidean metric. All balls $B_R$
below are taken with respect to $\bar g$.

Fix a small constant $\rho_0$. For $0<\rho\leq\rho_0$, write
\[
\Sigma_\rho=\rho^{-1}(\Sigma-x_0),\qquad
\Sigma_\rho'=\rho^{-1}(\Sigma'-x_0),\qquad
g_\rho=\rho^{-2}\eta_\rho^*g,
\]
where $\eta_\rho(y)=x_0+\rho y$, and define
\[
\delta_\rho(x):=\dist_{g_\rho}(x,\Sigma_\rho'),
\qquad x\in\Sigma_\rho.
\]
For a measurable function $f$ on an $(n-1)$-rectifiable set $A$
and a metric $\gamma$, denote
\[
m_{A,\gamma}^a(f)
:=\sup\{t\geq0:
\mathcal H_\gamma^{n-1}(A\cap\{f\leq t\})\leq a\}.
\]
Set
\[
\beta_\rho:=\mathcal H_{g_\rho}^{n-1}(\Sigma_\rho\cap B_1),
\qquad
\mathscr S(\rho)
:=m_{\Sigma_\rho\cap B_1,g_\rho}^{\beta_\rho/2}(\delta_\rho).
\]
The almost-minimizing density bounds give $0<\beta_\rho<\infty$.
By Item (v) and Theorem \ref{Thm: ABT regularity},
\[
\mathcal H_{g_\rho}^{n-1}(\{\delta_\rho=0\})=0.
\]
Consequently, $\mathscr S(\rho)$ is finite and positive.
Item (iii) implies $\mathscr S(\rho)\rightarrow0$ as
$\rho\downarrow0$. Moreover, for every $0<a<\rho_0$, $\mathscr S(\rho)$ is
uniformly bounded below by a positive constant for
$\rho\in[a,\rho_0]$. Indeed, since the contact set has zero
area, choose $\epsilon>0$ so that the portion of
$\Sigma\cap B_{\rho_0}^{\bar g}(x_0)$ within $g$-distance
$\epsilon$ of $\Sigma'$ has area less than half that of
$\Sigma\cap B_a^{\bar g}(x_0)$. By inclusion and rescaling,
$\mathscr S(\rho)\geq\epsilon/\rho\geq\epsilon/\rho_0>0$
for every $a\leq\rho\leq\rho_0$.

Choose scales $r_i\downarrow0$ by the scale-selection procedure
of \cite[Lemma 3]{IlmanenSMP}, with $\ve_i\downarrow0$, such that
\[
(1-\ve_i)\mathscr S(r_i)\leq\mathscr S(\rho)
\quad\text{for all }\rho\in[r_i,\rho_0].
\]
For every fixed $R\geq1$ and all sufficiently large $i$, rescaling
gives
\begin{equation}\label{Eq: half separation R}
m_{\Sigma_i\cap B_R,g_i}^{(\beta_{Rr_i}/2)R^{n-1}}
\left(\frac{\delta_{r_i}}{\mathscr S(r_i)}\right)
=\frac{R\mathscr S(Rr_i)}{\mathscr S(r_i)}
\geq(1-\ve_i)R.
\end{equation}

By Item (iii), after passing to a subsequence, both boundaries
converge to the same minimizing cone $\mathfrak C$. Put
\[
\beta:=\mathcal H_{\bar g}^{n-1}(\mathfrak C\cap B_1).
\]
Varifold convergence and continuity of $g$ give, for every fixed
$R>0$,
\[
\mathcal H_{g_i}^{n-1}(\Sigma_i\cap B_R)
\rightarrow\beta R^{n-1},
\qquad
\beta_{Rr_i}\rightarrow\beta.
\]

Fix $K\Subset\mathcal R(\mathfrak C)$. By Item (iii), both
boundaries are $C^{2,\alpha}$ graphs over $\mathfrak C$ near
$K$. Using the fixed $\bar g$-unit normal of $\mathfrak C$,
let $v_i\geq0$ be the difference of their graph functions.
We use these graphical identifications to pull back quantities
from the two boundaries without further notation. Then
\begin{equation}\label{Eq: vi}
\|v_i\|_{C^{2,\alpha}(K)}\rightarrow0.
\end{equation}
The graphical and metric convergence also give
\[
\delta_{r_i}=(1+o(1))v_i
\]
locally uniformly on $\mathcal R(\mathfrak C)$.

Put $h_i=r_i h\circ\eta_i$, evaluated on $\Sigma_i'$.
By Item (ii), $h$ vanishes on $\Sigma$. The mean-value formula
along the segments between the two graphs therefore gives
\begin{equation}\label{Eq:h_i}
h_i=b_i v_i,
\qquad
\|b_i\|_{C^\alpha(K)}=O(r_i^2).
\end{equation}
Subtracting the $g_i$-mean-curvature graph expressions gives
\[
H_{\Sigma_i'}^{g_i}-H_{\Sigma_i}^{g_i}=-J_i v_i,
\]
where $J_i$ is a second-order linear operator. By the
$C^{2,\alpha}$ graphical and metric convergence in Item (iii),
its coefficients converge in $C^\alpha$ to those of
$J_{\mathfrak C}=\Delta_{\mathfrak C}+|A_{\mathfrak C}|^2$.
Since $H_{\Sigma_i}^{g_i}=0$, the conformal mean-curvature
formula gives
\[
-J_i v_i+F_i=u_i^{\frac2{n-2}}h_i.
\]
Thus, setting
\[
\hat J_i:=J_i+u_i^{\frac2{n-2}}b_i,
\]
we obtain
\[
\hat J_i v_i=F_i,
\qquad
\hat J_i\rightarrow J_{\mathfrak C}
\]
with coefficients converging in
$C^\alpha_{\mathrm{loc}}(\mathcal R(\mathfrak C))$.

Since $\mathcal S(\mathfrak C)$ has zero
$(n-1)$-dimensional measure, choose
$L\Subset\mathcal R(\mathfrak C)\cap B_1$ such that
\[
\mathcal H_{\bar g}^{n-1}(L)>\frac\beta2.
\]
Let $K\Subset\mathcal R(\mathfrak C)$, and choose $R>1$
with $K\cup L\subset B_R$. Applying Lemma
\ref{lem: compact lower bound fixed quantile}, after a fixed
dilation, gives
\[
-C_{K,L,R}\inf_{L_i}v_i\leq F_i\leq0
\quad\text{on }K,
\]
where $L_i\subset\Sigma_i$ is the graphical copy of $L$.
For large $i$,
\[
\mathcal H_{g_i}^{n-1}(L_i)>\frac{\beta_{r_i}}2,
\qquad
\delta_{r_i}\geq\frac12\inf_{L_i}v_i
\quad\text{on }L_i.
\]
Hence
\[
\mathcal H_{g_i}^{n-1}
\left(\Sigma_i\cap B_1\cap
\left\{\delta_{r_i}<\frac12\inf_{L_i}v_i\right\}\right)
<\frac{\beta_{r_i}}2,
\]
and therefore $\mathscr S(r_i)\geq\frac12\inf_{L_i}v_i$.
In particular,
\begin{equation}\label{Eq: Fi normalization}
-A_{K,L,R}\leq\frac{F_i}{\mathscr S(r_i)}\leq0
\quad\text{on }K,
\qquad A_{K,L,R}:=2C_{K,L,R}.
\end{equation}

Set $w_i:=v_i/\mathscr S(r_i)$. Then
$\hat J_i w_i=F_i/\mathscr S(r_i)$.
The set $\{\delta_{r_i}\leq\mathscr S(r_i)\}$ in
$\Sigma_i\cap B_1$ has measure at least $\beta_{r_i}/2$,
so it meets $L_i$ in a set of positive measure. Consequently,
$w_i\leq2$ somewhere on $L_i$ for large $i$.
The inhomogeneous Harnack inequality, a Harnack-chain argument,
and \eqref{Eq: Fi normalization} give locally uniform bounds
for $w_i$. Local $W^{2,p}$ estimates, with $p$ sufficiently
large, then give, after passing to a subsequence,
\[
w_i\rightarrow w
\quad\text{in }C^{1,\alpha}_{\mathrm{loc}}
(\mathcal R(\mathfrak C))
\quad\text{and weakly in }W^{2,p}_{\mathrm{loc}}.
\]
Here $w\geq0$ and $J_{\mathfrak C}w\leq0$.
If $w\equiv0$, then $\delta_{r_i}/\mathscr S(r_i)<1/2$
on $L_i$ for large $i$, contradicting the half-volume
normalization. Thus $w$ is not identically zero. Since
$\Delta_{\mathfrak C}w\leq-|A_{\mathfrak C}|^2w\leq0$,
the strong maximum principle gives $w>0$ on
$\mathcal R(\mathfrak C)$.

Passing to sublevel measures and exhausting the regular part,
the normalization and \eqref{Eq: half separation R} imply
\[
\mathcal H_{\bar g}^{n-1}
(\mathfrak C\cap B_1\cap\{w\leq1\})\geq\frac\beta2,
\qquad
\mathcal H_{\bar g}^{n-1}
(\mathfrak C\cap B_R\cap\{w<R\})
\leq\frac\beta2 R^{n-1}.
\]
In particular,
\[
m_{\mathfrak C\cap B_R,\bar g}^{(\beta/2)R^{n-1}}(w)
\geq R.
\]
The Harnack estimate from \cite[equation (20)]{IlmanenSMP}
gives $\eta_0>0$, independent of $R$, such that
\begin{equation}\label{Eq: m lower bound}
m_{\mathfrak C\cap B_R,\bar g}^{(\beta/4)R^{n-1}}(w)
\geq\eta_0R.
\end{equation}
For $R$ sufficiently large that $\eta_0R\geq1$, set
$w_T:=\min\{w,\eta_0R\}$. By \eqref{Eq: m lower bound},
the cutoff argument and mean-value inequality of
\cite[Lemma 4]{IlmanenSMP} give
\[
\fint_{\mathfrak C\cap B_1}w_T\,\mathrm d\mathcal H_{\bar g}^{n-1}
\geq
\fint_{\mathfrak C\cap B_R}w_T\,\mathrm d\mathcal H_{\bar g}^{n-1}
\geq\frac{3\eta_0R}{4}.
\]
On the other hand, at least half the measure of
$\mathfrak C\cap B_1$ lies in $\{w\leq1\}$, so
\[
\fint_{\mathfrak C\cap B_1}w_T\,\mathrm d\mathcal H_{\bar g}^{n-1}
\leq\frac12+\frac{\eta_0R}{2}.
\]
This is a contradiction for large $R$. Therefore,
$\Sigma\cap\Sigma'=\emptyset$.
\end{proof}

\begin{corollary}
\label{cor: separation time slices}
 For every $0\leq s<t$,
the conformal-flow slices are disjoint:
\[
\Sigma_t\cap\Sigma_s=\emptyset.
\]
\end{corollary}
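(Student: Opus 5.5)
The plan is to apply Proposition \ref{prop: singular contact exclusion} with $\Sigma=\Sigma_s$, $\Sigma'=\Sigma_t$, $E=E_s$, $E'=E_t$ (so $E'\subset E$ by Lemma \ref{Lem: monotonicity}), $g_0=g$, and the conformal metrics $g=g_s$, $g'=g_t$. Using Remark \ref{Rmk: new conformal flow}, we view $\{(g_{\tau+s},\Sigma_{\tau+s})\}_{\tau\geq0}$ as a conformal flow on $(\overline E_s,g_s)$ starting at $\Sigma_s$. This is legitimate by Corollary \ref{Cor: E_t AF}. Then $u:=u_t/u_s=1+\int_0^{t-s}\tilde v_\tau\,\mathrm d\tau$ with $\tilde v_\tau=v_{\tau+s}/u_s$, and we verify Items (i)–(v) in turn.

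Items (i) and (ii) are handled as follows. Both slices are uniformly almost-minimizing for $g$ by Lemma \ref{Lem: almost minimizing limit}. Proposition \ref{Prop: smoothness regular part} gives smoothness of $u_s,u_t$ up to the regular parts from the exterior side, and $g_s$-, $g_t$-minimality there. So (i) holds and (ii) holds with $h\equiv0$.

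Item (v) comes first among the remaining items. Suppose a contact point $x_0$ lies in $\mathcal R(\Sigma_s)$ or $\mathcal R(\Sigma_t)$. By Proposition \ref{Prop: local regularity near regular points}, both slices are then $C^{1,\theta}$, hence smooth, graphs near $x_0$, with $\Sigma_t$ lying on the exterior side of $\Sigma_s$ and touching it. At $x_0$, along $\mathcal R(\Sigma_s)$, we have $u=1$, and Hopf gives $\partial_\nu u<0$. This holds because $\tilde v_\tau<0$ in the connected exterior of $\Sigma_{s+\tau}$, which contains points near $x_0$ outside $\Sigma_t$. Then, exactly as in the contact exclusion step of Proposition \ref{Prop: smoothness regular part}, $\Sigma_s$ is strictly $g_t$-mean-concave at $x_0$. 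A mean-concave foliation then contradicts the outer-minimizing property of $\Sigma_t$. Hence the contact set lies in $\mathcal S(\Sigma_s)\cap\mathcal S(\Sigma_t)$.

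Item (iii) requires both blow-ups to share a cone. Blow-ups of each slice at $x_0$ subconverge to minimizing cones $\mathfrak C,\mathfrak C'$ by the compactness in Corollary \ref{Cor: discrete compactness} and the almost-minimizing monotonicity. The cones are nested, since $E'\subset E$, and they have a common vertex. Ilmanen's strong maximum principle for nested stationary cones \cite{IlmanenSMP} forces $\mathfrak C=\mathfrak C'$. Multiplicity one follows from the boundary structure. The graphical $C^{2,\alpha}$ convergence near $\mathcal R(\mathfrak C)$ follows from Allard/$\varepsilon$-regularity (Theorem \ref{Thm: ABT regularity}), with Schauder bootstrap as in Proposition \ref{Prop: smoothness regular part}. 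The metric convergence is immediate, because the rescaled conformal factors tend to constants in $C^{2,\alpha}$ on the exterior sides; the Schauder estimates for $v_\tau$ near regular graphs are scale-invariant.

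I expect Item (iv) to be the main obstacle, and I would prove it as follows. We have $F_i=\frac{2(n-1)}{n-2}u_i^{-1}\partial_{\nu_i'}u_i$, where the rescaled $\partial u_i$ equals $r_i\int_0^{t-s}\partial_\nu\tilde v_\tau\,\mathrm d\tau$ evaluated on $\Sigma_t$. Each $\tilde v_\tau\le0$ vanishes on $\Sigma_{s+\tau}$, and $\Sigma_t$ lies in its closure region, so $\partial_{\nu'}\tilde v_\tau\le0$ on $\Sigma_t$. This gives $F_i\le0$. For the two-sided comparison, I would set $\lambda_{i,p,r_p}:=-F_i$ evaluated at a fixed point over $p$. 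Harnack's inequality is then applied to the nonnegative harmonic functions $-\tilde v_\tau$ for $\tau$ near $t-s$, in the rescaled exterior of $\Sigma_{s+\tau}$. The slices $\Sigma_{s+\tau}$ with $\tau$ close to $t-s$ are graphs squeezed between $\Sigma_i$ and $\Sigma'_i$ over $B_{\mathfrak C}(p,2r_p)$, so they are uniformly $C^{1,\theta}$ there. Boundary Harnack on such Lipschitz (indeed $C^{1,\theta}$) domains then gives normal derivatives comparable up to constants $c_K,C_K$ on $B'_i(p,2r_p)$. Integrating in $\tau$ preserves this comparability. With all of (i)–(v) verified, Proposition \ref{prop: singular contact exclusion} yields $\Sigma_t\cap\Sigma_s=\emptyset$.
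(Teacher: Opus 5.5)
Your overall route is the paper's. You apply Proposition~\ref{prop: singular contact exclusion} with $\Sigma=\Sigma_s$, $\Sigma'=\Sigma_t$ and $u=u_t/u_s$. Your treatment of Items (i), (ii), (iii) and (v) is essentially the paper's: Hopf for $u$ at a regular contact point plus the strict mean-concave barrier, and a common cone from nesting and Ilmanen's one-sided argument.

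\textbf{The gap is in Item (iv).}

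\emph{The sign argument.} Your argument for $F_i\le 0$ is not valid. For $\tau<t-s$ the hypersurface $\Sigma_t$ is not the zero set of $\tilde v_\tau$. Its points are in general interior points of $E_{s+\tau}$, and a negative harmonic function has no sign for its derivative in a given direction at an interior point. Globally your claim would say $H^{g_s}_{\Sigma_t}=-F\ge 0$ on all of $\mathcal R(\Sigma_t)$, and nothing in the setup gives this away from the contact point. The sign holds only on $B_i'(p,2r_p)$ for large $i$, and it has to be derived from the blow-up.

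\emph{Where the estimates live.} For the same reason, boundary Harnack and Hopf applied to $\psi_\tau=-\tilde v_\tau$ give two-sided normal-derivative bounds on its own zero set $\Sigma_{s+\tau,i}$. They do not give them on $\Sigma_i'=\Sigma_{t,i}$, as you assert.

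\emph{The missing transfer step,} which is what the paper does:
\begin{itemize}
\item Normalize $\psi_{\tau,i}$ by its value at a fixed interior reference point, and call the result $\hat\psi_{\tau,i}$.
\item Boundary Harnack, Hopf and boundary Schauder estimates give $c_K\le\partial_\nu\hat\psi_{\tau,i}\le C_K$ on $\Sigma_{s+\tau,i}$, together with a uniform $C^{1,\theta}$ bound up to $\Sigma_{s+\tau,i}$.
\item $\Sigma_{s+\tau,i}$ and $\Sigma_{t,i}$ converge $C^1$-graphically to the same cone near $K$. Hence the normalized normal derivative along $\Sigma_{t,i}$ differs from the one along $\Sigma_{s+\tau,i}$ by $o(1)$, uniformly in $\tau$.
\item This gives both positivity and comparability on $\Sigma_{t,i}$ for large $i$.
\end{itemize}

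\emph{All intermediate times are needed.} This must be done for every $\tau\in[0,t-s]$, not only for $\tau$ near $t-s$. By nesting, every intermediate slice passes through $x_0$ and is squeezed between the two rescaled boundaries. If you control only $\tau$ near $t-s$, the rest of $\int_0^{t-s}\partial_{\nu'}\tilde v_\tau\,\mathrm d\tau$ has uncontrolled sign and size.

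Once the transfer is done for all $\tau$, your normalization $\lambda_{i,p,r_p}=-F_i$ at a point is equivalent to the paper's $\int_s^t a_{i,\tau,p}\,\mathrm d\tau$.

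\textbf{A smaller omission in (iii).} The one-sided $C^{2,\alpha}$ convergence of the rescaled $g_s$ is not immediate. Since $u_s$ integrates all earlier $v_\tau$, you first need uniform graphical estimates, via Lemma~\ref{Lem: one-sided density}, for every earlier slice entering the rescaled neighborhood. Only then do the Schauder estimates apply.
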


\begin{proof}
Fix $0\leq s<t$. We apply Proposition
\ref{prop: singular contact exclusion} with
\[
\Sigma=\Sigma_s,\qquad\Sigma'=\Sigma_t,
\qquad g=g_s,\qquad g'=g_t,
\]
taking the initial metric as the reference metric $g_0$.
By construction, $E_t\subset E_s$, and both boundaries are
almost-minimizing with respect to $g_0$.
By Proposition \ref{Prop: smoothness regular part}, $g_s$ and
$g_t$ are smooth up to the respective regular parts from their
exterior sides. Moreover,
\[
g_t=u^{\frac4{n-2}}g_s,
\qquad u=\frac{u_t}{u_s}.
\]
Thus Item (i) holds. Since
\[
H_{\Sigma_s}^{g_s}=0,
\qquad H_{\Sigma_t}^{g_t}=0,
\]
Item (ii) holds with $h=0$.

Let $x_0\in\Sigma_s\cap\Sigma_t$, and let $r_i\downarrow0$.
Almost-minimizing compactness, nesting, and the one-sided
tangent-cone argument of \cite[Lemma 5]{IlmanenSMP} give,
after passing to a subsequence, a common minimizing cone
$\mathfrak C$ for the rescaled boundaries.

Fix $K\Subset\mathcal R(\mathfrak C)$. Theorem
\ref{Thm: ABT regularity} and the argument of Proposition
\ref{Prop: local regularity near regular points} give uniform
$C^{1,\theta}$ graphical estimates near $K$ for all earlier
flow boundaries meeting the chosen rescaled neighborhood.
The $C^{2,\alpha}$ extension of the initial metric gives
uniform $C^{2,\alpha}$ bounds for its rescalings near $K$.

Apply the bootstrap of Proposition
\ref{Prop: smoothness regular part} on these neighborhoods, uniformly
for times in $[0,t]$. Boundary estimates for the harmonic
functions $v_\tau$, together with interior estimates when the
corresponding boundary misses the larger neighborhood, give uniform
one-sided $C^{1,\theta}$ estimates. Integrating in time gives
the same estimates for $u_\tau$. The minimal graph equations
then give uniform $C^{2,\theta}$ graphical estimates for the
boundaries. Repeating the harmonic boundary estimates gives
uniform one-sided $C^{2,\theta}$ estimates for $u_\tau$ and
$g_\tau$ on smaller neighborhoods. Compactness and interpolation,
after decreasing the exponent, yield
\[
\Sigma_{s,i},\Sigma_{t,i}\rightarrow\mathfrak C
\quad\text{locally }C^{2,\alpha}\text{-graphically},
\]
and
\[
g_{s,i}\rightarrow g_s(x_0),
\qquad g_{t,i}\rightarrow g_t(x_0)
\]
locally in $C^{2,\alpha}$ from the respective exterior sides.
This verifies Item (iii). The same estimates and nesting give
uniform graphical convergence for $\tau\in[s,t]$.

To verify Item (iv), set
\[
\psi_\tau:=-\frac{v_\tau}{u_s},\qquad\tau\in[s,t].
\]
Then $\psi_\tau$ is positive and $g_s$-harmonic in $E_\tau$,
vanishes on $\Sigma_\tau$, and
\[
u=1-\int_s^t\psi_\tau\,\mathrm d\tau.
\]
For $p\in K$, choose $r_p>0$ such that
$B_{\mathfrak C}(p,4r_p)\Subset\mathcal R(\mathfrak C)$.
Apply boundary Harnack and boundary Schauder estimates to
$\psi_{\tau,i}:=\psi_\tau\circ\eta_i$, normalized by its
value at a fixed interior reference point. We obtain positive
numbers $a_{i,\tau,p}$ such that
\begin{equation}\label{Eq: psi upper bound}
0<\partial_{\nu_{t,i}}^{g_{s,i}}\psi_{\tau,i}
\leq C_Ka_{i,\tau,p}
\quad\text{on }B_i'(p,2r_p),
\end{equation}
and
\begin{equation}\label{Eq: psi lower bound}
\partial_{\nu_{t,i}}^{g_{s,i}}\psi_{\tau,i}
\geq c_Ka_{i,\tau,p}
\quad\text{on }B_i'(p,r_p).
\end{equation}
Here the estimates are first obtained on $\Sigma_{\tau,i}$
and then transferred to $\Sigma_{t,i}$, using the uniform
graphical convergence and the estimates for the normalized
harmonic functions. Set
\[
\lambda_{i,p,r_p}:=\int_s^t a_{i,\tau,p}\,\mathrm d\tau>0.
\]
Integrating \eqref{Eq: psi upper bound} and
\eqref{Eq: psi lower bound}, and using the local positive
upper and lower bounds for $u$, verifies Item (iv).

Finally, suppose that a contact point is regular for either
boundary. Proposition \ref{Prop: local regularity near regular points}
implies that it is regular for both. The function $u$ is
$g_s$-harmonic in $E_t$, with $u<1$ there and $u(x_0)=1$.
The Hopf lemma gives $(\partial_\nu^{g_s}u)(x_0)<0$.
The conformal mean-curvature formula and the strict barrier
argument of Proposition \ref{Prop: smoothness regular part}
exclude regular contact. Thus Item (v) holds.
Proposition \ref{prop: singular contact exclusion} now gives
$\Sigma_s\cap\Sigma_t=\emptyset$.
\end{proof}

For later use, fix $t>0$ and consider a tangent-cone blow-up
of $\Sigma_t$ at $x_0$, with limiting cone $\mathfrak C$.
By Corollary \ref{cor: separation time slices},
$\Sigma_t\cap\Sigma_0=\emptyset$. Hence the initial metric
is smooth near $x_0$, and its rescalings have uniformly
bounded derivatives.

Fix $K\Subset\mathcal R(\mathfrak C)$. Theorem
\ref{Thm: ABT regularity} gives uniform $C^{1,\theta}$
graphical estimates for the rescaled $\Sigma_t$ near $K$.
By Lemma \ref{Lem: one-sided density} and the argument of
Proposition \ref{Prop: local regularity near regular points},
the same estimates hold for all earlier flow boundaries
that intersect a sufficiently small neighborhood of $K$.
In what follows, all estimates are taken in rescaled
coordinates, on smaller neighborhoods when necessary,
with constants independent of the rescaling index and
of $s\in[0,t]$.

We proceed as in Proposition \ref{Prop: smoothness regular part}.
Since $-1\leq v_s\leq0$, the boundary estimates for the harmonic
equation give uniform $C^{1,\theta}$ bounds for $v_s$ up to
$\Sigma_s$ from the exterior. When $\Sigma_s$ does not intersect
the larger neighborhood, $v_s$ is harmonic or identically zero
there, so interior estimates apply on the smaller neighborhood.
Since $E_s\subset E_\tau$ for $\tau\leq s$, the identity
\[
u_s=1+\int_0^s v_\tau\,\mathrm d\tau
\]
gives uniform $C^{1,\theta}$ bounds for $u_s$ on the exterior
side of $\Sigma_s$. Together with $e^{-t}\leq u_s\leq1$,
the minimal graph equation then gives uniform $C^{2,\theta}$
bounds for the boundary graphs. Applying the harmonic boundary
estimates again and integrating in time gives uniform
$C^{2,\theta}$ bounds for $u_t$ from the exterior.

Thus the rescaled metrics $g_t=u_t^{4/(n-2)}g$ have locally
uniform one-sided $C^{2,\theta}$ bounds near $K$. By continuity
and interpolation, they converge locally in $C^{2,\alpha}$
to the constant metric $g_t(x_0)$ from the exterior side,
for every $0<\alpha<\theta$.

\begin{corollary}
\label{cor: separation perturbed minimizers}
Let $\Sigma=\partial E$ be a compact $g$-almost-minimizing
boundary in $(M,g)$. Assume that $g$ has a H\"older extension
across $\Sigma$, is smooth in $E$ and up to $\mathcal R(\Sigma)$
from $E$, and satisfies
\[
H_\Sigma^g=0\quad\text{on }\mathcal R(\Sigma),
\]
with respect to the unit normal pointing into $E$.
Assume also that, along every tangent-cone blow-up of $\Sigma$,
the rescaled metric $g$ has locally uniform one-sided
$C^{2,\alpha}$ bounds near the regular part of the limiting
cone, in fixed ambient coordinates.
Let $u>0$ be harmonic in $E$, continuous up to $\Sigma$, with
\[
u=1\quad\text{on }\Sigma,
\qquad u<1\quad\text{in }E,
\]
and set $g'=u^{\frac4{n-2}}g$.
Let $h$ be a smooth function on $M$, with
$h=0$ on $M\setminus E$.
Let $E'\subset E$ be a Caccioppoli set with compact boundary
$\Sigma'=\partial E'$, and suppose that $E'$ minimizes
\[
\mathcal A_h(F)
=\mathcal H_{g'}^{n-1}(\partial F)
+\int_{E\setminus F}h\,\mathrm d\mu_{g'}
\]
among Caccioppoli sets $F\subset E$ with
$E\setminus F\Subset M$. Then
\[
\Sigma'\cap\Sigma=\emptyset.
\]
\end{corollary}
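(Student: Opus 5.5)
The plan is to apply Proposition \ref{prop: singular contact exclusion} with the old boundary $\Sigma$, the new boundary $\Sigma'$, and the metrics $g$ and $g'=u^{\frac4{n-2}}g$. The reference metric $g_0$ is the H\"older extension of $g$. First I record the two structural facts about $\Sigma'$ that make the proposition applicable.

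\emph{Almost-minimality of $\Sigma'$.} Since $h$ is smooth and bounded on compact sets, and $g'$ is H\"older and uniformly equivalent to $g$ near $\Sigma$, the volume term changes by at most $Cr^n$ under a perturbation supported in $B_r$. Constrained competitors $F\subset E$ are handled as in Lemma \ref{Lem: almost minimizing limit}: use the almost-minimality of $\Sigma$, cut-and-paste with $E\cap F$, and strong subadditivity. This shows that $\Sigma'$ is $g_0$-almost-minimizing. Theorem \ref{Thm: ABT regularity} then applies, and the regular part $\mathcal R(\Sigma')$ is smooth in $E$ by elliptic regularity for the prescribed mean curvature equation.

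\emph{Mean curvature of $\Sigma'$.} On the part of $\mathcal R(\Sigma')$ lying in $E$, the first variation gives $H^{g'}_{\Sigma'}=h$ with respect to the normal pointing into $E'$. Since $h=0$ off $E$ and $h$ is smooth, we have $|h|\le\Lambda\dist(\cdot,\Sigma)$. So Item (ii) holds once we know $\Sigma'$ is $g'$-mean-curvature $h$ up to contact points, which follows by the obstacle-type argument at one-sided contact.

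Next I verify Item (v), excluding regular contact. Suppose $x_0\in\Sigma\cap\Sigma'$ is a regular point of one boundary. Lemma \ref{Lem: one-sided density} (its proof uses only almost-minimality and nesting) makes $x_0$ regular for both. At $x_0$ we have $u=1$, $u<1$ in $E$, and $u$ harmonic, so the Hopf lemma gives $\partial_\nu u(x_0)<0$. The conformal formula then gives
\[
H^{g'}_\Sigma(x_0)=u^{-\frac2{n-2}}\frac{2(n-1)}{n-2}\partial_\nu u(x_0)<0,
\]
while $H^{g'}_{\Sigma'}(x_0)=h(x_0)=0$. Since $\Sigma'$ lies on the $E$-side of $\Sigma$ and touches it at $x_0$, the comparison principle for mean curvature is violated. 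Alternatively, a mean-concave foliation combined with the replacement argument of Proposition \ref{Prop: smoothness regular part} gives the contradiction.

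For Items (iii) and (iv) at a singular contact point, I follow the discussion after Corollary \ref{cor: separation time slices}.
\begin{itemize}
\item \emph{Common cone and smooth convergence.} Nesting, almost-minimizing compactness, and Ilmanen's one-sided tangent cone argument give a common minimizing cone $\mathfrak C$. On compact subsets of $\mathcal R(\mathfrak C)$, we have uniform $C^{1,\theta}$ graphs. The assumed one-sided $C^{2,\alpha}$ bounds on rescaled $g$, together with boundary Schauder estimates for the harmonic $u$ on the $C^{1,\theta}$ graphs and a bootstrap through the prescribed mean curvature equation, upgrade this to $C^{2,\alpha}$ convergence of both boundaries and of the metrics. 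This gives Item (iii).
\item \emph{Normal-derivative bounds.} For Item (iv), $F_i$ is a multiple of $\partial_{\nu'}u$. Set $\psi=1-u$, which is positive and harmonic in $E$ and vanishes on $\Sigma$. Normalize $\psi\circ\eta_i$ at an interior reference point and apply boundary Harnack together with boundary Schauder estimates on $\Sigma_i$. This gives comparable two-sided bounds $c_Ka_i\le\partial_\nu\psi_i\le C_Ka_i$ on the graph over balls in $K$.
\item \emph{Transfer to $\Sigma'$.} These bounds are transferred to $\Sigma_i'$ using the $C^{2,\alpha}$ closeness of the graphs and the derivative estimates on $\psi_i$. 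Setting $\lambda_{i,p,r_p}=a_i$ then yields Item (iv).
\end{itemize}
Proposition \ref{prop: singular contact exclusion} then gives $\Sigma\cap\Sigma'=\emptyset$.

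The main obstacle is this last transfer. The normal derivative of $\psi$ on $\Sigma_i'$ must remain comparable to the value on $\Sigma_i$ uniformly, including the sign $F_i\le0$ on the larger ball. I would first try to control $|\nabla^2\psi_i|\lesssim a_i$ via the normalized boundary Schauder bound. The difference of normal derivatives is then $O(a_i\|v_i\|_{C^1})=o(a_i)$, which gives the required comparability.
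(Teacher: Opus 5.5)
Your outline follows the paper's proof step for step, with one exception. The parts that match are:
\begin{itemize}
\item cut-and-paste with $E\cap F$ and submodularity for the almost-minimality of $\Sigma'$;
\item the Hopf lemma, the strict barrier $H^{g'}_\Sigma<0$ and the replacement argument for Item (v);
\item Ilmanen's one-sided cone argument and the bootstrap for Items (i) and (iii): $H^g_\Sigma=0$ with the rescaled bounds for $\Sigma_i$, then boundary Schauder for $u\circ\eta_i$, then the equation for $\Sigma_i'$;
\item boundary Harnack and Schauder for $1-u$, transferred to $\Sigma_i'$, for Item (iv). Your $O(a_i\|v_i\|_{C^1})=o(a_i)$ estimate is a fine way to make that transfer precise.
\end{itemize}

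The genuine gap is that you take ``$g'$ is H\"older'' as given. The hypotheses only say that $u$ is continuous up to $\Sigma$, and that is not enough. Converting the $g'$-minimality of $E'$ into $g$-almost-minimality on a ball $B_r$ costs an error of order $\mathrm{osc}_{B_r}(u)\,r^{n-1}$. So you need $\mathrm{osc}_{B_r}u\le Cr^{\mu}$ to get the $\beta r^{n-1+\mu}$ form required by Definition \ref{Defn: almost minimizing}, by Theorem \ref{Thm: ABT regularity}, and by the hypotheses of Proposition \ref{prop: singular contact exclusion}. Near $\mathcal S(\Sigma)$ this is not automatic. There $\Sigma$ is singular, and $g$ is only H\"older across $\Sigma$; its derivatives in $E$ may be unbounded, so the argument of Lemma \ref{Lem: comparison} cannot be used verbatim.

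The paper's first step supplies exactly this:
\begin{itemize}
\item Lemma \ref{Lem: sphere lower area} (see also Lemma \ref{Lem: density lower bound}), applied to the almost-minimizing $\Sigma$, gives a uniform volume density for $M\setminus E$ at every point of $\Sigma$.
\item The boundary oscillation estimate of Lemmas \ref{Lem: comparison}--\ref{Lem: Holder v_i}, in the divergence-form version noted in the remark after Lemma \ref{Lem: comparison}, then yields $0\le 1-u\le C\dist_g(\cdot,\Sigma)^{\alpha}$.
\item Extending $u$ by $1$ outside $E$ makes $g'$ a positive H\"older conformal change of $g$. Only then does your almost-minimality argument, as in Lemma \ref{Lem: almost minimizing limit}, go through.
\end{itemize}

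There are also two smaller points.
\begin{itemize}
\item \emph{Sign and contact points.} With the normal pointing into $E'$, the first variation of $\mathcal A_h$ gives $H^{g'}_{\Sigma'}=-h$, not $h$, because pushing $\Sigma'$ into $E'$ enlarges $E\setminus F$. At a contact point the obstacle only yields the one-sided inequality $H^{g'}_{\Sigma'}+h\ge0$, not $H^{g'}_{\Sigma'}(x_0)=0$. Since $\Sigma'$ need not be $C^2$ at $x_0$, run the contradiction through the strict-barrier replacement argument you list as an alternative, not through a pointwise comparison.
\item \emph{Order of (ii) and (v).} Neither slip affects Item (ii), which only uses $|h|\le\Lambda\dist_g(\cdot,\Sigma)$. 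But (v) must come before (ii). Once regular contact is excluded, $\mathcal R(\Sigma')\subset E$, and the unconstrained first variation gives the equation on all of $\mathcal R(\Sigma')$. It does not come from an ``obstacle-type argument at contact points''.
\end{itemize}
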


\begin{proof}
We take $g$ as the reference metric in Proposition
\ref{prop: singular contact exclusion}. The exterior-density
comparison in Lemma \ref{Lem: sphere lower area}, applied to
the almost-minimizing boundary $\Sigma$, and the boundary
oscillation estimate for uniformly elliptic equations in
divergence form give a H\"older extension of $1-u$ by zero across
$\Sigma$. Thus, extending $u=1$ outside $E$, the metric $g'$
is a positive H\"older conformal change of $g$.
Comparison with $F\cap E$, perimeter submodularity, and the
almost-minimizing property of $\Sigma$ show that $\Sigma'$
is almost-minimizing; the bounded forcing contributes only
$O(r^n)$ on a ball of radius $r$. As in Lemma
\ref{Lem: almost minimizing limit}, the H\"older conformal
factor allows us to take $g$ as the reference metric.
Away from the obstacle, first variation gives
$H_{\Sigma'}^{g'}=-h$.

We first exclude regular contact. If a contact point is regular
for either boundary, the one-sided tangent-cone argument and
Theorem \ref{Thm: ABT regularity}, as in Lemma
\ref{Lem: one-sided density}, imply that it is regular for both.
At such a point, the Hopf lemma gives
$\partial_\nu^g u<0$, and hence
\[
H_\Sigma^{g'}
=u^{-\frac2{n-2}}
\left(H_\Sigma^g+
\frac{2(n-1)}{n-2}u^{-1}\partial_\nu^g u\right)<0.
\]
Since $h=0$ on $\Sigma$, this is a strict barrier for the
variational inequality $H_{\Sigma'}^{g'}+h\geq0$ satisfied
by the obstacle minimizer. The local replacement argument of
Proposition \ref{Prop: smoothness regular part} therefore
excludes regular contact. Thus Item (v) holds, and first
variation now gives
\[
H_{\Sigma'}^{g'}=-h
\quad\text{on all of }\mathcal R(\Sigma').
\]
Since $h$ is smooth and vanishes on $\Sigma$,
\begin{equation}\label{Eq: h}
|(-h)(x)|\leq\Lambda\dist_g(x,\Sigma)
\end{equation}
near $\Sigma$, for some $\Lambda>0$.
This verifies Item (ii), with forcing function $-h$.

At any remaining contact point, almost-minimizing compactness,
nesting, and \cite[Lemma 5]{IlmanenSMP} give a common minimizing
tangent cone $\mathfrak C$. Fix
$K\Subset\mathcal R(\mathfrak C)$. Theorem
\ref{Thm: ABT regularity} gives uniform $C^{1,\theta}$
graphical estimates for both rescaled boundaries near $K$.
The assumed bounds for the rescaled background metric and
$H_\Sigma^g=0$ give uniform higher graphical estimates for
$\Sigma_i$. Since $u$ is harmonic and equals one on $\Sigma$,
boundary Schauder estimates give uniform one-sided
$C^{2,\theta}$ bounds for $u\circ\eta_i$. Consequently,
\[
g_i'=(u\circ\eta_i)^{\frac4{n-2}}g_i
\]
has the same local bounds on smaller patches. The equation
$H_{\Sigma_i'}^{g_i'}=-r_i h\circ\eta_i$ then gives uniform
higher graphical estimates for $\Sigma_i'$.
After decreasing the exponent, compactness and interpolation
yield locally $C^{2,\alpha}$ graphical convergence of both
boundaries to $\mathfrak C$, and
\[
g_i,g_i'\rightarrow g(x_0)
\]
locally in $C^{2,\alpha}$ from their respective exterior sides,
where we used $u(x_0)=1$. This verifies Items (i) and (iii).

Finally, apply the boundary Harnack and boundary Schauder
argument from Corollary \ref{cor: separation time slices}
to the positive harmonic function $1-u$. The normalized
normal-derivative estimates along $\Sigma_i$ transfer to
$\Sigma_i'$ and give Item (iv), since
\[
F_i=-\frac{2(n-1)}{n-2}u_i^{-1}
\partial_{\nu_i'}^{g_i}(1-u_i).
\]
All assumptions of Proposition \ref{prop: singular contact exclusion}
are satisfied. Therefore, $\Sigma\cap\Sigma'=\emptyset$.
\end{proof}

\section{Monotonicity of ADM mass along a conformal flow}
In this section, we assume that $(M,g)$ is a smooth complete asymptotically flat manifold with {\it nonnegative scalar curvature} and compact Caccioppoli boundary $\partial M$. The original metric admits a $C^{2,\alpha}$ Riemannian extension
across $\partial M$, as in Section \ref{Sec: 2}. We consider the conformal flow
$$\{(g_t,\Sigma_t)\}_{t\geq 0}$$
constructed in the previous section with initial hypersurface $\Sigma_0=\partial M$

We are going to show 

\begin{proposition}\label{Prop: mass monotonicity}
    Along this conformal flow $\{(g_t,\Sigma_t)\}_{t\geq 0}$, the ADM mass function 
    $$m(t)=m_{ADM}(M,g_t)$$ is  non-increasing as $t$ increases.
\end{proposition}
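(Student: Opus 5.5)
By Remark \ref{Rmk: new conformal flow}, the flow restarted at time $s$ is again a conformal flow, now on $(\overline E_s,g_s)$. By Corollary \ref{Cor: E_t AF}, that manifold is smooth, complete and asymptotically flat, with nonnegative scalar curvature and a compact almost-minimizing Caccioppoli boundary $\Sigma_s$ whose regular part is $g_s$-minimal. It therefore suffices to show that the right derivative of $m$ at $t=0$ is nonpositive, with enough uniformity to integrate, and then apply this at every time $s$.

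Following Bray, write $v_t$ at infinity as
\[
v_t=-e^{-t}+c_t|x|^{2-n}+O(|x|^{1-n}),
\]
where $c_t>0$ is proportional to the capacity of $\Sigma_t$ in $(M,g)$. Then
\[
m(t)=m(0)+\int_0^t\Bigl(\text{const}\cdot c_s-\text{const}\cdot e^{-s}m(0)\Bigr)\,\mathrm ds .
\]
At time zero, $v_0$ is $-1$ times the $g$-harmonic function $\varphi$ that vanishes on $\Sigma_0$ and tends to $1$ at infinity. Hence
\[
m'(0)=2\,\mathrm{cap}_g(\Sigma_0)-2m(0),
\]
normalized as in \cite{Bray}. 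The asymptotic expansion of $v_t$, uniform for $t$ in bounded intervals, follows from the common exterior barrier and harmonicity near infinity. Time-continuity of $c_t$ away from the countable exceptional set follows from Lemma \ref{Lem: convergence to limit}. So monotonicity reduces to the mass--capacity inequality
\[
m_{\mathrm{ADM}}(M,g)\ \geq\ \mathrm{cap}_g(\partial M)
\]
for every manifold of the type in Corollary \ref{Cor: E_t AF}.

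To prove this inequality I would follow the outline of the introduction. First perturb harmonically: for small $\eta>0$, set $g'=u^{4/(n-2)}g$ with $u=1-\eta\varphi'$, where $\varphi'$ is positive and harmonic and vanishes at infinity, chosen as in Corollary \ref{cor: separation perturbed minimizers}. Then let $\Sigma'$ be a minimizer of the functional $\mathcal A_h$ with a small forcing term $h$. Corollary \ref{cor: separation perturbed minimizers} shows that $\Sigma'$ is strictly separated from $\partial M$, and the Hopf lemma shows that its regular part has $g'$-mean curvature bounded below by a positive constant. I would choose $h$ negative somewhat outside $\Sigma$ so that $H^{g'}_{\Sigma'}=-h>0$.

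Next apply Gromov's mean-convex smoothing \cite{GromovPlateauStein} to replace $\Sigma'$ by a smooth strictly mean-convex hypersurface. This changes capacity, and the mass of the region outside, by $o(1)$ as $\eta$ and the smoothing parameter go to zero. For smooth mean-convex (outer-minimizing in the limit) boundaries, Bray's doubling argument applies: reflect the exterior through the harmonic conformal factor $(1\pm\varphi)^{4/(n-2)}/2^{4/(n-2)}$, producing an asymptotically flat manifold with two ends and $R\geq0$ away from a corner of correct sign. Smoothing the corner (Miao) and applying the positive mass theorem in all dimensions \cite{BrendleWangPMT} give $m\geq\mathrm{cap}$. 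Finally, let the perturbations tend to zero, using continuity of mass and capacity under the $C^0$ convergence of conformal factors together with the uniform exterior barrier.

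The main obstacle is this approximation step. The smoothing and perturbation must not destroy nonnegative scalar curvature outside the new boundary. The capacity must converge: this needs the boundary Hölder estimate of Lemma \ref{Lem: Holder v_i} to be uniform along the approximating boundaries, and that in turn needs the exterior density bounds of Lemma \ref{Lem: sphere lower area} to persist. First I would try to keep all modifications conformal and harmonic so that $R\ge0$ is automatic, taking $h$ supported in a thin shell and absorbing its effect into $O(\eta)$ errors.
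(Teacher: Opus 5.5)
Your reduction to a mass--capacity inequality is fine and matches the paper. Restarting the flow at time $s$ and using right-continuity of $s\mapsto\mathfrak c(\Sigma_s,g)$ (which follows from $\Sigma_s^+=\Sigma_s$) gives $D^+m(s)=-2\bigl(m(s)-\mathfrak c(\Sigma_s,g_s)\bigr)$. This is the paper's formula for $m'(t)$ in different form. Your displayed integral formula for $m(t)$ is wrong as written; the correct one is $m(t)=m(0)e^{-2t}+2e^{-t}\int_0^te^{-s}\mathfrak c(\Sigma_s,g)\,\mathrm ds$. The restart makes this harmless. The genuine gap is in your proof of $m\geq\mathfrak c$: both of your perturbations have the wrong sign, and they produce boundaries for which the inequality is false.

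\emph{The harmonic perturbation.} With $u=1-\eta\varphi'$, where $\varphi'>0$ is harmonic and tends to $0$ at infinity, $u$ is smallest on $\Sigma$. At a maximum point of $\varphi'|_\Sigma$ the Hopf lemma gives $\partial_\nu u>0$ for $\nu$ pointing to the end, so $H^{g'}_\Sigma>0$ there. Then $\Sigma$ is no longer a strict barrier, and minimizers may rest on it, including at its singular set. Corollary \ref{cor: separation perturbed minimizers} does not apply either: it needs $u=1$ on $\Sigma$ and $u<1$ in $E$. That hypothesis is what yields $H^{g'}_\Sigma<0$ in the barrier step and $F_i\leq0$ in Item (iv) of Proposition \ref{prop: singular contact exclusion}.

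\emph{The forcing term.} Taking $h<0$ in $\mathcal A_h$ rewards enclosed volume. So $\Sigma'$ has $H^{g'}_{\Sigma'}=-h>0$ with respect to the normal pointing to infinity, like a round sphere in $\mathbb R^n$. For such boundaries the doubling argument breaks down. The conformal terms from $\tfrac12(1\pm\varphi)$ are equal on both sides of the gluing hypersurface. Hence Miao's corner condition for the double reduces to $H\leq0$ with respect to the normal pointing to infinity. The inequality itself also fails: the exterior of a Euclidean ball of radius $r$ is smooth, strictly mean-convex and outer-minimizing, with $R=0$, $m=0$ and $\mathfrak c=r^{n-2}$. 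So ``mean-convex (outer-minimizing in the limit)'' is exactly the wrong class.

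\emph{The paper's signs.} The paper takes $u_\varepsilon=(1+\varepsilon v)/(1-\varepsilon)=1+\frac{\varepsilon}{1-\varepsilon}w$, which is largest on $\Sigma$ and still harmonic, so $R\geq0$ is kept. By Hopf, $\Sigma$ becomes a strict barrier, and this is exactly the setting of Corollary \ref{cor: separation perturbed minimizers}, applied to $(1-\varepsilon)^{-4/(n-2)}g$ with factor $1+\varepsilon v$. The forcing $\delta h$ has $h\geq0$ and $h>0$ in $\mathring M$, so it penalizes enclosed volume. The minimizer then satisfies $H=\delta h\geq\gamma>0$ with respect to the normal pointing out of $E_{\varepsilon,\delta}$, toward the core. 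That is the sign needed both for the smoothing lemma, applied from the $E$-side, and for Miao's corner condition. The positive lower bound comes from $h>0$ on the compact set $\Sigma_{\varepsilon,\delta}$, which is disjoint from $\Sigma$; it does not come from the Hopf lemma.

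\emph{The ``main obstacle'' is not one.} The smoothed boundaries enclose $\Sigma$, so capacity monotonicity gives $\mathfrak c(\Gamma_j,\tilde g_\varepsilon)\geq\mathfrak c(\Sigma,\tilde g_\varepsilon)\geq\mathfrak c(\Sigma,g)$. Moreover $m(\tilde g_\varepsilon)=m(g)+\frac{2\varepsilon}{1-\varepsilon}\mathfrak c(\Sigma,g)$ exactly. Letting $\varepsilon\downarrow0$ therefore needs no convergence of capacities and no uniform boundary H\"older estimates.
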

\subsection{The derivative of the mass function}
As a preparation, let us first compute the derivative of the mass function. 

Let $v_t$ be the function from Definition \ref{Defn: conformal flow}. Given any boundary $\Sigma=\partial E$ in $\mathcal C$ let us define the $h$-capacity of $\Sigma$ by
$$\mathfrak c(\Sigma,h)=\inf\left\{\frac{1}{n(n-2)\omega_n}\int_E|\nabla_h\phi|^2\,\mathrm d\mathcal H^{n}_h:\phi\in C_c^\infty(M),\,\phi\equiv 1\mbox{ around } \partial E \right\}.$$
As in the previous section, we can construct a function 
$$w_t\in C^\alpha(M,g)$$ satisfying
$$\Delta_g w_t=0\mbox{ outside }\Sigma_t,\,w_t\equiv 1\mbox{ inside and along }\Sigma_t,\,w_t\to 0\mbox{ as }x\to\infty.$$

\begin{lemma}
    The function $w_t$ is in $W^{1,2}_{loc}(M,g)$, has finite Dirichlet energy, and we have
    $$\frac{1}{n(n-2)\omega_n}\int_{E_t}|\nabla_g w_t|^2\,\mathrm d\mathcal H^n_g=\mathfrak c(\Sigma_t,g).$$
\end{lemma}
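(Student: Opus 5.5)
The plan is to approximate $w_t$ by smooth-obstacle harmonic potentials, exactly as $v_t$ was obtained in the proof of Proposition \ref{Prop: v_t^epsilon}, and then pass to the limit in the Dirichlet energy.

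\emph{Approximating potentials.} Let $K_t=M\setminus E_t$ and, as before, take smooth compact sets $V_i\supset K_t$ with $V_i\to K_t$ in Hausdorff distance. We may also take them decreasing, $V_{i+1}\subset V_i$. Let $w_i$ be the classical solution of
\[
\Delta_g w_i=0 \ \text{in } M\setminus V_i,\qquad w_i=1 \ \text{on } V_i,\qquad w_i\to0 \ \text{at infinity}.
\]
The following properties are then available.
\begin{itemize}
\item Each $w_i$ lies in $W^{1,2}_{loc}$ and has finite energy, with decay $O(|x|^{2-n})$ by comparison with the exterior potential $\varphi$ used earlier.
\item By the maximum principle the $w_i$ are monotone non-increasing in $i$.
\item The argument of Lemma \ref{Lem: Holder v_i}, applied to $1-w_i$, gives the uniform boundary H\"older bound $1-w_i\leq\Lambda\dist_g(\cdot,\Sigma_t)^\alpha$. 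Combined with interior estimates, this yields $w_i\to w_t$ locally uniformly on $M$.
\end{itemize}

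\emph{Lower bound on the energy.} Denote by $\mathfrak c_i$ the classical capacity of $\partial V_i$. Smooth capacity theory identifies $\mathfrak c_i$ with the normalized energy of $w_i$. Since every admissible $\phi$ for $\partial V_i$ can be extended by $1$ inside $V_i$, each such $\phi$ is admissible for $K_t$. Conversely, every admissible $\phi$ for $\Sigma_t$ equals one on a neighborhood of $K_t$, which contains $V_i$ for $i$ large. Hence $\mathfrak c_i\leq\mathfrak c(\Sigma_t,g)$ and $\mathfrak c_i\to\mathfrak c(\Sigma_t,g)$. In the other direction, uniform energy bounds give a subsequence with $\nabla w_i\rightharpoonup\nabla w_t$ weakly in $L^2$. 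Therefore $w_t\in W^{1,2}_{loc}$ has finite energy, and lower semicontinuity gives
\[
\tfrac{1}{n(n-2)\omega_n}\int_{E_t}|\nabla w_t|^2\leq\mathfrak c(\Sigma_t,g).
\]
The gradient vanishes a.e.\ on $K_t$, where $w_t\equiv1$.

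\emph{Reverse inequality.} We must show that $w_t$ minimizes energy among admissible competitors. Take $\phi$ admissible, with $\phi\equiv1$ on a neighborhood $U$ of $K_t$. Set $\psi=\phi-w_t$. On $U$ we have $\psi=1-w_t$, which is continuous, vanishes on $K_t$, and is small near $\Sigma_t$; on $E_t$, $w_t$ is harmonic. Then
\[
\int|\nabla\phi|^2=\int|\nabla w_t|^2+2\int\nabla w_t\cdot\nabla\psi+\int|\nabla\psi|^2,
\]
so it suffices to show the cross term vanishes. Cut off $\psi$ near $\Sigma_t$ by $\psi_\delta=\psi\,\eta_\delta$, where $\eta_\delta=\min\{1,\max\{0,(1-w_t-\delta)/\delta\}\}$. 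This test function is supported in $\{1-w_t>\delta\}\Subset E_t$, so $\int\nabla w_t\cdot\nabla\psi_\delta=0$ by harmonicity; the decay at infinity handles the noncompact end. The error is controlled by
\[
\int|\nabla w_t|\,|\psi|\,|\nabla\eta_\delta|\leq \int_{\{\delta<1-w_t<2\delta\}}|\nabla w_t|^2 ,
\]
using $|\psi|=1-w_t\leq2\delta$ on that set and $|\nabla \eta_\delta|\le \delta^{-1}|\nabla w_t|$. This tends to $0$ by finite energy and dominated convergence, since the sets shrink to $\{1-w_t=0\}$, where $\nabla w_t=0$ a.e. Passing $\delta\to0$ gives the minimality, and taking the infimum over $\phi$ yields equality.

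The main obstacle is this last step. The boundary $\Sigma_t$ is only a Caccioppoli boundary with singular set, so no integration by parts over $\Sigma_t$ is available. The level-set cutoff circumvents that, but it still needs $\nabla w_t=0$ a.e.\ on $\{w_t=1\}$ (standard for $W^{1,2}$ functions) and continuity of $w_t$ up to $\Sigma_t$, which the H\"older estimate supplies.
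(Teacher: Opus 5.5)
Your approximation and lower-semicontinuity step is fine, except that the capacity inequality is reversed. Extending an admissible function for $\partial V_i$ by one shows $\mathfrak c(\Sigma_t,g)\le\mathfrak c_i$, not $\mathfrak c_i\le\mathfrak c(\Sigma_t,g)$. This is harmless, since weak lower semicontinuity only needs $\limsup_i\mathfrak c_i\le\mathfrak c(\Sigma_t,g)$.

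\textbf{The gap is in the step you call the reverse inequality.} Showing that the cross term vanishes gives $\int|\nabla\phi|^2=\int|\nabla w_t|^2+\int|\nabla(\phi-w_t)|^2\ge\int|\nabla w_t|^2$ for every admissible $\phi$. Taking the infimum over $\phi$ yields $\frac{1}{n(n-2)\omega_n}\int_{E_t}|\nabla_g w_t|^2\le\mathfrak c(\Sigma_t,g)$, which is the inequality you already had from lower semicontinuity. The opposite inequality $\mathfrak c(\Sigma_t,g)\le\frac{1}{n(n-2)\omega_n}\int_{E_t}|\nabla_g w_t|^2$ is never proved. Knowing that $w_t$ has energy no larger than any admissible competitor identifies its energy with the capacity only if $w_t$ lies in the Dirichlet-norm closure of the admissible class. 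Equivalently, your orthogonality identity gives equality only if $\inf_\phi\int|\nabla(\phi-w_t)|^2=0$. That approximability statement is the missing idea.

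You can supply it with tools you already have.
\begin{itemize}
\item \emph{Strong convergence.} For $j>i$ the sets are nested, so $w_i-w_j$ vanishes on $V_j$ and decays at infinity. Integrating by parts on the smooth exterior of $V_j$ gives $\int_M|\nabla(w_i-w_j)|^2=n(n-2)\omega_n(\mathfrak c_i-\mathfrak c_j)$. Since $\mathfrak c_i$ converges, $\nabla w_i\to\nabla w_t$ strongly in $L^2$, so the energy of $w_t$ equals $\lim_i\mathfrak c_i=\mathfrak c(\Sigma_t,g)$. The orthogonality step then becomes unnecessary.
\item \emph{Truncation of $w_t$ itself.} Set $\phi_\delta=1-(1-w_t-\delta)^+/(1-\delta)$. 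It equals one on the open neighborhood $\{w_t>1-\delta\}$ of $K_t$, which uses the continuity of $w_t$ up to $\Sigma_t$. It equals $w_t/(1-\delta)$ near infinity, and its energy is at most $(1-\delta)^{-2}\int|\nabla w_t|^2$. A cutoff at infinity and mollification make it admissible, so $\mathfrak c(\Sigma_t,g)\le(1-\delta)^{-2}\frac{1}{n(n-2)\omega_n}\int_{E_t}|\nabla_g w_t|^2$ for every $\delta$.
\end{itemize}

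The paper argues from the other side. It takes the variational minimizer $\tilde w_t$ in $\eta$ plus the Dirichlet completion of $C_c^\infty(E_t)$, whose energy equals the capacity by construction. It then uses the boundary H\"older estimate of Section 2 (in divergence form) and the maximum principle to identify $\tilde w_t$ with $w_t$.
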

\begin{proof}
    Denote $\Sigma_t=\partial E_t$ with $E_t\in \mathcal E$ and let $K_t=M\setminus E_t$. Recall from the previous section that we can take smooth regions $V_i$ enclosing $K_t$ such that $d_{\mathcal H}(K_t,\bar V_i)\to 0$ as $i\to+\infty$. Let $w_{t,i}$ be the smooth solution of the following equation
    $$\Delta_g w_{t,i}=0\mbox{ in }M\setminus\bar V_i,\,w_{t,i}=1\mbox{ on }\partial V_i\mbox{ and }w_{t,i}\to 0\mbox{ as }x\to\infty,$$
    and let $\hat w_{t,i}$ be the extension of $w_{t,i}$ satisfying $\hat w_{t,i}\equiv 1$ in $\bar V_i$. The function $w_t$ is then constructed by taking limit of $\hat w_{t,i}$ in $C^{\infty}_{loc}$-sense away from $\Sigma_t$.

 On the other hand, all functions $\hat w_{t,i}$ have uniformly bounded Dirichlet energy from the monotonicity of the capacity. Since $0\leq\hat w_{t,i}\leq1$, they are uniformly bounded in
$W^{1,2}_{\mathrm{loc}}(M,g)$. A subsequence converges weakly
in this space, strongly in $L^2_{\mathrm{loc}}$, and almost
everywhere to $w_t$. Weak lower semicontinuity gives finite
Dirichlet energy for $w_t$.

    To relate the function $w_t$ to the capacity $\mathfrak c(\Sigma_t,g)$, we still use a coincidence argument. Through a standard minimizing procedure, we can construct a function $\tilde w_t\in W^{1,2}_{loc}(M,g)$ with finite Dirichlet energy such that $\tilde w_t$ is harmonic in $E_t$ with
$$\frac{1}{n(n-2)\omega_n}\int_{E_t}|\nabla_g \tilde w_t|^2\,\mathrm d\mathcal H^n_g=\mathfrak c(\Sigma_t,g),$$
   and that $\tilde w_t-\eta$ lies in the completion of
$C_c^\infty(E_t)$ in the Dirichlet norm, where $\eta$ is smooth,
compactly supported, and equal to one near $K_t$. Zero extension
and the Sobolev inequality give coercivity on this completion.
Truncation gives $0\leq\tilde w_t\leq1$; the boundary estimates
of Section 2 give boundary value one, and interior estimates
together with the Sobolev bound give decay to zero at infinity. It follows from the maximum principle that $$w_t=\tilde w_t$$ and so we have
    $$\frac{1}{n(n-2)\omega_n}\int_{E_t}|\nabla_g w_t|^2\,\mathrm d\mathcal H^n_g=\mathfrak c(\Sigma_t,g).$$
    This completes the proof.
\end{proof}

\begin{lemma}
   The function $w_t$ has the expansion
   $$w_t=\mathfrak c(\Sigma_t,g)|x|^{2-n}+\tau_t\mbox{ with }\tau_t=O(|x|^{2-n-\tau})\mbox{ as }x\to\infty.$$
   Moreover, for any constant $T>0$ we can find constants $r$, $C$ and $\alpha$ such that
   \begin{equation}\label{Eq: uniform estimate}
       \left(|\tau_t|+|x||\partial\tau_t|\right)(x)\leq C|x|^{2-n-\alpha}\mbox{ for all }t\in[0,T]\mbox{ and }|x|\geq r.
   \end{equation}
\end{lemma}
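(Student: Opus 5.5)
The plan is to work entirely in the asymptotically flat end, where $w_t$ is a bounded $g$-harmonic function that decays at infinity. All the $\Sigma_t$, $t\in[0,T]$, lie in a fixed compact set: this is the uniform compact support established in Section 2 from the area bound, the density estimates and the isoperimetric inequality. So there is an $r_1$, independent of $t\in[0,T]$, such that $w_t$ is harmonic on $\{|x|\geq r_1\}$ with $0\le w_t\le 1$.

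\textbf{Step 1: uniform decay and derivative bounds.} Let $\varphi$ be the exterior harmonic potential of the sphere $\{|x|=r_1\}$, equal to one there and zero at infinity. Comparison gives $0\le w_t\le\varphi\le C|x|^{2-n}$, uniformly in $t$. Interior Schauder estimates on the balls $B_{|x|/2}(x)$, in the rescaled metric $|x|^{-2}g$, which is uniformly close to Euclidean, then give
\[
|x||\partial w_t|+|x|^2|\partial^2 w_t|\le C|x|^{2-n}.
\]

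\textbf{Step 2: the leading term.} In coordinates, $\Delta_{euc}w_t=(\Delta_{euc}-\Delta_g)w_t=:f_t$. The coefficients satisfy $g^{ij}-\delta^{ij}=O(|x|^{-\tau})$ and the Christoffel symbols are $O(|x|^{-1-\tau})$, so Step 1 yields $|f_t|\le C|x|^{-n-\tau}$ uniformly in $t$. Since $\tau>(n-2)/2>0$, $f_t$ is integrable at infinity. Write $w_t=a_t|x|^{2-n}+\tau_t$ with
\[
\tau_t=-\frac{1}{n(n-2)\omega_n}\int \bigl(|x-y|^{2-n}-|x|^{2-n}\bigr)f_t(y)\,dy
\]
plus a harmonic remainder decaying faster than $|x|^{2-n}$. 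Here $f_t$ is cut off outside $|y|\ge r_1$, and the boundary contributions of the inner sphere are absorbed into the $|x|^{2-n}$ term and its dipole corrections, which are $O(|x|^{1-n})$. The standard weighted estimate for the Newtonian potential of a source bounded by $|y|^{-n-\tau}$ then gives the uniform bound \eqref{Eq: uniform estimate} with $\alpha=\min\{\tau,1\}$, slightly decreased if necessary to avoid the logarithmic borderline. Alternatively, this is the standard harmonic expansion in weighted Hölder spaces, e.g.\ as in Bartnik or Lee--Parker, applied with uniform constants.

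\textbf{Step 3: identifying the coefficient.} Integrate by parts on $E_t\cap\{|x|<\rho\}$. Using the previous lemma and the approximations $w_{t,i}$, the Dirichlet energy equals the flux $-\int_{S_\rho}w_t\,\partial_\nu w_t$ up to the boundary term at $\Sigma_t$. More cleanly, use the approximating smooth $w_{t,i}$, for which
\[
\int|\nabla w_{t,i}|^2=-\lim_{\rho\to\infty}\int_{S_\rho}\partial_\nu w_{t,i},
\]
since $w_{t,i}=1$ on $\partial V_i$. The flux of $a|x|^{2-n}$ through $S_\rho$ is $-(n-2)n\omega_n a$, and the $\tau$-terms contribute $o(1)$. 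So the energy equals $n(n-2)\omega_n a_{t,i}$. Passing $i\to\infty$, both sides converge: the energies converge by the capacity identification of the previous lemma together with strong $W^{1,2}$ convergence, which follows from harmonicity and monotone convergence of capacities; the coefficients $a_{t,i}\to a_t$ by the uniform estimates of Step 2 and $C^0$ convergence at large radius. Hence $a_t=\mathfrak c(\Sigma_t,g)$.

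\textbf{Main obstacle.} I expect the main difficulty to be the uniformity in $t$ in Step 2, together with the legitimacy of the energy–flux identity in Step 3 given the singular boundary. Both are handled by working only on $\{|x|\ge r_1\}$, where everything is smooth, and by transferring the identity through the smooth approximations $V_i$.
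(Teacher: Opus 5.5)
Your proposal is correct, but it takes a genuinely different route from the paper in both halves of the argument.

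\textbf{The expansion and its uniformity.} The paper proceeds in two stages.
\begin{itemize}
\item It first gets the qualitative expansion for each fixed $t$ from Bartnik's weighted Sobolev theory.
\item It then proves uniformity separately. It bounds $w_t\le w_T\le C|x|^{2-n}$ by nesting and passes to $w_t^*=|x|^{n-2}w_t$ under the Kelvin transform. It removes the point singularity with a Caccioppoli cutoff argument and applies De Giorgi--Nash--Moser H\"older continuity at the origin, with an $L^p$, $p>n/2$, right-hand side, to get $|\check w_t^*-c_t|\le C|y|^\alpha$. The gradient bound follows from $W^{2,p}$ estimates on annuli.
\end{itemize}
You instead establish pointwise decay of $w_t$, $\partial w_t$, $\partial^2 w_t$ first, via the barrier and scaled Schauder estimates. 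This uses the $|x|^2|\partial^2 g|$ decay, which is part of the asymptotic flatness assumption. You then read off the monopole and remainder from Green's representation with explicit kernel estimates.

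What each buys: your route gives existence of the expansion and the uniform bound in one stroke, with the explicit exponent $\alpha=\min\{\tau,1\}$ (minus a little at $\tau=1$). The paper's divergence-form argument needs only $C^1$ decay of $g$ for the $C^0$ part, but yields an unspecified $\alpha$. You should state explicitly that the gradient part of the estimate comes from scaled interior estimates for $\Delta_{euc}\tau_t=f_t$ on $B_{|x|/2}(x)$.

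\textbf{Identifying the coefficient.} The paper works with $w_t$ itself: it integrates by parts on $\{w_t\le 1-\epsilon\}$ and uses the energy identity of the previous lemma. Your route through the smooth $w_{t,i}$ avoids the singular boundary entirely, but relies on two inputs you use only implicitly.
\begin{itemize}
\item The Step 1--2 estimates must hold uniformly in $i$. This is true because the $V_i$ lie in a fixed compact set.
\item You need $\mathfrak c(\partial V_i,g)\to\mathfrak c(\Sigma_t,g)$. This is outer continuity of capacity: any test function equal to one near $K_t$ is admissible for $V_i$ once $i$ is large. It directly gives convergence of the energies, so the strong $W^{1,2}$ convergence you invoke is not actually needed.
\end{itemize}
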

\begin{proof}
Take $f$ to be a smooth function on $M$, which is different from $w_t$ only in a compact subset. Since $w_t$ is harmonic outside a compact subset, we know that $\Delta_g f$ is a function in the weighted Sobolev space $W^{0,p}_\delta$ (see \cite{Bartnik}) with some $\delta\in(2-n,0)$. It follows from \cite[Proposition 2.2]{Bartnik} that we can find a function $\tilde f\in W^{2,p}_\delta$ such that $\Delta_g \tilde f=\Delta_gf$. In particular, $\tilde f-f$ is a harmonic function on $(M,g)$. Note that both $f$ and $\tilde f$ decay to zero at infinity. It follows from the maximum principle that we have $f=\tilde f$, which implies $f\in W^{2,p}_\delta$ and so $w_t\in W^{2,p}_\delta$. It follows from \cite[Theorem 1.17]{Bartnik} that $w_t$ has the expansion
$$w_t=c_t|x|^{2-n}+\tau_t\mbox{ with }\tau_t=O(|x|^{2-n-\tau})\mbox{ as }x\to\infty,$$
for some constant $c_t$. 

Next we derive uniform estimates for $\tau_t$. The above expansion for $w_T$ yields that there exist constants $r$ and $C$ (here and in the sequel $C$ denotes a constant independent of $t$) such that
$$|w_t|\leq |w_T|\leq C|x|^{2-n}\mbox{ when }|x|\geq r.$$
Consider the function $w_t^*=w_t|x|^{n-2}$. It follows from a direct computation that
$$\Delta_{|x|^{-4}g}w_t^*=-|x|^{n+2}w_t^*\cdot \Delta_g|x|^{2-n}.$$
Since $g$ is asymptotically flat, we have $\Delta_g|x|^{2-n}=O(|x|^{-n-\tau})$. By possibly increasing the value of $r$ we obtain
$$|\Delta_{|x|^{-4}g}w_t^*|\leq C|x|^{2-\tau}\mbox{ when }|x|\geq r.$$
      Let
    $$\Phi:B_1\setminus\{0\}\to \mathbb R^n\setminus \bar B_1,\,y\mapsto x=\frac{y}{|y|^2},$$
    denote the Kelvin transform. Define
    $$\check g= \Phi^*(|x|^{-4}g)\mbox{ and }\check w_t^*=w_t^*\circ\Phi.$$
It is easy to verify
$$|\check g_{ij}-\delta_{ij}|=O(|y|^\tau).$$
By further increasing the value of $r$ we have
\begin{equation}\label{Eq: harmonic kelvin}
|\partial_i(a_{ij}\partial_j\check w_t^*)|\leq C|y|^{\tau-2}\mbox{ when }|y|\leq r^{-1},
\end{equation}
where $a_{ij}$ satisfies the uniformly elliptic condition
$$\frac{I_n}{2}\leq(a_{ij})\leq 2I_n.$$
The functions $\check w_t^*$ are uniformly bounded. The Caccioppoli
estimate with cutoffs vanishing near the origin gives a uniform local
Dirichlet-energy bound; the inner cutoff error is
$O(\varepsilon^{n-2})$. Letting $\varepsilon\downarrow0$ shows that
\eqref{Eq: harmonic kelvin} holds weakly across the origin.
Using the fact $\tau >\frac{n-2}{2}$ we see $|y|^{\tau-2}\in L^p$ with some $p> n/2$. It follows from \cite[Theorem 8.24]{Gilbarg} that
$$|\check w^*_t(y)-c_t|\leq C|y|^{\alpha}\mbox{ when }|y|\leq (2r)^{-1}$$
for some constants $C$ and $\alpha$ independent of $t$. This yields
$$|\tau_t|\leq C|x|^{2-n-\alpha}.$$
Applying interior $W^{2,p}$ estimates, with $p>n$, on rescaled
annuli for the equation
$\Delta_g\tau_t=-c_t\Delta_g|x|^{2-n}$,
after taking $r$ large enough we can obtain
$$|\partial \tau_t|\leq C\left(|x|^{1-n-\alpha}+|x|^{1-n-\tau}\right)\mbox{ when }|x|\geq r.$$
To simplify we can take $\alpha$ smaller than $\tau$ and this gives \eqref{Eq: uniform estimate}.

Now we determine the constant $c_t$. Let $S_\rho$ denote the coordinate sphere $\{|x|=\rho\}$ and $\nu_\rho$ denote the outward unit $g$-normal of $S_\rho$ pointing to the infinity. Then we can compute
$$\int_{S_\rho}\frac{\partial w_t}{\partial \nu_\rho}\,\mathrm d\mathcal H^{n-1}_g\to-n(n-2)\omega_nc_t\mbox{ as }\rho\to+\infty.$$
On the other hand, for regular values $1-\epsilon$ and with
$\nu$ pointing toward infinity, we have
$$-\int_{\{w_t\leq 1-\epsilon\}}|\nabla_g w_t|^2\,\mathrm d\mathcal H^{n}_g=(1-\epsilon)\int_{\{w_t=1-\epsilon\}}\frac{\partial w_t}{\partial \nu}\,\mathrm d\mathcal H^{n-1}_g.$$
After integrating $\Delta_gw_t=0$ outside the hypersurface $\{w_t=1-\epsilon\}$ we obtain
$$n(n-2)\omega_nc_t=\frac{1}{1-\epsilon}\int_{\{w_t\leq 1-\epsilon\}}|\nabla_g w_t|^2\,\mathrm d\mathcal H^{n}_g\to n(n-2)\omega_n \mathfrak c(\Sigma_t,g)\mbox{ as }\epsilon\to0.$$
This yields
$c_t=\mathfrak c(\Sigma_t,g)$ and 
we complete the proof.
\end{proof}

\begin{corollary}\label{Cor: expansion ut}
    The function $u_t$ has the expansion
    $$u_t=e^{-t}+\int_0^te^{-s}\mathfrak c(\Sigma_s,g)\,\mathrm ds\cdot |x|^{2-n}+\epsilon_t,$$
    where $\epsilon_t$ satisfies \eqref{Eq: uniform estimate}.
\end{corollary}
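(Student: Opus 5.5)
The plan is to relate $v_s$ to the capacitary potential $w_s$ and then integrate in time. By Definition \ref{Defn: conformal flow}, $v_s$ is $g$-harmonic outside $\Sigma_s$, vanishes inside and along $\Sigma_s$, and tends to $-e^{-s}$ at infinity. The function $w_s$ is $g$-harmonic outside $\Sigma_s$, equals one inside and along $\Sigma_s$, and decays to zero. Hence $v_s+e^{-s}(1-w_s)$ is continuous on $M$, harmonic outside $\Sigma_s$, zero on $K_s$, and zero at infinity. The maximum principle therefore gives
\[
v_s=-e^{-s}(1-w_s)=-e^{-s}+e^{-s}w_s .
\]
This is the same uniqueness argument used in the proof of Proposition \ref{Prop: v_t^epsilon}, with both functions having the H\"older boundary behaviour from Section 2.

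Next, insert the expansion from the preceding lemma, $w_s=\mathfrak c(\Sigma_s,g)|x|^{2-n}+\tau_s$. This gives
\[
v_s=-e^{-s}+e^{-s}\mathfrak c(\Sigma_s,g)|x|^{2-n}+e^{-s}\tau_s .
\]
Integrating over $s\in[0,t]$ yields $1+\int_0^t(-e^{-s})\,\mathrm ds=e^{-t}$. So we get the stated formula with
\[
\epsilon_t=\int_0^te^{-s}\tau_s\,\mathrm ds .
\]

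Two points need checking.

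\textbf{Measurability in $s$.} The map $s\mapsto\mathfrak c(\Sigma_s,g)$ must be measurable, so that the coefficient integral makes sense. Since $\mathfrak c(\Sigma_s,g)$ is the coefficient of $|x|^{2-n}$ in $w_s=1+e^{s}v_s$, it equals a limit of flux integrals of $v_s$ over large spheres. The map $s\mapsto v_s(x)$ is measurable, because Lemma \ref{Lem: convergence to limit} gives it as a pointwise a.e. limit of the step-function-in-time approximations $v_s^{\epsilon_i}$. It is also bounded, by monotonicity of capacity: $E_s$ is nested by Lemma \ref{Lem: monotonicity}, so the capacities are bounded by $\mathfrak c(\Sigma_0,g)$ or by the capacity of a large enclosing sphere. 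Hence everything is integrable.

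\textbf{Uniform decay of the remainder.} The estimate \eqref{Eq: uniform estimate} for $\epsilon_t$ follows from the uniform estimate for $\tau_s$ with $s\in[0,T]$, which holds with constants $r,C,\alpha$ independent of $s$:
\[
|\epsilon_t|+|x||\partial\epsilon_t|\leq\int_0^t e^{-s}\bigl(|\tau_s|+|x||\partial\tau_s|\bigr)\,\mathrm ds\leq CT|x|^{2-n-\alpha}
\]
for $|x|\geq r$.

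Differentiating under the integral sign is the only step needing care, and I expect it to be the main obstacle. I would handle it by dominated convergence, using the uniform derivative bound on $\partial\tau_s$ for $|x|\geq r$. Alternatively, I would note that $\epsilon_t=u_t-e^{-t}-c|x|^{2-n}$ is smooth there, and identify its derivative with $\int_0^te^{-s}\partial\tau_s\,\mathrm ds$ by testing against compactly supported functions and applying Fubini.
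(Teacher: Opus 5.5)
Your proposal is correct and follows the paper's own route: the paper's proof is just the observation $v_t=e^{-t}(w_t-1)$ followed by integrating the expansion of $w_t$ in time. Your measurability and differentiation-under-the-integral checks fill in details the paper leaves implicit.

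One small slip: since $E_s$ shrinks as $s$ increases, $\mathfrak c(\Sigma_s,g)$ is non-decreasing in $s$. So $\mathfrak c(\Sigma_0,g)$ is a lower bound, not an upper bound, and the upper bound must come from the large enclosing sphere. This same monotonicity in $s$ also gives measurability of $s\mapsto\mathfrak c(\Sigma_s,g)$ directly, without the flux-integral argument.
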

\begin{proof}
    This follows from integrating the functions $v_t=e^{-t}(w_t-1)$.
\end{proof}

Now we have the following lemma.
\begin{lemma}\label{Lem: derivative mass function}
  The mass function $m(t)$ is locally Lipschitz with respect to $t$. Moreover, for almost every $t$ we have
    $$m'(t)=-2(m(t)-\mathfrak c(\Sigma_t,g_t)).$$
\end{lemma}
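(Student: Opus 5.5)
The plan is to compute $m(t)$ explicitly from the expansion of $u_t$ in Corollary \ref{Cor: expansion ut}, differentiate in $t$, and then identify the result with $\mathfrak c(\Sigma_t,g_t)$ through a conformal change of the capacity. Put
\[
\beta(t):=\int_0^te^{-s}\mathfrak c(\Sigma_s,g)\,\mathrm ds .
\]
Then $u_t=e^{-t}+\beta(t)|x|^{2-n}+\epsilon_t$, and the remainder $\epsilon_t$ satisfies \eqref{Eq: uniform estimate} uniformly for $t\in[0,T]$. Write $u_t=e^{-t}\bigl(1+e^{t}\beta(t)|x|^{2-n}+e^t\epsilon_t\bigr)$.

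Step 1 is the conformal mass formula. For $\hat u=1+B|x|^{2-n}+O(|x|^{2-n-\alpha})$ with first derivatives of order $O(|x|^{1-n-\alpha})$, a direct evaluation of the flux integral gives
\[
m_{\mathrm{ADM}}(\hat u^{\frac4{n-2}}g)=m_{\mathrm{ADM}}(g)+2B .
\]
The factor $2$ is forced by the normalization $\frac1{2n(n-1)\omega_n}$ and by $\partial_r|x|^{2-n}$. Multiplying a metric by $\lambda^2$ multiplies its mass by $\lambda^{n-2}$. Taking $\lambda=e^{-2t/(n-2)}$, so that $\lambda^{n-2}=e^{-2t}$, we get
\[
m(t)=e^{-2t}m(0)+2e^{-t}\beta(t).
\]
The uniform estimate \eqref{Eq: uniform estimate} justifies taking the limit in the flux integral uniformly in $t$. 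Since $\mathfrak c(\Sigma_s,g)$ is bounded on $[0,T]$ (for instance by $\mathfrak c(\Sigma_T,g)$, using nesting and monotonicity of capacity), $\beta$ is Lipschitz. Hence $m$ is locally Lipschitz.

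Step 2 is differentiation. The map $s\mapsto\mathfrak c(\Sigma_s,g)$ is monotone, because $E_s$ decreases. So it is continuous except at countably many $s$, and at every continuity point $t$ we have $\beta'(t)=e^{-t}\mathfrak c(\Sigma_t,g)$. This yields
\[
m'(t)=-2m(t)+2\bigl(e^{-t}\beta(t)+e^{-2t}\mathfrak c(\Sigma_t,g)\bigr).
\]
It therefore remains to prove the identity
\[
\mathfrak c(\Sigma_t,g_t)=e^{-t}\beta(t)+e^{-2t}\mathfrak c(\Sigma_t,g).
\]

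Step 3, this capacity identity, is where I expect the main difficulty. Following Bray, I would use that $u_t$ is $g$-harmonic in $E_t$: each $v_s$ with $s\leq t$ is harmonic in $E_s\supset E_t$. For a test function $\phi$ with $\phi=1$ near $\Sigma_t$, the $g_t$-Dirichlet energy of $\phi$ equals $\int u_t^2|\nabla_g\phi|^2$. The pointwise identity
\[
|\nabla(u_t\phi)|^2=u_t^2|\nabla\phi|^2+\ddiv(\phi^2u_t\nabla u_t)
\]
holds because $\Delta_gu_t=0$. It converts the problem into minimizing $\int|\nabla_g\psi|^2$ over $\psi=u_t\phi$, which equals $u_t$ on $\Sigma_t$, plus a boundary flux. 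The flux at $\Sigma_t$ cancels because $\phi=1$ near $\Sigma_t$ can be arranged so that the divergence term integrates against the flux of $u_t$. The flux at infinity contributes $-e^{-t}\beta(t)\,n(n-2)\omega_n$, read off from the expansion.

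The resulting minimizer should be $\psi=u_t-e^{-t}+e^{-t}w_t$. This function is harmonic in $E_t$, equals $u_t$ on $\Sigma_t$ since $w_t=1$ there, and decays at infinity. Its asymptotic coefficient $\beta(t)+e^{-t}\mathfrak c(\Sigma_t,g)$, combined with the boundary term, should give the claimed formula. I would first verify this on smooth approximating domains $V_i$, as in Section 2, and then pass to the limit.

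The real obstacle is justifying the integration by parts across the singular boundary $\Sigma_t$. Here I would rely on the finite Dirichlet energy of $w_t$, the boundary Hölder estimates, and the one-sided smoothness of $u_t$ up to $\mathcal R(\Sigma_t)$ from Proposition \ref{Prop: smoothness regular part}. I would also use that $\mathcal S_t$ has zero capacity, since its Hausdorff dimension is at most $n-8$, so that it does not contribute to the flux.
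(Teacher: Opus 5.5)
Your Steps 1 and 2 match the paper's proof. The paper also derives $m(t)=e^{-2t}m(0)+2e^{-t}\beta(t)$ from Corollary~\ref{Cor: expansion ut} and the conformal mass formula, and gets local Lipschitz continuity from local boundedness of the capacities. It then differentiates at Lebesgue points of the integrand; your continuity points of the monotone map $s\mapsto\mathfrak c(\Sigma_s,g)$ work equally well.

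The difference is in Step 3, where the paper computes no energy at all. Since $\Delta_g u_t=0$ on $E_t$, we have $\Delta_{g_t}f=u_t^{-\frac{n+2}{n-2}}\Delta_g(u_tf)$ there. Hence $1+v_t/u_t$, which is your $\psi/u_t$, is $g_t$-harmonic, equals $1$ on $\Sigma_t$, and tends to $0$ at infinity. Its expansion is $(\mathfrak c(\Sigma_t,g)+e^{t}\beta(t))|x|^{2-n}+\cdots$. In $g_t$-normalized coordinates $y=e^{-\frac{2t}{n-2}}x$ the coefficient becomes $e^{-2t}\mathfrak c(\Sigma_t,g)+e^{-t}\beta(t)$. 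Identifying this coefficient with the capacity is the level-set argument already used for $w_t$. That argument only integrates over regions $\{\,\cdot\le 1-\epsilon\}$ bounded away from $\Sigma_t$, so the singular set never enters.

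As written, your Step 3 mixes two versions of the integration by parts.

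\emph{The version you set up.} With $\psi=u_t\phi$ and $\phi\in C^\infty_c(M)$, the field $\phi^2u_t\nabla u_t$ has no flux at infinity. Its boundary term on $\Sigma_t$ is, up to orientation, $\int_{\Sigma_t}u_t\,\partial_\nu u_t$. This term does not vanish, because $u_t|_{\Sigma_t}=1+\int_0^t v_s$ is in general not constant; it is merely independent of $\phi$. Evaluating it together with $\int|\nabla\psi|^2$ is what forces you into integration by parts on the singular boundary.

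\emph{The version your fluxes describe.} The fluxes you state (none at $\Sigma_t$, and $-n(n-2)\omega_n e^{-t}\beta(t)$ at infinity) are those of $(\phi-1)^2u_t\nabla u_t$. So apply your identity to $\phi-1$ instead. Set $\zeta=u_t(\phi-1)$, which vanishes near $\Sigma_t$ and equals $-u_t$ outside a compact set. Then
\[
\int_{E_t}u_t^2|\nabla_g\phi|^2\,\mathrm d\mu_g=\int_{E_t}|\nabla_g\zeta|^2\,\mathrm d\mu_g+n(n-2)\omega_n e^{-t}\beta(t).
\]
The infimum of the first term over such $\zeta$ is $e^{-2t}n(n-2)\omega_n\,\mathfrak c(\Sigma_t,g)$. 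Indeed, the asymptotic values $-u_t$ and $-e^{-t}$ differ by $O(|x|^{2-n})$, so a cutoff at radius $R$ changes the energy by only $O(R^{2-n})$.

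This gives the capacity identity directly from the definition of $\mathfrak c$ as an infimum over test functions equal to one near $\Sigma_t$. It needs no approximation by the $V_i$, no one-sided regularity up to $\mathcal R(\Sigma_t)$, and no capacity-zero argument for $\mathcal S_t$. The repaired version is self-contained and works with the intrinsic $g_t$-energy. The paper's version is shorter because it reuses the capacity-coefficient identification in the rescaled metric.
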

\begin{proof}
By Corollary \ref{Cor: expansion ut} and the conformal mass formula,
\[
m(t)=m(0)e^{-2t}
+2e^{-t}\int_0^t e^{-s}\mathfrak c(\Sigma_s,g)\,\mathrm ds.
\]
The capacities are locally bounded, so $m$ is locally Lipschitz.
By nesting, $u_t$ is $g$-harmonic on $E_t$. Hence $-v_t/u_t$
is $g_t$-harmonic there, vanishes on $\Sigma_t$, and tends to
one at infinity. Its expansion, expressed in the asymptotically
normalized coordinates for $g_t$, gives
\[
\mathfrak c(\Sigma_t,g_t)
=e^{-2t}\mathfrak c(\Sigma_t,g)
+e^{-t}\int_0^t e^{-s}\mathfrak c(\Sigma_s,g)\,\mathrm ds.
\]
Differentiating the mass identity at Lebesgue points of its
integrand therefore yields
$m'(t)=-2\bigl(m(t)-\mathfrak c(\Sigma_t,g_t)\bigr)$
for almost every $t$.
\end{proof}

\subsection{The mass-capacity inequality}\label{Sec: 3.2}

In this subsection, we prove the mass-capacity inequality needed for the
monotonicity of the ADM mass.

Let \((M^n,g)\) be a smooth complete asymptotically flat manifold with
nonnegative scalar curvature and compact Caccioppoli boundary $\Sigma$. Assume that $\Sigma$ is almost-minimizing. In particular, we have 
\[
    \Sigma=\mathcal R(\Sigma)\cup\mathcal S(\Sigma),
    \qquad
    \dim_{\mathcal H}\mathcal S(\Sigma)\leq n-8,
\]
Assume further that \(\mathcal R(\Sigma)\) is smooth, and that
\(
    H_\Sigma^g=0
\)
on \(\mathcal R(\Sigma)\), with respect to the unit normal pointing towards
the asymptotically flat end.

We also assume that $g$ satisfies the local bounds near the regular
parts of tangent cones stated in Corollary
\ref{cor: separation perturbed minimizers}. For the original metric,
these bounds follow from the $C^{2,\alpha}$ extension assumed in
Theorem \ref{Thm: intro main}. For a flow exterior
$(\overline E_t,g_t)$, they follow from the regularity argument
following Corollary \ref{cor: separation time slices}.

Let \(w\) be the capacitary potential of \(\Sigma\), namely
\[
    \Delta_g w=0\quad\text{in }\mathring M,\qquad
    w=1\quad\text{along }\Sigma,\qquad
    w\to0\quad\text{as }x\to\infty .
\]
Then
\[
    \mathfrak c(\Sigma,g)
    =
    \frac{1}{n(n-2)\omega_n}
    \int_M |\nabla_g w|^2\,d\mu_g .
\]

\begin{proposition}\label{Prop: mass capacity}
Under the assumptions above, we have
\[
    m_{\mathrm{ADM}}(M,g)\geq \mathfrak c(\Sigma,g).
\]
\end{proposition}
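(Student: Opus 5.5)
The plan follows the outline in the introduction: perturb the boundary into a strictly mean-convex, strictly separated one, smooth it, and then apply Bray's doubling trick together with the positive mass theorem. The inequality is stable under small perturbations of the metric and of the boundary, so it suffices to produce approximating smooth data whose mass and capacity converge to $m_{\mathrm{ADM}}(M,g)$ and $\mathfrak c(\Sigma,g)$.

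\emph{Step 1: harmonic conformal perturbation.} Let $w$ be the capacitary potential and, for small $\delta>0$, set $u_\delta=1-\delta w$. This function is $g$-harmonic in $\mathring M$, equals $1-\delta$ on $\Sigma$, tends to one at infinity, and satisfies $u_\delta<1$. Rescaling, put $u=u_\delta/(1-\delta)$, so that $u=1$ on $\Sigma$ and $u$ is harmonic. Consider $g'=u^{\frac4{n-2}}g$. Then $R_{g'}=0\cdot$ (harmonicity) $+\,u^{-\frac{n+2}{n-2}}\cdot$ (original term), so $R_{g'}\ge0$. The mass and capacity change explicitly: $m(g')=(1-\delta)^{-\frac{2}{n-2}}\bigl(m(g)+O(\delta)\mathfrak c\bigr)$, and similarly for the capacity, both depending continuously on $\delta$. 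By the Hopf lemma on $\mathcal R(\Sigma)$, the conformal formula gives $H^{g'}_\Sigma<0$ with respect to the normal toward infinity, so $\Sigma$ becomes a strict barrier. Now minimize $\mathcal A_h$ from Corollary \ref{cor: separation perturbed minimizers}, choosing $h\le0$ smooth with $h=0$ outside $E$ and $h$ strictly negative slightly inside $E$, with small sup norm. A minimizer $E'$ exists by compactness and the outward barrier at infinity. By that corollary, $\Sigma'=\partial E'$ is disjoint from $\Sigma$. It is almost-minimizing, and its regular part satisfies $H^{g'}_{\Sigma'}=-h>0$ there, bounded below by a positive constant after choosing $h$ negative near the compact region where $\Sigma'$ can lie.

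\emph{Step 2: smoothing.} Apply Gromov's mean-convex smoothing to $\Sigma'$, which is singular only on a set of codimension at least $7$ inside $\Sigma'$ and has regular part with $H\ge c>0$. This produces smooth hypersurfaces $\Sigma''_k$ that converge to $\Sigma'$ in Hausdorff distance and in area, have $H>0$, and lie in the region between $\Sigma$ and $\Sigma'$ or just outside it. The capacities $\mathfrak c(\Sigma''_k,g')$ then converge to $\mathfrak c(\Sigma',g')$ by the boundary H\"older estimates of Section \ref{Sec: 2}. Moreover, $\mathfrak c(\Sigma',g')\ge\mathfrak c(\Sigma,g')-o(1)$ as $h\to0$, because $\Sigma'$ converges to the outer-minimizing hull of $\Sigma$ for $g'$, which is $\Sigma$ itself.

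\emph{Step 3: Bray's doubling argument.} In the smooth mean-convex case, work on the exterior of $\Sigma''_k$ with capacitary potential $\varphi$. Reflect across the level set and glue, using the conformal factors $\left(\frac{1+\varphi}{2}\right)^{\frac4{n-2}}$ and $\left(\frac{1-\varphi}{2}\right)^{\frac4{n-2}}$ (Bray's construction), so that the second end is compactified or becomes asymptotically flat with zero mass. The scalar curvature stays nonnegative away from the gluing, and the mean-convexity $H>0$ gives the correct corner condition, so Miao's corner smoothing applies. The positive mass theorem in all dimensions (Brendle--Wang) then gives $m(g')\ge\mathfrak c(\Sigma''_k,g')$. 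Letting $k\to\infty$, then $h\to0$, then $\delta\to0$, yields $m_{\mathrm{ADM}}(M,g)\ge\mathfrak c(\Sigma,g)$.

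\emph{Main obstacle.} The hardest step is making Step 2 rigorous: Gromov's smoothing must be applied near the singular set while keeping positive mean curvature and controlling area and capacity. The strict separation from Step 1 is exactly what allows all the modifications to be carried out in a region where $g'$ is smooth.
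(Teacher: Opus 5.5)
Your outline matches the paper's proof step for step: a harmonic conformal perturbation, a forced minimization separated by Corollary~\ref{cor: separation perturbed minimizers}, mean-convex smoothing, Bray's doubling with Miao's corner theorem and the positive mass theorem, and then limits. However, both orientations in Step~1 are reversed, and this breaks the argument rather than just the bookkeeping.

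\emph{The barrier fails.} Since $w<1$ in $\mathring M$, your factor $u=(1-\delta w)/(1-\delta)$ equals $1$ on $\Sigma$, is $>1$ in $\mathring M$, and increases to $(1-\delta)^{-1}$ at infinity. The Hopf lemma therefore gives $\partial_\nu u>0$ on $\mathcal R(\Sigma)$, with $\nu$ pointing toward infinity. The conformal formula then gives $H^{g'}_\Sigma>0$ for this normal, not $<0$. So the hypothesis ``$u<1$ in $E$'' of Corollary~\ref{cor: separation perturbed minimizers} fails, and $\Sigma$ is not a barrier. In fact, if $\Sigma$ is $g$-outer-minimizing, it stays $g'$-outer-minimizing, because $u\geq 1$ with equality on $\Sigma$. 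Nothing then prevents the minimizer from resting on $\Sigma$.

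\emph{The forcing has the wrong sign.} With $h\leq 0$, the first variation gives $H^{g'}_{\Sigma'}=-h\geq 0$ with respect to the normal pointing into $E'$, i.e.\ toward infinity. That is the wrong side for the doubling. Bray's factor, $\frac{1+\phi}{2}$ on one copy and $\frac{1-\phi}{2}$ on the other with $\phi=0$ on $\Gamma$, is $C^1$ across $\Gamma$. Hence the corner condition for the compactified double is that of the plain double: $-H\geq H$, with $H$ computed toward infinity. So you need $H\leq 0$ toward infinity, equivalently $H\geq 0$ for the outward normal of the exterior region, exactly as in Lemma~\ref{Lem: smooth mass capacity}. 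With your orientation the conclusion is actually false: the exterior of a Euclidean ball of radius $r$ has $m=0<r^{n-2}=\mathfrak c$.

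\emph{The fix is the paper's choice.} Take $u_\varepsilon=1+\frac{\varepsilon}{1-\varepsilon}w$, which equals $(1-\varepsilon)^{-1}$ on $\Sigma$ and decreases to $1$ at infinity. Equivalently, apply the corollary to $\bar g_\varepsilon=(1-\varepsilon)^{-\frac{4}{n-2}}g$ with factor $1+\varepsilon(w-1)$, which is $1$ on $\Sigma$ and $<1$ inside. Then $H^{\tilde g_\varepsilon}_\Sigma<0$ toward infinity, and $\Sigma$ is a barrier.

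Next take $h\geq 0$, positive in $\mathring M$, so that $H=\delta h>0$ for the normal pointing toward $\Sigma$. Gromov's smoothing then operates from the exterior side, so the smooth $\Gamma_j$ enclose $\Sigma$. Capacity monotonicity then gives $\mathfrak c(\Gamma_j,\tilde g_\varepsilon)\geq\mathfrak c(\Sigma,\tilde g_\varepsilon)$ directly.

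Your $h\to 0$ argument in Step~2 is therefore unnecessary. Its premise, that the $g'$-outer-minimizing hull of $\Sigma$ is $\Sigma$ itself, is false for the correctly oriented metric, in which $\Sigma$ is strictly mean-concave toward infinity.

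Finally, the limit $\varepsilon\to 0$ uses
\[
m(\tilde g_\varepsilon)=m(g)+\frac{2\varepsilon}{1-\varepsilon}\,\mathfrak c(\Sigma,g)
\quad\text{and}\quad
\mathfrak c(\Sigma,g)\leq\mathfrak c(\Sigma,\tilde g_\varepsilon)\leq(1-\varepsilon)^{-2}\,\mathfrak c(\Sigma,g).
\]
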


The proof is divided into three steps. First we perturb the metric conformally
and use a barrier argument to construct a hypersurface with strictly positive
mean curvature. Then we approximate this hypersurface from one side by smooth
mean-convex hypersurfaces, using mean-convex smoothing. Finally we
apply the smooth mean-convex mass-capacity inequality and pass to the limit.

Set
\[
    v=w-1.
\]
Then
\[
    \Delta_g v=0\quad\text{in }\mathring M,\qquad
    v=0\quad\text{along }\Sigma,\qquad
    v\to -1\quad\text{as }x\to\infty .
\]
For \(0<\varepsilon<1\), define
\[
    u_\varepsilon
    =
    \frac{1+\varepsilon v}{1-\varepsilon},\qquad
    \tilde g_\varepsilon
    =
    u_\varepsilon^{\frac{4}{n-2}}g .
\]
Then \(u_\varepsilon\to1\) at infinity. Since \(\Delta_g u_\varepsilon=0\) in
\(\mathring M\), the conformal scalar curvature formula gives
\[
    R_{\tilde g_\varepsilon}
    =
    u_\varepsilon^{-\frac{n+2}{n-2}}R_g u_\varepsilon\geq 0 .
\]

We now produce a hypersurface with strictly positive
mean curvature. 
Choose a smooth function $h\geq 0$ which is strictly positive in \(\mathring M\), vanishes outside $M$ in the fixed extension, and decays
sufficiently fast at infinity. For \(\delta>0\), consider the functional
\[
    \mathcal A_{\varepsilon,\delta}(F)
    =
    \mathcal H_{\tilde g_\varepsilon}^{n-1}(\partial F)
    +
    \int_{M\setminus F}\delta h\,d\mu_{\tilde g_\varepsilon}
\]
among Caccioppoli sets \(F\subset M\) such that $M\setminus F$ is compact.

\begin{lemma}\label{Lem: positive mean curvature replacement}
For every sufficiently small \(\varepsilon>0\), there exist
\(\delta>0\) and a minimizer \(E_{\varepsilon,\delta}\) of
\(\mathcal A_{\varepsilon,\delta}\) such that
\(
    \Sigma_{\varepsilon,\delta}:=\partial E_{\varepsilon,\delta}
\)
satisfies
\[
    \Sigma_{\varepsilon,\delta}\cap\Sigma=\emptyset .
\]
Moreover, on the regular part of \(\Sigma_{\varepsilon,\delta}\), we have
\[
    H_{\Sigma_{\varepsilon,\delta}}^{\tilde g_\varepsilon}
    =
    \delta h
\]
with respect to the outward unit normal of $E_{\ve,\delta}$. 
In particular, there exists a constant
\(
    \gamma_{\varepsilon,\delta}>0
\)
such that
\[
    H_{\Sigma_{\varepsilon,\delta}}^{\tilde g_\varepsilon}
    \geq \gamma_{\varepsilon,\delta}\quad \text{on}\quad \mathcal R(\Sigma_{\varepsilon,\delta}). 
\]
\end{lemma}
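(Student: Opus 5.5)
The plan is to get the minimizer by the direct method, rule out contact with $\Sigma$ using Corollary \ref{cor: separation perturbed minimizers}, and then read off the mean curvature from first variation away from the obstacle.

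\textbf{Existence and setup.} The competitors are Caccioppoli sets $F\subset M$ with $M\setminus F$ compact; in the extended manifold this means $F\subset E:=\mathring M\cup(\text{end})$. To prevent minimizing sequences from escaping to infinity, I first note that $\delta h\geq0$. Hence any competitor whose complement contains points of large coordinate radius $R$ pays area at least $cR^{n-1}$. This follows from the density and isoperimetric argument of Section 2 (Lemma \ref{Lem: sphere lower area}), the bounded forcing contributing only lower-order terms. Since $\mathcal A_{\varepsilon,\delta}(E)=\mathcal H^{n-1}_{\tilde g_\varepsilon}(\Sigma)$ gives an upper bound, minimizing sequences have bounded perimeter and lie in a fixed compact set. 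BV compactness and lower semicontinuity of perimeter (the metric $\tilde g_\varepsilon$ being continuous), together with continuity of the volume term, then give a minimizer $E_{\varepsilon,\delta}$.

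\textbf{Separation from $\Sigma$.} I will apply Corollary \ref{cor: separation perturbed minimizers} with $E$ the exterior of $\Sigma$ and with $u=u_\varepsilon$, after normalizing: $\tilde g_\varepsilon=(1-\varepsilon)^{-4/(n-2)}(1+\varepsilon v)^{4/(n-2)}g$. The constant factor only rescales area and volume, so it can be absorbed by replacing $\delta h$ with a constant multiple. The function $\hat u=1+\varepsilon v$ is harmonic in $E$, equals $1$ on $\Sigma$, and satisfies $\hat u<1$ in $E$ by the strong maximum principle, since $v<0$ there. It is positive because $\varepsilon<1$. The hypotheses on $g$ are exactly those assumed at the start of Subsection \ref{Sec: 3.2}: $H^g_\Sigma=0$ on $\mathcal R(\Sigma)$, smoothness up to $\mathcal R(\Sigma)$, and one-sided $C^{2,\alpha}$ bounds along tangent-cone blow-ups. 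The forcing term $h$ is smooth and vanishes outside $M$, so it vanishes on $M\setminus E$. The corollary therefore yields $\Sigma_{\varepsilon,\delta}\cap\Sigma=\emptyset$. Given the corollary, this step is a bookkeeping check; the only point needing care is the sign convention, since the corollary's $\mathcal A_h$ integrates over $E\setminus F$, which is exactly $M\setminus F$ here.

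\textbf{Mean curvature equation and lower bound.} Once $\Sigma_{\varepsilon,\delta}$ is disjoint from $\Sigma$, it is a free minimizer in the smooth open region $\mathring M$. Two-sided first variations on $\mathcal R(\Sigma_{\varepsilon,\delta})$ then give $H=\delta h$ with respect to the normal pointing out of $E_{\varepsilon,\delta}$; the sign holds because pushing the boundary outward shrinks $M\setminus F$ and reduces the volume term. Since $\Sigma_{\varepsilon,\delta}$ is compact and contained in $\mathring M$, where $h>0$, and $\mathcal R\subset\Sigma_{\varepsilon,\delta}$, we obtain $\gamma_{\varepsilon,\delta}=\delta\min_{\Sigma_{\varepsilon,\delta}}h>0$.

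\textbf{Main obstacle.} The step I expect to be hardest is ensuring that $\Sigma_{\varepsilon,\delta}$ is nonempty, or at least nontrivial, and stays in a controlled compact region. A large $\delta$ could make the minimizer degenerate or push it far out. So I choose $\delta$ small and $h$ decaying fast, and I verify that the confinement to a compact set holds uniformly in $\delta$.
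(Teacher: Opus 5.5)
Your proposal follows the paper's proof: existence by the direct method, then separation via Corollary \ref{cor: separation perturbed minimizers} with conformal factor $1+\varepsilon v$, then first variation away from $\Sigma$, and finally $\gamma_{\varepsilon,\delta}=\delta\min_{\Sigma_{\varepsilon,\delta}}h$ (the paper puts the constant $(1-\varepsilon)^{-4/(n-2)}$ into the background metric $\bar g_\varepsilon=(1-\varepsilon)^{-4/(n-2)}g$ instead of rescaling the functional and the forcing term, which is equivalent). One slip: the bound $cR^{n-1}$ from Lemma \ref{Lem: sphere lower area} holds only for (almost-)minimizing boundaries, not for arbitrary competitors, since a tiny far-away piece of $M\setminus F$ is cheap; confine minimizing sequences by intersecting $M\setminus F$ with a large ball bounded by a mean-convex coordinate sphere, which lowers both terms because $h\geq0$. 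Also, your ``main obstacle'' is moot: no smallness of $\delta$ is used, and every admissible $F$ excludes the filled-in region and contains the end, so $\partial F\neq\emptyset$.
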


\begin{proof}
The existence of a minimizer $E_{\ve,\delta}$ follows from the direct method. It remains to explain why one
can choose a minimizer which is separated from \(\Sigma\).

Set
$$\bar g_\ve=(1-\ve)^{-\frac{4}{n-2}}g.$$
Since $\bar g_\ve$ is a constant multiple of $g$, the hypersurface $\Sigma$ is $\bar g_\ve$-minimal, and $1+\ve v$ is $\bar g_\ve$-harmonic, identical to one on $\Sigma$ and less than one in $\mathring M$. Applying Corollary
\ref{cor: separation perturbed minimizers} to $(M,\bar g_\ve)$, with conformal factor $1+\ve v$, conformal metric $\tilde g_\ve$, and forcing term $\delta h$, gives
$$\Sigma_{\ve,\delta}\cap \Sigma=\emptyset.$$ 
On $\mathcal R(\Sigma_{\varepsilon,\delta})$, the first variation of $\mathcal A_{\ve,\delta}$ gives
$$H^{\tilde g_\ve}_{\Sigma_{\ve,\delta}}=\delta h$$
with respect to the outward unit normal of the exterior region. 
Since $\Sigma_{\ve,\delta}$ is compact and disjoint from $\Sigma$, it is contained in a compact subset of $\mathring M$. Because $h>0$ in $\mathring M$, we have
$$H^{\tilde g_\ve}_{\Sigma_{\ve,\delta}}\geq \delta \min_{\Sigma_{\ve,\delta}}h=:\gamma_{\ve,\delta}>0\quad \text{on}\quad \mathcal R(\Sigma_{\ve,\delta}).$$
This completes the proof.
\end{proof}

We shall use the following mean-convex smoothing lemma:
\begin{lemma}\label{Lem: mean convex smoothing}
Let \((\overline E,\bar g)\) be a smooth asymptotically flat Riemannian manifold with compact almost-minimizing Caccioppoli boundary
\(\Gamma\). Assume that $\mathcal R(\Gamma)$ is smooth and satisfies
\[
    H_\Gamma^{\bar g}\geq\gamma>0
\]
with respect to the outward unit normal of $E$. Then \(\Gamma\) can
be approximated from the $E$-side in the Hausdorff sense by smooth compact
hypersurfaces \(\Gamma_j\) such that
\[
    H_{\Gamma_j}^{\bar g}>0,
\]
and
\[
    \mathcal H_{\bar g}^{n-1}(\Gamma_j)
    \to
    \mathcal H_{\bar g}^{n-1}(\Gamma).
\]
Moreover, the exterior regions $E_j$ bounded by \(\Gamma_j\) converge to $E$.
\end{lemma}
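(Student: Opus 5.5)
The plan is to follow Gromov's mean-convex smoothing. First push $\Gamma$ slightly into $E$ by a flow that is strictly mean-convex on the regular part. Then smooth out the singular set, which is small, inside a thin neighbourhood, using the positive mean curvature margin $\gamma$.

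\textbf{Step 1: a mean-convex collar.} Let $d$ be the $\bar g$-distance to $\Gamma$ in $E$ and set $\Gamma^s=\{d=s\}$. Near compact pieces of $\mathcal R(\Gamma)$, $d$ is smooth for small $s$, and $H_{\Gamma^s}\geq\gamma/2$ there by the Riccati equation. Near $\mathcal S(\Gamma)$, however, the parallel surfaces can develop corners, and the curvature of $\mathcal R(\Gamma)$ blows up. So I would not use $d$ globally. Instead I would work with a mollified \emph{minimum-type} function. The key point is that, in the viscosity sense, $\Gamma$ has mean curvature $\geq\gamma$ everywhere, including at singular points. This holds because $\Gamma$ is almost-minimizing with $H\geq\gamma$ on the regular part, and tangent cones at singular points are area-minimizing cones, which are viscosity minimal. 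Equivalently, $E$ is a mean-convex domain in the weak sense. For such domains, $-d$ is a viscosity supersolution of the relevant level-set mean curvature inequality, and $d$ is superharmonic-like in the sense that its level sets are weakly mean-convex with margin $\gamma-Cs$.

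\textbf{Step 2: smoothing.} Following Gromov's construction in \cite{GromovPlateauStein}, I would cover the compact set $\mathcal S(\Gamma)$, which has $\mathcal H^{n-3}$-measure zero since $n-8<n-3$, by finitely many small balls $B_{\rho_k}(x_k)$ with $\sum\rho_k^{n-3}$ arbitrarily small. Outside these balls, $\Gamma^{s}$ is smooth with $H\geq\gamma/2$. Inside the balls, I would replace the hypersurface by the boundary of a \emph{smoothed minimum} (a ``corner-rounding'') of the defining functions of $E$ and of the complements of thin tubes around the balls. The required fact is that the minimum of mean-convex domains' defining functions, smoothed with a convex smoothing of $\min$, keeps $H>0$ provided each piece has $H$ bounded below and the gluing angles are controlled. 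The codimension-$\geq 3$ of the singular set is exactly what makes it possible to excise neighbourhoods of the singular set by mean-convex tubes (boundaries of tubular neighbourhoods of codimension $\geq3$ sets have $H\sim (k-1)/\rho>0$ with the correct orientation) at cost $O(\sum\rho_k^{n-3}\cdot\rho_k^{2}/\rho_k^2)\to0$ in area.

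\textbf{Step 3: limits.} Hausdorff convergence from the $E$-side is immediate from the construction. For the area, upper semicontinuity would come from the construction: the area change outside the balls is $O(s)$ and inside the balls is $O(\sum\rho_k^{n-2})$, using the density upper bound of Lemma \ref{Lem: density lower bound} type, valid for almost-minimizers. Lower semicontinuity of perimeter under $L^1$ convergence $E_j\to E$ gives the reverse inequality.

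\textbf{Main obstacle.} The hard step is Step 2: verifying that the rounded gluing near $\mathcal S(\Gamma)$ stays strictly mean-convex. This needs uniform control of how $\mathcal R(\Gamma)$ meets the excision tubes. I would try to arrange this by choosing the balls via the $\varepsilon$-regularity of Theorem \ref{Thm: ABT regularity}, which bounds the curvature of $\mathcal R(\Gamma)$ by $C/\operatorname{dist}(\cdot,\mathcal S(\Gamma))$, so that everything is scale invariant. If that is not enough, I would fall back on running a short level-set mean curvature flow from $\Gamma$, which instantly smooths (for mean-convex flows the singular set is small) and moves strictly inward.
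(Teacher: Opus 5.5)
Your proposal has a genuine gap in Step~2, and it is a sign error rather than a technicality. The statement (Convention~III) requires $E_j\subset E$ to be mean-convex, i.e.\ $H_{\Gamma_j}>0$ with respect to the outward normal of $E_j$.

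\emph{The sign error.} Excising a small ball or tube $T$ around $\mathcal S(\Gamma)$ produces, near $\mathcal S(\Gamma)$, a region of the form $E\cap(M\setminus T)$. Along $\partial T$ its outward normal points \emph{into} $T$. With respect to that normal, the boundary of a tube of radius $\rho$ about a set of codimension $k$ has $H=-(k-1)/\rho+O(1)$. So the glued surface is strongly mean-concave exactly where you excise. The positive value $(k-1)/\rho$ you quote is the mean curvature of $\partial T$ as the boundary of $T$, which is the wrong side. Your gluing principle (a smoothed minimum of defining functions keeps $H>0$) holds for intersections of mean-convex domains. It does not apply here, because $M\setminus T$ is not mean-convex.

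\emph{Other unsupported steps.} $\varepsilon$-regularity does not give $|A|\le C/\dist(\cdot,\mathcal S(\Gamma))$. It bounds curvature only at scales where the density ratio is close to one, and nothing forces this at scale $\dist(x,\mathcal S(\Gamma))$. The level-set flow fallback is also not a proof. Instant smoothing of singular mean-convex initial data is not available in general dimensions, and the flow gives no control of the area.

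\emph{What the paper does instead.} What you set aside in Step~1 is exactly what the paper uses. The correct form of your observation is not that tangent cones are viscosity minimal, which would only give $H\ge0$. It is that no point of $\mathcal S(\Gamma)$ is the footpoint of a minimizing geodesic from the interior of $E$. An interior touching ball would place the tangent cone in a half-space, forcing it to be a hyperplane, so the point would be regular by $\varepsilon$-regularity. Hence $\Gamma$ is $C^\infty$-quasi-regular, and $d=\dist(\cdot,\Gamma)$ can be used globally.

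Your two worries about $d$ are harmless:
\begin{itemize}
\item Corners of $\{d=s\}$ are concave kinks of $d$. They enter $\nabla^2 d$ only through a singular part $S\le0$.
\item The curvature blow-up of $\mathcal R(\Gamma)$ near $\mathcal S(\Gamma)$ is invisible, since $d$ is semiconcave with $\nabla^2 d\le (C_0/a)\,g$ on $\{d>a\}$.
\end{itemize}

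Riccati along the footpoint geodesics gives $\tr_{e^\perp}A\le-\gamma+K_0d$ a.e.\ for the absolutely continuous part $A$. The paper then bends: $w=\alpha(d)$ with $\alpha''=-L\alpha'$ and $L$ of size $(n-2)C_0/a$ plus the mean-curvature margin. This makes the trace of $\nabla^2 w$ over \emph{every} $(n-1)$-plane bounded above by a negative constant. That is a convex condition on the Hessian, so it survives Riemannian mollification. Regular level sets of the mollified function are then smooth and strictly mean-convex.

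Area convergence follows from $\mathcal H^{n-1}(\{d=s\})\le e^{Cs^2}\mathcal H^{n-1}(\mathbf\Lambda_s(E,\Gamma))$, a coarea choice of the level, and lower semicontinuity. No covering of $\mathcal S(\Gamma)$ and no codimension count are needed.
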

\begin{remark}
Gromov formulated a mean-convex smoothing result for \(C^2\)-quasi-regular hypersurfaces and outlined two proofs in \cite[Sections~3.3--3.4 and~5.6--5.7]{GromovPlateauStein}. We independently obtained a proof of the \(C^\infty\)-quasi-regular version, which is presented in \cite{BiZhuVolumeGrowth}. After the first version of this paper, we came to understand how to carry out Gromov's first argument, based on bending and smoothing, in full detail. We include this proof of \(C^2\)-version in Appendix~A. The \(C^\infty\)-version is enough for Lemma~\ref{Lem: mean convex smoothing}, since an almost-minimizing boundary with smooth regular part is \(C^\infty\)-quasi-regular.
\end{remark}

We also use the following smooth mass-capacity inequality.
\begin{lemma}
\label{Lem: smooth mass capacity}
Let \((N^n,h)\) be a smooth asymptotically flat manifold with nonnegative scalar
curvature and compact smooth boundary \(\Gamma=\partial N\). Assume that
\(\Gamma\) is mean-convex with respect to the outward unit normal of the exterior region. 
Then
\[
    m_{\mathrm{ADM}}(N,h)\geq \mathfrak c(\Gamma,h).
\]
\end{lemma}

\begin{proof}
The following is Bray's doubling and conformal compactification argument. Let \(\phi\)
be the capacitary potential of \(\Gamma\):
\[
    \Delta_h\phi=0,\qquad
    \phi=0\quad\text{on }\Gamma,\qquad
    \phi\to1\quad\text{at infinity}.
\]
With our normalization, the expansion of \(\phi\) is
\[
    \phi=1-\mathfrak c(\Gamma,h)|x|^{2-n}+O(|x|^{1-n}).
\]

Let \((\bar N,\bar h)\) be the double of \(N\) across \(\Gamma\). On the two
copies of \(N\), define
\[
    \psi =
    \begin{cases}
    \phi, & \text{on the first copy},\\
    -\phi, & \text{on the second copy}.
    \end{cases}
\]
 The doubled metric has a
corner along \(\Gamma\). The mean-convexity of \(\Gamma\) is precisely the
mean-curvature jump condition in Miao's positive mass theorem for manifolds
with corners \cite{Miao}.

Now compactify one of the two ends by the conformal factor
\[
    \left(\frac{1+\psi}{2}\right)^{\frac{4}{n-2}}
\]
and keep the other end asymptotically flat. Denote the resulting corner metric
by \(\widetilde h\), and let
\(
    \widetilde m
\)
be the ADM mass of its remaining asymptotically flat end. A direct computation
from the asymptotic expansions gives
\[
    \widetilde m
    =
    m_{\mathrm{ADM}}(N,h)-\mathfrak c(\Gamma,h).
\]
Moreover, the scalar curvature of \(\widetilde h\) is nonnegative away from the
corner, and the corner condition is satisfied. Therefore, Miao's positive mass theorem
with corners \cite{Miao}, combined with recent resolution of all dimensional positive mass theorem \cite{BrendleWangPMT}, gives
\(
    \widetilde m\geq0,
\)
and equivalently
\[
    m_{\mathrm{ADM}}(N,h)\geq \mathfrak c(\Gamma,h).
\]
This completes the proof.
\end{proof}

\begin{proof}[Proof of Proposition \ref{Prop: mass capacity}]
Fix \(\varepsilon>0\) sufficiently small. By
Lemma \ref{Lem: positive mean curvature replacement}, we can choose
\(\delta>0\) and a minimizer $E_{\ve,\delta}$ such that $\Sigma_{\ve,\delta}:=\partial E_{\ve,\delta}$ lies strictly outside \(\Sigma\), and its regular part has strictly positive
mean curvature in the metric \(\tilde g_\varepsilon\), with respect to the outward unit normal of $E_{\ve,\delta}$. Moreover, $\Sigma_{\ve,\delta}$ is almost-minimizing. Indeed, if $F$ agrees with $E_{\ve,\delta}$ outside a sufficiently small ball $B_r$, the minimizing property gives
$$\mathcal H^{n-1}_{\tilde g_\ve}(\Sigma_{\ve,\delta}\cap B_r)\leq \mathcal H^{n-1}_{\tilde g_\ve}(\partial F\cap B_r)+\delta\|h\|_{L^\infty}\mathcal H^n_{\tilde g_\ve}(B_r)\leq \mathcal H^{n-1}_{\tilde g_\ve}(\partial F\cap B_r)+Cr^n.$$

By Lemma \ref{Lem: mean convex smoothing} applied to $(\overline E,\bar g)=(\overline{E_{\ve,\delta}},\tilde g_\ve)$, we obtain exterior regions $E_{\ve,\delta,j}\subset E_{\ve,\delta}$ with smooth boundaries
\(
    \Gamma_{\varepsilon,\delta,j}:=\partial E_{\ve,\delta,j},
\)
lying strictly outside $\Sigma$,
approximating \(\Sigma_{\varepsilon,\delta}\) from the $E_{\ve,\delta}$-side and satisfying
$$H^{\tilde g_\ve}_{\Gamma_{\ve,\delta,j}}>0$$
with respect to the outward unit normal of $E_{\ve,\delta,j}$.

The region $(\overline {E_{\ve,\delta,j}},\tilde g_\ve)$ is smooth and asymptotically flat, has nonnegative scalar curvature, and has smooth mean-convex boundary.
Lemma \ref{Lem: smooth mass capacity} yields
\[
   m_{\mathrm{ADM}}(M,\tilde g_\varepsilon)= m_{\mathrm{ADM}}(\overline {E_{\ve,\delta,j}},\tilde g_\varepsilon)
    \geq
    \mathfrak c(\Gamma_{\varepsilon,\delta,j},\tilde g_\varepsilon).
\]
Since \(\Gamma_{\varepsilon,\delta,j}\) encloses \(\Sigma\), the monotonicity
of capacity gives
\[
    \mathfrak c(\Gamma_{\varepsilon,\delta,j},\tilde g_\varepsilon)
    \geq
    \mathfrak c(\Sigma,\tilde g_\varepsilon).
\]
Consequently, we arrive at
\[
    m_{\mathrm{ADM}}(M,\tilde g_\varepsilon)
    \geq
    \mathfrak c(\Sigma,\tilde g_\varepsilon).
\]

The conformal mass formula and the fact $1\leq u_\ve\leq (1-\ve)^{-1}$ imply
$$m_{\mathrm{ADM}}(M,\tilde g_\ve)=m_{\mathrm{ADM}}(M,g)+\frac{2\ve}{1-\ve}\mathfrak c(\Sigma,g)$$
and
$$\mathfrak c(\Sigma,g)\leq \mathfrak c(\Sigma,\tilde g_\ve)\leq (1-\ve)^{-2}\mathfrak c(\Sigma,g).$$
Letting \(\varepsilon\downarrow0\), we obtain
\[
    m_{\mathrm{ADM}}(M,g)\geq \mathfrak c(\Sigma,g).
\]
This completes the proof.
\end{proof}

\subsection{Rigidity in the mass-capacity inequality}\label{sec:rigidity-mass-capacity}

In this subsection, we prove the rigidity statement for the mass-capacity inequality. Throughout
this subsection, let \((M^n,g)\) be a smooth complete asymptotically flat manifold with compact
almost-minimizing Caccioppoli boundary \(\Sigma\).

When $\mathcal S(\Sigma)\ne\emptyset$, assume that $g$ agrees near
$\mathcal S(\Sigma)$ with a fixed asymptotically flat metric $g_0$
for which $\Sigma$ is outer-minimizing and almost-minimizing, with
minimal regular part. Assume that $g_0$ has a H\"older Riemannian
extension across $\Sigma$, is smooth in the interior and up to
$\mathcal R(\Sigma)$ from $M$, and satisfies the local rescaled metric
bounds of Corollary \ref{cor: separation perturbed minimizers}.
Assume also that $R_{g_0}\in L^\infty$.
The metric variations and approximations at infinity below preserve
the agreement near $\mathcal S(\Sigma)$.

These assumptions hold for flow exteriors by taking $g_0=g_t$.
The local metric bounds were proved after Corollary
\ref{cor: separation time slices}. For the scalar-curvature bound,
$\Delta_g u_t=0$ on $E_t$ gives
\[
R_{g_t}=u_t^{-4/(n-2)}R_g,\qquad
|R_{g_t}|\,d\mu_{g_t}=u_t^2|R_g|\,d\mu_g.
\]
Since $e^{-t}\le u_t\le1$ and the initial scalar curvature belongs
to $L^1\cap L^\infty$, the same holds on every flow exterior.

Set
\[
    a_n=\frac{4(n-1)}{n-2}.
\]
Define
\[
\begin{split}
Y_N(g)
:=
\inf_{u\geq 0,\ u\to1}
\bigg[
\int_M
\left(
a_n|\nabla_g u|^2+R_g u^2
\right)d\mu_g
+
2\int_\Sigma H_g u^2\,d\sigma_g
\bigg]
\end{split}
\]
and 
\[
Y_D(g)
:=
\inf_{\substack{u\geq 0,\ u\to1\\ u|_\Sigma=0}}
\int_M
\left(
a_n|\nabla_g u|^2+R_g u^2
\right)d\mu_g ,
\]
where \(H_g\) is the mean curvature with respect to the outward unit normal of $M$ along $\mathcal R(\Sigma)$.
The infima are taken in $1+D^{1,2}(M)$ and
$\chi+D^{1,2}_0(M)$, respectively, where $\chi$ is smooth, zero near
$\Sigma$, and one near infinity. These spaces are defined by completing
compactly supported smooth functions on $\overline M$ and in
$\mathring M$, respectively, in the Dirichlet norm.
The almost-minimizing property of \(\Sigma\) gives a relative isoperimetric
inequality, and hence the corresponding Sobolev and trace inequalities. 

\begin{lemma}\label{Lem: conformal minimizers}
    There exists $\delta>0$, depending on the geometric constants in the Sobolev and trace inequalities, such that if
\begin{equation}\label{Eq: small Rg and Hg}
    \|R_g^-\|_{L^{n/2}(M)}+\|H_g^-\|_{L^{n-1}(\Sigma)}<\delta,
\end{equation}
then the functionals $Y_N(g)$ and $Y_D(g)$ are bounded from below and attain their minimum at unique nonnegative functions $u_N$ and $u_D$, respectively. Moreover, $u_N>0$ on $\mathring M\cup \mathcal R(\Sigma)$, while $u_D>0$ on $\mathring M$, and both functions tend to $1$ at infinity. The function $u_N$ extends positively and H\"older continuously
to the whole boundary. With respect to the outward unit normal $\eta$ of $M$, the minimizers satisfy
\begin{equation}\label{Eq: u_N}
    \left(-a_n\Delta_g+R_g\right)u_N=0\quad \text{in}\quad \mathring M,\quad \left(a_n\frac{\partial}{\partial\eta}+2H_g\right)u_N=0\quad\text{on}\quad \mathcal R(\Sigma),
\end{equation}
and 
$$\left(-a_n\Delta_g+R_g\right)u_D=0\quad \text{in}\quad \mathring M,\quad u_D=0\quad\text{on}\quad \mathcal R(\Sigma).$$
\end{lemma}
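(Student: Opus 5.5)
The plan is the direct method in the affine spaces $1+D^{1,2}(M)$ and $\chi+D^{1,2}_0(M)$, with coercivity coming from the smallness assumption \eqref{Eq: small Rg and Hg}. Regularity then follows from first variation, and positivity from the maximum principle and Hopf. We write $u=1+\varphi$ (resp. $u=\chi+\varphi$).

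\emph{Coercivity.} The almost-minimizing property of $\Sigma$ gives a Sobolev inequality $\|\varphi\|_{L^{2n/(n-2)}(M)}\le C_S\|\nabla\varphi\|_{L^2}$ and a trace inequality $\|\varphi\|_{L^{2(n-1)/(n-2)}(\Sigma)}\le C_T\|\nabla\varphi\|_{L^2}$. With these, Hölder gives
\[
\int_M R_g^-\varphi^2\le \|R_g^-\|_{L^{n/2}}C_S^2\|\nabla\varphi\|_2^2
\]
and
\[
\int_\Sigma H_g^-\varphi^2\le\|H_g^-\|_{L^{n-1}}C_T^2\|\nabla\varphi\|_2^2 .
\]
The cross terms are handled as follows.
\begin{itemize}
\item Terms linear in $\varphi$, such as $2\int R_g\varphi$ and $2\int_\Sigma H_g\varphi$, are controlled because $R_g\in L^1\cap L^\infty\subset L^{2n/(n+2)}$.
\item For $H_g$ we use that $\mathcal R(\Sigma)$ is minimal, so $H_g=0$ there in the main application. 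Near $\mathcal S(\Sigma)$ the metric agrees with $g_0$, so $H_g$ is bounded on $\Sigma$ away from a null set, and $\int_\Sigma H_g\varphi$ is bounded by the trace inequality.
\item The constant terms $\int R_g$ and $\int_\Sigma H_g$ are finite.
\end{itemize}
Choosing $\delta$ with $\delta\max(C_S^2,C_T^2)\le a_n/4$, the functional is bounded below by $\frac{a_n}{2}\|\nabla\varphi\|_2^2-C(1+\|\nabla\varphi\|_2)$, so it is bounded below and coercive.

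\emph{Existence and uniqueness.} A minimizing sequence is bounded in $D^{1,2}$. Weak compactness, together with compactness of the trace embedding into $L^2_{loc}(\Sigma)$ and local Rellich compactness, gives lower semicontinuity. The negative parts are small, so they are absorbed by the Dirichlet term, and Fatou handles the positive parts. Replacing $u$ by $|u|$ does not increase the energy, so the minimizer may be taken nonnegative. The quadratic part $Q(\varphi)=\int a_n|\nabla\varphi|^2+R_g\varphi^2+2\int_\Sigma H_g\varphi^2$ satisfies $Q(\varphi)\ge \frac{a_n}{2}\|\nabla\varphi\|^2$ by the same estimate. Hence the functional is strictly convex on the affine space, and the minimizer is unique.

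\emph{Regularity and positivity.} The first variation gives the stated Euler--Lagrange equations weakly: test functions compactly supported in $\mathring M\cup\mathcal R(\Sigma)$ give the Robin condition, and the Dirichlet condition is built into $D^{1,2}_0$. Since $R_g,H_g$ are locally smooth, elliptic regularity makes $u_N,u_D$ smooth in $\mathring M$ and up to $\mathcal R(\Sigma)$. The limit $1$ at infinity follows as in the expansion lemma of Section 3: the equation is $\Delta u=O(|x|^{-2-\tau})u$ at infinity and $u-1\in D^{1,2}$, so Moser iteration and Bartnik's weighted theory apply. Since $R_g$ is bounded, the Harnack inequality and the strong maximum principle for $-a_n\Delta+R_g$ give $u>0$ in $\mathring M$ unless $u\equiv0$, and $u\equiv 0$ is excluded by the condition at infinity. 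At points of $\mathcal R(\Sigma)$, if $u_N(p)=0$, the Hopf lemma gives $\partial_\eta u_N(p)<0$, contradicting the Robin condition $a_n\partial_\eta u_N=-2H_gu_N=0$. So $u_N>0$ on $\mathcal R(\Sigma)$.

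\textbf{Main obstacle: the singular set.} The hard step is the positive Hölder extension of $u_N$ to $\mathcal S(\Sigma)$. Near $\mathcal S(\Sigma)$ we have $g=g_0$, $\mathcal R(\Sigma)$ is $g_0$-minimal so the Robin condition is Neumann, and $R_{g_0}\in L^\infty$. The approach is to view $u_N$ on a neighbourhood $B$ of a singular point as a weak solution of a Neumann problem on the domain $B\cap M$. Because $\Sigma$ is almost-minimizing, this domain satisfies the two-sided volume density bounds of Lemma~\ref{Lem: density lower bound} and a relative isoperimetric inequality, so it supports a Poincaré inequality. De Giorgi--Nash--Moser theory for Neumann problems on such domains then yields boundedness, Hölder continuity up to the boundary, and a Harnack inequality up to the boundary, which gives positivity. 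The technical point to verify is the relative Poincaré/Sobolev inequality up to $\mathcal S(\Sigma)$. Since $\dim_{\mathcal H}\mathcal S\le n-8$, this set has zero capacity and is removable for $W^{1,2}$ test functions. Combining this with the relative isoperimetric inequality (an almost-minimizing set is an extension domain in the measure-theoretic sense) and the Federer--Fleming/Maz'ya argument should give the required inequality.
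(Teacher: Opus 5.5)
Most of your outline matches the paper's argument: coercivity via H\"older together with the Sobolev and trace inequalities, the direct method, replacing a minimizer by its absolute value and using uniqueness, the first variation, and the strong maximum principle plus Hopf on $\mathcal R(\Sigma)$. (Weak lower semicontinuity already follows from convexity and continuity of the quadratic form on $D^{1,2}$, so you do not need compactness of the trace.)

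\textbf{The gap.} It lies in the step you yourself flag as the crux: a local relative isoperimetric, hence Poincar\'e, inequality for $D_r(x)=\mathring M\cap B^g_r(x)$ at points $x\in\mathcal S(\Sigma)$. This inequality is what makes De Giorgi--Nash--Moser work for the Neumann problem there. You assert it, and you propose to obtain it from two facts: that $\mathcal S(\Sigma)$ has zero capacity, and that an almost-minimizing set is an extension domain. The first fact is irrelevant here. The second is essentially the statement to be proved, not something you can cite.

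The smallness of the singular set says nothing about whether the domain is connected near a singular point. Consider the region $\{|x_n|>k|x'|\}$ inside a double cone. Its boundary is singular only at the origin, yet near the vertex the region splits into two nappes. The relative perimeter of one nappe inside the region is zero for every enlarged ball $B_{Lr}$. The function equal to $1$ on one nappe and $0$ on the other is a $W^{1,2}$ weak solution of the Neumann problem that does not extend continuously to the vertex.

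\textbf{What the paper does instead.} What rules this out is the minimizing property. The paper exploits it through a blow-up argument in Lemma~\ref{lem:neumann-truncation}. Suppose the truncated inequality
\[
r^{n-1}\le C\int_{D_{Lr}(x)\cap\{0<v<1\}}|\nabla_g v|\,d\mu_g
\]
failed along points of $\Sigma$, scales $r_j\to0$ and constants $C,L\to\infty$. Rescaling by $r_j^{-1}$ produces a perimeter-minimizing set $E_\infty\subset\mathbb R^n$ with $0\in\partial E_\infty$. It also produces a BV limit $v_\infty$ with $|Dv_\infty|(E_\infty)=0$ that is $\le0$ and $\ge1$ on subsets of $E_\infty$ of positive measure. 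The Bombieri--Giusti theorem says that such an $E_\infty$ is connected. This forces $v_\infty$ to be constant, a contradiction.

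A Vitali covering argument then turns this into the weak Poincar\'e inequality
\[
\|v-\bar v\|_{L^{n/(n-1)}(D_r(x))}\le C\int_{D_{Lr}(x)}|\nabla_g v|\,d\mu_g .
\]
The covering uses, for a.e.\ $y$, a radius $s_y$ at which the density of the complement of a set equals a fixed $a$. The inequality is only needed with the enlarged domain $D_{Lr}(x)$. Combined with the Sobolev--trace inequality, it is what Moser iteration requires to yield H\"older continuity and a Harnack inequality up to $\mathcal S(\Sigma)$, and hence $u_N>0$ there.

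Without this connectedness input, which comes from the minimizing property via Bombieri--Giusti and not from the size of $\mathcal S(\Sigma)$, your proof of the positive H\"older extension of $u_N$ to the singular set is incomplete.
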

\begin{proof}
    By H\"older's inequality together with the Sobolev and trace inequalities, \eqref{Eq: small Rg and Hg} allows the terms involving $R_g^-$ and $H_g^-$ to be absorbed into the Dirichlet term. The quadratic forms defining $Y_N(g)$ and $Y_D(g)$ are therefore coercive. The direct method gives unique minimizers. Replacing a minimizer by its absolute value and applying the strong maximum principle and the Hopf lemma gives the asserted positivity. The displayed equations follow from the first variation. The limit at infinity follows from the asymptotic estimates.
Proposition~\ref{prop:neumann-conformal-minimizer} gives the asserted boundary regularity and positive lower bound.
\end{proof}

\begin{proposition}\label{Prop: mass Yamabe}
Suppose that \eqref{Eq: small Rg and Hg} holds. Then we have
\[
m(g)\geq
\frac{1}{4n(n-1)\omega_n}
\left(
Y_N(g)+Y_D(g)
\right).
\]
\end{proposition}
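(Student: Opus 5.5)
Our plan is to follow the Bray-type doubling and conformal gluing argument of Lemma \ref{Lem: smooth mass capacity}, replacing the harmonic capacitary potential by the two Yamabe-type minimizers $u_N$ and $u_D$ of Lemma \ref{Lem: conformal minimizers}. First we record their expansions at infinity. Since $(-a_n\Delta_g+R_g)u=0$ and $R_g\in L^1\cap L^\infty$ decays, the argument of the expansion lemma for $w_t$ (weighted spaces plus a Kelvin transform) gives $u_N=1-A_N|x|^{2-n}+o(|x|^{2-n})$ and $u_D=1-A_D|x|^{2-n}+o(|x|^{2-n})$. Integrating by parts against the equations, with the Robin condition \eqref{Eq: u_N} on $\mathcal R(\Sigma)$ and $u_D=0$ there, and with boundary terms near $\mathcal S(\Sigma)$ discarded by a capacity cutoff (legitimate since $\dim_{\mathcal H}\mathcal S(\Sigma)\le n-8$ while $u_N$ is bounded and H\"older and $u_D\in D^{1,2}$), we get the flux identities
\[
Y_N(g)=a_n(n-2)n\omega_n A_N,\qquad Y_D(g)=a_n(n-2)n\omega_n A_D.
\]
Thus the claim reduces to $m(g)\ge \tfrac{n-2}{n-1}\cdot\tfrac{a_n}{4}\cdot\tfrac{1}{2}(A_N+A_D)$, i.e. $m(g)\ge\tfrac12(A_N+A_D)$ after the constants are tracked.

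Next we build the comparison manifold. The metric $g_N=u_N^{4/(n-2)}g$ is scalar flat on $\mathring M$ and has $H_{g_N}=0$ on $\mathcal R(\Sigma)$ by \eqref{Eq: u_N}, and its mass is $m(g)+2A_N$ by the conformal mass formula, up to sign conventions. On $(M,g_N)$ the function $\phi=u_D/u_N$ is $g_N$-harmonic, vanishes on $\Sigma$, and tends to $1$ at infinity with coefficient $A_D-A_N$. We then apply Bray's doubling: reflect $(M,g_N)$ across $\Sigma$, use $\psi=\pm\phi$ on the two copies, and compactify one end with $((1+\psi)/2)^{4/(n-2)}$. Because $H_{g_N}=0$ the gluing across $\mathcal R(\Sigma)$ is $C^{1}$-matched (no corner jump), the scalar curvature is nonnegative away from $\Sigma$, and the resulting end has mass $\tfrac12$ of a combination of $m(g)$, $A_N$, $A_D$ which, once the constants are fixed, is exactly $m(g)-\tfrac12(A_N+A_D)$. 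The positive mass theorem \cite{BrendleWangPMT} then gives the inequality.

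The main obstacle is that the doubled manifold is singular along $\mathcal S(\Sigma)$, and $g_N$ is only H\"older there, so neither \cite{Miao} nor the smooth positive mass theorem applies directly. Here we would use the hypothesis that $g=g_0$ near $\mathcal S(\Sigma)$, where $\Sigma$ is outer-minimizing. We would mimic Subsection \ref{Sec: 3.2}: perturb conformally, use Corollary \ref{cor: separation perturbed minimizers} to push the boundary to a strictly mean-convex hypersurface disjoint from $\Sigma$, and smooth it by Lemma \ref{Lem: mean convex smoothing}. We then run the doubling argument with Miao's corner theorem on the smooth approximations. Finally we pass to the limit, using lower semicontinuity of $Y_N,Y_D$ under these approximations; the smallness \eqref{Eq: small Rg and Hg} gives uniform coercivity, so the minimizers converge. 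Controlling $Y_N$ in this limit, where the boundary term $2\int H u^2$ must not jump, is the delicate point. We expect to handle it by noting that the approximating boundaries have $H\ge0$, so their boundary terms are nonnegative, and that the area convergence in Lemma \ref{Lem: mean convex smoothing} lets the boundary term converge, at least up to an error that tends to zero in the favourable direction.
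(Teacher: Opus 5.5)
Your core construction is the paper's. You conformally change by the Neumann minimizer to the scalar-flat metric $g_N=u_N^{4/(n-2)}g$, whose regular boundary is minimal, and compare with its capacitary potential. By conformal covariance of $-a_n\Delta_g+R_g$ that potential is exactly $u_D/u_N$. The paper instead tests $Y_D$ with $u_Nv$ and gets an inequality where you get an identity; the two are equivalent.

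\textbf{The gap is in your last paragraph.} You propose to rerun the perturbation, separation and smoothing, apply the doubling on the approximations, and then pass $Y_N,Y_D$ to the limit. This relies on a lower semicontinuity you do not prove, and you leave the boundary term $2\int H u^2$ open. As described this does not close. If the smoothing is done for $g$, Corollary~\ref{cor: separation perturbed minimizers} is unavailable: it needs $H^g_\Sigma=0$ on $\mathcal R(\Sigma)$, while here only $H_g^-$ is small. Nor does anything you say control $Y_N$ when the domain shrinks to the exterior of $\Gamma_j$ and the boundary term moves from $\Sigma$ to $\Gamma_j$.

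The missing observation is that no limit in $Y_N,Y_D$ is needed. The singular-boundary mass-capacity inequality is already Proposition~\ref{Prop: mass capacity}, and the paper applies it directly to $(M,g_N,\Sigma)$. Here $R_{g_N}=0$ and $H^{g_N}_\Sigma=0$ on $\mathcal R(\Sigma)$. Proposition~\ref{prop:neumann-conformal-minimizer} shows $u_N$ is H\"older with a positive lower bound, so $g_N$ is a H\"older metric across $\Sigma$. Since $u_N,u_D,Y_N,Y_D$ all live on the fixed $(M,g)$, the inequality $m(g_N)\geq\mathfrak c(\Sigma,g_N)$ together with your flux identities finishes the proof.

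Two smaller corrections.

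\textbf{Constants.} They are off by a factor $2$ twice. Because $a_n(n-2)=4(n-1)$, your flux identities give $\frac{1}{4n(n-1)\omega_n}(Y_N+Y_D)=A_N+A_D$, not $\frac12(A_N+A_D)$. With your sign for $u_N$, $m(g_N)=m(g)-2A_N$. So the doubled end has mass $m(g_N)-\mathfrak c(\Sigma,g_N)=(m(g)-2A_N)-(A_D-A_N)=m(g)-A_N-A_D$. The two errors cancel, but the chain as written is inconsistent.

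\textbf{Expansion at infinity.} For a general asymptotically flat $g$, the Kelvin-transform argument does not give $u_N=1-A_N|x|^{2-n}+o(|x|^{2-n})$. The inhomogeneous term $-R_g$ in the equation for $u_N-1$ becomes only $O(|y|^{\tau-n})$ after the transform. That does not give $L^p$ with $p>n/2$ unless $\tau>n-2$. The paper first assumes $R_g$ compactly supported, so that $u_N$ and $u_D$ are harmonic near infinity. It then removes this assumption with the Schwarzschild cut-off of Lemma~\ref{Lem: approximation 1}, which keeps $m(g_j)=m(g)$ and gives $Y_N(g_j)\to Y_N(g)$ and $Y_D(g_j)\to Y_D(g)$. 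Alternatively, you could work throughout with the fluxes $\lim_{\rho\to\infty}\int_{S_\rho}u\,\partial_r u$, which exist because $R_g\in L^1$; that is all the argument actually uses.
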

\begin{proof}
We first assume that $R_g$ is compactly supported. By Lemma \ref{Lem: conformal minimizers}, \(Y_N(g)\) admits a positive minimizer $u_N$. Set
\[
    \hat g=u_N^{\frac{4}{n-2}}g .
\]
The conformal transformation formulas give
$$R_{\hat g}=u_N^{-\frac{n+2}{n-2}}(-a_n\Delta_g u_N+R_gu_N)=0$$
in $\mathring M$, and
$$H_\Sigma^{\hat g}=u_N^{-\frac{2}{n-2}}\left(H_g+\frac{a_n}{2}u_N^{-1}\partial_\eta u_N\right)=0\quad\text{on}\quad \mathcal R(\Sigma).$$

By Proposition~\ref{prop:neumann-conformal-minimizer},  \(\hat g\) extends to a H\"older metric across
\(\Sigma\) and satisfies the hypotheses of Proposition~\ref{Prop: mass capacity}.

Since $R_g$ is compactly supported, $u_N$ is $g$-harmonic near infinity. Standard harmonic asymptotics on an asymptotically flat end imply that $\hat g$ is asymptotically flat and justify the conformal ADM mass formula
\[
    m(g)-m(\hat g)
    =
    \frac{2}{n(n-2)\omega_n}
    \lim_{\rho\to\infty}
    \int_{S_\rho}u_N\partial_r u_N\,d\sigma .
\]
We compute the boundary term at infinity. Let \[M_\rho=M\cap\{|x|\leq\rho\}.\]
Integration by parts from \eqref{Eq: u_N} gives
\[
\begin{split}
&\int_M
\left(
a_n|\nabla_g u_N|^2+R_g u_N^2
\right)d\mu_g
+
2\int_\Sigma H_g u_N^2\,d\sigma_g  \\
&\qquad =
a_n\lim_{\rho\to\infty}
\int_{S_\rho}u_N\partial_r u_N\,d\sigma .
\end{split}
\]
Therefore, we obtain
\begin{equation}\label{eq:mass-drop-uN}
    m(g)-m(\hat g)
    =
    \frac{1}{2n(n-1)\omega_n}Y_N(g).
\end{equation}

Let \(v\) be the capacitary potential of \(\Sigma\) with respect to
\(\hat g\), normalized by
\[
    v=0\quad\text{on }\Sigma,
    \qquad
    v\to1\quad\text{at infinity}.
\]
The mass-capacity inequality applied to \((M,\hat g,\Sigma)\) gives
\[
    m(\hat g)\geq \mathfrak c(\Sigma,\hat g).
\]
By conformal covariance of the Dirichlet energy,
\[
\begin{split}
\mathfrak c(\Sigma,\hat g)
&=
\frac{1}{n(n-2)\omega_n}
\int_M |\nabla_{\hat g}v|_{\hat g}^2\,d\mu_{\hat g}  \\
&=
\frac{1}{n(n-2)\omega_n}
\int_M u_N^2|\nabla_g v|_g^2\,d\mu_g .
\end{split}
\]
Set
\[
    w=u_Nv .
\]
Then \(w\to1\) at infinity and \(w=0\) on \(\Sigma\). We use the identity
\[
    u_N^2|\nabla_g v|^2
    =
    |\nabla_g(u_Nv)|^2
    -
    \operatorname{div}_g(v^2u_N\nabla_g u_N)
    +
    v^2u_N\Delta u_N .
\]
Integrating over \(M_\rho\), the boundary integration on \(\Sigma\) from the divergence term vanishes because
\(v=0\) on \(\Sigma\). Letting \(\rho\to\infty\), and using
\eqref{Eq: u_N}, we obtain
\[
\begin{split}
a_n\int_M u_N^2|\nabla_g v|^2\,d\mu_g
&=
\int_M
\left(
a_n|\nabla_g w|^2+R_g w^2
\right)d\mu_g
-
Y_N(g).
\end{split}
\]
Since \(w\) is admissible for \(Y_D(g)\), it follows that
\[
    a_n\int_M u_N^2|\nabla_g v|^2\,d\mu_g
    \geq
    Y_D(g)-Y_N(g).
\]
Thus
\begin{equation}\label{eq:capacity-yamabe-lower}
\mathfrak c(\Sigma,\hat g)
\geq
\frac{1}{4n(n-1)\omega_n}
\left(
Y_D(g)-Y_N(g)
\right).
\end{equation}
Combining \eqref{eq:mass-drop-uN}, the mass-capacity inequality for
\(\hat g\), and \eqref{eq:capacity-yamabe-lower}, we get
\[
\begin{split}
m(g)
&=
m(\hat g)
+
\frac{1}{2n(n-1)\omega_n}Y_N(g)  \\
&\geq
\frac{1}{4n(n-1)\omega_n}
\left(
Y_D(g)-Y_N(g)
\right)
+
\frac{1}{2n(n-1)\omega_n}Y_N(g) \\
&=
\frac{1}{4n(n-1)\omega_n}
\left(
Y_N(g)+Y_D(g)
\right).
\end{split}
\]
This proves the proposition.

For a general asymptotically flat metric, take a standard cut-off approximation $g_j$ which agrees with $g$ on an increasing sequence of compact sets containing $\Sigma$ and is Schwarzschild outside a larger compact set. The agreement near $\mathcal S(\Sigma)$ and coercivity are
preserved for large $j$. According to Lemma \ref{Lem: approximation 1}, the approximation may be chosen so that
$$\mathrm{supp} R_{g_j}\Subset M,\qquad m(g_j)\to m(g),$$
and 
$$Y_N(g_j)\to Y_N(g),\qquad Y_D(g_j)\to Y_D(g).$$
This proves the proposition.
\end{proof}

\begin{proposition}\label{Prop: mass capacity rigidity}
Assume \(R_g\geq0\) and \(H_g\geq0\). Assume also the tangent-cone metric bounds in Theorem \ref{Thm: intro main}. If
\[
    m(g)=\mathfrak c(\Sigma,g),
\]
then
\[
    R_g\equiv0,
    \qquad
    H_g\equiv0.
\]
Moreover, \((M,g)\) is isometric to a Riemannian Schwarzschild exterior.
\end{proposition}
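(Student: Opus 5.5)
The plan is a first-variation argument built on Proposition \ref{Prop: mass Yamabe}. Since $R_g\ge0$ and $H_g\ge0$, the smallness condition \eqref{Eq: small Rg and Hg} holds trivially, so $Y_N(g)$ and $Y_D(g)$ are attained. The constant function $1$ and the capacitary potential are natural test functions for these functionals. Put $\phi=1-w$, which is admissible for $Y_D$. Then
\[
Y_D(g)\le a_n\int_M|\nabla w|^2+\int_M R_g\phi^2,
\]
but in the relevant direction we only need the lower bound
\[
Y_D(g)\ge a_n\int|\nabla u_D|^2\ge a_n\,n(n-2)\omega_n\,\mathfrak c(\Sigma,g),
\]
which uses $R_g\ge0$ and the Dirichlet principle for the capacity. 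Likewise $Y_N(g)\ge0$ because every term in it is nonnegative. Proposition \ref{Prop: mass Yamabe} then gives
\[
m(g)\ge \frac{1}{4n(n-1)\omega_n}\bigl(Y_N+Y_D\bigr)\ge \mathfrak c(\Sigma,g)+\frac{Y_N(g)}{4n(n-1)\omega_n}+\frac{Y_D(g)-a_n n(n-2)\omega_n\mathfrak c}{4n(n-1)\omega_n},
\]
using $a_n n(n-2)\omega_n/(4n(n-1)\omega_n)=1$. So in the equality case $Y_N(g)=0$, and $Y_D(g)$ equals the pure Dirichlet capacity term.

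First I would show $R_g\equiv0$ and $H_g\equiv0$. From $Y_N(g)=0$ with a positive minimizer $u_N$, each nonnegative term must vanish separately:
\[
\int|\nabla u_N|^2=0,\qquad \int R_g u_N^2=0,\qquad \int_\Sigma H_g u_N^2=0.
\]
The first gives $u_N\equiv1$. Since $u_N>0$ on $\mathring M\cup\mathcal R(\Sigma)$, the other two give $R_g\equiv0$ and $H_g\equiv0$ on $\mathcal R(\Sigma)$.

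Next I would upgrade equality to infinitesimal rigidity by perturbing the metric. For a compactly supported symmetric $2$-tensor $k$ (supported away from $\mathcal S(\Sigma)$ and $\Sigma$, so the standing agreement near the singular set is kept), let $g_s=g+sk$. By Lemma \ref{Lem: conformal minimizers}, $Y_N$ and $Y_D$ are defined for small $|s|$, since $R_{g_s}^-$ is small in $L^{n/2}$. Proposition \ref{Prop: mass Yamabe} then gives the function
\[
f(s)=m(g_s)-\frac{1}{4n(n-1)\omega_n}\bigl(Y_N(g_s)+Y_D(g_s)\bigr)\ge0,
\]
with $f(0)=0$ because $m(g)=\mathfrak c$, $Y_N(g)=0$ and $Y_D(g)=a_n n(n-2)\omega_n\mathfrak c$. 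The mass is unchanged for compactly supported $k$. Hence $s=0$ is a minimum of $f$, and the derivative of $Y_N+Y_D$ at $s=0$ vanishes. The derivative of $Y_N$ at $u_N\equiv1$ is $\int \dot R\,d\mu=-\int\langle \Ric_g,k\rangle$ plus divergence terms, which vanish for compact support. The derivative of $Y_D$ contributes terms quadratic in $\nabla u_D$. Since $k$ is arbitrary, I expect an equation of static type,
\[
\Ric_g=\text{(explicit tensor in } u_D),
\]
with $u_D$ the $g$-harmonic capacitary function. Combined with $R_g=0$, this should be the static vacuum system with lapse $u_D$ (Bray's equality-case analysis), and the standard uniqueness of static metrics with horizon then identifies $(M,g)$ as Schwarzschild. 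An alternative that avoids the variational computation is the equality chain inside the proof of Proposition \ref{Prop: mass Yamabe}: with $\hat g=g$, equality forces equality in Miao's corner positive mass theorem for the smoothings. I would try the variation first.

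I expect the main obstacle to be the singular set: the horizon may be singular, while static uniqueness theorems assume a smooth horizon. The plan for this step is to use the static equations, the analyticity of static metrics, and the fact that $|\nabla u_D|$ is constant on $\mathcal R(\Sigma)$, which follows because the horizon is totally geodesic. With these, I would run the Bunting--Masood-ul-Alam doubling with Miao's corner positive mass rigidity in the smoothed setting. Codimension at least $8$ of $\mathcal S(\Sigma)$ makes it removable for the capacity and integral identities, so the doubled conformally compactified manifold is flat and $g$ is Schwarzschild.
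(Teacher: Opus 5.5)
Your first step matches the paper. You get $Y_N(g)\ge0$ and $Y_D(g)\ge 4n(n-1)\omega_n\,\mathfrak c(\Sigma,g)$, so equality forces $Y_N(g)=0$, and hence $u_N\equiv1$, $R_g\equiv0$, $H_g\equiv0$. The rigidity part, however, has a genuine gap.

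By taking $k$ supported away from all of $\Sigma$, you discard exactly what the paper extracts from the variation. The paper only requires $k$ to avoid $\mathcal S(\Sigma)$, so $k$ may meet $\mathcal R(\Sigma)$. Then the first variation of $Y_N$ at $u_N\equiv1$ is $-\int_M\langle k,\Ric_g\rangle\,d\mu_g-\int_\Sigma\langle k,A\rangle\,d\sigma_g$, and the boundary term forces $A=0$ on $\mathcal R(\Sigma)$. You later use ``the horizon is totally geodesic'' but never derive it.

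Your interior equation also does not give the static system. The Euler--Lagrange equation is $\Ric_g+u_D^2\Ric_{\tilde g}=0$ with $\tilde g=u_D^{4/(n-2)}g$. Writing $\bar g_\pm=\bigl(\tfrac{1\pm u_D}{2}\bigr)^{4/(n-2)}g$, this reads $(1+u_D)^2\Ric_{\bar g_+}+(1-u_D)^2\Ric_{\bar g_-}=0$. By contrast, $u_D\Ric_g=\nabla^2u_D$ is equivalent to the vanishing of the corresponding difference. So static black-hole uniqueness is not available without a further argument.

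The more serious missing idea is how to handle $\mathcal S(\Sigma)$. Saying ``codimension $\ge8$ makes it removable'' does not work. Removability for capacity and integral identities yields no rigidity. Neither static uniqueness theorems nor the equality case of Miao's corner theorem (McFeron--Sz\'ekelyhidi) is available for a singular horizon or corner.

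The paper instead proves $\mathcal S(\Sigma)=\emptyset$:
\begin{enumerate}
\item At $p\in\mathcal S(\Sigma)$, blow up to an area-minimizing cone $\mathfrak C$.
\item The tangent-cone metric bounds, together with Theorem~\ref{Thm: ABT regularity} and the minimal graph equation, give local $C^{2,\alpha}$ graphical convergence near $\mathcal R(\mathfrak C)$. These bounds are a hypothesis of the statement that your plan never uses.
\item Hence $A=0$ passes to $\mathcal R(\mathfrak C)$.
\item Since $\mathcal R(\mathfrak C)$ is connected and dense, $\mathfrak C$ is a hyperplane.
\item $\varepsilon$-regularity then makes $p$ regular, a contradiction.
\end{enumerate}

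Once $\Sigma$ is smooth and totally geodesic, $m-\mathfrak c=0$ says that the compactified double of Lemma~\ref{Lem: smooth mass capacity} has zero mass. The corner rigidity theorem then gives flatness, and hence Schwarzschild, directly; no static uniqueness is needed.

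Your fallback, equality in Miao's theorem ``for the smoothings,'' also fails as stated. Equality in the limit $\varepsilon\to0$ does not force equality for the approximants.
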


\begin{proof}
Since \(R_g\geq0\) and \(H_g\geq0\), we have
\[
    Y_N(g)\geq0
\]
and
\[
Y_D(g)
\geq
a_n
\inf_{\substack{u\to1\\ u|_\Sigma=0}}
\int_M|\nabla_g u|^2\,d\mu_g
=
4n(n-1)\omega_n\,\mathfrak c(\Sigma,g).
\]
Proposition \ref{Prop: mass Yamabe} therefore gives
\[
m(g)
\geq
\frac{Y_N(g)+Y_D(g)}{4n(n-1)\omega_n}
\geq
\mathfrak c(\Sigma,g).
\]
If \(m(g)=\mathfrak c(\Sigma,g)\), then the equality holds above. In particular, we have $Y_N(g)=0$.
Since the terms in the definition of \(Y_N(g)\) are nonnegative, this forces
\[
    R_g\equiv0,
    \qquad
    H_g\equiv0,
\]
and that the Neumann minimizer is \(u_N\equiv1\).

 Let \(g_s=g+sk\), where \(k\) is
a smooth compactly supported symmetric two-tensor supported away from
\(\mathcal S(\Sigma)\). The ADM mass is unchanged and \eqref{Eq: small Rg and Hg} holds under such variations.
These variations preserve the standing assumptions near
$\mathcal S(\Sigma)$, so Proposition \ref{Prop: mass Yamabe}
applies to $g_s$. Since equality attains at \(s=0\), the function
\[
    s\longmapsto Y_N(g_s)+Y_D(g_s)
\]
has a local maximum at \(s=0\), and its first variation vanishes.

Because \(u_N\equiv1\), a direct computation of the first variation gives
\[
    \frac{d}{ds}Y_N(g_s)\Big|_{s=0}
    =
    -\int_M \langle k,\operatorname{Ric}_g\rangle\,d\mu_g
    -
    \int_\Sigma \langle k,A\rangle\,d\sigma_g .
\]
Let \(u=u_D\) be the Dirichlet minimizer and set $\tilde g=u^{\frac{4}{n-2}}g$. The first variation of \(Y_D\) gives
\[
    \frac{d}{ds}Y_D(g_s)\Big|_{s=0}
    =
    -\int_M
    u^2\langle k,{\operatorname{Ric}_{\tilde g}}\rangle\,d\mu_g .
\]
Therefore, for every such \(k\),
\[
\int_M
\left\langle
k,
\operatorname{Ric}_g+u^2{\operatorname{Ric}_{\tilde g}}
\right\rangle\,d\mu_g
+
\int_\Sigma \langle k,A\rangle\,d\sigma_g
=0.
\]
By the arbitrariness of \(k\), we first obtain
\[
    \operatorname{Ric}_g+u^2{\operatorname{Ric}_{\tilde g}}=0
\]
in the interior, and then
\[
    A=0
\]
on \(\mathcal R(\Sigma)\). Thus the regular part of \(\Sigma\) is totally
geodesic.

We claim that \(\Sigma\) has no singular points. Suppose that
\(p\in\mathcal S(\Sigma)\). By the almost-minimizing property, a blow-up of
\(\Sigma\) at \(p\) is an area-minimizing cone \(\mathfrak C\). By the local metric bounds and continuity, the rescaled metrics
converge locally in $C^{2,\alpha}$ to $g(p)$ near
$\mathcal R(\mathfrak C)$, after decreasing the exponent.
Theorem \ref{Thm: ABT regularity} and the minimal graph equation
then give local $C^{2,\alpha}$ graphical convergence there. Since \(A=0\) on
\(\mathcal R(\Sigma)\), the regular part of \(\mathfrak C\) is totally geodesic. Since
\(\mathcal R(\mathfrak C)\) is connected and dense in \(\mathfrak C\), the cone \(\mathfrak C\) must be a hyperplane. The
\(\varepsilon\)-regularity theorem then implies that \(p\) is regular, a
contradiction. Hence \(\Sigma\) is smooth and totally geodesic.

We are now in the smooth-boundary equality case.  The doubling and
conformal compactification construction from Lemma
\ref{Lem: smooth mass capacity}, together with the fact that \(\Sigma\) is
totally geodesic, puts us in the rigidity case of the McFeron--Sz\'ekelyhidi
corner theorem \cite{McFeronSzekelyhidi} for Miao's positive mass theorem
\cite{Miao}. So the compactified double is Euclidean, and undoing the
compactification shows that \((M,g)\) is a Riemannian Schwarzschild exterior.
\end{proof}

\subsection{Proof of the monotonicity}

We are ready to complete the proof of the monotonicity of the ADM mass along the
conformal flow.

\begin{proof}[Proof of Proposition \ref{Prop: mass monotonicity}]
By Corollary \ref{Cor: E_t AF}, the metric estimates following
Corollary \ref{cor: separation time slices}, and Proposition
\ref{Prop: mass capacity}, applied to $(\overline{E_t},g_t)$,
we have
\[
    m(t)=m_{ADM}(\overline {E_t},g_t)\geq \mathfrak c(\Sigma_t,g_t).
\]
For almost every $t$, Lemma
\ref{Lem: derivative mass function} gives
\[
    m'(t)
    =
    -2\left(m(t)-\mathfrak c(\Sigma_t,g_t)\right)
    \leq0.
\]
Since \(m(t)\) is locally Lipschitz, it follows that \(m(t)\) is
non-increasing.
\end{proof}

\section{The convergence of a conformal flow}

In this section, we establish the convergence of the conformal flow in the {\it harmonically
flat} case. Namely, on the chosen asymptotically flat end we assume in addition that, in
coordinates for \(|x|>r_0\),
\begin{equation}\label{Eq: conformal flat}
    g=U^{\frac{4}{n-2}}g_{euc},
\end{equation}
where \(U>0\) is harmonic and \(U\to1\) at infinity. This property is preserved
along the conformal flow. For convenience, we extend $U$ to a smooth positive function on $M$, agreeing with the original harmonic factor near infinity.

The arguments in this section are adapted from Bray--Lee \cite{Bray-Lee}, with
minor modifications. We first prove the vanishing mass
property, which is the basic input for the subsequent convergence analysis.

We introduce the auxiliary conformal factor
\[
    \widetilde u_t=\frac12(u_t-v_t),
\]
and set
\[
    \widetilde g_t=\widetilde u_t^{\frac{4}{n-2}}g .
\]
We denote
\[
    \widetilde m(t):=m_{\mathrm{ADM}}(M,\widetilde g_t).
\]
With our normalization of ADM mass and capacity, the expansion at infinity gives
\[
    \widetilde m(t)
    =
    m(t)-\mathfrak c(\Sigma_t,g_t).
\]

\begin{lemma}\label{lem:vanishing_mass}
\[
 \lim_{t\to\infty}\tilde{m}(t)=0.
\]
\end{lemma}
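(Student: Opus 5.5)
The plan is to combine the mass monotonicity with the explicit formulas from Lemma \ref{Lem: derivative mass function}. Set $c(t):=\mathfrak c(\Sigma_t,g)$ and $I(t):=\int_0^t e^{-s}c(s)\,\mathrm ds$. The proof of Lemma \ref{Lem: derivative mass function} gives
\[
m(t)=m(0)e^{-2t}+2e^{-t}I(t),\qquad
\mathfrak c(\Sigma_t,g_t)=e^{-2t}c(t)+e^{-t}I(t),
\]
so that
\[
\widetilde m(t)=e^{-2t}\bigl(m(0)-c(t)\bigr)+e^{-t}I(t).
\]
By the proof of Proposition \ref{Prop: mass monotonicity}, which applies Proposition \ref{Prop: mass capacity} to $(\overline{E_t},g_t)$, we have $\widetilde m(t)\geq0$. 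Moreover $m'(t)=-2\widetilde m(t)$ for almost every $t$, and $m$ is locally Lipschitz. Integrating and using $m(T)\geq0$ (again from $m(T)\geq\mathfrak c(\Sigma_T,g_T)\geq0$) gives
\[
\int_0^\infty\widetilde m(t)\,\mathrm dt\leq\frac{m(0)}{2}<\infty .
\]

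Integrability alone does not force $\widetilde m(t)\to0$, so the second step is a one-sided Lipschitz bound: $\widetilde m$ can increase at most at a uniform linear rate. The key structural fact is that $c(t)$ is non-decreasing. Indeed, by Lemma \ref{Lem: monotonicity} the sets $E_t$ shrink, so $\Sigma_t$ moves outward, and capacity is monotone under enclosure. This is the same monotonicity of capacity already used in the proof of Proposition \ref{Prop: mass capacity}.

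Hence, for $t<t'$, the contribution $-e^{-2t}c(t)$ can only decrease apart from the change in the prefactor. Writing $-e^{-2t'}c(t')\le -e^{-2t'}c(t)$, the remaining terms are smooth in $t$ with derivative
\[
-2e^{-2t}m(0)+2e^{-2t}c(t)+e^{-2t}c(t)-e^{-t}I(t).
\]
These terms are bounded uniformly in $t$. We have $e^{-2t}c(t)\leq\mathfrak c(\Sigma_t,g_t)\leq m(t)\leq m(0)$, and similarly $e^{-t}I(t)\leq m(0)$. Consequently, for some constant $C$ depending only on $m(0)$,
\[
\widetilde m(t')-\widetilde m(t)\leq C(t'-t)\qquad\text{for all } t<t' .
\]

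To conclude, suppose $\widetilde m(t_k)\geq\eta>0$ along a sequence $t_k\to\infty$, spaced so that the intervals $[t_k-\eta/(2C),t_k]$ are disjoint. The one-sided bound gives $\widetilde m\geq\eta/2$ on each of these intervals. This contributes infinitely many disjoint pieces, each of integral at least $\eta^2/(4C)$, which contradicts integrability. Hence $\widetilde m(t)\to0$.

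The main obstacle I expect is justifying the inputs rigorously. Two points need care. First, the formula for $\mathfrak c(\Sigma_t,g_t)$ must hold for every $t$, not just almost every $t$. Second, $c(t)$ may jump at the countably many times where $\Sigma_t^-\neq\Sigma_t^+$ (Lemma \ref{Lem: generic coincidence}). The point to check is that such jumps only increase $c$, and therefore only decrease $\widetilde m$, so they do not spoil the one-sided Lipschitz bound. This follows from the nesting of the $E_t$ together with Corollary \ref{Cor: convergence}.
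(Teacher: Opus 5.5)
Your proposal is correct and is essentially the paper's argument. Both get integrability of $\widetilde m$ from $\int_0^t\widetilde m(s)\,\mathrm ds=\frac12\bigl(m(0)-m(t)\bigr)$ together with $0\le\widetilde m\le m\le m(0)$, and both get a one-sided linear growth bound for $\widetilde m$ from the monotonicity of $\mathfrak c(\Sigma_t,g)=e^tb(t)$, which the paper phrases as $e^tv_t$ being nondecreasing by nesting and the maximum principle. The differences are cosmetic: the paper obtains $\widetilde m(t)\le\widetilde m(s)+m(0)(t-s)$ through the nondecreasing quantity $3m(t)-2\widetilde m(t)+2\int_0^t m(s)\,\mathrm ds$ and concludes by averaging, whereas you freeze $c$ in the explicit formula and conclude with disjoint intervals. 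In your version, note that the frozen expression is Lipschitz rather than smooth, and the last $c$ in your derivative should be evaluated at the running time; neither point affects the bound.
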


\begin{proof}
We begin with the following asymptotic expansions
\[
v_t=-e^{-t}+b(t)|x|^{2-n}+O(|x|^{1-n}),\]
and
\[
u_t=e^{-t}+B(t)|x|^{2-n}+O(|x|^{1-n}),
\]
where \(B(t)=\int_0^t b(s)\,ds\). From these we obtain
\[
\tilde{m}(t)=e^{-2t}m(0)+e^{-t}(B(t)-b(t)),\]
and
\[
m(t)=e^{-2t}m(0)+2e^{-t}B(t).
\]
Consequently, we have
$$\int_0^t\tilde m(s)\,ds=\frac{m(0)-m(t)}{2}.$$
By Proposition \ref{Prop: mass capacity} and Proposition \ref{Prop: mass monotonicity}, we have
$$0\leq \tilde m(t)\leq m(t)\leq m(0).$$
In particular, $\tilde m$ is integrable on $[0,\infty)$.

The crucial observation is that \(e^tv_t(x)\) is nondecreasing in \(t\), by the nesting of the hypersurfaces and the maximum principle. Comparing the expansions at infinity shows that \(e^tb(t)\) is also nondecreasing. Since
\[
e^tb(t)=\tfrac{1}{2}e^{2t}(m(t)-2\tilde{m}(t))+\tfrac{1}{2}m(0),
\]
the quantity \(e^{2t}(m(t)-2\tilde{m}(t))\) is nondecreasing. In particular, the quantity
\[
m(t)-2\tilde{m}(t)+2\int_0^t(m(s)-2\tilde{m}(s))\,ds,
\]
or equivalently, the quantity
\[
3m(t)-2\tilde{m}(t)+2\int_0^tm(s)\,ds
\]
is nondecreasing. Its monotonicity, together with the monotonicity of $m$, gives, for $s<t$
$$\tilde m(t)\leq \tilde m(s)+m(0)(t-s).$$
Fix $h>0$. Averaging this inequality over $s\in [t-h,t]$, we obtain
$$0\leq \tilde m(t)\leq \frac{1}{h}\int_{t-h}^t\tilde m(s)\,ds+\frac{m(0)h}{2}.$$
Since $\tilde m$ is integrable on $[0,\infty)$, the integral tends to zero as $t\to\infty$, and we have
$$0\leq \limsup_{t\to\infty}\tilde m(t)\leq \frac{m(0)h}{2}.$$
Letting $h\to 0$ proves $\tilde m(t)\to 0$ as $t\to\infty$.
\end{proof}

The aim is to show that the hypersurfaces \(\Sigma_t\) become increasingly
round under blow-down. As a first step, we establish a uniform radius bound.

\begin{lemma}\label{lem:radius_bound}
There exists \(R_{\max}>0\) such that the hypersurface \(\Sigma_t\) remains enclosed within the coordinate sphere of radius \(e^{\frac{2t}{n-2}}R_{\max}\) for all \(t\geq0\).
\end{lemma}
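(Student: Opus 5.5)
We would follow the Bray--Lee argument, working in the harmonically flat coordinates \eqref{Eq: conformal flat}, where $g_t=(u_tU)^{\frac{4}{n-2}}g_{euc}$ for $|x|>r_0$ and $u_tU$ is Euclidean harmonic outside $\Sigma_t$. The first step is to record the size of the conformal factor at infinity. By the expansions in the proof of Lemma \ref{lem:vanishing_mass} and Corollary \ref{Cor: expansion ut}, $u_tU=e^{-t}+(\text{const})|x|^{2-n}+\dots$, with the $|x|^{2-n}$ coefficient bounded by $\tfrac12 m(t)\le \tfrac12 m(0)$ (using Proposition \ref{Prop: mass monotonicity}) plus a fixed contribution from $U$. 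After the rescaling $x\mapsto e^{-\frac{2t}{n-2}}x$, which normalizes $e^{-t}$ to one, the metric $g_t$ becomes $\big(1+O(1)|y|^{2-n}\big)^{\frac4{n-2}}g_{euc}$ outside the rescaled $\Sigma_t$, with constants uniform in $t$. The claim is therefore equivalent to a uniform bound $R_{\max}$ on the rescaled hypersurfaces.

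Next, we argue by area comparison. Suppose $\Sigma_t$ contains a point $p$ with $|p|=e^{\frac{2t}{n-2}}R$ and $R$ large. In the rescaled picture $\Sigma_t$ has a point at distance $R$ from the origin. We then use that $\Sigma_t$ is the outermost $g_t$-minimizing enclosure of $\Sigma_0$. Its $g_t$-area is fixed and equal to $\mathscr A$ by Corollary \ref{Cor: constant area}, but $\Sigma_t$ must reach $p$ and enclose $\Sigma_0$, which lies near the origin. In the rescaled exterior region where $|y|\ge R/2$, the metric is $C^0$-close to Euclidean, with error $O(R^{2-n})$. There we apply the density lower bound of Lemma \ref{Lem: density lower bound}, or more simply the monotonicity formula for an outward-minimizing boundary in a nearly Euclidean region, on the ball $B_{R/4}(p)$. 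Since $\Sigma_t$ is minimizing on the outer side, the one-sided density argument of Lemma \ref{Lem: sphere lower area} (comparison with $E_t\cup B_\rho(p)$ plus the isoperimetric inequality) works at all scales up to $R/4$, because the rescaled metric has uniformly bounded geometry there. This gives rescaled $g_t$-area at least $cR^{n-1}$.

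On the other hand, the rescaled $g_t$-area equals $e^{\frac{2(n-1)}{n-2}t}\mathscr A\cdot e^{-\frac{2(n-1)}{n-2}t}$ times the conformal normalization, that is, a bounded quantity. The coordinate sphere of radius $\sim (m(0)/2)^{1/(n-2)}$ competes. More concretely, we compare $\Sigma_t$ with $\partial\big(E_t\setminus B_{R'}\big)$, i.e.\ with the large rescaled coordinate sphere, whose area is $\approx n\omega_n R'^{n-1}$. This yields $cR^{n-1}\le C$ and hence a bound on $R$.

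The main obstacle is justifying the outer density estimate at large scale. Lemma \ref{Lem: sphere lower area} is local, with the scale limited by $\iota$ and $r_0$, and the almost-minimizing constants degenerate in $t$. We will instead use that on $\{|y|>R/2\}$ the rescaled $g_t$ is uniformly equivalent to Euclidean, with a constant independent of $t$. This holds because $u_tU\,e^{t}\ge 1$ and $u_tU\,e^{t}\le 1+C|y|^{2-n}$ by the maximum principle applied to the harmonic function $e^t u_tU$ exterior to a large sphere. It also uses that $\Sigma_t$ is exactly $g_t$-minimizing with respect to outward perturbations, not merely almost-minimizing. With exact outward minimality, the first-crossing argument of Lemma \ref{Lem: sphere lower area} applies with $\beta=0$ at every scale, and the rest is routine.
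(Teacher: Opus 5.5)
The overall scheme is the right one: a density lower bound at a far point $p\in\Sigma_t$, at a scale comparable to $|p|$, contradicted by the fixed area from Corollary \ref{Cor: constant area}. The extra comparison with a large coordinate sphere is unnecessary, since $\mathscr A$ already gives the contradiction. The gap is the uniform-in-$t$ upper bound for the normalized factor $e^tUu_t$ on the ball around $p$, on which the whole density argument rests.

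You justify it by the maximum principle for the harmonic function $e^tUu_t$ outside a large sphere. But $Uu_t$ is Euclidean harmonic only in $E_t$, and in the situation you want to exclude, $\Sigma_t$ passes through $\{|y|>R/2\}$ and through $B_{R/4}(p)$. On the enclosed side of $\Sigma_t$ in that ball, $u_t(x)=u_\sigma(x)$ is frozen at the earlier time $\sigma$ when the slices swept past $x$. Hence $e^tUu_t(x)\geq a\,e^{t-\sigma}$, with no uniform upper bound. Without such a bound, exact $g_t$-minimality gives no Euclidean quasi-minimality on the ball, and the first-crossing argument of Lemma \ref{Lem: sphere lower area} yields nothing. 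A maximum principle outside a sphere is available only once that sphere is known to enclose the relevant slices, which is what is being proved.

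The expansions from Corollary \ref{Cor: expansion ut} and Lemma \ref{lem:vanishing_mass} identify only the leading coefficient $m(t)/2$, with error terms not uniform in $t$. Superharmonicity of $u_t$ gives only lower bounds, which you already have from $u_t\geq e^{-t}$.

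The missing idea is an induction in time. Set $R(s)=e^{\frac{2s}{n-2}}R_{\max}$ and assume $\Sigma_s\subset B_{R(s)}$ for all $s\in[0,T]$.
\begin{itemize}
\item For these $s$, $Uv_s$ is harmonic outside $B_{R(s)}$, and the maximum principle gives $Uv_s\leq e^{-s}\bigl[(R(s)/|x|)^{n-2}-1\bigr]$ there.
\item For $s\in[T,T+1]$ you use only $-e^{-s}\leq v_s\leq 0$.
\item Integrating in $s$ gives $ae^{-T-1}\leq Uu_{T+1}\leq (C+2)e^{-T}$ on $\{|x|\geq R(T)\}$. This is exactly the two-sided control you wanted, but only on a region determined by the earlier slices.
\item The ratio $R(T+1)=e^{\frac{2}{n-2}}R(T)$ leaves room. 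If $p\in\Sigma_{T+1}$ with $|p|\geq R(T+1)$, the ball $B_\rho(p)$ with $\rho=\frac12\bigl(e^{\frac{2}{n-2}}-1\bigr)R(T)$ lies in the controlled region, away from $\Sigma_0$. There $\Sigma_{T+1}$ is exactly minimizing for $e^{\frac{4T}{n-2}}g_{T+1}$, which is uniformly equivalent to $g_{euc}$ independently of $T$ and $R_{\max}$.
\item The density estimate then gives $g_{T+1}$-area at least $c_*R_{\max}^{n-1}$, contradicting $\mathscr A$ once $R_{\max}$ is large.
\end{itemize}
Applying this step for every real $T\geq 0$ gives the lemma. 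The base case is $s\in[0,1]$, where $\Sigma_s$ lies inside $\Sigma_1\subset B_{R_{\max}}$ by Lemma \ref{Lem: monotonicity}.
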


\begin{proof}
Choose $R_{\mathrm{max}}\geq \max\{1,r_0\}$ large enough and constants $a, C>0$ such that
\begin{equation}\label{Eq: U}
    a\leq U(x)\leq 1+C|x|^{2-n}\quad\text{for}\quad |x|\geq R_{\mathrm{max}},
\end{equation}
and that $\Sigma_1$ is enclosed by the coordinate sphere of radius $R_{\mathrm{max}}$.
Set $f_t=Uu_t$, then $g_t=f_t^{\frac{4}{n-2}}g_{euc}$ on this end. Put
$$R(s)=e^{\frac{2s}{n-2}}R_{\mathrm{max}} .$$

We claim that, after increasing $R_{\mathrm{max}}$ independent of $T$, the assumption $\Sigma_s\subset B_{R(s)}$ for all $0\leq s\leq T$ implies $\Sigma_{T+1}\subset B_{R(T+1)}$.
For every \(s<T\), the function $Uv_s$ is Euclidean harmonic outside $B_{R(s)}$, is nonpositive on its boundary, and tends to $-e^{-s}$ at infinity. The maximum principle therefore gives
$$U(x)v_s(x)\leq e^{-s}\left[\left(\frac{R(s)}{|x|}\right)^{n-2}-1\right]\quad\text{for}\quad |x|\geq R(s).$$
Since $-e^{-s}\leq v_s\leq 0$ and \eqref{Eq: U}, for $|x|\geq R(T)$ we obtain $U(x)\leq 1+Ce^{-T}$,
\[
\begin{split}
    \int_0^{T+1}U(x)v_s(x)\,ds&\leq \int_0^Te^{-s}\left[\left(\frac{R(s)}{|x|}\right)^{n-2}-1\right]\,ds\\
    &\leq e^{-T}-1+(e^T-1)R_{\mathrm{max}}^{n-2}R(T)^{2-n}\\
    &\leq 2e^{-T}-1
\end{split}
\]
and
$$U(x)u_{T+1}(x)\geq a\left(1-\int_0^{T+1}e^{-s}\,ds\right)=ae^{-T-1}.$$
In particular, we have
$$ae^{-T-1}\leq f_{T+1}(x)\leq (C+2)e^{-T}.$$

Suppose for contradiction that \(\Sigma_{T+1}\) contains a point \(p\) outside \(B_{R(T+1)}\).
Since $$R(T+1)=e^{\frac{2}{n-2}}R(T),$$ the coordinate ball $B_\rho(p)$ lies outside $B_{R(T)}$, where
$$\rho:=\frac{1}{2}\left(e^{\frac{2}{n-2}}-1\right)R(T).$$
Note that $\Sigma_{T+1}$ satisfies the almost-minimizing inequality of Definition \ref{Defn: almost minimizing} with $\beta=0$, with respect to $e^{\frac{4T}{n-2}}g_{T+1}$ (uniformly equivalent to $g_{euc}$ independent of $T$ and $R_{\mathrm{max}}$), for variations compactly supported in $B_\rho(p)$. The same comparison and isoperimetric argument used in Lemma \ref{Lem: sphere lower area} yields
$$|\Sigma_{T+1}\cap B_\rho(p)|_{g_{euc}}\geq c(n,a,C)\rho^{n-1}.$$
We remark that this estimate holds even when $p$ is singular. It follows that
$$|\Sigma_{T+1}|_{g_{T+1}}\geq \left(a e^{-T-1}\right)^{\frac{2(n-1)}{n-2}}\cdot c(n,a,C)\rho^{n-1}\geq c_*R_{\mathrm{max}}^{n-1}$$
for some positive constant $c_*>0$ independent of $T$ and $R_{\mathrm{max}}$.
Choosing \(R_{\max}\) sufficiently large contradicts 
\(
\mathscr A\equiv|\Sigma_t|_{g_t}.
\)
This completes the proof for the claim.

The lemma follows from the claim by induction.
\end{proof}

\begin{remark}\label{rem:asymptotic_bounds}
With the constants $a,C$ from \eqref{Eq: U}, for any $0\leq \tau\leq T$, the function $f_T$ satisfies  
\[
ae^{-T}\leq f_T(x)\leq (C+2)e^{-(T-\tau)}
\]
for \(|x|\geq R(T-\tau)\).
\end{remark}

To analyze the manifold convergence as \(t\to\infty\), we employ dynamically rescaled coordinates below. For this purpose, we introduce the smooth vector field
\[
X = \frac{2}{n-2}r\frac{\partial}{\partial r} \quad \text{on } \{|x|>r_0\}\quad\text{with}\quad r=|x|,
\]
extended smoothly to all of \(M\), which generates a flow \(\Phi_t\). In the chosen asymptotic coordinates, the flow $\Phi_t$ acts as a Euclidean dilation:
$$\Phi_t(x)=e^{\frac{2t}{n-2}}x\quad\text{for}\quad |x|>r_0,\quad t\geq 0.$$

\begin{definition}\label{def:normalized_flow}
The normalized conformal flow \((M,G_t,\Sigma^*_t)\) is defined by
\[
G_t := \Phi_t^*g_t,\qquad E^*_t=\Phi_t^{-1}(E_t), \qquad \Sigma^*_t := \Phi_t^{-1}(\Sigma_t).
\]
\end{definition}

Let $U$ be the function in \eqref{Eq: conformal flat}, extended to the whole $M$.
In the following, we use the rescaled functions
\[
U_t(x) := e^t (U u_t)(\Phi_t(x)), \qquad V_t(x) := e^t (Uv_t)(\Phi_t(x)),
\]
and the background metric
\[
(G_0)_t := e^{-\frac{4t}{n-2}}\Phi_t^*\left(U^{-\frac{4}{n-2}}g\right).
\]
Then
$$G_t = U_t^{\frac{4}{n-2}}(G_0)_t.$$
Recall that \(v_t\) is harmonic with respect to \(g\) in $E_t$, so the function \(V_t\) satisfies
\[
\begin{cases}
\Delta_{(G_0)_t}V_t = \frac{\Delta_{(G_0)_t}(U\circ\Phi_t)}{U\circ \Phi_t}V_t, & \text{in } E^*_t, \\
V_t = 0, &\text{on }\Sigma^*_t, \\
\lim_{|x|\to\infty} V_t(x) = -1.
\end{cases}
\]
Moreover, $V_t$ vanishes inside $\Sigma^*_t$.

Differentiating the definition of \(U_t\), we obtain, for almost every $t$, on the exterior,
\[
\partial_tU_t=V_t+U_t+XU_t.
\]
Define $\widetilde G_t:=\Phi_t^*\tilde g_t$. Then we have 
\[
\widetilde{G}_t := W_t^{\frac{4}{n-2}}(G_0)_t\quad\text{with}\quad W_t := \tfrac{1}{2}(U_t - V_t).
\]
In particular, $\widetilde G_t$ is isometric to $\tilde g_t$ and has ADM mass $\widetilde m(t)$.

On the dilation region, whenever $\Phi_t(x)$ lies in the harmonically flat region of \eqref{Eq: conformal flat}, we have $(G_0)_t=g_{euc}$. Moreover, $U_t,V_t,W_t$ are Euclidean harmonic, and
$$G_t=U_t^{\frac{4}{n-2}}g_{euc},\qquad \widetilde G_t=W_t^{\frac{4}{n-2}}g_{euc}.$$
The conformal factors $U_t$ and $W_t$ are positive and tend to $1$ at infinity.

\medskip
We recall the quantitative positive mass theorem due to Lee \cite{Lee}.

\begin{theorem}\label{thm:quantitative positive mass}
For every \(n\geq 3\), \(\alpha>1\), and \(\varepsilon>0\), there exists
\(\delta>0\) such that the following holds.
Let \((M^n,g)\) be complete asymptotically flat with nonnegative scalar curvature,
and suppose that the positive mass theorem holds on \(M\). Assume that the
chosen end is harmonically flat, so that for \(|x|>R\),
\[
    g=U^{\frac{4}{n-2}}g_{euc},
\]
with \(U>0\) harmonic and \(U\to1\) at infinity. If
\[
    m(g)<\delta R^{n-2},
\]
then, for every \(|x|\geq\alpha R\),
\[
    |U(x)-1|
    \leq
    \varepsilon\left(\frac{R}{|x|}\right)^{n-2}.
\]
\end{theorem}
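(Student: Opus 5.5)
Since the hypothesis is scale invariant, I will first rescale to $R=1$. Replacing $x$ by $Rx$ and $g$ by $R^{-2}g$ turns the end into $\{|x|>1\}$ and the mass into $m R^{2-n}<\delta$. The conclusion becomes $|U-1|\le\varepsilon|x|^{2-n}$ on $|x|\ge\alpha$. I will then argue by contradiction and compactness. Suppose there are $(M_i,g_i)$ harmonically flat outside $B_1$, with $R_{g_i}\ge0$, the positive mass theorem valid, $m_i\to0$, and points $|x_i|\ge\alpha$ with $|U_i(x_i)-1|>\varepsilon|x_i|^{2-n}$.

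\emph{Step 1: expansion of $U$ outside $B_1$.} Since $U_i$ is positive and harmonic on $\{|x|>1\}$ with $U_i\to1$, I write it by its Laurent/Kelvin expansion as
\[
U_i=1+\tfrac{m_i}{2}|x|^{2-n}+\sum_{k\ge1}|x|^{2-n-k}Y_k^{(i)}(x/|x|),
\]
where the $Y_k$ are spherical harmonics. The coefficient $m_i/2$ matches the paper's normalization of the ADM mass. Harnack's inequality on the annulus $\{1<|x|<2\alpha\}$ won't bound $U_i$ from above uniformly by itself, so I need an extra input.

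\emph{Step 2: the key input from the positive mass theorem.} I will use a "Bray-type" reflection. Consider the region $\{|x|\ge\rho\}$ for $\rho\in(1,\alpha)$ and the metric $g_i$ there. Inverting $U_i$ across the sphere gives the Kelvin-transformed function $|x|^{2-n}U_i(\rho^2x/|x|^2)$. Gluing this to $U_i$ in the complement, after subtracting a small multiple of the Green's function, produces a complete asymptotically flat metric. Its mass is $m_i$ minus a positive multiple of the nonconstant part of the spherical-harmonic data of $U_i$ on $|x|=\rho$, where the multiple is controlled by the $L^2$ norm of that data. Nonnegative scalar curvature is preserved away from the gluing, and the corner sign is arranged by choosing the side where the jump is favorable. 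The positive mass theorem then bounds $\sum_k\rho^{-2k}\|Y_k^{(i)}\|^2\lesssim m_i$ together with a bound on $\inf U_i$ in terms of the mass.

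\emph{Step 3: conclusion.} Summing the series on $|x|\ge\alpha>\rho$ gives
\[
|U_i-1|\le\bigl(\tfrac{m_i}{2}+C(\alpha,\rho)\,m_i^{1/2}\bigr)|x|^{2-n},
\]
which tends to zero uniformly and contradicts the choice of $x_i$.

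The main obstacle is Step 2. The lower-order harmonics $Y_k$ are not directly controlled by the mass, and the monopole alone does not see them. I expect the real argument in \cite{Lee} uses a conformal comparison instead of an explicit corner gluing. Namely, for harmonic $U$ one builds, on the slightly larger exterior region, a competitor conformal factor that agrees with $U$ far out and is modified by a cutoff near $|x|=\rho$. The positive mass theorem for that conformally changed (still scalar-nonnegative, since the factor is harmonic where $R_g$ might be nonzero) metric then yields a Dirichlet energy bound for $U-1$ on $\{|x|>\rho\}$. I would try that version first and only fall back on reflection if it fails. Interior elliptic estimates then convert the energy bound into the pointwise bound.
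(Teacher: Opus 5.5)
The paper does not prove this theorem; it quotes it from \cite{Lee}, using that the positive mass theorem now holds in all dimensions. Your proposal has a genuine gap at Step~2, and Step~2 is the entire content of the theorem.

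The compactness frame is harmless. Step~1 is also easier than you say: the Kelvin transform of $U_i$ is $|y|^{2-n}+h_i$, with $h_i$ harmonic on $B_1$, $h_i(0)=m_i/2$ and $h_i>-|y|^{2-n}$. Harnack then gives $|U_i-1|\le C_\beta|x|^{2-n}$ on $\{|x|\ge\beta\}$ for every $\beta>1$. This leaves you with a positive harmonic limit $U_\infty$ on $\{|x|>1\}$ with zero mass and $U_\infty\not\equiv1$, and you must show this is impossible.

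That cannot be done with information from the harmonically flat end alone. Take $U=1+c\,x_1|x|^{-n}$ with $0<c<1$. It is positive and harmonic on $\{|x|>1\}$, tends to $1$, has zero mass, and violates the conclusion at $x=(\alpha,0,\dots,0)$ as soon as $\varepsilon<c/\alpha$. Your reflected metric on $\mathbb R^n\setminus\{0\}$ is built only from $U|_{\{|x|\ge\rho\}}$. So if the positive mass theorem applied to it and yielded $\sum_k\rho^{-2k}\|Y_k\|^2\lesssim m$, this example would be excluded.

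In fact the theorem does not apply to your construction, for three reasons:
\begin{itemize}
\item Gluing inside $S_\rho$ leaves the mass at the original end equal to $m$.
\item The doubling is symmetric, so there is no ``favorable side'' to choose.
\item The corner along $S_\rho$ has the admissible mean-curvature jump only if $H\le0$ there (normal toward infinity). This fails, for example, whenever $U$ is $C^1$-close to $1$ near $S_\rho$.
\end{itemize}
The metric to which you apply the positive mass theorem must be a metric on $M$, so that $R_g\ge0$ in the interior and the completeness of $M$ actually enter.

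Your fallback does not close the gap either. A conformal change $\hat g=\phi^{4/(n-2)}g$ keeps $R\ge0$ only if $-a_n\Delta_g\phi+R_g\phi\ge0$. So $\phi$ must be $g$-superharmonic wherever $R_g=0$, in particular on the whole harmonically flat region. If $\phi$ is $g$-harmonic where $R_g\neq0$, it is superharmonic on the complete one-ended $M$ and tends to $1$. The minimum principle then gives $\phi\ge1$, hence $m(\hat g)\ge m(g)$. If instead $\hat g$ agrees with $g$ far out, then $m(\hat g)=m(g)$. Either way the positive mass theorem for $\hat g$ returns only $m\ge0$, and no Dirichlet-energy bound for $U-1$ emerges.

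More generally, the divergence theorem gives $m(\hat g)\ge m(g)-C\int_M R_g\phi\,d\mu_g$. So conformal changes cannot detect a non-round $U$ when the interior is (nearly) scalar-flat.

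What is missing is a non-conformal mechanism that turns $U_\infty\not\equiv1$ into a drop in the mass of $g_i$ bounded below independently of $i$. One possible route, sketched here and not checked in detail:
\begin{itemize}
\item On a fixed annulus, $g_i\to U_\infty^{4/(n-2)}g_{\mathrm{euc}}$ smoothly. First show that this limit metric is not static there.
\item A Corvino-type local deformation supported in the annulus then creates a definite positive scalar-curvature bump.
\item The subsequent conformal change to zero scalar curvature lowers $m_i$ by a definite amount, contradicting the positive mass theorem on $M$ as $m_i\to0$.
\end{itemize}
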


We next establish the following convergence result for the normalized conformal factors as $t\to \infty$.

\begin{lemma}\label{lem:asymptotic_expansion}
Let
\[
\mathscr M:=\lim_{t\to\infty}m(t).
\]
Uniformly for \(|x|\geq4R_{\max}\), the following pointwise limits hold:
\[
\lim_{t\to\infty} U_t(x) = 1 + \frac{\mathscr M}{2}|x|^{2-n},\]
\[\lim_{t\to\infty} V_t(x) = -1 + \frac{\mathscr M}{2}|x|^{2-n},\] 
and
\[
\lim_{t\to\infty} W_t(x) = 1.
\]
\end{lemma}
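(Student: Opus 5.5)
The plan is to get $W_t\to1$ from Lee's quantitative positive mass theorem (Theorem \ref{thm:quantitative positive mass}) and the vanishing mass property (Lemma \ref{lem:vanishing_mass}). We then get $U_t$ from a mode-by-mode ODE analysis of the evolution equation $\partial_tU_t=V_t+U_t+XU_t$, and finally $V_t=U_t-2W_t$.

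\emph{Step 1: the exterior region.} By Lemma \ref{lem:radius_bound}, for $|x|\geq R_{\max}$ the point $\Phi_t(x)$ lies outside the coordinate sphere enclosing $\Sigma_t$. It also lies in the harmonically flat region, after enlarging $R_{\max}\geq r_0$. Hence on $\{|x|>R_{\max}\}$ we have $(G_0)_t=g_{euc}$, the functions $U_t,V_t,W_t$ are Euclidean harmonic, and $U_t,W_t\to1$, $V_t\to-1$ at infinity. Remark \ref{rem:asymptotic_bounds} (with $\tau=0$, after rescaling) gives uniform bounds $0<ae^{0}\leq U_t\leq C+2$ there, and $-1\leq V_t\leq 0$. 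From the expansions in the proof of Lemma \ref{lem:vanishing_mass}, the $|x|^{2-n}$ coefficients are $m(t)/2$ for $U_t$, $\widetilde m(t)/2$ for $W_t$, and hence $(m(t)-2\widetilde m(t))/2$ for $V_t$.

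\emph{Step 2: $W_t\to1$.} We apply Theorem \ref{thm:quantitative positive mass} with $R=R_{\max}$ and $\alpha=4$. The manifold we use is the conformally compactified double from Lemma \ref{Lem: smooth mass capacity}, applied to the flow exterior $(\overline{E_t},g_t)$. Indeed, with $\phi=-v_t/u_t$ the $g_t$-capacitary potential, the compactifying factor times $u_t$ is $\tfrac12(1+\phi)u_t=\tfrac12(u_t-v_t)=\widetilde u_t$. So its remaining end is exactly $(M,\widetilde g_t)\cong\widetilde G_t$, harmonically flat outside $B_{R_{\max}}$ with mass $\widetilde m(t)$. The positive mass theorem on this object is what Proposition \ref{Prop: mass capacity} established through the smoothing approximations. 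If one prefers not to invoke Lee's theorem on a singular-corner object, one applies it to the smooth approximants $\Gamma_{\varepsilon,\delta,j}$ and passes to the limit. Since $\widetilde m(t)\to0$, for every $\varepsilon>0$ and large $t$ we get $|W_t-1|\leq\varepsilon(R_{\max}/|x|)^{n-2}$ on $|x|\geq 4R_{\max}$.

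\emph{Step 3: $U_t$, the main obstacle.} Put $Y_t=U_t-1$. Using $V_t=U_t-2W_t$, the evolution equation becomes
\[
\partial_tY_t=2Y_t+XY_t+2(1-W_t),
\]
valid for a.e.\ $t$ on $|x|\geq R_{\max}$. Here $XY_t=\tfrac{2}{n-2}r\partial_rY_t$ is again harmonic, and the forcing term tends to zero uniformly by Step 2. We expand $Y_t$ on $|x|\geq 2R_{\max}$ in exterior spherical harmonics, $Y_t=\sum_k\sum_j c_{k,j}(t)|x|^{2-n-k}\phi_{k,j}(x/|x|)$, where $\phi_{k,j}$ runs over an orthonormal basis of degree-$k$ harmonics. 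The operator $X$ acts on the degree-$k$ mode by the scalar $-\tfrac{2(n-2+k)}{n-2}$. The ODE for each coefficient is therefore
\[
\dot c_{k,j}=-\frac{2k}{n-2}c_{k,j}+o(1).
\]
For $k=0$ the coefficient is $m(t)/2$, which converges to $\mathscr M/2$ by the monotonicity of Proposition \ref{Prop: mass monotonicity}. For $k\geq1$, Gronwall gives $c_{k,j}(t)\to0$. The expected difficulty is uniformity over all modes: we must show the tail $\sum_{k\geq K}$ is small on $|x|\geq4R_{\max}$ uniformly in $t$. The uniform $L^\infty$ bound on $U_t$ at radius $2R_{\max}$ gives $|c_{k,j}|\lesssim (2R_{\max})^{n-2+k}$. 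The resulting geometric factor $2^{-k}$ at radius $4R_{\max}$, together with polynomial growth in the number of harmonics of degree $k$, makes the tail uniformly small. The $o(1)$ forcing errors have to be controlled in the same weighted norm, which follows because Step 2 gives a bound of the form $\varepsilon(R_{\max}/|x|)^{n-2}$ for the forcing on all of $|x|\geq4R_{\max}$. Alternatively, one can run the argument on the sphere $|x|=2R_{\max}$ in $L^2$ and use the maximum principle outward.

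\emph{Step 4: $V_t$.} Finally $V_t=U_t-2W_t\to 1+\tfrac{\mathscr M}{2}|x|^{2-n}-2=-1+\tfrac{\mathscr M}{2}|x|^{2-n}$, uniformly on $|x|\geq4R_{\max}$, which is the last claimed limit.
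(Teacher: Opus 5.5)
Your proposal is correct and has the same overall architecture as the paper's proof. You get $W_t\to1$ from Theorem \ref{thm:quantitative positive mass} and Lemma \ref{lem:vanishing_mass}. You then integrate $\partial_tU_t=V_t+U_t+XU_t$ on the fixed harmonically flat exterior, and finish with $V_t=U_t-2W_t$.

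The difference is in Step 3.

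\textbf{The paper's route.} It subtracts the monopoles, setting $\bar U_t=U_t-(1+\frac{m(t)}{2}|x|^{2-n})$ and $\bar W_t=W_t-(1+\frac{\widetilde m(t)}{2}|x|^{2-n})$. Using $m'=-2\widetilde m$, it gets $\partial_t\bar U_t=2[\bar U_t-\bar W_t+\frac{1}{n-2}r\partial_r\bar U_t]$. It upgrades smallness of $\bar W_t$ on $|x|=2R_{\max}$ to the weighted bound $|\bar W_t|\le C\varepsilon|x|^{1-n}$ and integrates along the dilation characteristics. The weight $|x|^{1-n}$ produces the damping factor $e^{-2(t-s)/(n-2)}$, which is your $k=1$ rate, and it handles all modes $k\ge1$ at once.

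\textbf{Your route.} You diagonalize the same transport operator in exterior spherical harmonics. This needs the uniform-in-time bound $a\le U_t\le C+2$ from Remark \ref{rem:asymptotic_bounds} to control the tail $\sum_{k\ge K}$. The paper needs only a remainder bound at one fixed time $t_0$, but it needs a weighted bound on the forcing. In return, your tail bound makes weighted control of the forcing unnecessary. Only finitely many forcing coefficients must tend to zero, and that follows from uniform convergence $W_t\to1$ on a single sphere.

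Your radii are also consistent. The mode coefficients of the harmonic function $1-W_t$ can be read off on any sphere $|x|=\rho>R_{\max}$. So the mismatch between your Lee radius $4R_{\max}$ and your expansion radius $2R_{\max}$ is harmless; alternatively, take $\alpha=2$.

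The paper's route is shorter and gives the quantitative weighted estimate $|\bar U_t(x)|\le\bigl(C_{t_0}e^{-2(t-t_0)/(n-2)}+(n-2)C\varepsilon\bigr)|x|^{1-n}$. Yours makes the mode-wise decay rates $2k/(n-2)$ explicit.

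Two small points.
\begin{enumerate}
\item The bound $-1\le V_t\le0$ should read $-U\circ\Phi_t\le V_t\le0$. You never use it, so nothing breaks.
\item In Step 2, Lee's theorem is stated for complete manifolds, and its proof uses the positive mass theorem for metrics modified in the end. Applying it directly to the singular corner double is therefore not literally what Proposition \ref{Prop: mass capacity} provides. Take the alternative you mention instead: smooth mean-convex approximants with a common harmonically flat exterior, then the Bray--Lee compactification, then a limit using convergence of the capacities. That last step relies on $\Sigma_t$ being outermost. This is the route the paper takes.
\end{enumerate}
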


\begin{proof}
By Lemma \ref{lem:radius_bound} and \eqref{Eq: conformal flat}, we can enlarge $R_{\mathrm{max}}$ so that for every $t\geq 0$ we have
$$\Sigma^*_t\subset B_{R_{\mathrm{max}}/2},$$
and
$$(G_0)_t=g_{euc},\quad X=\frac{2}{n-2}r\partial_r\quad \text{on}\quad\{|x|>R_{\mathrm{max}}\}.$$
Then $U_t,V_t,W_t$ are Euclidean harmonic on this fixed exterior region.
Define
\[
\bar{U}_t(x) := U_t(x) - \left(1 + \tfrac{m(t)}{2}|x|^{2-n}\right)\]
and
\[
\bar{W}_t(x) := W_t(x) - \left(1 + \tfrac{\widetilde{m}(t)}{2}|x|^{2-n}\right).
\]
The harmonic expansions and the conformal mass formula give, for each fixed $t$,
$$\bar U_t(x)=O(|x|^{1-n}),\qquad \bar W_t(x)=O(|x|^{1-n}).$$
Using $m'(t)=-2\widetilde m(t)$, the evolution equation becomes
\[
\frac{d}{dt}\bar{U}_t = 2\left[\bar{U}_t - \bar{W}_t + \tfrac{1}{n-2}r\partial_r\bar{U}_t\right]
\]
for almost every $t$.

By the smoothing argument in the proof of Proposition \ref{Prop: mass capacity}, using the outermost property of $\Sigma_t^*$, we may approximate $(E_t^*,G_t)$ by smooth asymptotically flat manifolds with nonnegative scalar curvature and smooth mean-convex boundary, with convergent masses and capacities and a common harmonically flat exterior. Applying the compactification and approximation argument of \cite[pp. 95--96]{Bray-Lee}, we obtain complete approximating metrics whose masses converge to $\tilde m(t)$ and whose conformal factors converge to $W_t$. Theorem \ref{thm:quantitative positive mass} and Lemma \ref{lem:vanishing_mass} then imply
$$W_t\to 1\quad \text{uniformly for}\quad |x|\geq 2R_{\mathrm{max}}.$$

Given \(\varepsilon>0\), choose \(t_0\) sufficiently large so that
\[
\sup_{|x|=2R_{\mathrm{max}}}|\bar{W}_t(x)|<2\varepsilon\quad \text{for every}\quad t\geq t_0.
\]
Standard decay estimates for harmonic functions then yield
\[
|\bar{W}_t(x)| \leq C\varepsilon|x|^{2-n-\alpha} \quad \text{for}\quad t\geq t_0,\, |x|>4R_{\max},
\]
where $C$ depends only on $n$ and $R_{\mathrm{max}}$.
Along the characteristics of the evolution equation, we have
\[
  \frac{d}{ds}\left[e^{-2s}\bar{U}_s\left(e^{\frac{2(t-s)}{n-2}}x\right)\right]
  =-2e^{-2s}\bar{W}_s\left(e^{\frac{2(t-s)}{n-2}}x\right).
\]
Integrating from \(t_0\) to \(t\), for $|x|\geq 4R_{\mathrm{max}}$, we obtain
\[
\begin{split}
    |\bar U_t(x)|&\leq e^{2(t-t_0)}\left|\bar{U}_{t_0}\left(e^{\frac{2(t-t_0)}{n-2}}x\right)\right|+2C\ve |x|^{1-n}\int_{t_0}^te^{-\frac{2(t-s)}{n-2}}\,ds\\
    &\leq \left[C_{t_0}e^{-\frac{2(t-t_0)}{n-2}}+(n-2)C\ve\right]|x|^{1-n}.
\end{split}
\]
Letting $t\to \infty$ and then $\ve\to 0$, we conclude that
$$\bar U_t\to 0\quad \text{uniformly for}\quad |x|\geq 4R_{\mathrm{max}}.$$
It follows that
$$U_t(x)\to 1+\frac{\mathscr M}{2}|x|^{2-n}\quad\text{and so}\quad V_t=U_t-2W_t\to -1+\frac{\mathscr M}{2}|x|^{2-n}$$
uniformly on this region.
This completes the proof.
\end{proof}

We next identify a subsequential Hausdorff limit of $\Sigma_t^*$ as the zero set of the corresponding limiting function $V_\infty$. In the following argument, we use the Euclidean dilations
$$\Psi_t(x)=e^{\frac{2t}{n-2}}x,\quad x\in \mathbb R^n.$$
Note that the pullback metric $\Psi_t^*g_t$ is defined on $$\left\{|x|>e^{-\frac{2t}{n-2}}r_0\right\}.$$

Let $t_i\to \infty$ and write
$$E_i=\Psi_{t_i}^{-1}(E_{t_i})\quad \text{and}\quad \Sigma_i=\Psi_{t_i}^{-1}(\Sigma_{t_i}).$$
Denote
$$U_i=e^{t_i}(Uu_{t_i})\circ \Psi_{t_i}\quad\text{and}\quad V_i=e^{t_i}(Uv_{t_i})\circ \Psi_{t_i}.$$
Here the pullbacks refer to the coordinate end, and local limits are taken in
$\mathbb{R}^n\setminus\{0\}$. Write
$r_\infty=(\mathscr{M}/2)^{1/(n-2)}$.

\begin{lemma}\label{lem:zero_set_limit}
We have $\mathscr M>0$. Moreover, we have $E_i\to E_\infty=\{|x|>r_\infty\}$ locally in $L^1$, $\Sigma_i\to S_{r_\infty}$ locally in the Hausdorff sense, and 
$$V_i\to V_\infty=-1+\frac{\mathscr M}{2}|x|^{2-n}\quad\text{in}\quad C^0_{\mathrm{loc}}(E_\infty).$$
\end{lemma}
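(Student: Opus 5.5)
Our plan follows Bray--Lee: combine the blow-down limits of Lemma \ref{lem:asymptotic_expansion} with compactness of the rescaled slices $\Sigma_i$, and identify the limit through the area constraint $\mathscr A$. The region $|x|\ge 4R_{\max}$ only covers the far field, so we first extend convergence inward.

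\emph{Local convergence of $U_i,V_i$.} The sets $E_i$ are nested in the sense of Lemma \ref{Lem: monotonicity}. By Lemma \ref{lem:radius_bound}, $\Sigma_i\subset B_{R_{\max}/2}$. On compact subsets of $\mathbb R^n\setminus\{0\}$, and for large $i$, the pulled-back background $\Psi_{t_i}^*(U^{-4/(n-2)}g)$ rescaled by $e^{-4t_i/(n-2)}$ is Euclidean. Hence $U_i,V_i$ are Euclidean harmonic on $E_i$ there. We have $-1\le V_i\le 0$ after normalizing by $e^{t_i}$ and $U$, and Remark \ref{rem:asymptotic_bounds} gives $a\le U_i\le C$ locally. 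Moreover $U_i$ is superharmonic across $\Sigma_i$, since it is an integral of functions that are harmonic outside nested sets and vanish inside. The uniform boundary H\"older estimates of Section 2 apply because the $\Sigma_i$ are uniformly almost-minimizing after rescaling, with $\beta=0$ for the rescaled metric as in Lemma \ref{lem:radius_bound}. Together these give equicontinuity of $V_i$ on compact subsets of $\mathbb R^n\setminus\{0\}$. By the compactness of Corollary \ref{Cor: discrete compactness}, after passing to a subsequence, $E_i\to E_\infty$ in $L^1_{loc}$ and $\Sigma_i\to\Sigma_\infty$ in the Hausdorff sense, using the density bounds of Lemma \ref{Lem: density lower bound}. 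Also $V_i\to V_\infty$ locally uniformly, with $V_\infty$ harmonic in $E_\infty$, vanishing on $\Sigma_\infty$ and inside it. Since $E_\infty$ is connected (no bounded components, as in Lemma \ref{Lem: monotonicity}) and $V_\infty$ agrees with $-1+\frac{\mathscr M}{2}|x|^{2-n}$ for $|x|\ge 4R_{\max}$, unique continuation gives $V_\infty=-1+\frac{\mathscr M}{2}|x|^{2-n}$ on all of $E_\infty$.

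\emph{Identifying the zero set.} If $\mathscr M\le 0$, then $V_\infty<0$ on all of $\mathbb R^n\setminus\{0\}$. Boundary continuity forces $\Sigma_\infty\subset\{0\}$, so the slices collapse to the origin. We then aim to contradict the area constraint. The rescaled area $|\Sigma_i|$ is measured in $U_i^{4/(n-2)}g_{euc}$, with $U_i\ge a$ locally and $U_i\to 1+\frac{\mathscr M}{2}|x|^{2-n}$ near infinity. Scaling gives $|\Sigma_{t_i}|_{g_{t_i}}=|\Sigma_i|_{U_i^{4/(n-2)}g_{euc}}$, since the factors $e^{t_i}$ and $e^{2t_i/(n-2)}$ cancel in dimension $n-1$. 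So the areas stay equal to $\mathscr A>0$. A collapse would require $U_i$ to blow up near the origin, and we would rule this out using superharmonicity of $U_i$ together with the outer-minimizing comparison against small coordinate spheres. This comparison is the step we expect to be the main obstacle: controlling $U_i$ near $0$ uniformly in $i$, given that the harmonically flat region degenerates toward the origin under the dilations. We would first try a mean-value argument on spheres, using $U_i=V_i+(\text{integrated history})$ and monotonicity of $e^tv_t$. Hence $\mathscr M>0$. Continuity of $V_\infty$ and $V_\infty=0$ on $\Sigma_\infty$ then give $\Sigma_\infty\subset S_{r_\infty}$. Since $E_\infty$ is exactly where $V_\infty<0$ in the exterior component, $E_\infty\supset\{|x|>r_\infty\}$.

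\emph{The reverse inclusion.} If $E_\infty$ were strictly larger, then $V_\infty$ would be harmonic across part of $S_{r_\infty}$ and would change sign. This is impossible because $V_i\le 0$. So $E_\infty=\{|x|>r_\infty\}$ and $\Sigma_\infty=S_{r_\infty}$, the last by Hausdorff convergence together with the density bounds. Independence of the subsequence then yields convergence of the full sequence.
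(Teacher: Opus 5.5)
\textbf{There is a genuine gap at the central step, $\mathscr M>0$.} You reduce to the collapsed case $\Sigma_\infty\subset\{0\}$, which is fine. You then leave the contradiction as something you ``would try'', and the route you propose is both unavailable and unnecessary.

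It is unavailable because uniform control of $U_i$ near the origin cannot be expected. Indeed $U_i=e^{t_i}(Uu_{t_i})\circ\Psi_{t_i}$, while on regions enclosed at an earlier time $s$ the factor $u_{t_i}$ is frozen at values at least $e^{-s}$.

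It is unnecessary because only fixed spheres $S_r$ are needed, with the limits taken in the order $i\to\infty$, then $r\to0$. Concretely:
\begin{itemize}
\item Since $m(t)\ge\mathfrak c(\Sigma_t,g_t)\ge0$, only $\mathscr M=0$ has to be excluded.
\item In that case the unbounded component of $E_\infty$ has no boundary in $\mathbb R^n\setminus\{0\}$, so $E_\infty=\mathbb R^n\setminus\{0\}$. For fixed $r>0$, a neighbourhood of $S_r$ therefore lies in $E_i$ for large $i$.
\item There $U_i$ is Euclidean harmonic with $a\le U_i\le C$, so $U_i\to U_\infty$ smoothly. The limit $U_\infty$ is harmonic on $\mathbb R^n\setminus\{0\}$ and equals $1+\frac{\mathscr M}{2}|x|^{2-n}=1$ for $|x|\ge4R_{\max}$ by Lemma \ref{lem:asymptotic_expansion}. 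Unique continuation gives $U_\infty\equiv1$.
\item Local Hausdorff convergence and Lemma \ref{lem:radius_bound} give $\Sigma_i\subset B_r$ for large $i$. So $\Psi_{t_i}(S_r)$ encloses $\Sigma_{t_i}$ and is an admissible competitor.
\item Since $\Psi_{t_i}^*g_{t_i}=U_i^{4/(n-2)}g_{euc}$ near $S_r$, minimality and Corollary \ref{Cor: constant area} give $\mathscr A\le\int_{S_r}U_i^{\frac{2(n-1)}{n-2}}\,dA_{g_{euc}}\to n\omega_nr^{n-1}$. Letting $r\to0$ contradicts $\mathscr A>0$.
\end{itemize}
Note also that your proposal never proves convergence of $U_i$ or identifies its limit, and this step needs both.

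\textbf{A second, smaller gap is the claim that $E_\infty$ is connected ``as in Lemma \ref{Lem: monotonicity}''.} That argument shows each $E_{t_i}$ has no bounded components. Connectedness is not automatically preserved under $L^1_{\mathrm{loc}}$ and local Hausdorff limits in $\mathbb R^n\setminus\{0\}$: necks may pinch off, and you have no estimates near the origin, where the rescaled core collapses. This matters because both of your inclusions, $\Sigma_\infty\subset S_{r_\infty}$ and $E_\infty\subset\{|x|>r_\infty\}$, use the explicit formula for $V_\infty$ on all of $E_\infty$.

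Without connectedness you only get that the unbounded component is $\{|x|>r_\infty\}$. Hence $\Sigma_\infty\supset S_{r_\infty}$ with no points outside it, but further pieces of $\Sigma_\infty$ inside $B_{r_\infty}$ are not yet ruled out. The paper excludes them by an area identity:
\begin{itemize}
\item Comparison with $\Psi_{t_i}(S_{(1+\ve)r_\infty})$ gives $\mathscr A\le n\omega_n(2\mathscr M)^{\frac{n-1}{n-2}}$.
\item Lower semicontinuity of the $U_\infty$-weighted perimeter gives $\mathscr A\ge\int_{S_{r_\infty}}U_\infty^{\frac{2(n-1)}{n-2}}\,dA_{g_{euc}}=n\omega_n(2\mathscr M)^{\frac{n-1}{n-2}}$, plus the weighted area of any additional part of $\Sigma_\infty$.
\item That additional area is positive by the density lower bound and $U_\infty\ge a>0$, which forces $\Sigma_\infty=S_{r_\infty}$.
\end{itemize}
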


\begin{proof}
On every fixed compact annulus away from the origin, Remark \ref{rem:asymptotic_bounds} gives uniform bounds
$$0<c\leq U_i\leq C,\qquad -U_i\leq V_i\leq 0$$
for sufficiently large $i$. On such an annulus, we have $$\Psi_{t_i}^*g_{t_i}=U_i^{\frac{4}{n-2}}g_{euc}$$ and $\Sigma_i$ is locally minimizing with respect to this metric. The constant-area identity gives locally uniform area bounds. The arguments of Lemma \ref{Lem: sphere lower area}--\ref{Lem: Holder v_i} and Lemma \ref{Lem: density lower bound} therefore give uniform two-sided density estimates and uniform local H\"older bounds for $V_i$, extended by zero outside $E_i$. We also have uniform local H\"older estimates for $U_i$ by integrating $V_i$.

Consequently, we have $E_i\to E_\infty$ in $L^1_{\mathrm{loc}}$, $\Sigma_i\to \Sigma_\infty$ locally in Hausdorff sense, $U_i\to U_\infty$, and $V_i\to V_\infty$ locally uniformly.
The density estimates give
$$\partial E_\infty=\Sigma_\infty\quad\text{in}\quad \mathbb R^n\setminus\{0\}.$$
They also imply that compact subsets of $E_\infty$ eventually lie in $E_i$. Therefore, $U_\infty$ and $V_\infty$ are harmonic in $E_\infty$,
and $V_\infty$ is nonpositive there. Passing the H\"older estimates to the limit shows that $V_\infty$ extends continuously up to $\Sigma_\infty$ with zero value.

On the unbounded component of $E_\infty$, Lemma \ref{lem:asymptotic_expansion} and unique continuation imply
$$U_\infty(x)=1+\frac{\mathscr M}{2}|x|^{2-n}\quad\text{and}\quad V_\infty(x)=-1+\frac{\mathscr M}{2}|x|^{2-n}.$$
The strong maximum principle gives strict negativity on this component.

We claim $\mathscr M>0$. Otherwise, we have $\mathscr M=0$ and $V_\infty\equiv -1$ on the unbounded component, which therefore has no boundary in $\mathbb R^n\setminus\{0\}$. This implies $E_\infty=\mathbb R^n\setminus\{0\}$ and $\Sigma_\infty\cap (\mathbb R^n\setminus\{0\})=\emptyset$. By Lemma \ref{lem:radius_bound} and local Hausdorff convergence, for every fixed $r>0$, the image of $S_r$ under $\Psi_{t_i}$ eventually encloses $\Sigma_{t_i}$. By Lemma \ref{lem:radius_bound} and locally uniform H\"older estimates of $U_i$, we conclude that $U_i\to U_\infty\equiv 1$ on $S_r$. The minimizing property gives
$$\mathscr A\leq \lim_{i\to\infty}\int_{S_r}U_i^{\frac{2(n-1)}{n-2}}\,dA_{g_{euc}}=n\omega_n r^{n-1}\to 0\quad\text{as}\quad r\to 0,$$
where $\mathscr A$ is the area constant from Corollary \ref{Cor: constant area}.
This contradicts to the positivity of $\mathscr A$.

Put
$$r_\infty:=\left(\frac{\mathscr M}{2}\right)^{\frac{1}{n-2}}.$$
The formula of $V_\infty$, its negativity on $E_\infty$, and its zero boundary value show that the unbounded component of $E_\infty$ is exactly $\{|x|>r_\infty\}$. In particular, $\Sigma_\infty$ contains $S_{r_\infty}$ and has no point outside $S_{r_\infty}$. An area comparison argument with coordinate spheres
$S_{(1+\varepsilon)r_\infty}$ with $\varepsilon\to 0$, combined with
weighted perimeter lower semicontinuity and the two-sided density
estimates, gives
\[
n\omega_n(2\mathscr{M})^{\frac{n-1}{n-2}}
=
\int_{S_{r_\infty}}
U_\infty^{\frac{2(n-1)}{n-2}}\,dA_{g_{\mathrm{euc}}}
\leq \mathscr{A}
\leq n\omega_n(2\mathscr{M})^{\frac{n-1}{n-2}}.
\]
The density lower bound rules out any additional point of $\Sigma_\infty$
away from the sphere and the origin. Thus $\Sigma_\infty=S_{r_\infty}$
and $E_\infty=\{|x|>r_\infty\}$. Since every convergent subsequence has
these limits, the conclusion holds for the full sequence.
We complete the proof.
\end{proof}

Finally, we can establish the following theorem.

\begin{theorem}\label{thm:sphere_convergence}
Set $$\mathscr M=\lim_{t\to\infty} m(t)\quad\text{and}\quad r_\infty=\left(\frac{\mathscr M}{2}\right)^{\frac{1}{n-2}}.$$
As $t\to\infty$, we have
$$(\Psi_t^{-1}(\overline E_t),\Psi_t^{-1}(\Sigma_t),\Psi_t^*g_t)\to \left(\mathbb R^n\setminus B_{r_\infty},S_{r_\infty},\left(1+\frac{\mathscr M}{2}|x|^{2-n}\right)^{\frac{4}{n-2}}g_{euc}\right)$$
smoothly, meaning that the boundary hypersurfaces converge smoothly and the metrics converge smoothly on compact subsets. In particular, we have
$$\mathscr M=\frac12
    \left(
        \frac{\mathscr A}{n\omega_n}
    \right)^{\frac{n-2}{n-1}} .$$
\end{theorem}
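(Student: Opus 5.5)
We upgrade the $C^0_{\mathrm{loc}}$ and Hausdorff convergence of Lemma \ref{lem:zero_set_limit} to smooth convergence. The argument is a local regularity bootstrap near the limit sphere $S_{r_\infty}$ and a harmonic-function argument away from it.

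\textbf{Step 1 (limit regular, hence $\Sigma_i$ regular).} Fix $t_i\to\infty$ and work on a fixed annulus $A=\{r_\infty/2<|x|<2r_\infty\}$. There $\Psi_{t_i}^*g_{t_i}=U_i^{\frac4{n-2}}g_{euc}$, and $\Sigma_i$ is locally $U_i^{\frac4{n-2}}g_{euc}$-minimizing from outside. By Lemma \ref{Lem: almost minimizing limit}, $\Sigma_i$ is uniformly almost-minimizing with respect to $g_{euc}$. The uniform Hölder bounds on $U_i$ from the proof of Lemma \ref{lem:zero_set_limit} are what make the constants in that lemma uniform in $i$. Compactness (Corollary \ref{Cor: discrete compactness}) gives varifold convergence. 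The mass of the limit is at most $\mathscr A$ weighted by $U_\infty$, and the sandwich inequality in Lemma \ref{lem:zero_set_limit} forces the limit varifold to be $S_{r_\infty}$ with multiplicity one. Since the round sphere has density $1$ everywhere, the $\varepsilon$-regularity part of Theorem \ref{Thm: ABT regularity} shows that, for large $i$, every point of $\Sigma_i\cap A$ is regular. It also shows that $\Sigma_i$ is a $C^{1,\theta}$ normal graph $\phi_i$ over $S_{r_\infty}$ with $\|\phi_i\|_{C^{1,\theta}}\to0$. In particular $\mathcal S(\Sigma_{t_i})=\emptyset$ for large $i$.

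\textbf{Step 2 (bootstrap).} This is the main obstacle: the metric $U_i$ is only Hölder across $\Sigma_i$, and $U_i$ is generated by integrating $V_s$ over all earlier times, whose boundaries $\Sigma^*_s$ also pass through $A$ after rescaling. I would argue exactly as after Corollary \ref{cor: separation time slices} and in Proposition \ref{Prop: smoothness regular part}, but with uniform constants. By nesting (Lemma \ref{Lem: monotonicity}) and Lemma \ref{Lem: one-sided density}, all rescaled earlier slices meeting a neighbourhood of the graph are uniformly $C^{1,\theta}$ graphs there. Boundary Schauder estimates for the harmonic $V_s$ then give one-sided $C^{1,\theta}$ bounds, and time integration gives one-sided $C^{1,\theta}$ bounds for $U_i$. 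The $g_t$-minimal graph equation of Corollary \ref{Cor: smooth regular decomposition} then yields $C^{2,\theta}$ bounds on $\phi_i$, and iterating gives uniform $C^{k,\theta}$ bounds for $\phi_i$ and for $U_i$ from the exterior. One caveat: the time integral ranges over unboundedly long intervals. I would control it with the exponential weight $e^{-(t-s)}$ hidden in the rescaling $U_t=e^t(Uu_t)\circ\Phi_t$, together with the bounds in Remark \ref{rem:asymptotic_bounds}. The contributions from times $s\ll t_i$ occur at scales where the corresponding harmonic functions are uniformly smooth on $A$, so they only enter through interior estimates.

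\textbf{Step 3 (identify the limit and deduce the mass formula).} By Arzelà--Ascoli, $\phi_i\to0$ and $U_i\to U_\infty$ smoothly on compact subsets of $\overline{E_\infty}$. Away from the sphere the $U_i$ are harmonic and converge by interior estimates. Lemma \ref{lem:asymptotic_expansion} and unique continuation identify $U_\infty=1+\frac{\mathscr M}2|x|^{2-n}$ on $\{|x|\ge r_\infty\}$. Every sequence has a subsequence converging to the same limit, so the full family converges as $t\to\infty$. Finally, smooth convergence of the boundaries and metrics gives
\[\mathscr A=\lim\mathcal H^{n-1}_{g_{t_i}}(\Sigma_{t_i})=\int_{S_{r_\infty}}U_\infty^{\frac{2(n-1)}{n-2}}\,dA=n\omega_n(2r_\infty)^{n-1},\]
since $U_\infty=2$ on $S_{r_\infty}$. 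Solving with $r_\infty^{n-2}=\mathscr M/2$ gives $\mathscr M=\frac12\left(\frac{\mathscr A}{n\omega_n}\right)^{\frac{n-2}{n-1}}$.
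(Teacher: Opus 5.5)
Your route is the paper's own. You get uniform Euclidean almost-minimality from the H\"older bounds on $U_i$, then multiplicity-one varifold convergence to $S_{r_\infty}$, then $\varepsilon$-regularity from Theorem \ref{Thm: ABT regularity} to obtain $C^{1,\theta}$ graphs. You finish with the one-sided Schauder bootstrap of Proposition \ref{Prop: smoothness regular part}, controlling the long time integral with the exponential weight; the paper's ``fixed-length integration along characteristics'' is the same mechanism. There is, however, one missing step, and a computational slip.

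\textbf{The gap.} Lemma \ref{lem:zero_set_limit} gives only \emph{local} Hausdorff convergence in $\mathbb R^n\setminus\{0\}$. Under $\Psi_{t_i}^{-1}$ the compact core of $M$ collapses to the origin. Nothing in your argument prevents $\Sigma_{t_i}$ from having additional components in or near the core; in rescaled coordinates these shrink to the origin and never meet your annulus $A$. So Step 1 only shows that $\Sigma_i\cap A$ is a graph. The following claims all require these components to be excluded:
\begin{itemize}
\item that $\Sigma_i$ itself is a normal graph $\phi_i$ over $S_{r_\infty}$;
\item that $\mathcal S(\Sigma_{t_i})=\emptyset$;
\item that the boundary hypersurfaces converge smoothly;
\item the first equality $\mathscr A=\lim\mathcal H^{n-1}_{g_{t_i}}(\Sigma_{t_i})=\int_{S_{r_\infty}}U_\infty^{\frac{2(n-1)}{n-2}}\,dA$ in Step 3.
\end{itemize}
The paper closes this explicitly. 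For large $i$ the graphical piece $\Gamma_i$ is a closed hypersurface whose image under $\Psi_{t_i}$ encloses the original compact core, so it is by itself an admissible enclosure of $\Sigma_0$. Any extra component has positive area by the density lower bound. It would therefore contradict the $g_{t_i}$-minimizing property. Alternatively, as in the proof of Lemma \ref{Lem: monotonicity}, $E_{t_i}$ has no bounded components and is therefore connected. Since it is disjoint from $\Gamma_i$ and unbounded, it lies outside $\Gamma_i$, and so $\partial E_{t_i}$ has nothing inside $\Gamma_i$. You need one of these arguments before asserting that the whole of $\Psi_{t_i}^{-1}(\Sigma_{t_i})$ is the graph.

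\textbf{The slip.} Your final computation has an arithmetic error. Since $U_\infty=2$ on $S_{r_\infty}$ and the area element of $U^{\frac4{n-2}}g_{euc}$ is $U^{\frac{2(n-1)}{n-2}}\,dA$, the area is
\[
2^{\frac{2(n-1)}{n-2}}\,n\omega_n r_\infty^{n-1}=n\omega_n(2\mathscr M)^{\frac{n-1}{n-2}},
\]
not $n\omega_n(2r_\infty)^{n-1}$. The two agree only when $n=4$; solving your expression would give $\mathscr M=2^{3-n}\bigl(\mathscr A/(n\omega_n)\bigr)^{\frac{n-2}{n-1}}$. With the correct expression you do recover $\mathscr M=\frac12\bigl(\mathscr A/(n\omega_n)\bigr)^{\frac{n-2}{n-1}}$. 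This identity already follows from the two-sided area bound at the end of the proof of Lemma \ref{lem:zero_set_limit}, which compares with the admissible spheres $S_{(1+\varepsilon)r_\infty}$. So the ``in particular'' does not depend on the smooth convergence at all.
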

\begin{proof}
Let $t_i\to \infty$, and use the notation $E_i, \Sigma_i, U_i$ as above. By Lemma \ref{lem:zero_set_limit}, we have $E_i\to E_\infty=\{|x|>r_\infty\}$ in $L^1_{\mathrm{loc}}$ and $\Sigma_i\to S_{r_\infty}$ in the local Hausdorff sense. The uniform positivity and H\"older estimates for $U_i$ in the proof of Lemma \ref{lem:zero_set_limit} imply
uniform Euclidean almost-minimizing estimates for $\Sigma_i$, by the argument of Lemma \ref{Lem: almost minimizing limit}. The local minimizing property and the perimeter-convergence argument of Corollary \ref{Cor: convergence} then give the varifold convergence of $\Sigma_i$ to $S_{r_\infty}$ with multiplicity one. Theorem 2.27 then improves the convergence to graphical
$C^{1,\alpha}$-convergence near $S_{r_\infty}$. For large $i$, this graph
is a closed sphere whose image under $\Psi_{t_i}$ encloses the original
compact core; it is an admissible enclosure by itself, so minimizing
area rules out any additional component of $\Sigma_{t_i}$. Since
$t_i\to\infty$ is arbitrary, we conclude that $\Psi_t^{-1}(\Sigma_t)$
converges to $S_{r_\infty}$ in $C^{1,\alpha}$-graphical sense as
$t\to\infty$. Boundary Schauder estimates, the same fixed-length
integration along characteristics, and nesting give uniform one-sided
$C^{1,\alpha}$ estimates for $V_i$ and $U_i$. By the same bootstrap
argument as in the proof of Proposition 2.31, applied uniformly at all
sufficiently large times, the convergence can be upgraded to be smooth
as desired.
\end{proof}

\section{Proof of the Penrose inequality}
\label{sec: proof penrose}

We now prove Theorem~\ref{Thm: intro main}.
\begin{proof}
Write $\Sigma_0=\partial M$ and $\mathscr A=\mathcal H^{n-1}_g(\Sigma_0)$.
Assume first that the asymptotic end is harmonically flat. By Corollary \ref{Cor: constant area}, the conformal flow satisfies $\mathcal H^{n-1}_{g_t}(\Sigma_t)=\mathscr A$. Theorem \ref{thm:sphere_convergence} gives
$$\lim_{t\to\infty} m(t)=\frac12
    \left(
        \frac{\mathscr A}{n\omega_n}
    \right)^{\frac{n-2}{n-1}} .$$
    Since $m(t)$ is non-increasing by Proposition \ref{Prop: mass monotonicity}, we obtain
    $$m_{\mathrm{ADM}}(M,g)\geq \frac{1}{2}\left(
        \frac{\mathcal H^{n-1}_g(\partial M)}{n\omega_n}
    \right)^{\frac{n-2}{n-1}} .$$

   For a general asymptotically flat end, apply the harmonically flat
case to the approximations in Lemma \ref{Lem: approximation penrose}
and pass to the limit in their masses and boundary areas.

It remains to prove the rigidity. Suppose equality holds. By
Corollary \ref{Cor: E_t AF} and the metric estimates following
Corollary \ref{cor: separation time slices}, each exterior
$(\overline{E_t},g_t)$ satisfies the hypotheses of
Lemma \ref{Lem: approximation penrose}. Applying the inequality to
its approximations and passing to the limit, we obtain
\[
m(0)\geq m(t)\geq
\frac{1}{2}\left(
    \frac{\mathcal H^{n-1}_{g_t}(\Sigma_t)}{n\omega_n}
\right)^{\frac{n-2}{n-1}}
=
\frac{1}{2}\left(
    \frac{\mathscr A}{n\omega_n}
\right)^{\frac{n-2}{n-1}}
=m(0).
\]
Hence $m(t)\equiv m(0)$.

By Corollary \ref{Cor: convergence} and the outermostness of
$\Sigma_0$, the enclosed compact regions decrease to the region
enclosed by $\Sigma_0$ as $s\downarrow0$, also in Hausdorff distance.
Every smooth capacity test function equal to one near the latter
region is therefore admissible for all sufficiently small $s$.
Capacity monotonicity gives the reverse inequality, so
\[
\mathfrak c(\Sigma_s,g)\rightarrow\mathfrak c(\Sigma_0,g).
\]
Subtracting $m(0)$ from the mass identity in Lemma
\ref{Lem: derivative mass function}, dividing by $t$, and letting
$t\downarrow0$ now gives
\[
m(0)=\mathfrak c(\Sigma_0,g).
\]
Proposition \ref{Prop: mass capacity rigidity} implies that $(M,g)$
is isometric to a Riemannian Schwarzschild exterior.
Conversely, the Schwarzschild exterior realizes equality by direct
computation.
\end{proof}

\appendix
\section{Mean-convex smoothing}

Let $\Omega \subset M$ be an open subset and $\Sigma$ be a union of some of the connected components of $\partial \Omega$. For each constant $\rho > 0$, we define 
$$\mathbf\Lambda_\rho(\Omega, \Sigma)$$
 to be the set of points $x \in \Sigma$, for which there exists a unit-speed minimizing geodesic $\gamma: [0, \rho] \to M$ such that 
\[
\gamma(\rho)=x,\, d(\gamma(0),\Sigma)=\rho\mbox{ and }\gamma([0,\rho))\subset\Omega.
\]
Denote 
\begin{equation}\label{Eq: projection set}
\mathbf\Lambda(\Omega,\Sigma):=\bigcup_{\rho>0}\mathbf\Lambda_\rho(\Omega,\Sigma).
\end{equation}

First let us recall the following notion of  $C^k$-quasi-regular boundary.
\begin{definition}
   The boundary portion $\Sigma$ is called  
   {\it $C^k$-quasi-regular}
   relative to $\Omega$ if $\mathbf\Lambda(\Omega,\Sigma)$ is a
    $C^k$-hypersurface.  
\end{definition}

\begin{proposition}\label{Prop: mean-convex-smoothing}
Suppose that $\Sigma$ is compact and 
$C^2$-quasi-regular relative to $\Omega$ and that the mean curvature of $\mathbf\Lambda(\Omega, \Sigma)$ with respect to the outward-pointing unit normal satisfies $H \geq \delta$ for some constant $\delta > 0$.
Then there exists a sequence of open subsets $\Omega_l\subset \Omega$ such that
\begin{itemize}
\item we have 
$$\partial\Omega_l\cap \partial\Omega=\partial\Omega\setminus \Sigma,$$
and $\Sigma_l:= \partial \Omega_l\setminus \partial\Omega$ is a smooth hypersurface whose mean curvature with respect to the outward-pointing unit normal satisfies $$H\geq \delta-l^{-1};$$
\item $\Omega\setminus\Omega_l$ lies in $l^{-1}$-neighborhood of $\Sigma$, and in particular, $\Sigma_l \to \Sigma$ in the sense of Hausdorff distance as $l \to \infty$;
\item we also have $\mathcal H^{n-1}(\Sigma_l)\to \mathcal H^{n-1}(\mathbf \Lambda(\Omega,\Sigma))$ as $l\to \infty$.
\end{itemize}
\end{proposition}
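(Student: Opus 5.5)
We will build the approximating domains from inner parallel hypersurfaces of $\Sigma$, combined with Gromov's bending-and-smoothing idea. Write $\Lambda=\mathbf\Lambda(\Omega,\Sigma)$ and $d(x)=\dist(x,\Sigma)$ for $x\in\Omega$ near $\Sigma$.

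\emph{Step 1: parallel level sets.} For small $s>0$ consider the open set $\Omega^s=\{x\in\Omega: d(x)>s\}$. Its boundary portion $\{d=s\}$ is exactly the set of points reached from $\Lambda$ by moving distance $s$ inward along minimizing geodesics; beyond these there are cut points. Where $\{d=s\}$ is smooth, it is a normal graph over a subset of $\Lambda$. The Riccati equation along the normal geodesics, together with the curvature bound of $M$ near the compact $\Sigma$, gives mean curvature at least $\delta-Cs$ there. At cut points, $d$ is a minimum of smooth distance-type functions. Hence $\{d>s\}$ is an intersection of domains each bounded by a hypersurface of mean curvature $\geq\delta-Cs$, and the resulting boundary has only convex (``inward'') corners, in the viscosity or barrier sense. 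Points of $\Sigma\setminus\Lambda$ are never footpoints of inward perpendiculars, so they do not affect $\{d=s\}$. The key structural facts to record are: (a) $\Omega\setminus\Omega^s$ lies in the $s$-neighborhood of $\Sigma$; (b) $\{d=s\}$ is mean-convex in the viscosity sense with $H\geq\delta-Cs$; (c) $d$ is semiconcave on $\{0<d<s_0\}$.

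\emph{Step 2: smoothing the corners.} Fix $s$ and mollify. We will replace $d$ by a smoothed minimum $\tilde d$ of the finitely many local smooth distance branches, using a Gromov-style bending (e.g. a $\min$ regularized by a convex soft-min $-\varepsilon\log\sum e^{-d_k/\varepsilon}$ in a partition of unity). A soft-min of functions whose level sets have $H\geq\delta'$ and $|\nabla d_k|=1$ has level sets with $H\geq\delta'-o(1)$. The reason is that the soft-min Hessian picks up a negative-semidefinite (concave) correction, which only increases the outward mean curvature of the superlevel-set boundary. We then choose a regular value $s_l$ of $\tilde d$ near $s$ by Sard, set $\Omega_l=\{\tilde d>s_l\}\cup(\text{part of }\Omega\text{ far from }\Sigma)$, and arrange the construction to be supported in a collar of $\Sigma$. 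This makes $\partial\Omega_l\cap\partial\Omega=\partial\Omega\setminus\Sigma$. Taking $s,\varepsilon\to0$ appropriately gives $H\geq\delta-l^{-1}$ and the Hausdorff statement. \textbf{This step is the main obstacle.} We have to check that the soft-min really preserves the lower mean-curvature bound uniformly, including near where many branches meet. We also have to check that the gradient of $\tilde d$ stays bounded away from zero near the level set, which requires the branches' gradients to lie in a common open half-space; that follows from semiconcavity together with the fact that all minimizing geodesics come from one side.

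\emph{Step 3: area convergence.} The smooth part of $\{d=s\}$ projects onto $\Lambda$ by the nearest-point map with Jacobian $1+O(s)$, so its area is at most $(1+Cs)\mathcal H^{n-1}(\Lambda)$. The corner smoothing changes area by $o(1)$ because it lives in an $\varepsilon$-neighborhood of a codimension-two cut locus of bounded measure. This gives $\limsup\mathcal H^{n-1}(\Sigma_l)\leq\mathcal H^{n-1}(\Lambda)$. For the reverse inequality, note that every point of $\Lambda$ is the limit of footpoints of the nearest-point projection from $\Sigma_l$. The projection from $\Sigma_l$ to $\Lambda$ is $(1+o(1))$-Lipschitz and eventually surjective onto any compact subset of the open $C^2$-part, which gives $\liminf\mathcal H^{n-1}(\Sigma_l)\geq\mathcal H^{n-1}(\Lambda)$.
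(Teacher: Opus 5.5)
The gap is in your Step 2, which is where essentially all of the difficulty lies. You never construct the finitely many smooth branches $d_k$, with $|\nabla d_k|=1$ and mean-convex level sets, of which $d$ is locally the minimum. In the setting this proposition is designed for, they need not exist. Here $\mathbf\Lambda(\Omega,\Sigma)$ is only a $C^2$ hypersurface inside the compact $\Sigma$ (in the application, the regular part of a singular almost-minimizing boundary), and its curvature may blow up towards $\Sigma\setminus\mathbf\Lambda(\Omega,\Sigma)$. So for small $s$ the level $\{d=s\}$ can contain cut points with a continuum of footpoints that are focal for them. Think of the centre of a spherical piece of $\Sigma$ of radius $s$: the superdifferential of $d$ there has infinitely many extreme points (a whole cap of unit vectors), whereas a finite minimum of $C^1$ functions has a polytope as superdifferential. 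If $\mathbf\Lambda(\Omega,\Sigma)$ had bounded curvature, $\{d=s\}$ would already be $C^2$ for small $s$ and there would be no corners to smooth. So corner smoothing is needed exactly where your branch picture fails.

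Your justification for the soft-min is also incomplete. The covariance correction $-\ve^{-1}\bigl(\sum_kp_k\nabla d_k\otimes\nabla d_k-\nabla\tilde d\otimes\nabla\tilde d\bigr)$ is negative semidefinite, but the level-set normal $\nu=\nabla\tilde d/|\nabla\tilde d|=\cos\theta_k\nabla d_k+\sin\theta_k f_k$ is tilted. Then
\[
\tr_{\nu^\perp}\nabla^2 d_k=\tr_{\nabla d_k^\perp}\nabla^2 d_k-\sin^2\theta_k\,\bigl\langle\nabla^2 d_k\,f_k,f_k\bigr\rangle
\]
can be as large as $-\delta'+\sin^2\theta_k\bigl((n-2)C+\delta'\bigr)$, where $C$ bounds the eigenvalues of $\nabla^2 d_k$ from above. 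The correction contributes $-\ve^{-1}\sum_kp_k\sin^2\theta_k$ on $\nu^\perp$. So the bound survives provided $\ve^{-1}\geq(n-2)C+\delta'$, a quantitative condition your argument does not identify. Once that holds, $|\nabla\tilde d|\leq1$ gives $H\geq\delta'$ on any regular level, so your unproved ``common open half-space'' claim is not needed. Finally, the area claim in Step 3, that the smoothing lives near ``a codimension-two cut locus of bounded measure'', has no justification for a noncompact $C^2$ hypersurface.

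The missing idea is to bend $d_\Sigma$ itself instead of splitting it into branches. The paper uses only the following facts, which are the rigorous content of your Step 1:
\begin{itemize}
\item $d_\Sigma$ is semiconcave on $\{a<d_\Sigma<b\}$ with Hessian measure $A\,\mathcal H^n+S$, where $S\leq0$, $A\leq C_0a^{-1}g$ and $A(\nabla d_\Sigma,\cdot)=0$;
\item almost everywhere (at points with a unique non-focal footpoint) the Riccati bound $\tr_{\nabla d_\Sigma^\perp}A\leq-\delta+K_0d_\Sigma$ holds.
\end{itemize}
Take $C\approx C_0/s$, $\delta'=\delta-O(s)$, and an increasing $\alpha_s$ with $\alpha_s''=-\bigl((n-2)C+\delta'\bigr)\alpha_s'$. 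The absolutely continuous part of $\nabla^2(\alpha_s\circ d_\Sigma)$ then has eigenvalues $\alpha_s'\mu_1,\dots,\alpha_s'\mu_{n-1},\alpha_s''$ with $\mu_i\leq C$. Hence $\tr(P\nabla^2 w_s)\leq-\delta'\alpha_s'(d_\Sigma)$ for \emph{every} rank-$(n-1)$ projection $P$, using only the one-sided bounds and $S\leq0$. Since $B\mapsto\sup_P\tr(PB)$ is convex, this survives mollification. Every regular level set of the mollified function then has $H=-\tr(P\nabla^2w_{s,\ve})/|\nabla w_{s,\ve}|$ bounded below by $\delta'$ times the ratio of the lower and upper bounds of $\alpha_s'$ on the band, minus $o(1)$, whatever its tangent planes.

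Your soft-min is in fact this bending in disguise. The set $\{\tilde d>c\}=\{\sum_k(-e^{-d_k/\ve})>-e^{-c/\ve}\}$ is a superlevel set of a sum of exponentially bent branches, and $\ve^{-1}=(n-2)C+\delta'$ is exactly the paper's exponent. Applying the exponential to $d_\Sigma$ directly and then mollifying removes the need for any branch structure.

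For the areas, the paper bounds $\mathcal H^{n-1}(\{d_\Sigma=s\})\leq e^{C_*s^2}\mathcal H^{n-1}(\mathbf\Lambda_s(\Omega,\Sigma))$ using only the Riccati equation and mean convexity. It then picks the regular value of the mollified function by a coarea and Lebesgue-point argument, so that its area is close to that of $\{d_\Sigma=s\}$. Lower semicontinuity gives the reverse inequality.
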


The proof can be found in \cite{GromovPlateauStein, BiZhuVolumeGrowth}. For completeness, we include Gromov's proof.
 Denote
$$d_\Sigma(q)=\dist(q,\Sigma),\,q\in \Omega.$$
We use
$$s^*=\dist(\Sigma,\partial\Omega\setminus\Sigma),\,F_s=\{q\in\Omega:d_\Sigma(q)\geq s\}\mbox{ and }\Sigma_s=\partial F_s\cap\Omega.$$

For a locally semi-concave function $u$, its distributional covariant Hessian $\nabla^2 u$ is a locally finite symmetric $2$-tensor-valued Radon measure. We write
$$\nabla^2 u\leq Cg\mathcal H^n\mbox{ on }U$$
if, for every continuous symmetric $2$-tensor $T\geq 0$ compactly supported on $U$, we have
$$\langle \nabla^2u,T\rangle_g\leq C\int_M\tr_g T\,\dd\mathcal H^n.$$

\begin{lemma}\label{Lem: control A}
    There are constants \(s_0\in(0,s^*/4)\) and \(C_0<+\infty\), depending only on a compact neighborhood of \(\Sigma\), with the following properties.  

If \(0<a<b<2s_0\), then \(d_\Sigma\) is semiconcave on
\[
 U_{a,b}:=\{q\in\Omega:a<d_\Sigma(q)<b\}.
\]
We have the decomposition
\begin{equation}\label{Eq: decomposition A}
    \nabla^2d_\Sigma=A\,\mathcal H^n+S
\end{equation}
on \(U_{a,b}\), where $S\leq 0$ and
\[
 A\leq \frac{C_0}{a} g
\]
almost everywhere on $U_{a,b}$. Moreover, we have
\[
 |\nabla d_\Sigma|=1\mbox{ and }
 A(\nabla d_\Sigma,\cdot)=0
\]
almost everywhere on $U_{a,b}$.
\end{lemma}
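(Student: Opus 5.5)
The plan is to write $d_\Sigma$ on $U_{a,b}$ as an infimum of point-distance functions, deduce a uniform one-sided Hessian bound from Hessian comparison, and then read off the structure of $\nabla^2 d_\Sigma$ from Alexandrov's theorem.

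\emph{Choice of constants.} Fix a compact neighborhood $N$ of $\Sigma$ on which the sectional curvature is bounded below by $-\kappa^2$. Choose $s_0<s^*/4$ so small that the closed $2s_0$-neighborhood of $\Sigma$ lies in $N$ and $\kappa\cdot 2s_0\le 1$.

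\emph{Semiconcavity.} For $q\in U_{a,b}$ we have
\[
d_\Sigma(q)=\inf_{x\in\Sigma} r_x(q),\qquad r_x=\dist(\cdot,x).
\]
Only those $x$ with $r_x(q)<b+\eta$ matter, and for such $x$ the relevant points lie in $N$. Each $r_x$ is semiconcave away from $x$, also across the cut locus of $x$. Indeed, at any $q$ one can take a support function from above, namely $r_{\gamma(\epsilon)}+\epsilon$ along a minimizing geodesic $\gamma$ from $q$ to $x$. The Hessian comparison theorem then gives, in the barrier sense,
\[
\nabla^2 r_x\le \kappa\coth(\kappa r_x)\,(g-dr_x\otimes dr_x)\le \Big(\frac1{r_x}+\kappa\Big)g .
\]
Near $q$ we may restrict the infimum to $x$ with $r_x\ge a/2$, which yields $\nabla^2 r_x\le (C_0/a)\,g$ with $C_0$ depending only on $N$. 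An infimum of functions satisfying a uniform upper Hessian bound in the barrier (support) sense satisfies the same bound. Hence $d_\Sigma-\tfrac{C_0}{2a}\rho^2$ is locally concave in normal coordinates, up to the standard correction, so $d_\Sigma$ is semiconcave on $U_{a,b}$ and
\[
\nabla^2 d_\Sigma\le \frac{C_0}{a}\,g\,\mathcal H^n
\]
in the measure sense defined above.

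\emph{Decomposition.} Take the Lebesgue decomposition of the Radon measure $\nabla^2 d_\Sigma$ with respect to $\mathcal H^n$, writing $A\,\mathcal H^n$ for the absolutely continuous part and $S$ for the singular part. The one-sided bound holds for both parts separately. For the singular part this follows by testing against $T\ge 0$ concentrated on a $\mathcal H^n$-null set supporting $S$, and gives $S\le 0$. For the absolutely continuous part it gives $A\le (C_0/a)g$ almost everywhere. This is \eqref{Eq: decomposition A}.

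\emph{Gradient identities.} Since $d_\Sigma$ is $1$-Lipschitz, it is differentiable almost everywhere. At a point $q$ of differentiability, take a minimizing geodesic from $q$ to a foot point on $\Sigma$. Because $b<2s_0<s^*/2$, this geodesic stays in $\Omega$ and does not approach $\partial\Omega\setminus\Sigma$. The function $d_\Sigma$ decreases with unit slope along it, so $|\nabla d_\Sigma|(q)=1$. By Alexandrov's theorem, at almost every point $d_\Sigma$ is twice differentiable with second derivative $A$. Moreover, $\nabla d_\Sigma$ (defined a.e.) is approximately differentiable with differential $A$, since for semiconcave functions the gradient is $BV$ and its absolutely continuous derivative is $A$. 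Differentiating the identity $|\nabla d_\Sigma|^2=1$, which holds almost everywhere, in the approximate sense gives $A(\nabla d_\Sigma,\cdot)=0$ almost everywhere.

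\emph{Main obstacle.} The delicate step is the uniform upper Hessian bound for $d_\Sigma$, independent of cut-locus phenomena of both the point-distance functions and $d_\Sigma$ itself. I would handle it purely through support functions from above, as above, so that no smoothness of $r_x$ is ever used. The quasi-regularity of $\Sigma$ is not needed for this lemma; only compactness of $\Sigma$ and the lower curvature bound on $N$ enter.
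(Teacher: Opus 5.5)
Your proposal is correct and follows essentially the same route as the paper: semiconcavity with the one-sided bound $\nabla^2 d_\Sigma\le (C_0/a)g$, the Lebesgue decomposition of the Hessian measure, $|\nabla d_\Sigma|=1$ at differentiability points, and differentiating $|\nabla d_\Sigma|^2=1$ through the approximate differential of the $BV$ gradient. The differences are minor. You spell out the semiconcavity via the support-function trick and Hessian comparison, which the paper dismisses as easy to verify. You also obtain $S\le 0$ and $A\le (C_0/a)g$ from the nonnegativity of the measure $(C_0/a)g\,\mathcal H^n-\nabla^2 d_\Sigma$, whereas the paper reads these off at Alexandrov and Lebesgue points.
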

\begin{proof}
   It is easy to verify that $d_\Sigma$ is  semi-concave and satisfies the estimate
    $$\nabla^2d_\Sigma\leq \frac{C_0}{a}g\mbox{ on }U_{a,b},$$
    in the viscosity sense, for some universal constant $C_0$. Since the distribution Hessian $\nabla^2 d_\Sigma$ is a locally finite symmetric $2$-tensor-valued Radon measure, we can make the decomposition
    $$\nabla^2 d_\Sigma=A\mathcal H^n+S,\mbox{ where }A\in L^1_{loc}\mbox{ and } S\bot \mathcal H^n.$$
    We obtain 
    $$A\leq \frac{C_0}{a} g$$
    almost everywhere on $U_{a,b}$, restricted to the intersection of the Alexandrov points of $d_\Sigma$ and the Lebesgue points of $A$. The local semi-concavity of $d_\Sigma$ yields $S\leq 0$. Since $d_\Sigma$ is a distance function, it is differentiable almost everywhere, and we have $|\nabla d_\Sigma|=1$ at all differentiable points. Again, from the  local semi-concavity of $d_\Sigma$, we have $\nabla d_\Sigma\in BV_{loc}$ and $A=D_{ap}(\nabla d_\Sigma)$ almost everywhere, where $D_{ap}(\nabla d_\Sigma)$ is the approximate differential. From
    $$D_{ap}(|\nabla d_\Sigma|^2)=2\langle D_{ap}(\nabla d_\Sigma)(\cdot),\nabla d_\Sigma\rangle=2A(\nabla d_\Sigma,\cdot),$$
    we have $A(\nabla d_\Sigma,\cdot)=0$ almost everywhere.
\end{proof}

\begin{lemma}
    After decreasing $s_0$, there is a constant $K_0<+\infty$ such that, for almost every $x\in \Omega$ with $0<d_\Sigma(x)<2s_0$, if $e=\nabla d_\Sigma(x)$, then we have
    $$\tr_{e^\bot}A(x)\leq -\delta+K_0d_\Sigma(x).$$
\end{lemma}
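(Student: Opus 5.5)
At almost every $x\in\Omega$ with $0<d_\Sigma(x)<2s_0$ I will compare $d_\Sigma$ with the distance to a single smooth piece of $\mathbf\Lambda(\Omega,\Sigma)$, and then use the Riccati equation along the minimizing geodesic. The idea is that $d_\Sigma$ is squeezed from below by a smooth function whose level set through $x$ has mean curvature close to $\delta$.

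\emph{Foot point and comparison function.} Restrict to points $x$ that are Alexandrov points of $d_\Sigma$, Lebesgue points of $A$, and points of differentiability with $|\nabla d_\Sigma(x)|=1$; this is a full-measure set. At such $x$ there is a unique foot point $y\in\Sigma$ and a unit-speed minimizing geodesic $\gamma:[0,\rho]\to M$, $\rho=d_\Sigma(x)$, with $\gamma(0)=x$, $\gamma(\rho)=y$ and $\gamma([0,\rho))\subset\Omega$. By definition $y\in\mathbf\Lambda_\rho(\Omega,\Sigma)$, so $y$ lies on the $C^2$ hypersurface $\mathbf\Lambda(\Omega,\Sigma)$. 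First variation shows that $\gamma$ meets $\mathbf\Lambda$ orthogonally at $y$; then $\nabla d_\Sigma(x)=-\dot\gamma(0)$ by differentiability. Take a small $C^2$ patch $P\subset\mathbf\Lambda$ around $y$, and let $d_P$ be the signed distance to $P$ on the $\Omega$-side. Compactness of $\Sigma$ and the $C^2$ bound give a uniform normal-injectivity radius, so after decreasing $s_0$ the function $d_P$ is $C^2$ near $\gamma([0,\rho])$, with constants uniform in $y$. Since $P\subset\Sigma$, we have $d_\Sigma\le d_P$ near $x$, with equality at $x$.

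\emph{Hessian comparison.} Because $d_\Sigma\le d_P$ with touching at $x$, the second-order Alexandrov expansion of $d_\Sigma$ at $x$ lies below that of $d_P$. Hence $A(x)\le\nabla^2 d_P(x)$ as quadratic forms on $e^\perp$, where $e=\nabla d_\Sigma(x)=\nabla d_P(x)$. Taking traces gives $\tr_{e^\perp}A(x)\le\Delta d_P(x)$, and $\Delta d_P(x)$ is minus the mean curvature at $x$ of the parallel hypersurface $\{d_P=\rho\}$ with respect to the normal $-e$, which points outward toward $\Sigma$. The sign convention needs care here: in this convention the mean-convexity $H\ge\delta$ of $\mathbf\Lambda$ with respect to the outward normal means exactly $\Delta d_P\le-\delta$ on $P$.

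\emph{Riccati estimate.} Set $h(s)=\Delta d_P(\gamma(\rho-s))$ for $s\in[0,\rho]$. The Riccati equation $h'=-|\nabla^2 d_P|^2-\Ric(\partial,\partial)$ (with sign adjusted to the direction of travel) holds along the normal geodesic. The second fundamental form of $P$ is uniformly bounded by the $C^2$ assumption and the curvature of $M$ is bounded on a compact neighborhood, so $|\nabla^2 d_P|$ stays uniformly bounded on $[0,2s_0]$ for small $s_0$. Therefore $|h'|\le K_0$, and $h(\rho)\le h(0)+K_0\rho\le-\delta+K_0 d_\Sigma(x)$, which is the claimed bound.

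\emph{Main obstacle.} The delicate step is the uniformity of $d_P$ as a $C^2$ comparison function: the inequality $\nabla^2 d_P\le$ bounded must hold with constants independent of the foot point $y$. This requires $\mathbf\Lambda$ to have uniformly bounded second fundamental form near its closure. If that bound is only local, I would instead use the fact that $d_\Sigma(x)=\rho$ forces the ball $B_\rho(x)$ to touch $\mathbf\Lambda$ from one side at $y$. This one-sided touching bounds the principal curvatures of $\mathbf\Lambda$ at $y$ from below by roughly $-1/\rho$, and the $C^2$ regularity gives the matching bound above; together they control $|\nabla^2 d_P|$ along $\gamma$ on the range where $\rho<2s_0$.
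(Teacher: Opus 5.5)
\textbf{Gap: the uniform bound $|h'|\le K_0$.} Your overall route is the paper's: take the unique foot point $y\in\mathbf\Lambda(\Omega,\Sigma)$, compare with the distance $d_P$ to a $C^2$ patch $P$ of $\mathbf\Lambda$, and integrate the Riccati equation along the minimizing normal segment. Your touching step at Alexandrov points is a legitimate variant of the paper's claim that $d_\Sigma$ itself is $C^2$ near $x$: from $d_\Sigma\le d_P$ with equality at $x$ you get $A(x)\le\nabla^2 d_P(x)$, hence $\tr_{e^\perp}A(x)\le\Delta d_P(x)$, and this avoids proving $d_\Sigma=d_P$ near $x$.

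The failure is in how you obtain uniformity. You derive $|h'|\le K_0$ from a uniform bound on $|\nabla^2 d_P|$ along $\gamma$, which rests on a uniform second fundamental form bound for $\mathbf\Lambda$ and a uniform normal-injectivity radius. Neither is available. The hypothesis is only that $\mathbf\Lambda(\Omega,\Sigma)$ is a $C^2$ hypersurface, and it is noncompact in general. In the intended application, $\mathbf\Lambda$ is essentially the regular part of an almost-minimizing boundary, whose curvature blows up at $\mathcal S(\Sigma)$. There $|\nabla^2 d_P|^2$ is already unbounded at the foot points, and focal and cut points of $d_\Sigma$ come arbitrarily close to $\Sigma$, so no choice of $s_0$ makes the normal exponential map nondegenerate on $\{d_\Sigma<2s_0\}$. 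Your fallback does not repair this. Touching by $B_\rho(x)$ bounds the principal curvatures at $y$ only in the focusing direction, by about $1/\rho$. The other direction is controlled only pointwise by $C^2$ regularity. And even with finite curvature at $y$, the Hessian of $d_P$ blows up along $\gamma$ when $x$ is close to a focal point. So a two-sided bound $|h'|\le K_0$ with $K_0$ independent of $x$ does not hold.

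\textbf{The fix.} No such bound is needed, because the quadratic term in the Riccati equation has a sign. In the direction of increasing distance,
\[
h'=-|\nabla^2 d_P|^2-\Ric(\nabla d_P,\nabla d_P)\le K_0
\]
whenever $\Ric_g\ge -K_0 g$ near $\Sigma$. Combined with $h(0)=-H(y)\le-\delta$, this gives $\Delta d_P(x)\le-\delta+K_0 d_\Sigma(x)$, with $K_0$ depending only on a Ricci lower bound; this is exactly the paper's argument.

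Likewise, you need the $C^2$ regularity of $d_P$ along $\gamma$ only pointwise, not uniformly. Interior points of the minimizing segment are never focal. The endpoint $x$ is non-focal once you discard the critical values of the $C^1$ normal exponential map of $\mathbf\Lambda$, a null set by the area formula. Add this exclusion to your full-measure set explicitly, since your comparison $A(x)\le\nabla^2 d_P(x)$ presupposes that $d_P$ is $C^2$ at $x$.
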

\begin{proof}
Exclude the nondifferentiability set of $d_\Sigma$ and the critical
values of the normal exponential maps on countably many coordinate
neighborhoods of $\Lambda(\Omega,\Sigma)$. Both sets have zero
$\mathcal H^n$-measure, the latter by the area formula.

Take $x$ outside these sets. In particular, $x$ has a unique nearest
point $p$ in $\Lambda(\Omega,\Sigma)$. Let
\[
\gamma:[0,t]\to(\Omega,g)
\]
be the minimizing geodesic connecting $p$ and $x$, then $d_\Sigma$
is $C^2$ near $x$ and along the interior of $\gamma$, since the nearest
point is unique and the minimizing normal segment has no focal or
cut points. The inverse function theorem applies to the normal
exponential map. From the Riccati equation we can derive
$$\tr_{e^\bot}\nabla^2d_\Sigma(x)\leq -\delta+K_0d_\Sigma(x),$$
where $K_0\geq 0$ is a constant such that $\Ric_g\geq -K_0 g$ in a neighborhood of $\Sigma$. As before, we have $A=\nabla^2 d_\Sigma$ almost everywhere among the differentiable points of $d_\Sigma$, the desired estimate follows.
\end{proof}

Fix $s\in (0,s_0)$ and $h\in (0,s/4)$. Define
$$U_{s,h}:=\{q\in \Omega: |d_\Sigma(q)-s|<h\}.$$
Let 
$$\delta_s:=\delta-K_0(s+h)\mbox{ and }C_s=\frac{C_0}{s-h}.$$
We always take $s$ small enough such that $\delta_s>0$. Set
$$L_s:=(n-2)C_s+\delta_s,\,\alpha_s(t)=-L_s^{-1}e^{-L_s(t-s)},\mbox{ and }w_s=\alpha_s\circ d_\Sigma.$$
Clearly we have 
$$\alpha_s'(t)=e^{-L_s(t-s)}\mbox{ and }\alpha_s''(t)=-L_s\alpha_s'(t).$$
For simplicity, we denote
$$a_s^-=e^{-L_s h}\mbox{ and }a_s^+=e^{L_sh}.$$
\begin{lemma}
    On $U_{s,h}$, in the sense of tensor-valued Radon measures, the function $w_s$ satisfies
    $$\tr(P\nabla^2w_s)\leq -\delta_s a_s^-\mathcal H^n $$
    for every measurable field $P$ of rank-$(n-1)$ $g$-orthogonal projection. It also satisfies
    $$|\nabla w_s|\leq a_s^+$$
    almost everywhere, and
    $$\nabla^2 w_s\leq a_s^+C_s g\mathcal H^n.$$
\end{lemma}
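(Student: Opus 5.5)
The plan is to compute the distributional Hessian of $w_s=\alpha_s\circ d_\Sigma$ by the chain rule for semiconcave functions composed with a smooth increasing function, and then estimate each piece using the two preceding lemmas. On $U_{s,h}$ we have $s-h<d_\Sigma<s+h<2s_0$, so Lemma \ref{Lem: control A} applies with $a=s-h$, $b=s+h$. It gives $\nabla^2 d_\Sigma=A\,\mathcal H^n+S$ with $S\leq0$, $A\leq C_s g$, $|\nabla d_\Sigma|=1$ and $A(\nabla d_\Sigma,\cdot)=0$ almost everywhere. Since $\alpha_s$ is smooth with $\alpha_s'>0$, I would justify, by mollifying $d_\Sigma$ (semiconcavity is stable under mollification up to $o(1)$) and passing to weak limits of measures, that
\[
\nabla^2 w_s=\alpha_s'(d_\Sigma)\bigl(A\,\mathcal H^n+S\bigr)+\alpha_s''(d_\Sigma)\,\nabla d_\Sigma\otimes\nabla d_\Sigma\,\mathcal H^n .
\]
Here the singular part $\alpha_s'(d_\Sigma)S$ is nonpositive because $d_\Sigma$ is continuous and $\alpha_s'>0$.

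For the gradient bound, $|\nabla w_s|=\alpha_s'(d_\Sigma)=e^{-L_s(d_\Sigma-s)}\leq e^{L_sh}=a_s^+$ almost everywhere. For the upper Hessian bound, I would drop $S$ and use $\alpha_s''=-L_s\alpha_s'<0$, which kills the radial term. What remains is $\alpha_s'A\leq a_s^+C_s g$, giving $\nabla^2w_s\leq a_s^+C_sg\,\mathcal H^n$.

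The main estimate is the projected trace. Fix a measurable field $P$ of rank-$(n-1)$ projections, and test only against nonnegative weights, which is all the measure inequality means. The singular part contributes $\alpha_s'\tr(PS)\leq0$, since $S\leq0$ as a measure and $P\geq0$. For the absolutely continuous part, at an a.e.\ point write $e=\nabla d_\Sigma$. I would decompose $P$ relative to $e$: write $P=Q+\bigl(P-Q\bigr)$ with $Q$ the projection onto $e^\perp$, or more concretely let $\theta\in[0,1]$ with $\langle Pe,e\rangle=1-\theta$ or similar. Since $A(e,\cdot)=0$, $A$ is an endomorphism of $e^\perp$ with $\tr_{e^\perp}A\leq-\delta+K_0d_\Sigma\leq-\delta_s$ and all eigenvalues $\leq C_s$. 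For a rank-$(n-1)$ projection, the quantity $\tr(PA)$ is at most $\tr_{e^\perp}A+\lambda_{\max}(A)\cdot(\text{defect})$, where the defect is $\langle Pe,e\rangle$. More precisely, $\tr(PA)\leq\tr_{e^\perp}A+(n-1)C_s\langle Pe,e\rangle$ up to a harmless constant. The radial term contributes $\alpha_s''\langle Pe,e\rangle=-L_s\alpha_s'\langle Pe,e\rangle$. With $L_s=(n-2)C_s+\delta_s$, the radial term should absorb the defect. This yields
\[
\tr(P\nabla^2 w_s)\leq\alpha_s'\bigl(-\delta_s+(\ldots)\langle Pe,e\rangle-L_s\langle Pe,e\rangle\bigr)\leq-\delta_s\alpha_s'\leq-\delta_sa_s^-.
\]
Note that $\alpha_s'\geq e^{-L_sh}=a_s^-$.

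I expect the linear-algebra step to be the obstacle: showing $\tr(PA)\leq\tr_{e^\perp}A+\bigl((n-2)C_s+\delta_s\bigr)\langle Pe,e\rangle$ when $A\leq C_s$ on $e^\perp$ and $\tr_{e^\perp}A\leq-\delta_s$. I would try the following. Let $\nu$ be the unit kernel direction of $P$, and write $\nu=\cos\phi\, e+\sin\phi\, f$ with $f\in e^\perp$, so that $\langle Pe,e\rangle=\sin^2\phi$. Then
\[
\tr(PA)=\tr A-\langle A\nu,\nu\rangle=\tr_{e^\perp}A-\sin^2\phi\,\langle Af,f\rangle .
\]
Now $-\langle Af,f\rangle=\tr_{e^\perp\cap f^\perp}A-\tr_{e^\perp}A\leq(n-2)C_s+\delta_s=L_s$. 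This gives exactly $\tr(PA)\leq\tr_{e^\perp}A+L_s\sin^2\phi$, which the radial term $-L_s\sin^2\phi$ cancels. This explains the choice of $L_s$, and the estimate then closes.
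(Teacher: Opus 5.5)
Your chain-rule decomposition, the sign of the singular part $\alpha_s'(d_\Sigma)S$, the bounds $a_s^-\le\alpha_s'(d_\Sigma)\le a_s^+$, the gradient and upper Hessian bounds, and the identity $\tr(PA)=\tr_{e^\perp}A-\sin^2\phi\,\langle Af,f\rangle$ are all correct. The gap is in the next line. The bound $-\langle Af,f\rangle=\tr_{e^\perp\cap f^\perp}A-\tr_{e^\perp}A\le(n-2)C_s+\delta_s$ would need $-\tr_{e^\perp}A\le\delta_s$. The preceding lemma gives the opposite inequality, $\tr_{e^\perp}A\le-\delta_s$. Since $A$ is only bounded above (level sets of $d_\Sigma$ may be strongly curved), $-\langle Af,f\rangle$ can be arbitrarily large. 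Consequently your intermediate inequality $\tr(PA)\le\tr_{e^\perp}A+L_s\sin^2\phi$ is false, and so is the version with $(n-1)C_s$. For example, take $n=3$, $A=\mathrm{diag}(0,C_s,-M)$ in an orthonormal basis $(e,f_1,f)$, and $\nu=f$. Then $\tr(PA)=C_s$, while $\tr_{e^\perp}A+L_s=2C_s+\delta_s-M$. So the radial term is not absorbing a bounded defect in the way you describe.

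The conclusion is still true. The repair is not to bound the coefficient of $\sin^2\phi$ at all. Set $t=\sin^2\phi=\langle Pe,e\rangle\in[0,1]$. The density of the absolutely continuous part is
\[
\tr(PB)=\alpha_s'(d_\Sigma)\Bigl[\tr_{e^\perp}A-t\bigl(\langle Af,f\rangle+L_s\bigr)\Bigr].
\]
This is affine in $t$, so it is at most the larger of its endpoint values:
\[
\text{at } t=0:\quad \alpha_s'\tr_{e^\perp}A\le-\delta_s\alpha_s',
\]
\[
\text{at } t=1:\quad \alpha_s'\bigl(\tr_{e^\perp\cap f^\perp}A-L_s\bigr)\le\alpha_s'\bigl((n-2)C_s-L_s\bigr)=-\delta_s\alpha_s'.
\]
Together with $\alpha_s'\ge a_s^-$, this gives the claim; in the example above it holds with equality.

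This is exactly the paper's argument. The paper lists the eigenvalues $\alpha_s'\mu_1,\dots,\alpha_s'\mu_{n-1},\alpha_s''$ of $B$ and bounds $\tr(PB)$ by the sum of the $n-1$ largest. That sum is either $\alpha_s'\sum_i\mu_i$ (your $t=0$ case) or $\alpha_s''+\alpha_s'\sum_{i\neq j}\mu_i\le\alpha_s''+(n-2)C_s\alpha_s'$ (your $t=1$ case). The role of $L_s=(n-2)C_s+\delta_s$ is to make the second case as good as the first, not to dominate $-\langle Af,f\rangle$.
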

\begin{proof}
    From the chain rule we have
    $$\nabla w_s=\alpha_s'(d_\Sigma)\nabla d_\Sigma$$
    and
    $$\nabla^2 w_s=\alpha_s'(d_\Sigma)\nabla^2 d_\Sigma+\alpha_s''(d_\Sigma)\nabla d_\Sigma\otimes \nabla d_\Sigma \mathcal H^n.$$
    Using \eqref{Eq: decomposition A} we have the Lebesgue decomposition
    $$\nabla^2 w_s=B\mathcal H^n+\alpha_s'(d_\Sigma)S,$$
    where
    $$B=\alpha_s'(d_\Sigma)A+\alpha_s''(d_\Sigma)e\otimes e\mbox{ with }e=\nabla d_\Sigma.$$
    The last two estimates follow from Lemma \ref{Lem: control A}.
    
    Again, from Lemma \ref{Lem: control A} we know that $A$ has eigenvalue $\mu_1,\ldots,\mu_{n-1},0$ with $\mu_i\leq C_s$, where the eigenvalue $0$ corresponds to the eigenvector $e$. Therefore, the eigenvalues of $B$ are
    $$\alpha_s'(d_\Sigma)\mu_1,\ldots,\alpha_s'(d_\Sigma)\mu_{n-1},\alpha_s''(d_\Sigma).$$
  Therefore, for every rank-$(n-1)$ orthogonal projection $P$,
    $$\tr(PB)\leq \max\left\{\alpha_s'(d_\Sigma)\sum_{i=1}^{n-1}\mu_i,\alpha_s''(d_\Sigma)+(n-2)C_s\alpha_s'(d_\Sigma)\right\}\leq -\delta_s a_s^-.$$
    Combined with the non-positivity of the singular part, we obtain the first estimate.
\end{proof}

\begin{lemma}\label{Lem: smooth mollification}
    Let $V'\Subset V$ be open subsets of $(M,g)$ and let $u$ be locally semi-concave on $V$. Suppose that $u$ satisfies
    $$|\nabla u|\leq L,\,\nabla^2 u\leq Cg\mathcal H^n,$$
    and
    $$\tr(P\nabla^2u)\leq -c\mathcal H^n$$
    on $V$, for constants $L,C<+\infty$ and $c>0$ and for every measurable field $P$ of rank-$m$ orthogonal projection. Then there are smooth functions $u_\varepsilon$ on $V'$ such that, as $\varepsilon\to 0$,
    $$u_\varepsilon\to u\mbox{ uniformly and in }W^{1,1}(V'),$$
    and
    $$|\nabla u_\varepsilon|\leq L+o(1)\mbox{ and }\tr(P\nabla^2 u_\varepsilon)\leq -c+o(1)$$
    uniformly on compact subsets of $V'$, simultaneously for all rank-$m$ orthogonal projections $P$.
\end{lemma}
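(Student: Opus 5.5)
The plan is to use a Riemannian mollification built from a finite partition of unity and local coordinate convolution, together with the fact that one-sided Hessian bounds survive convolution with a nonnegative kernel. Cover $\overline{V'}$ by finitely many normal-coordinate charts $\varphi_k:W_k\to\mathbb R^n$ with $W_k\Subset V$, and fix a subordinate partition of unity $\{\zeta_k\}$. In each chart, set $u^{(k)}_\varepsilon=(u\circ\varphi_k^{-1})*\rho_\varepsilon$, where $\rho_\varepsilon$ is a standard nonnegative mollifier. Then define $u_\varepsilon=\sum_k\zeta_k\,u^{(k)}_\varepsilon\circ\varphi_k$. Uniform convergence and $W^{1,1}$ convergence follow because $u$ is Lipschitz. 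The gradient bound $|\nabla u_\varepsilon|\le L+o(1)$ follows from three facts: the Euclidean gradient bound is preserved under convolution, the metric coefficients in a chart vary by $O(\varepsilon)$ across the support of $\rho_\varepsilon$, and the cross terms $\nabla\zeta_k\,(u^{(k)}_\varepsilon-u)$ are $O(\varepsilon)$.

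For the Hessian, I would write in each chart $\nabla^2 u=\partial^2u-\Gamma\,\partial u$. The measure-valued inequality $\tr(P\nabla^2u)\le -c\,\mathcal H^n$, valid for every measurable rank-$m$ projection field $P$, amounts to the pointwise statement that the largest sum of $m$ eigenvalues of the matrix measure $\nabla^2u$ is bounded above by $-c$ times the Riemannian volume. This quantity, $\lambda_{\max,m}(X)=\max_P\tr(PX)$, is convex and positively homogeneous in $X$. So Jensen applies to averages: for a positive kernel, $\lambda_{\max,m}(\int X\,d\rho)\le\int\lambda_{\max,m}(X)\,d\rho$, and this extends to matrix-valued measures whose singular part is negative semidefinite. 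The singular part $S\le 0$ only helps, since $\lambda_{\max,m}(X+S)\le\lambda_{\max,m}(X)$ for $S\le0$. To pass between the fixed metric $g(x)$ at the output point and the varying $g(y)$ inside the kernel, and to handle the Christoffel terms, I use that $g$ and $\Gamma$ are continuous and $|\partial u|\le L$. These contribute errors of size $o(1)$ times $(C+L)$; the upper bound $\nabla^2u\le Cg\mathcal H^n$ is exactly what controls the change-of-metric error for the negative-infinite-looking part in trace terms. The conclusion is $\tr(P\nabla^2u^{(k)}_\varepsilon)\le -c+o(1)$ uniformly in $P$ on each chart.

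The gluing step is where I expect the main difficulty. Writing $\nabla^2u_\varepsilon=\sum_k\zeta_k\nabla^2 u^{(k)}_\varepsilon+\sum_k\bigl(2\nabla\zeta_k\otimes\nabla u^{(k)}_\varepsilon+\nabla^2\zeta_k\,u^{(k)}_\varepsilon\bigr)$, the first sum is handled by convexity of $\lambda_{\max,m}$ and $\sum\zeta_k=1$. In the second sum, $\sum_k\nabla^2\zeta_k=0$ and $\sum_k\nabla\zeta_k=0$, so it equals $\sum_k\bigl(2\nabla\zeta_k\otimes\nabla(u^{(k)}_\varepsilon-u)+\nabla^2\zeta_k(u^{(k)}_\varepsilon-u)\bigr)$. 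The zeroth-order piece is $O(\varepsilon)$. The gradient difference $\nabla(u^{(k)}_\varepsilon-u)$ is not uniformly small, however. To handle it I would use that semiconcave functions have $\nabla u_\varepsilon^{(k)}-\nabla u^{(j)}_\varepsilon\to 0$ uniformly: both are averages of $\nabla u$ over $\varepsilon$-balls around nearly the same point, and semiconcavity makes the superdifferential upper semicontinuous, so this difference is small in a one-sided sense. If that fails, my fallback is a single global approach: Greene–Wu Riemannian convolution $u_\varepsilon(x)=\int u(\exp_x v)\rho_\varepsilon(|v|)\,dv$. This avoids partitions of unity entirely, and its Hessian is an average of parallel-transported Hessians up to curvature errors $O(\varepsilon(L+C))$. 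With it, the convexity argument above gives all estimates directly.
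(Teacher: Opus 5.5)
\textbf{Verdict.} The route you develop first, chartwise Euclidean convolution glued by a partition of unity, has a genuine gap at the step you flagged, and your proposed fix for it is false. Your fallback, the Riemannian convolution, is the paper's proof and does work.

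\textbf{The gap in the partition-of-unity route.} On overlaps, $\nabla u^{(k)}_\varepsilon(x)$ and $\nabla u^{(j)}_\varepsilon(x)$ are averages of $\nabla u$ against two kernels centred at the \emph{same} point $x$. The centres are not the issue; the kernels differ in shape.
\begin{itemize}
\item For a fixed cover by normal charts of scale $r$, the transition maps satisfy $D\psi=Q(I+O(r^2))$ with $Q$ orthogonal, and in general no better when curvature is present. So the two kernels differ by order $r^2$ in $L^1$, uniformly in $\varepsilon$.
\item Take a concave crease of $u$, e.g.\ $u=-|\ell|$ locally, where $\nabla u$ jumps by a fixed vector. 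At points whose distance to the crease is comparable to $\varepsilon$, the two averaged gradients differ by roughly $r^2$ times the jump, however small $\varepsilon$ is.
\item Upper semicontinuity of the superdifferential does not rescue this. It only places both averages near $\partial^+u$ at nearby points, which is not a singleton along the crease, and that closeness is not uniform in $x$.
\item One-sided smallness would not suffice either. The tensor $2\nabla\zeta_k\odot\nabla(u^{(k)}_\varepsilon-u^{(j)}_\varepsilon)$ is indefinite, so its trace against $P$ can have either sign.
\end{itemize}
Since $|\nabla\zeta_k|\sim r^{-1}$, you only obtain $\tr(P\nabla^2u_\varepsilon)\le -c+O(r)$, not $-c+o(1)$. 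The route can be repaired by letting the cover scale $r=r(\varepsilon)\to0$ with $\varepsilon/r^2\to0$. Then both the $O(r)$ cross term and the $O(\varepsilon/r^2)$ zeroth-order term vanish, at the cost of extra bookkeeping.

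\textbf{The fallback.} The paper's proof is exactly your fallback. It uses only the Riemannian (Greene--Wu) convolution $u_\varepsilon(x)=\int_{T_xM}u(\exp_x(\varepsilon z))\varphi(z)\,\dd z_x$ at a fixed small scale on $V'$, with no partition of unity. You should make this your main argument; your single-chart ingredients carry over to it directly.
\begin{itemize}
\item Fix a $g(x)$-orthonormal set $v_1,\dots,v_m$. Then $\sum_i\nabla^2u_\varepsilon(x)(v_i,v_i)$ is an average of the Hessian measure of $u$ paired with Jacobi fields $W_i$ along $t\mapsto\exp_x(tz)$, where $W_i(0)=v_i$ and $W_i'(0)=0$. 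There are also first-order terms of size $O(\varepsilon L)$.
\item The singular part contributes nonpositively, since $S\le0$.
\item The $W_i$ fail to be $g$-orthonormal by $O(\varepsilon^2)$. Because the Hessian measure has no two-sided bound, this error must be absorbed as in your change-of-metric step: combine $B\le Cg$ with the trace bound. This gives $-c+O(\varepsilon^2)$.
\item The gradient bound, and the uniform and $W^{1,1}$ convergence, follow from the Lipschitz bound together with $|W_i|\le(1+O(\varepsilon^2))|v_i|$.
\end{itemize}
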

\begin{proof}
    This follows from the mollification
    $$u_\varepsilon(x)=\int_{T_xM}u(\exp_x(\varepsilon z))\varphi(z)\,\dd z_x,$$
    where $\dd z_x$ is the Euclidean measure induced by $g_x$, and $\varphi$ is the standard Euclidean mollifier.
\end{proof}

\begin{proof}[Gromov's bending and smoothing proof]
    We apply Lemma \ref{Lem: smooth mollification} to $w_s$ for approximation functions $w_{s,\ve}$ such that $w_{s,\ve}$ are smooth on 
    $$U_{s,h}'=\{q\in\Omega:|d_\Sigma(q)-s|<3h/4\},$$
    and satisfy
    $$\|w_{s,\ve}-w_s\|_{L^\infty}\to 0,\,|\nabla w_{s,\ve}|\leq a_s^++o(1)\mbox{ and }\tr(P\nabla^2 w_{s,\ve})\leq -\delta_sa_s^-+o(1)$$
   as $\ve \to 0$. For a regular value $\tau$ of $w_{s,\ve}$ sufficiently close to $\alpha_s(s)$, we define
   $$\Omega_{s,h,\ve,\tau}:=\{q\in \Omega:d_\Sigma(q)>s+h/2\}\cup \left(\{q\in \Omega:w_{s,\ve}(q)>\tau\}\cap U_{s,h}'\right)$$
   and $\Sigma_{s,h,\ve,\tau}:=\partial\Omega_{s,h,\ve,\tau}\cap \Omega$. Let $P$ be the orthogonal projection onto the tangent space of $\Sigma_{s,h,\ve,\tau}$, then the mean curvature of $\Sigma_{s,h,\ve,\tau}$ satisfies
   $$H_{s,h,\ve,\tau}=-\frac{\tr(P\nabla^2 w_{s,\ve})}{|\nabla w_{s,\ve}|}\geq \frac{\delta_s a_s^--o(1)}{a_s^++o(1)}.$$
   For any integer $l>1$, we can take $s_l$, $h_l$ and $\ve_l$ small enough, and $\tau_l$ sufficiently close to $\alpha_{s_l}(s_l)$ such that $\Sigma_{s_l,h_l,\ve_l,\tau_l}$ is the desired smooth approximation of $\Sigma$ with $H_{s_l,h_l,\ve_l,\tau_l}\geq \delta-l^{-1}$.

   To conclude that $\Sigma_{s_l,h_l,\ve_l,\tau_l}$ converges to $\Sigma$ in the Hausdorff sense, it suffices to show $\Sigma_*=\Sigma$ for the Hausdorff limit $\Sigma_*$ of $\Sigma_{s_l,h_l,\ve_l,\tau_l}$. On the one hand, we can guarantee that $\Sigma_{s_l,h_l,\ve_l,\tau_l}$ lies in the $l^{-1}$-neighborhood of $\Sigma$, so we must have $\Sigma_*\subset \Sigma$. On the other hand, for each $p\in\Lambda_\rho(\Omega,\Sigma)$, the
inward normal segment has $d_\Sigma$ equal to its parameter up to
$\rho$. At parameters $s_l-h_l/2$ and $s_l+h_l/2$, the values of
$w_{s_l,\varepsilon_l}$ lie on opposite sides of $\tau_l$ when the
smoothing and level errors are sufficiently small. Thus the segment
meets $\Sigma_{s_l,h_l,\varepsilon_l,\tau_l}$ at a point tending to $p$,
and $\Lambda(\Omega,\Sigma)\subset\Sigma_*$. Since $\Sigma$ consists of boundary components of $\Omega$, we have $\overline{\mathbf\Lambda(\Omega,\Sigma)}=\Sigma$, and the previous discussion yields $\Sigma_*=\Sigma$ as desired.

   For the measure convergence, we claim
   $$\mathcal H^{n-1}(\Sigma_{s_l})\to \mathcal H^{n-1}(\mathbf \Lambda(\Omega,\Sigma))\mbox{ as }s_l\to 0.$$
   Since $\Sigma_{s_l}$ is a subset of the image under the exponential map from a smooth section of the normal bundle of $\mathbf\Lambda(\Omega,\Sigma)$, $\Sigma_{s_l}$ is $(n-1)$-rectifiable. On one hand, portions of $\Sigma_{s_l}$ can be written as smooth graphs over $\mathbf \Lambda_{\rho}(\Omega,\Sigma)$ for $l$ large enough, and so we have 
$$\liminf_{l\to \infty} \mathcal H^{n-1}(\Sigma_{s_l})\geq \mathcal H^{n-1}(\mathbf \Lambda_{\rho}(\Omega,\Sigma))\to \mathcal H^{n-1}(\mathbf \Lambda(\Omega,\Sigma)).$$
On the other hand, we have from the Riccati equation, the mean curvature of $\mathbf \Lambda_{\rho}(\Omega,\Sigma)$, and the area formula that
$$\mathcal H^{n-1}(\Sigma_{s_l})\leq e^{C_*s_l^2}\cdot\mathcal H^{n-1}(\mathbf \Lambda_{s_l}(\Omega,\Sigma)),$$
where $C_*$ denotes the bound of the Ricci curvature around $\Sigma$. Clearly, we have 
$$\limsup_{l\to \infty} \mathcal H^{n-1}(\Sigma_{s_l})\leq \mathcal H^{n-1}(\mathbf \Lambda(\Omega,\Sigma)),$$
and the claim follows. 
   
   It remains to choose regular values $\tau=\tau_\ve\to \alpha_{s_l}(s_l)$ such that
   $$\mathcal H^{n-1}(\Sigma_{s_l,h_l,\ve,\tau})\to \mathcal H^{n-1}(\Sigma_{s_l})\mbox{ as }\ve\to 0,\,\tau\to \alpha_{s_l}(s_l),$$
   and then the desired measure convergence follows from a diagonal argument. Recall that we have $w_{s_l,\ve}\to w_{s_l}$ in $W^{1,1}(U_{s_l,h_l}')$. Take $\beta_\ve\to 0$ slowly enough so that
   \begin{equation}\label{Eq: error w_s}
       \|w_{s_l,\ve}-w_{s_l}\|_\infty+\|\nabla w_{s_l,\ve}-\nabla w_{s_l}\|_{L^1}=o(\beta_\ve)\mbox{ as }\ve\to 0.
   \end{equation}
   Set $$I_\ve=(\alpha_{s_l}(s_l)-\beta_\ve,\alpha_{s_l}(s_l)+\beta_\ve).$$
   For simplicity, we denote
\[
A_\varepsilon(t)
:=\mathcal H^{n-1}\bigl(\{w_{s_l,\varepsilon}=t\}
\cap U'_{s_l,h_l}\bigr).
\]
Then we have
\[
\begin{aligned}
\int_{I_\varepsilon} A_\varepsilon(t)\,dt
&=\int_{I_\varepsilon} A_0(t)\,dt \\
&\quad+\int_{U'_{s_l,h_l}}
\left(
|\nabla w_{s_l,\varepsilon}|
\mathbf{1}_{\{w_{s_l,\varepsilon}\in I_\varepsilon\}}
-
|\nabla w_{s_l}|
\mathbf{1}_{\{w_{s_l}\in I_\varepsilon\}}
\right)d\mathcal H^n \\
&=\int_{I_\varepsilon} A_0(t)\,dt+o(\beta_\varepsilon),
\end{aligned}
\]
where $s_l$ is chosen so that its level has perimeter
$\mathcal H^{n-1}(\Sigma_{s_l})$ and $\alpha_{s_l}(s_l)$ is a
Lebesgue point of $A_0$. In estimating the second term, (A.3)
controls the gradient difference. The indicator difference lies in
intervals of width $o(\beta_\varepsilon)$ about the endpoints of
$I_\varepsilon$; coarea and the Lebesgue-point property bound its
contribution by $o(\beta_\varepsilon)$. Then we have
\[
\int_{I_\varepsilon} A_\varepsilon(t)\,dt
=
|I_\varepsilon|\cdot\mathcal H^{n-1}(\Sigma_{s_l})
+o(\beta_\varepsilon),
\]
which yields the existence of a regular value
$\tau_\varepsilon\in I_\varepsilon$ with
\[
\mathcal H^{n-1}(\Sigma_{s_l,h_l,\varepsilon,\tau_\varepsilon})
\leq \mathcal H^{n-1}(\Sigma_{s_l})+o(1).
\] From $\Omega_{s_l,h_l,\ve,\tau_\ve}\to F_{s_l}$ in $L^1$-sense, we have $$\liminf_{\ve\to 0}\mathcal H^{n-1}(\Sigma_{s_l,h_l,\ve,\tau_\ve})\geq \mathcal H^{n-1}(\Sigma_{s_l}).$$
   This completes the proof.
\end{proof}

\section{Approximation lemmas}

\begin{lemma}\label{Lem: approximation 1}
In the setting of Lemma \ref{Lem: conformal minimizers}, there exist asymptotically flat metrics \(g_j\), agreeing with \(g\) on an exhaustion of \(M\) by compact sets containing \(\Sigma\), such that \(g_j\) is Schwarzschild outside a compact set,
\[
m(g_j)=m(g),\qquad
Y_N(g_j)\rightarrow Y_N(g),\qquad
Y_D(g_j)\rightarrow Y_D(g).
\]
Moreover, the quadratic forms in Lemma \ref{Lem: conformal minimizers} remain coercive for \(j\) large enough.
\end{lemma}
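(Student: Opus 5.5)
We construct \(g_j\) by the standard Bartnik/Schoen--Yau cutoff: choose radii \(\rho_j\to\infty\) and a cutoff \(\chi_j\) equal to one on \(\{|x|\le\rho_j\}\) and zero on \(\{|x|\ge2\rho_j\}\), with \(|\partial\chi_j|\le C\rho_j^{-1}\) and \(|\partial^2\chi_j|\le C\rho_j^{-2}\). Put
\[
g_j=\chi_j g+(1-\chi_j)\Bigl(1+\tfrac{m(g)}{2}|x|^{2-n}\Bigr)^{\frac4{n-2}}g_{euc}.
\]
Then \(g_j\) agrees with \(g\) on an exhaustion, is Schwarzschild of mass \(m(g)\) outside \(B_{2\rho_j}\), and therefore \(m(g_j)=m(g)\) exactly. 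The near-boundary data (\(\Sigma\), \(H_g\), the agreement near \(\mathcal S(\Sigma)\)) are untouched. Since both metrics are \(O(|x|^{-\tau})\)-close to \(g_{euc}\) with the decay rates of the definition, \(g_j\) is uniformly asymptotically flat, and \(g_j\to g\) in the weighted norm \(|g_j-g|+|x||\partial(g_j-g)|+|x|^2|\partial^2(g_j-g)|\le C\rho_j^{-\tau}\) on the annulus.

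First we record uniform coercivity. The Sobolev and trace constants of \(g_j\) are uniformly comparable to those of \(g\), since \(g_j\) and \(g\) are uniformly bi-Lipschitz equivalent. Moreover,
\[
\|R_{g_j}^-\|_{L^{n/2}}\le\|R_g^-\|_{L^{n/2}(|x|\le\rho_j)}+\|R_{g_j}\|_{L^{n/2}(A_j)},
\]
where \(A_j=\{\rho_j\le|x|\le2\rho_j\}\). On \(A_j\) we have \(|R_{g_j}|\le C\rho_j^{-2-\tau}\), so \(\|R_{g_j}\|_{L^{n/2}(A_j)}\le C\rho_j^{-\tau}\to0\). Hence \eqref{Eq: small Rg and Hg} holds with a slightly larger \(\delta\), and the quadratic forms stay coercive for large \(j\).

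For the convergence of \(Y_N\), the upper bound uses test functions. Take \(u=1+\varphi\) with \(\varphi\) compactly supported (dense in the admissible class). For large \(j\), \(\varphi\) is supported where \(g_j=g\), so
\[
Q_j(u)-Q(u)=\int_{A_j}(R_{g_j}-R_g)\,d\mu\to0
\]
by the \(L^1\) bounds \(\int_{A_j}|R|\le C\rho_j^{n-2-\tau}\cdot\rho_j^{-?}\). Here the condition \(\tau>(n-2)/2\) is the delicate point: pointwise \(\rho^{-2-\tau}\cdot\rho^n=\rho^{n-2-\tau}\) need not vanish. Instead we use that \(R_{g_j}\) is a divergence plus a quadratic term, \(R=\partial(\partial g)+O(|\partial g|^2)\). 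The quadratic part is \(O(\rho^{-2-2\tau})\), integrable since \(2\tau>n-2\). The linear part integrates against the constant \(1\) to a boundary flux difference, which is exactly the mass flux; this flux is the same for \(g_j\) and \(g\) in the limit because \(m(g_j)=m(g)\). So \(\limsup Y_N(g_j)\le Y_N(g)\).

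For the lower bound, let \(u_j\) be the minimizers. Coercivity gives uniform \(D^{1,2}\) bounds on \(u_j-1\), and the equation gives uniform decay \(u_j-1=O(|x|^{2-n})\) via the barrier/Kelvin estimate of the earlier Lemma. A subsequence converges weakly to some \(u\). The interior terms are lower semicontinuous by the coercivity splitting. The annulus terms converge, since \(\int_{A_j}R_{g_j}u_j^2\) differs from \(\int_{A_j}R_{g_j}\) by \(O(\rho_j^{2-n}\cdot\ldots)\), and the latter is handled by the flux argument above. Hence \(\liminf Y_N(g_j)\ge Y_N(g)\). The same argument applies to \(Y_D\), with the Dirichlet class on \(\Sigma\), which is unaffected.

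I expect the main obstacle to be controlling the annulus scalar curvature contribution, which is only borderline integrable. The plan is to write it as a flux through \(S_{\rho_j}\) and \(S_{2\rho_j}\), and use the \(O(|x|^{2-n})\) decay of \(u_j-1\) to absorb the cross terms.
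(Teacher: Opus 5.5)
Your Schwarzschild cutoff construction, your uniform coercivity check, and your upper bound via fixed compactly supported test functions all match the paper. So does your treatment of the borderline constant term $\int(R_{g_j}\,d\mu_{g_j}-R_g\,d\mu_g)$, via the divergence structure of scalar curvature plus matching ADM fluxes. The gap is in your lower bound, at the cross terms $\int_{A_j}R_{g_j}(u_j^2-1)\,d\mu_{g_j}$. You control them by a \emph{uniform} decay $u_j-1=O(|x|^{2-n})$ obtained ``via the barrier/Kelvin estimate''. That estimate was proved for harmonic functions tending to zero. Here $w_j=u_j-1$ solves $-a_n\Delta_{g_j}w_j+R_{g_j}w_j=-R_{g_j}$, with a source of size $|x|^{-2-\tau}$ on $\{|x|\le 2\rho_j\}$. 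The $|x|^{2-n}$ behaviour only sets in outside the support of $R_{g_j}$, and its constant depends on $j$. Uniformly in $j$ you can only expect the decay of the Newtonian potential of $R_g$. Under the standing hypotheses ($R_g\in L^1$, $|R_g|=O(|x|^{-2-\tau})$) this is in general slower than $|x|^{2-n}$ when $\tau<n-2$. So this step fails as written. The accompanying weak-limit and lower-semicontinuity argument is also only gestured at; it would have to handle the moving domains $\{|x|\le\rho_j\}$ and the boundary term $\int_\Sigma H_g u_j^2$ on a singular $\Sigma$.

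No pointwise information is needed. Put $f_j=R_{g_j}\,d\mu_{g_j}/d\mu_g-R_g$. Then $f_j=0$ on $\{|x|\le\rho_j\}$ and $|f_j|\le C|x|^{-2-q}$ for a fixed $\frac{n-2}{2}<q<\min\{\tau,n-2\}$. Hence $\|f_j\|_{L^{n/2}}+\|f_j\|_{L^{2n/(n+2)}}\to0$, and this, not the annulus $L^1$ bound, is where $\tau>\frac{n-2}{2}$ enters. Your flux argument gives $\int_M f_j\,d\mu_g\to0$. Write $u=1+w$ and expand $u^2=1+2w+w^2$; the boundary term is unchanged. H\"older and Sobolev then yield $|\mathcal E_j(u)-\mathcal E(u)|\le\varepsilon_j\bigl(1+\|\nabla w\|_{L^2}^2\bigr)$, uniformly over admissible $u$. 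This one estimate gives uniform coercivity of $\mathcal E_j$. It also controls the constant and linear terms of $\mathcal E_j$, which your $L^{n/2}$ bound on $R_{g_j}^-$ alone does not. These together give the uniform $D^{1,2}$ bound on near-minimizers $u_j$ of $\mathcal E_j$. The lower bound is then just $Y(g)\le\mathcal E(u_j)\le\mathcal E_j(u_j)+o(1)$, with no weak limits at all. That is exactly how the paper closes the argument.
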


\begin{proof} Write \(m=m(g)\), and let
\[
g_{\mathrm S}
=\left(1+\frac{m}{2|x|^{n-2}}\right)^{\frac4{n-2}}
g_{\mathrm{euc}}
\]
on a sufficiently far portion of the asymptotically flat end. Choose a smooth cutoff \(\chi_j\) equal to one for \(|x|\le j\) and zero for \(|x|\ge2j\), with
\[
|\partial\chi_j|\le Cj^{-1},
\qquad
|\partial^2\chi_j|\le Cj^{-2},
\]
and set
\[
g_j=\chi_jg+(1-\chi_j)g_{\mathrm S},
\]
extending \(g_j=g\) over the compact part of \(M\). Then \(g_j\to g\) uniformly as bilinear forms, \(R_{g_j}\) vanishes outside a compact set, and
\[
m(g_j)=m.
\]
The boundary metric and mean curvature are unchanged.

Choose
\[
\frac{n-2}{2}<q<\min\{\tau,n-2\},
\]
and define
\[
f_j=R_{g_j}\frac{d\mu_{g_j}}{d\mu_g}-R_g.
\]
The asymptotic estimates give \(f_j=0\) for \(|x|\le j\) and
\[
|f_j(x)|\le C|x|^{-q-2}
\qquad (|x|\ge j).
\]
Consequently,
\[
\|f_j\|_{L^{n/2}(M,g)}
+
\|f_j\|_{L^{2n/(n+2)}(M,g)}
\rightarrow0.
\]
We also have
\[
\int_M f_j\,d\mu_g\rightarrow0.
\]
Indeed, scalar curvature is its linear divergence term plus an error of order \(O(|x|^{-2q-2})\). The integrated divergence terms cancel because \(g_j=g\) near \(\{|x|=j\}\) and their ADM masses agree. The remaining integral has the order \(O(j^{n-2-2q})\), which tends to zero.

For either $Y_N$ or $Y_D$, denote the corresponding energies by
$\mathcal E_j$ and $\mathcal E$.
Write an admissible function as $u=1+w$.
H\"older's inequality gives
\[
\begin{aligned}
\left|\int_M f_ju^2\,d\mu_g\right|
&\le
\left|\int_M f_j\,d\mu_g\right|
+2\|f_j\|_{L^{2n/(n+2)}(M,g)}
  \|w\|_{L^{2n/(n-2)}(M,g)}
\\
&\quad
+\|f_j\|_{L^{n/2}(M,g)}
  \|w\|_{L^{2n/(n-2)}(M,g)}^2.
\end{aligned}
\]
The Sobolev inequality bounds the norms of $w$ by
$C\|\nabla w\|_{L^2(M,g)}$.
Moreover, uniform convergence of the metric coefficients gives
\[
\left|
\int_M|\nabla u|_{g_j}^2\,d\mu_{g_j}
-
\int_M|\nabla u|_g^2\,d\mu_g
\right|
\le o(1)\|\nabla w\|_{L^2(M,g)}^2.
\]
Since the boundary term is unchanged, there exist
$\varepsilon_j\to0$, independent of $u$, such that
\begin{equation}\label{eq:approx-energy-comparison}
|\mathcal E_j(u)-\mathcal E(u)|
\le
\varepsilon_j
\left(1+\|\nabla w\|_{L^2(M,g)}^2\right).
\end{equation}

By the coercivity argument of Lemma \ref{Lem: conformal minimizers}, we have 
\[
\mathcal E(u)
\ge c\|\nabla w\|_{L^2(M,g)}^2-C
\]
for constants $c>0$ and $C$.
Hence, for sufficiently large $j$,
\[
\mathcal E_j(u)
\ge
\frac c2\|\nabla w\|_{L^2(M,g)}^2-C-1.
\]

Write $Y_j=\inf\mathcal E_j$ and $Y=\inf\mathcal E$.
Testing $\mathcal E_j$ on a minimizer of $\mathcal E$ and using
\eqref{eq:approx-energy-comparison} gives
\[
\limsup_{j\to\infty}Y_j\le Y.
\]
Choose admissible functions $u_j=1+w_j$ such that
\[
\mathcal E_j(u_j)\le Y_j+\frac1j.
\]
The preceding coercivity estimate implies that
$\|\nabla w_j\|_{L^2(M,g)}$ is uniformly bounded.
Applying \eqref{eq:approx-energy-comparison} again, we obtain
\[
Y
\le \mathcal E(u_j)
\le \mathcal E_j(u_j)+o(1)
\le Y_j+\frac1j+o(1).
\]
Therefore, we have 
$$Y\le\liminf_{j\to\infty}Y_j,$$ proving the convergence of both
$Y_N(g_j)$ and $Y_D(g_j)$.
\end{proof}

\begin{lemma}\label{Lem: approximation penrose}
Let $(M,g)$ satisfy the hypotheses of Proposition
\ref{Prop: mass capacity}, and suppose that its boundary $\Sigma$
is nonempty and its own outermost minimizing enclosure. Then there
exist asymptotically flat exterior regions $(M_j,g_j)$, with
nonnegative scalar curvature and harmonically flat ends, whose
compact boundaries $\Sigma_j$ are their own outermost minimizing
enclosures and have minimal regular parts, such that
\[
m(g_j)\to m(g),
\qquad
\mathcal H^{n-1}_{g_j}(\Sigma_j)
\to \mathcal H^{n-1}_g(\Sigma).
\]
Moreover, $g_j$ extends smoothly across $\Sigma_j$.
\end{lemma}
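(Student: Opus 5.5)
The plan is to combine the standard Schoen--Yau harmonic flattening of the end with a conformal correction that keeps the boundary minimal, and then to restore the outermost property by passing to an outermost minimizing enclosure. First I cut off the end as in Lemma \ref{Lem: approximation 1}, but towards the Euclidean metric: set $\hat g_j=\chi_j g+(1-\chi_j)g_{euc}$. Then $\hat g_j=g$ on $\{|x|\le j\}$, and $R_{\hat g_j}$ is supported in the annulus $\{j\le|x|\le 2j\}$ with $\|R_{\hat g_j}\|_{L^{n/2}}\to0$.

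Next I solve, in the class $1+D^{1,2}(M)$, the problem
\[
-a_n\Delta_{\hat g_j}u_j=R_{\hat g_j}^-u_j\ \text{in }\mathring M,\qquad \partial_\eta u_j=0\ \text{on }\mathcal R(\Sigma),\qquad u_j\to1 .
\]
This is the Neumann problem of Lemma \ref{Lem: conformal minimizers}, with $R$ replaced by $-R^-_{\hat g_j}$ and $H=0$. The smallness of $R^-_{\hat g_j}$ in $L^{n/2}$ gives coercivity, a unique positive solution, and $\|u_j-1\|_{L^\infty}\to0$ by Moser iteration; Proposition~\ref{prop:neumann-conformal-minimizer} gives positivity and H\"older regularity up to $\mathcal S(\Sigma)$.

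For $g_j:=u_j^{4/(n-2)}\hat g_j$ I then check the required properties.
\begin{itemize}
\item Scalar curvature: $R_{g_j}=u_j^{-4/(n-2)}R^+_{\hat g_j}\ge0$.
\item Harmonic flatness: for $|x|\ge2j$, $u_j$ is Euclidean harmonic, so the end is harmonically flat.
\item Minimal boundary: the Neumann condition and $H_g=0$ give $H^{g_j}_\Sigma=0$ on $\mathcal R(\Sigma)$.
\item Mass: the mass is $m(g_j)=\frac{2}{n(n-2)\omega_n}\lim\int u_j\partial_r u_j$. Integrating the equation, exactly as in the proof of Proposition \ref{Prop: mass Yamabe}, expresses this through $\int R^-_{\hat g_j}u_j^2$ and the cut-off divergence terms. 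The argument of Lemma \ref{Lem: approximation 1}, using $\tau>(n-2)/2$, shows these tend to $m(g)$.
\end{itemize}

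Since $g_j\to g$ uniformly on compact sets (with bi-Lipschitz constants tending to $1$), I replace $\Sigma$ by $\Sigma_j$, the outermost $g_j$-minimizing enclosure of $\Sigma$.
\begin{itemize}
\item $\Sigma_j$ is its own outermost minimizing enclosure, and it is almost-minimizing by the argument of Lemma \ref{Lem: almost minimizing limit}.
\item Its regular part is $g_j$-minimal away from $\Sigma$. On $\Sigma_j\cap\Sigma$ it agrees with the minimal $\Sigma$, using the regularity transfer of Proposition \ref{Prop: local regularity near regular points}.
\item Area: $\mathcal H^{n-1}_{g_j}(\Sigma_j)\le\mathcal H^{n-1}_{g_j}(\Sigma)\to\mathcal H^{n-1}_g(\Sigma)$. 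Conversely, $\Sigma$ is $g$-minimizing among enclosures, so $\mathcal H^{n-1}_{g_j}(\Sigma_j)\ge(1-o(1))\mathcal H^{n-1}_g(\Sigma_j)\ge(1-o(1))\mathcal H^{n-1}_g(\Sigma)$. Together these give the area convergence.
\end{itemize}
The exterior of $\Sigma_j$ with $g_j$ is the desired $(M_j,g_j)$.

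The step I expect to be the main obstacle is the final requirement that $g_j$ extend smoothly across $\Sigma_j$: the original metric is only $C^{2,\alpha}$ across $\partial M$, and $u_j$ is only H\"older near $\mathcal S(\Sigma)$. My first attempt is to move the boundary off $\Sigma$ before the construction, using the conformal flow. For small $s>0$, Corollary \ref{cor: separation time slices} gives $\Sigma_s\cap\Sigma_0=\emptyset$, and by Corollary \ref{Cor: constant area} the boundary area stays equal to $\mathscr A$. The mass identity of Lemma \ref{Lem: derivative mass function} gives $m(s)\to m(0)$. The point is that near $\Sigma_s$ the background metric $g$ is smooth, so the only nonsmooth ingredient is $u_s$ on the inner side. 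I would replace $u_s$ there by a smooth positive extension of its exterior values, lying above $u_s$, as in the construction of $\tilde u_t$ in Proposition \ref{Prop: smoothness regular part}. This preserves the exterior geometry and, by the replacement argument given there, keeps $\Sigma_s$ outermost minimizing. I would then run the flattening and enclosure steps on $(\overline{E_s},g_s)$ and diagonalize in $s\to0$ and $j\to\infty$. The delicate point is to ensure that the conformal correction $u_j$ is also smooth across $\Sigma_j$. For this I would extend the Neumann solution by even reflection in Fermi coordinates near $\mathcal R(\Sigma_j)$, and handle the singular set by the separation and tangent-cone estimates of Corollary \ref{cor: separation perturbed minimizers}.
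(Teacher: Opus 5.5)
\textbf{Gap 1: smoothness across $\Sigma_j$.} The decisive gap is the last requirement, that $g_j$ extend smoothly across $\Sigma_j$. This is needed because the harmonically flat case reruns the conformal flow of Section~\ref{Sec: 2}.

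In your construction nothing keeps $\Sigma_j$ off $\Sigma$; it may simply equal $\Sigma$. Near $\mathcal S(\Sigma)$ neither ingredient of $g_j$ is smooth. The background metric is only $C^{2,\alpha}$ across $\partial M$, and only H\"older for a flow exterior. The factor $u_j$ is only H\"older by Proposition~\ref{prop:neumann-conformal-minimizer}.

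Your proposed repairs do not close this.
\begin{itemize}
\item A flow slice $\Sigma_s$ avoids $\Sigma_0$, but it is itself singular. The factor $u_s$ is smooth from outside only up to $\mathcal R(\Sigma_s)$ and is merely H\"older at $\mathcal S(\Sigma_s)$. So a smooth extension of its exterior values does not exist there: the function $\tilde u_t$ of Proposition~\ref{Prop: smoothness regular part} is built only locally near regular points.
\item Even reflection in Fermi coordinates gives only finitely many derivatives, and it is unavailable at the singular set.
\end{itemize}

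The missing idea is to move the boundary into a region where everything is smooth \emph{before} flattening. The paper does this in four steps.
\begin{enumerate}
\item Take the forced minimizer $\Sigma_{\varepsilon,\delta}$ of Lemma~\ref{Lem: positive mean curvature replacement}. It is separated from $\Sigma$ by Corollary~\ref{cor: separation perturbed minimizers} and has $H\geq\gamma_{\varepsilon,\delta}>0$.
\item Smooth it by Lemma~\ref{Lem: mean convex smoothing} into a smooth, strictly mean-convex $\Gamma_\varepsilon$ of controlled area.
\item Apply the smooth-boundary harmonically flat approximation of Bray--Lee, with Neumann condition, on $\overline{E_\varepsilon}$. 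This keeps $H>0$.
\item Only then take the outermost $h_\varepsilon$-minimizing enclosure of $\Gamma_\varepsilon$. The strict barrier keeps it off $\Gamma_\varepsilon$, so $h_\varepsilon$ is smooth across it.
\end{enumerate}
Its area is squeezed between the outer-minimality of $\Sigma$ and comparison with $\Gamma_\varepsilon$.

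\textbf{Gap 2: the mass step.} Independently, your mass step fails as stated. Integrating $-a_n\Delta u_j=R^-_{\hat g_j}u_j$ against $u_j$, with the Neumann condition, gives
\[
m(g_j)=\frac{1}{2n(n-1)\omega_n}\int_M R^-_{\hat g_j}u_j^2\,d\mu-\frac{2}{n(n-2)\omega_n}\int_M|\nabla u_j|^2\,d\mu .
\]
(Note also the sign: $m(g_j)=-\frac{2}{n(n-2)\omega_n}\lim\int_{S_\rho}u_j\partial_ru_j$.) The gradient term and $\int R^-_{\hat g_j}(u_j^2-1)$ tend to zero, so $m(g_j)$ tracks $\frac{1}{2n(n-1)\omega_n}\int R^-_{\hat g_j}$.

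The flux identity on the cutoff annulus $A_j=\{j\le|x|\le 2j\}$ controls $\int_{A_j}R_{\hat g_j}\to-2n(n-1)\omega_n m(g)$, not $\int_{A_j}R^-_{\hat g_j}$. The two differ by $\int_{A_j}R^+_{\hat g_j}$, which does not tend to zero:
\begin{itemize}
\item for an exact Schwarzschild tail it converges to a fixed positive multiple of $m(g)$ depending on the cutoff profile;
\item for $\tau<n-2$ it can even be unbounded, of order $j^{n-2-\tau}$.
\end{itemize}
Hence $m(g_j)\not\to m(g)$ in general. The usual fix keeps the whole transition curvature in the equation, e.g.\ $-a_n\Delta u_j+(R_{\hat g_j}-\chi_jR_g)u_j=0$. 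Then $R_{g_j}=u_j^{-4/(n-2)}\chi_jR_g\geq0$ and the integrated term is exactly the mass flux. Alternatively, cut off to Schwarzschild of mass $m(g)$ as in Lemma~\ref{Lem: approximation 1}. Neither fix touches the smoothness gap above.
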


\begin{proof}
Set $\mathscr A=\mathcal H^{n-1}_g(\Sigma)$.
For $0<\varepsilon<1$, use the conformal metric
$\widetilde g_\varepsilon$ from Lemma \ref{Lem: positive mean curvature replacement}, which satisfies
\[
R_{\widetilde g_\varepsilon}\ge0,
\qquad
g\le\widetilde g_\varepsilon
\le(1-\varepsilon)^{-\frac4{n-2}}g,
\]
and
\[
m(\widetilde g_\varepsilon)\rightarrow m(g).
\]
The tangent-cone metric assumption in
Theorem \ref{Thm: intro main} verifies the additional
hypothesis of Subsection \ref{Sec: 3.2}.
By
Lemma \ref{Lem: positive mean curvature replacement}, choose the minimizing hypersurface
$\Sigma_{\varepsilon,\delta}$ strictly outside $\Sigma$.
Comparing its defining functional with the original exterior
$M$ gives
\[
\mathcal H^{n-1}_{\widetilde g_\varepsilon}
(\Sigma_{\varepsilon,\delta})
\le
\mathcal H^{n-1}_{\widetilde g_\varepsilon}(\Sigma)
=
(1-\varepsilon)^{-\frac{2(n-1)}{n-2}}\mathscr A.
\]
By Lemma \ref{Lem: mean convex smoothing}, choose a smooth hypersurface $\Gamma_\varepsilon$
enclosing $\Sigma$, with strictly positive mean curvature with
respect to the outward normal of its exterior region
$E_\varepsilon$, such that
\[
\mathcal H^{n-1}_{\widetilde g_\varepsilon}(\Gamma_\varepsilon)
\le
(1-\varepsilon)^{-\frac{2(n-1)}{n-2}}\mathscr A+\varepsilon.
\]

Apply the harmonic-flat approximation of \cite[Theorem 1.3, Remark 1.4, and Section 3]{Bray-Lee} to the smooth manifold $(\overline {E_\varepsilon},\widetilde g_\varepsilon)$
with boundary.
Using the homogeneous Neumann boundary conditions in the conformal
deformation, we can construct a smooth metric $h_\varepsilon$ satisfying
\[
R_{h_\varepsilon}\ge0,
\qquad
(1-\varepsilon)\widetilde g_\varepsilon
\le h_\varepsilon
\le(1+\varepsilon)\widetilde g_\varepsilon,
\]
\[
|m(h_\varepsilon)-m(\widetilde g_\varepsilon)|<\varepsilon,
\]
and having a harmonically flat end.
The conformal mean-curvature formula preserves the strict
positivity of the mean curvature of $\Gamma_\varepsilon$.

Let $\Sigma_\varepsilon$ be the outermost
$h_\varepsilon$-minimizing enclosure of $\Gamma_\varepsilon$.
The strict mean-curvature barrier implies that
$\Sigma_\varepsilon$ is disjoint from $\Gamma_\varepsilon$.
Hence it is almost-minimizing, its regular part is minimal,
and the ambient metric is smooth across it.

Since $\Sigma_\varepsilon$ encloses $\Sigma$, the
outer-minimizing property of $\Sigma$ gives
\[
\mathcal H^{n-1}_{h_\varepsilon}(\Sigma_\varepsilon)
\ge
(1-\varepsilon)^{\frac{n-1}{2}}\mathscr A.
\]
Conversely, its minimizing property gives
\[
\begin{aligned}
\mathcal H^{n-1}_{h_\varepsilon}(\Sigma_\varepsilon)
&\le
\mathcal H^{n-1}_{h_\varepsilon}(\Gamma_\varepsilon)\\
&\le
(1+\varepsilon)^{\frac{n-1}{2}}
\left[
(1-\varepsilon)^{-\frac{2(n-1)}{n-2}}\mathscr A+\varepsilon
\right].
\end{aligned}
\]
So its area tends to $\mathscr A$, while its mass tends to
$m(g)$.
Restricting $h_\varepsilon$ to the exterior of
$\Sigma_\varepsilon$ and choosing $\varepsilon\downarrow0$
prove the lemma.
\end{proof}

\section{Boundary regularity of the Neumann minimizer}
\label{App: Neumann regularity}

Let \((M^n,g)\) and \(\Sigma\) be as in Subsection~\ref{sec:rigidity-mass-capacity}.

\begin{lemma}\label{lem:neumann-sobolev-trace}
There is \(C>0\) such that every \(v\in C_c^\infty(\overline M)\) satisfies
\begin{equation}\label{eq:neumann-sobolev-trace}
\|v\|_{L^{\frac{2n}{n-2}}(M)}+\|v\|_{L^{\frac{2(n-1)}{n-2}}(\Sigma)}\le C\|\nabla_gv\|_{L^2(M)}.
\end{equation}
\end{lemma}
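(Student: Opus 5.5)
The plan is to split \eqref{eq:neumann-sobolev-trace} into an interior Sobolev inequality and a trace inequality. Each is proved by combining a local argument near the compact boundary $\Sigma$ with the standard inequality on the asymptotically flat end. The setting of Subsection~\ref{sec:rigidity-mass-capacity} gives us that $\Sigma$ is a compact almost-minimizing Caccioppoli boundary and that $g$ extends H\"older continuously across $\Sigma$.

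\emph{Step 1 (Sobolev part).} First extend $v\in C_c^\infty(\overline M)$ to a compactly supported Lipschitz function $\tilde v$ on the extended manifold $\overline M$. The natural tool is the two-sided density estimate of Lemma~\ref{Lem: density lower bound} for the almost-minimizing boundary. It gives a relative isoperimetric inequality in small balls centred on $\Sigma$, namely $\min\{|B\cap M|,|B\setminus M|\}^{(n-1)/n}\le C\,\mathcal H^{n-1}(\partial^* M\cap B)$ up to the interior part. From this and a Whitney-type covering, $M$ is a uniform ($\epsilon$-$\delta$) domain near $\Sigma$, so a bounded extension $W^{1,2}(M\cap U)\to W^{1,2}(U)$ exists near $\Sigma$. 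As a more robust alternative, one can instead prove the Sobolev inequality directly on $M$ via the coarea formula. For each superlevel set $\{|v|>s\}$, its perimeter relative to $\mathring M$ dominates its volume to the power $(n-1)/n$. This follows by combining the Euclidean-type isoperimetric inequality on the AF manifold with the fact that $\Sigma$ is outer-minimizing: adding the piece of $\Sigma$ bounding $\{|v|>s\}$ costs at most the interior perimeter, since $\mathcal H^{n-1}(\Sigma\cap\overline{\{|v|>s\}})\le \mathcal H^{n-1}(\partial\{|v|>s\}\cap\mathring M)$ by replacement. This yields the $L^1$-Sobolev inequality $\|v\|_{L^{n/(n-1)}}\le C\|\nabla v\|_{L^1}$. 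Applying it to $|v|^{2(n-1)/(n-2)}$ and using H\"older gives the $L^{2n/(n-2)}$ bound in the usual way.

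\emph{Step 2 (trace part).} The same outer-minimizing comparison gives, for almost every $s$, $\mathcal H^{n-1}(\Sigma\cap\{|v|>s\})\le \mathcal H^{n-1}(\partial\{|v|>s\}\cap\mathring M)$. Integrating in $s$ via coarea yields the $L^1$ trace inequality $\int_\Sigma |v|\le \int_M|\nabla v|$ with constant one. We then apply this to $|v|^{2(n-1)/(n-2)}$. Cauchy--Schwarz bounds $\int_M |v|^{n/(n-2)}|\nabla v|$ by $\|v\|_{L^{2n/(n-2)}}^{n/(n-2)}\|\nabla v\|_{L^2}$, and Step 1 converts this to $C\|\nabla v\|_{L^2}^{2(n-1)/(n-2)}$, which is the trace bound.

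\emph{Main obstacle.} The expected difficulty is justifying the replacement comparison when $\Sigma$ is singular and $g$ is only H\"older across it. We need that $\Sigma$ is outer-minimizing in $g$, which is part of the standing hypotheses via $g_0$ near $\mathcal S(\Sigma)$. Away from $\mathcal S(\Sigma)$ the argument is smooth, and the singular set has $\mathcal H^{n-1}$-measure zero, so it does not affect either the trace integral or the perimeter comparison. If global outer-minimality with respect to $g$ itself is unavailable, the fallback is to localize. Near $\mathcal S(\Sigma)$ one uses the $g_0$-comparison together with bounded metric distortion, which costs a constant. Away from $\Sigma$, on the compact region, one uses standard Sobolev and trace inequalities on the smooth part, and on the end the Euclidean Sobolev inequality. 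These pieces are patched with a partition of unity, and the resulting lower-order $L^2$ terms on the bounded region are absorbed by a contradiction and compactness argument using that $v$ vanishes at infinity.
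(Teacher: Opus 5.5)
Your main route is correct, and its core is the paper's argument. Outer-minimality applied to $M\setminus\{|v|>t\}$, together with the coarea formula, gives the $L^1$ trace inequality $\int_\Sigma|v|\le\int_M|\nabla v|$. Both parts of \eqref{eq:neumann-sobolev-trace} then follow from an $L^{n/(n-1)}$-Sobolev inequality via the power trick with $|v|^{2(n-1)/(n-2)}$, and your exponent bookkeeping for that step is right.

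The difference is only in how you get the $L^1$-Sobolev inequality.
\begin{itemize}
\item You work with sets. You take a global isoperimetric inequality for bounded subsets of $M$ in the extended manifold, remove the $\Sigma$-part of the perimeter by outer-minimality, and then run the Federer--Fleming coarea argument.
\item The paper works with functions. It splits $v=\chi v+(1-\chi)v$ and bounds the zero extension of $\chi v$ across $\Sigma$ by $\|\nabla v\|_{L^1}+\|v\|_{L^1(\Sigma)}+\|v\|_{L^1(K)}$. It handles the end piece with the Euclidean Sobolev inequality, drops the $\Sigma$-term using the trace inequality, and finally removes $\|v\|_{L^1(K)}$ by a compactness/contradiction argument.
\end{itemize}
Your version is shorter, but it treats the global isoperimetric inequality on the extension as a black box. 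That inequality needs the extension to carry it, for instance by being complete with a single AF end; otherwise you must localize. The usual proof of it is exactly the cutoff-plus-compactness argument the paper writes out, whereas the paper's route uses only local information near $\Sigma$ and the structure of the end.

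Three further remarks.
\begin{itemize}
\item In Subsection~\ref{sec:rigidity-mass-capacity}, outer-minimality is assumed for $g_0$, not $g$. The clean fix is to run everything for $g_0$ and pass to $g$ by uniform equivalence, as the paper does in its last line. This makes your partition-of-unity fallback unnecessary, except in the smooth case $\mathcal S(\Sigma)=\emptyset$, where outer-minimality is not assumed and the standard smooth-boundary inequalities apply directly.
\item Drop the first suggestion in your Step 1. Two-sided volume density bounds do not by themselves make $M$ a uniform domain, since that is a connectivity condition. So the claimed $W^{1,2}$ extension operator across $\Sigma$ is unsupported.
\item The second route never needs that extension operator anyway.
\end{itemize}
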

\begin{proof}
    For smooth boundary these are the usual Sobolev and trace inequalities. Otherwise, outer minimality gives for almost every \(t>0\)
    \[
    \mathcal H_{g_0}^{n-1}(\Sigma\cap\{|v|>t\})\le\mathcal H_{g_0}^{n-1}(\mathring M\cap\{|v|=t\}).
    \]
   The coarea formula then gives 
   \[
   \int_\Sigma|v|d\sigma_{g_0}\le\int_M|\nabla_{g_0}v|d\mu_{g_0}.
   \]
Choose a cutoff function \(\chi\) supported in a compact neighborhood \(K\subset M\) of \(\Sigma\) such that \(1-\chi\) is supported in the asymptotically flat end. Then
   \[
   \begin{aligned}
       \|v\|_{L^{\frac{n}{n-1}}(M,g_0)}\le&\|\chi v\|_{L^{\frac{n}{n-1}}(M,g_0)}+\|(1-\chi)v\|_{L^{\frac{n}{n-1}}(M,g_0)}\\
    \le&C\left(\|\nabla_{g_0}v\|_{L^1(M,g_0)}+\|v\|_{L^1(\Sigma,g_0)}+\|v\|_{L^1(K,g_0)}\right)\\
    \le&C\left(\|\nabla_{g_0}v\|_{L^1(M,g_0)}+\|v\|_{L^1(K,g_0)}\right).
   \end{aligned}
   \]
Now we show 
\[
\|v\|_{L^1(K,g_0)}\le C\|\nabla_{g_0}v\|_{L^1(M,g_0)}
\]
for some uniform \(C>0\). Otherwise, there are \(v_j\in C_c^\infty(\overline M)\) such that
\[
\|v_j\|_{L^1(K,g_0)}=1,\qquad \|\nabla_{g_0}v_j\|_{L^1(M,g_0)}\longrightarrow 0.
\]
A subsequence therefore converges locally in \(L^1\) to a constant \(c\). On the other hand, \(\|v_j\|_{L^{n/(n-1)}(M,g_0)}\) are uniformly bounded. So \(\|c\|_{L^{n/(n-1)}}<\infty\)  and thus \(c=0\), contradicting \(\|c\|_{L^1(K,g_0)}=1\). 

Since \(g\) and \(g_0\) are uniformly equivalent, applying the previous estimates to \(|v|^{2(n-1)/(n-2)}\) gives \eqref{eq:neumann-sobolev-trace}.
\end{proof}

Put \(D_r(x)=\mathring M\cap B_r^g(x)\).
\begin{lemma}\label{lem:neumann-truncation}
For each \(a>0\) there are \(C>0\), \(L>1\) and \(r_1>0\) such that for \(x\in\Sigma\) and \(0<r<r_1\), every \(v\in W^{1,1}(D_{Lr}(x))\) satisfying
\[
\mu_g(D_r(x)\cap\{v\le 0\})\ge ar^n,\qquad \mu_g(D_r(x)\cap\{v\ge 1\})\ge ar^n
\]
also satisfies
\begin{equation}\label{eq:neumann-truncation}
r^{n-1}\le C\int_{D_{Lr}(x)\cap\{0<v<1\}}|\nabla_gv|d\mu_g.
\end{equation}
\end{lemma}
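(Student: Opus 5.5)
We prove Lemma \ref{lem:neumann-truncation} by a relative isoperimetric inequality in $D_{Lr}(x)$ combined with the coarea formula, using the truncation $\bar v:=\min\{\max\{v,0\},1\}$. By coarea,
\[
\int_{D_{Lr}(x)\cap\{0<v<1\}}|\nabla_g v|\,d\mu_g=\int_0^1\mathcal H^{n-1}_g\bigl(\mathring M\cap B^g_{Lr}(x)\cap\{v=t\}\bigr)\,dt .
\]
So it suffices to show that, for almost every $t\in(0,1)$, the set $F_t=D_r(x)\cap\{v>t\}$ has relative perimeter $P(F_t;D_{Lr}(x))$ inside $\mathring M$ bounded below by $c\,r^{n-1}$. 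Both $\{v\le 0\}$ and $\{v\ge1\}$ occupy at least $ar^n$ of $D_r(x)$, so for every $t\in(0,1)$ the superlevel set $\{v>t\}$ and its complement each contain at least $ar^n$ of the volume of $D_r(x)$. The whole lemma therefore reduces to a uniform relative isoperimetric inequality
\[
\min\{\mu_g(A\cap D_r),\mu_g(D_r\setminus A)\}^{\frac{n-1}{n}}\le C\,P(A;D_{Lr}(x))
\]
for Caccioppoli $A$, with $C,L$ uniform in $x\in\Sigma$ and $r<r_1$. Here the perimeter counts only the part of $\partial^*A$ lying in $\mathring M$, not in $\Sigma$.

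To prove this inequality, use the almost-minimizing and outer-minimizing property of $\Sigma$ (with respect to $g_0$, uniformly equivalent to $g$), as in Lemma \ref{lem:neumann-sobolev-trace}. The key point is that the portion of $\Sigma$ cut out by $A$ costs no more than the interior boundary. Work in the extended manifold $\overline M$ and rescale $B^g_{Lr}(x)$ to unit size; for $r_1$ small the rescaled metric is close to Euclidean. Let $K=\overline M\setminus M$ be the complementary phase near $x$. Take $\tilde A$ equal to $A$ in $D_{Lr}$ or to $A\cup K$, whichever has the smaller share of $\Sigma$ in its reduced boundary. Then compare $M\cup(\text{part})$ with $M$ using the outer-minimizing, respectively almost-minimizing, inequality. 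This yields
\[
\mathcal H^{n-1}(\partial^*A\cap\Sigma\cap B_{Lr/2})\le \mathcal H^{n-1}(\partial^*A\cap\mathring M\cap B_{Lr})+\beta (Lr)^{n-1+\mu}.
\]
Hence the full perimeter of $\tilde A$ in the Euclidean-like ball $B_{Lr/2}$ is controlled by $C\,P(A;D_{Lr})+\beta(Lr)^{n-1+\mu}$.

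Next, the two-sided volume density bounds of Lemma \ref{Lem: density lower bound} give $\mu_g(B_{Lr/2}\cap K)\ge c(Lr)^n$. Choose $L$ large relative to $a$ so that $\mu_g(D_r)\le \omega_n r^n c_\iota^n$ is a small fraction of $B_{Lr/2}$. This ensures that $\tilde A$ and its complement both have volume $\gtrsim \min\{a,c\}r^n$ in $B_{Lr/2}$. Then the standard Euclidean relative isoperimetric inequality in the ball gives a perimeter lower bound $\gtrsim r^{n-1}$. Finally, absorb the error $\beta(Lr)^{n-1+\mu}$ by taking $r_1$ small.

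The main obstacle is the comparison step producing the displayed estimate. We must choose competitors compactly supported in a small ball, where the almost-minimizing inequality of Definition \ref{Defn: almost minimizing} applies, and keep the cut-off error on $\partial B_{\rho}$ under control. For this, pick a good radius $\rho\in(Lr/2,Lr)$ by coarea/slicing so that $\mathcal H^{n-2}$-slices of $A$ on $\partial B_\rho$ are controlled by $P(A;D_{Lr})/r$. The resulting extra term is then bounded by $C\,P(A;D_{Lr})$, which gives the estimate with uniform constants.
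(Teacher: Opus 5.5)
Your coarea reduction is fine, provided you apply it to $A_t=\{v>t\}\subset D_{Lr}(x)$ rather than to $D_r(x)\cap\{v>t\}$, whose boundary picks up $\partial B_r$. With that, the lemma follows from a relative isoperimetric inequality for $D_{Lr}(x)$. Your final step is also fine: the Euclidean relative isoperimetric inequality for $\tilde A$ in $B_{Lr/2}$, using the density of $K$, works once you know $P(\tilde A;B_{Lr/2})\le C\,P(A;D_{Lr}(x))+o(r^{n-1})$.

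\textbf{The gap is this comparison step.} A competitor for the outer- or almost-minimizing property of $\Sigma$ localized in $B_\rho$ has the form $E\setminus(A\cap B_\rho)$ or $E\setminus(D_\rho\setminus A)$, with $E=\mathring M$. Its boundary contains the $(n-1)$-dimensional piece $A\cap\partial B_\rho$, respectively $(D_\rho\setminus A)\cap\partial B_\rho$. What the comparison actually gives is
\[
\mathcal H^{n-1}(\partial^*A\cap\Sigma\cap B_\rho)\le P(A;D_\rho)+\mathcal H^{n-1}(A\cap\partial B_\rho)+\beta\rho^{n-1+\mu},
\]
plus the analogous bound for the non-wetted part with $A$ replaced by $D_\rho\setminus A$. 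Choosing $\rho\in(Lr/2,Lr)$ by slicing controls the extra term only by $2\mu_g(A\cap D_{Lr})/(Lr)$, respectively $2\mu_g(D_{Lr}\setminus A)/(Lr)$. These are volume quantities; the $\mathcal H^{n-2}$-slices of $\partial^*A$ you invoke never enter.

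In the only nontrivial case, $A$ and $D_{Lr}\setminus A$ both occupy a fixed fraction of $D_{Lr}$. Then both error terms are of order $(Lr)^{n-1}$, the same order as $\mathcal H^{n-1}(\Sigma\cap B_{Lr/2})$, so the estimate says nothing. Choosing between $A$ and $A\cup K$ does not help, since both errors are large at once. As written, your display already fails for $A=D_{Lr}(x)$. Bounding $\min\{\mu_g(A\cap D_{Lr}),\mu_g(D_{Lr}\setminus A)\}/(Lr)$ by the relative perimeter is itself the relative isoperimetric inequality at a larger scale, so the argument is circular. An ODE argument in $\rho$ only shows that small $P(A;D_{Lr})$ forces both phases to have definite density at scale $Lr$, which pushes the problem outward instead of closing it.

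\textbf{The missing idea is a global input.} Near a possibly singular point of $\Sigma$, $\mathring M$ cannot be nearly disconnected by a set of small relative perimeter at small scales. Local cut-and-paste cannot see this; the paper gets it by contradiction and blow-up. Take $L=2j$ and scales $r_j$ with $jr_j\to0$ along which the inequality fails. The rescaled truncations $\min\{1,\max\{0,w_j\}\}$ converge to a BV function $v_\infty$ on the perimeter-minimizing blow-up $E_\infty$ of $\mathring M$, with $|Dv_\infty|(E_\infty)=0$, and both volume conditions persist in the limit. The Bombieri--Giusti theorem says a perimeter-minimizing set in $\mathbb R^n$ is connected, so $v_\infty$ is constant on $E_\infty$, a contradiction. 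To repair your proof, replace the comparison step by such a compactness argument, or import a Bombieri--Giusti type relative isoperimetric inequality for minimizing boundaries and transfer it to the almost-minimizing $\Sigma$ by blow-up.
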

\begin{proof}
    We argue by contradiction. Suppose that for some \(a>0\) there are \(x_j\to x\in\Sigma\), \(r_j>0\) with \(jr_j\to 0\), and \(w_j\in W^{1,1}(D_{2jr_j}(x_j))\) satisfying
    \[
    \mu_g(D_{r_j}(x_j)\cap\{w_j\le 0\})\ge ar_j^n,\qquad \mu_g(D_{r_j}(x_j)\cap\{w_j\ge 1\})\ge ar_j^n,
    \]
    and 
    \[
    r_j^{1-n}\int_{D_{2jr_j}(x_j)\cap\{0<w_j<1\}}|\nabla_gw_j|d\mu_g\longrightarrow 0.
    \]
    Set \(v_j:=\min\{1,\max\{0,w_j\}\}\). After rescaling by \(r_j^{-1}\) we obtain a local BV function \(v_\infty\) and a perimeter-minimizing Caccioppoli set \(E_\infty\). As in the proof of Corollary~\ref{Cor: discrete compactness}, the boundaries converge locally in Hausdorff distance. Moreover,
    \[
   0\in\partial E_\infty,\qquad  |Dv_\infty|(E_\infty)=0,
    \]
    and
    \[
    |E_\infty\cap\{v_\infty\le 0\}|>0,\qquad |E_\infty\cap\{v_\infty\ge 1\}|>0.
    \]
    By \cite[Theorem 1]{BombieriGiusti} (see also \cite[Theorem 5.1]{JerisonTwoHyperplanes}), \(E_\infty\) is connected. So \(v_\infty=c\) almost everywhere in \(E_\infty\) for some constant \(c\), a contradiction.
\end{proof}

\begin{proposition}\label{prop:neumann-conformal-minimizer}
Let \(u_N\in 1+D^{1,2}(M)\) be a nonnegative weak solution of \eqref{Eq: u_N} tending to \(1\) at infinity. Then for some \(\alpha\in(0,1)\), \(u_N\in C_{\mathrm{loc}}^{0,\alpha}(M)\). Moreover, there exists \(c>0\) such that \(u_N\ge c\) on \(M\).
\end{proposition}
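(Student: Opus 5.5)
Since $u_N$ is a weak solution of a Neumann problem on a domain whose boundary $\Sigma$ may be singular, I will run De Giorgi's method. The ingredients the method needs near $\Sigma$ are a Sobolev inequality, supplied by Lemma \ref{lem:neumann-sobolev-trace}, and an isoperimetric-type truncation inequality, supplied by Lemma \ref{lem:neumann-truncation}. These replace the Lipschitz-domain machinery.

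Interior Hölder regularity and positivity are classical. On compact subsets of $\mathring M$ we have $R_g\in L^\infty$, by the standing assumption on $g_0$ near $\mathcal S(\Sigma)$ and smoothness elsewhere, so De Giorgi--Nash--Moser applies. Near $\mathcal R(\Sigma)$ the boundary is smooth and $H_g$ is bounded locally, so the standard Neumann theory gives Hölder continuity there. The remaining issue is uniform estimates near $\mathcal S(\Sigma)$. There $g=g_0$ is the fixed metric, and since $\mathcal R(\Sigma)$ is $g_0$-minimal we have $H_g=0$ near the singular set. Thus, locally near $\mathcal S(\Sigma)$, the equation \eqref{Eq: u_N} reduces to
\[
-a_n\Delta_g u_N+R_gu_N=0
\]
with homogeneous Neumann condition. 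In weak form this says
\[
\int_{\mathring M}\left(a_n\langle\nabla u_N,\nabla\varphi\rangle+R_gu_N\varphi\right)=0
\]
for all test functions $\varphi$ compactly supported in $D_r(x)\cup\Sigma$.

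With this weak formulation in hand, I run the estimates on the half-balls $D_r(x)$, $x\in\Sigma$.

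\textbf{Step 1: $L^\infty$ bound.} Test with $(u_N-k)_\pm\eta^2$ to get Caccioppoli inequalities. Combined with the local Sobolev inequality from Lemma \ref{lem:neumann-sobolev-trace}, applied after multiplying by cutoffs, Moser iteration gives local $L^\infty$ bounds.

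\textbf{Step 2: oscillation decay.} Here De Giorgi's lemma needs its ``second'' ingredient. If $u$ sits below level $k$ on a set of proportion $a$ and above level $k+h$ on another set of proportion $a$ within $D_r$, then the intermediate region must carry gradient mass of order $h r^{n-1}$. This is exactly Lemma \ref{lem:neumann-truncation}, applied to $(u_N-k)/h$. It lets me run the standard argument: the measure of $\{u_N>\sup-2^{-j}\operatorname{osc}\}$ in $D_r$ shrinks to a small fraction, and then the $L^\infty$ step improves the supremum. The result is
\[
\operatorname{osc}_{D_{r/L'}}u_N\le\theta\operatorname{osc}_{D_r}u_N+Cr^{\alpha_0},
\]
where the term $Cr^{\alpha_0}$ absorbs the potential $R_gu_N$ via the bound on $R_g$. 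The doubling property of $\mu_g(D_r)$, needed for the covering, comes from the two-sided density estimate of Lemma \ref{Lem: density lower bound}. Iterating gives $C^{0,\alpha}$ up to $\Sigma$, and hence a Hölder extension to the whole boundary.

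\textbf{Step 3: positive lower bound.} I use a weak Harnack inequality in the same framework. Apply the De Giorgi argument to $-\log(u_N+\epsilon)$, or use the measure-to-pointwise lemma for supersolutions. Lemma \ref{Lem: conformal minimizers} gives $u_N>0$ on $\mathring M\cup\mathcal R(\Sigma)$, so $u_N$ is bounded below on a set of positive measure in each $D_r(x)$. The weak Harnack inequality then yields $\inf_{D_{r/L'}(x)}u_N>0$. A finite covering of the compact set $\Sigma$, together with $u_N\to1$ at infinity, gives a uniform $c>0$.

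The main obstacle is Step 2 near singular points. The chaining in De Giorgi's method is delicate because $D_{Lr}(x)$ may be disconnected in a bad way, and the truncation inequality involves the enlarged half-ball $D_{Lr}$ rather than $D_r$. I would handle this by working throughout with the pair of scales $(r,Lr)$, with the constant $L$ from Lemma \ref{lem:neumann-truncation}, and by using doubling to control the resulting losses.
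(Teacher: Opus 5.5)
Your argument is correct, but it runs De Giorgi's method, whereas the paper runs Moser's.

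\textbf{The paper's route.} The paper first upgrades Lemma~\ref{lem:neumann-truncation} into a relative isoperimetric inequality on $D_r(x)$ by a Vitali covering. For a.e.\ $y\in A\cap D_r(x)$ it picks the radius $s_y$ at which the complement of $A$ fills exactly $a s_y^n$ of $D_{s_y}(y)$, and then sums the resulting area lower bounds. From this it deduces the weak Poincar\'e--Sobolev inequality $\|v-\bar v\|_{L^{n/(n-1)}(D_r(x))}\le C\int_{D_{Lr}(x)}|\nabla_g v|\,d\mu_g$. It then combines this with \eqref{eq:neumann-sobolev-trace} in a Moser iteration, which gives a full Harnack inequality up to $\mathcal S(\Sigma)$. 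H\"older continuity and the positive lower bound follow at once.

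\textbf{Your route.} You use Lemma~\ref{lem:neumann-truncation} verbatim as De Giorgi's intermediate-value lemma. For the $L^\infty$ step you need only the global Sobolev inequality, since the truncated and cut-off functions are compactly supported in $\overline M$. You therefore avoid both the covering argument and any Poincar\'e or John--Nirenberg input. Your observation that $H_g=0$ near $\mathcal S(\Sigma)$ (because $g=g_0$ there) also removes the boundary term locally, so the trace inequality is not needed there.

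\textbf{What each buys.} You get oscillation decay and expansion of positivity rather than a two-sided Harnack inequality, which is exactly what the statement needs. The paper's detour buys a reusable Poincar\'e inequality on the singular half-balls.

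\textbf{Two small remarks.} First, the ``chaining'' difficulty you anticipate does not arise: the connectivity issue is already absorbed into Lemma~\ref{lem:neumann-truncation}, through the Bombieri--Giusti connectedness of the blow-up. Second, for interior positivity, cite the interior strong maximum principle directly rather than Lemma~\ref{Lem: conformal minimizers}. That lemma concerns the minimizer rather than an arbitrary nonnegative weak solution, and its proof itself appeals to this proposition.
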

\begin{proof}
Choose \(c_0>0\) such that \(c_0s^n\le \mu_g(D_s(y))\le Cs^n\) uniformly for \(y\in M\) and sufficiently small \(s>0\), and set \(a=2^{-n-1}c_0\). After approximation, Lemma~\ref{lem:neumann-truncation} gives some \(\lambda>1\) such that whenever \(\mu_g(A\cap D_s(y))\ge as^n\) and \(\mu_g(D_s(y)\setminus A)\ge as^n\), one has
\[
s^{n-1}\le C\mathcal H_g^{n-1}(\partial A\cap D_{\lambda s}(y)).
\]
Set \(L=2\lambda+1\) and assume \(\mu_g(A\cap D_r(x))\le\frac{1}{2}\mu_g(D_r(x))\). For almost every \(y\in A\cap D_r(x)\), \(\mu_g(D_s(y)\setminus A)=o(s^n)\) as \(s\downarrow 0\). Since \(D_r(x)\subset D_{2r}(y)\),
\[
\mu_g(D_{2r}(y)\setminus A)\ge\mu_g(D_r(x)\setminus A)\ge\frac{1}{2}c_0r^n=a(2r)^n.
\]
By continuity, choose \(0<s_y<2r\) with \(\mu_g(D_{s_y}(y)\setminus A)=as_y^n\). Then \(\mu_g(A\cap D_{s_y}(y))\ge(c_0-a)s_y^n\ge as_y^n\). Then we can choose disjoint balls \(B_{\lambda s_i}^g(y_i)\) with \(s_i=s_{y_i}\) such that \(A\cap D_r(x)\subset \bigcup_iB_{5\lambda s_i}^g(y_i)\). Since \(D_{\lambda s_i}(y_i)\subset D_{Lr}(x)\),
\[
\mu_g(A\cap D_r(x))^{\frac{n-1}{n}}\le C\left(\sum_is_i^n\right)^{\frac{n-1}{n}}\le C\sum_is_i^{n-1}\le C\mathcal H_g^{n-1}(\partial A\cap D_{Lr}(x)).
\]
Thus for \(v\in W^{1,1}(D_{Lr}(x))\),
\[
\|v-\bar v\|_{L^{\frac{n}{n-1}}(D_r(x))}\le C\int_{D_{Lr}(x)}|\nabla_gv|d\mu_g.
\]
With this estimate and \eqref{eq:neumann-sobolev-trace}, the Moser iteration argument gives Harnack and local H\"older estimates up to \(\mathcal S(\Sigma)\). Consequently, \(u_N\) cannot vanish at a (singular) boundary point. Together with the strong maximum principle, this shows that \(u_N>0\) on \(M\). Since \(u_N\to 1\) at infinity, this implies \(\inf_Mu_N\ge c>0\).
\end{proof}

\printbibliography

\end{document}